\documentclass[11pt]{article}

\usepackage[margin=1.15in]{geometry}
\usepackage{amsmath,amssymb,amsthm,mathtools}
\usepackage{mathrsfs}
\usepackage{enumitem}
\usepackage{hyperref}
\usepackage{stmaryrd}
\usepackage{xcolor}

\numberwithin{equation}{section}

\newtheorem{theorem}{Theorem}[section]
\newtheorem{proposition}[theorem]{Proposition}
\newtheorem{lemma}[theorem]{Lemma}
\newtheorem{corollary}[theorem]{Corollary}
\newtheorem{definition}[theorem]{Definition}
\newtheorem{remark}[theorem]{Remark}

\newcommand{\R}{\mathbb R}

\newcommand{\spt}{\operatorname{spt}}
\newcommand{\mres}{\mathbin{\llcorner}}

\title{Perturbative Schauder Estimates for \(Q\)-Valued Quasilinear Elliptic Systems and a Sharp Dimension Bound for Branch Sets of Stationary Graphs}

\author{Mattia Luchese}

\begin{document}

\date{}
\maketitle

\begin{abstract}
We establish a priori interior \(C^{1,\alpha}\) and \(C^{2,\alpha}\) Schauder estimates for a class of \(Q\)-valued quasilinear elliptic systems which are perturbations of the Laplace system, for arbitrary multiplicity \(Q\), domain dimension \(n\geq2\), and target dimension \(k\). The \(C^{1,\alpha}\) estimate generalises the work of Simon and Wickramasekera in \cite{SW16} on \(2\)-valued solutions of linear systems and concerns weak solutions of divergence-form systems, while the \(C^{2,\alpha}\) estimate concerns strong solutions. In dimension \(n=2\), both estimates hold for every \(0<\alpha<1/Q\). In dimensions \(n\geq3\), there exists \(\delta=\delta(n,k,Q)>0\) such that both estimates hold for every \(0<\alpha<\delta\).

As applications, we obtain a small-slope Schauder estimate and a small-slope Bernstein theorem for \(Q\)-valued maps whose graph varifolds are stationary. Combining the Schauder estimate with the recent branch-set stratification theory of Krummel--Minter--Wickramasekera in \cite{KMW26}, we further prove that, for every \(\gamma>0\), the branch set \(\mathcal B_u\) of any \(C^{1,\gamma}\) \(Q\)-valued map \(u\) whose graph varifold is stationary satisfies $\dim_{\mathcal H}\mathcal B_u\leq n-2$.
This bound is sharp already in codimension one.
\end{abstract}

\tableofcontents

\section{Introduction}

We develop an interior Schauder theory for \(Q\)-valued quasilinear elliptic systems which are perturbations of the Laplace system. The first result concerns \(C^{1,\alpha}(B^n_1,\mathcal A_Q(\R^k))\) weak solutions of divergence-form systems
\[
\Delta u+\operatorname{div}_x\mathscr E(x,u,Du)+\mathscr B(x,u,Du)=f,
\]
where \(f\in L^\infty(B^n_1,\R^k)\) is single-valued and \(\mathscr B(x,0,0)=0\).
Under natural regularity assumptions on the coefficients, and provided the principal perturbation \(\mathscr E\) is sufficiently small (see hypothesis \((\star)\) in Section~\ref{sec:first-order-schauder}), Theorem~\ref{thm:quasilinear-Schauder} gives, for solutions satisfying
\(\sup_{B_1^n}|Du|\leq L\), the estimate
\[
\|u\|_{C^{1,\alpha}(B_{1/2}^n)}
\leq
C\left(
\|u\|_{L^2(B_1^n)}
+
\|f\|_{L^\infty(B_1^n)}
\right),
\]
where the constant \(C\) also depends on \(L\).
The admissible range is
\[
0<\alpha<\frac1Q
\quad\text{if }n=2,
\qquad
0<\alpha<\delta(n,k,Q)
\quad\text{if }n\geq3,
\]
where \(\delta(n,k,Q)>0\) is the homogeneity gap established in Theorem~\ref{thm:homogeneity-one-gap}.

We also prove a second-order analogue for \(C^{2,\alpha}(B^n_1,\mathcal A_Q(\R^k))\) strong solutions of systems
\[
\Delta u+\sum_{i,j=1}^n\mathscr A^{ij}(x,u,Du)D_{ij}u+\mathscr C(x,u,Du)=f,
\]
where \(f\in C^{0,\alpha}(B^n_1,\R^k)\) is single-valued and \(\mathscr C(x,0,0)=0\). Under analogous regularity assumptions on the coefficients, and provided the principal perturbation \(\mathscr A\) is sufficiently small (see hypothesis \((\star\star)\) in Section~\ref{sec:second-order-schauder}), Theorem~\ref{thm:quasilinear-C2-Schauder} gives, for solutions satisfying \(\sup_{B_1^n}|Du|\leq L\), the estimate
\[
\|u\|_{C^{2,\alpha}(B_{1/2}^n)}
\leq
C\left(
\|u\|_{L^2(B_1^n)}
+
\|f\|_{C^{0,\alpha}(B_1^n)}
\right),
\]
where the constant \(C\) also depends on \(L\).
The admissible range is \(0<\alpha<1/Q\) when \(n=2\), and \(0<\alpha<\delta(n,kn,Q)\) when \(n\geq3\). The target dimension \(kn\) appears because the limiting Liouville argument is applied to the differential, which takes values in \(\operatorname{Hom}(\mathbb R^n,\mathbb R^k)\).

\(C^{1,\alpha}\) Schauder estimates in the \(Q\)-valued setting were first established by Simon and Wickramasekera in \cite{SW16} for \(2\)-valued symmetric solutions of linear systems. The general scheme used here, in which a homogeneity gap yields a Liouville theorem and the Liouville theorem yields Schauder estimates, is the same as in \cite{SW16}. Krummel obtained analogous estimates for arbitrary multiplicity in a setting with a fixed codimension-two branching axis \cite{Kru19}. It should be remarked that in these earlier works the formulation of the equations is considerably simpler than in the present setting. In \cite{SW16}, the symmetric part of a \(2\)-valued solution of the Euler--Lagrange system of the area functional already satisfies a linear system with coefficients depending only on \(x\). In the general setting of the present paper, by contrast, to include the area system one must consider genuine quasilinear systems (see Remark~\ref{rem:Q-valued-weak-solutions-examples}\textup{(iv)}). In \cite{SW16}, the interpretation of weak solutions is also more straightforward, since away from the branch set the solution is locally plus or minus a single-valued \(C^{1,\alpha}\) branch. Similarly, in \cite{Kru19}, a global selection of single-valued \(C^{1,\alpha}\) branches is possible after cutting the domain along a half-hyperplane bounded by the branching axis, so the weak equations can be interpreted sheet by sheet. Here, by contrast, care must be taken even in defining an appropriate notion of \(Q\)-valued weak solution.

Following this approach, we prove both estimates by contradiction. A failure of either estimate produces, after rescaling, an entire Dirichlet-stationary map whose differential retains a nonzero oscillation. This map is the blow-up itself in the \(C^{1,\alpha}\) case and the differential of the blow-up in the \(C^{2,\alpha}\) case. The contradiction then follows from the following Liouville theorem (Theorem~\ref{thm:liouville}): if
\[
v\in C^{1,\alpha}\bigl(\mathbb R^n,\mathcal A_Q(\mathbb R^k)\bigr)
\]
is Dirichlet-stationary and satisfies
\[
[Dv]_{\alpha;\mathbb R^n}<\infty,
\]
then \(v\) is affine, provided
\[
0<\alpha<\frac1Q
\quad\text{if }n=2,
\qquad
0<\alpha<\delta(n,k,Q)
\quad\text{if }n\geq3.
\]

Section~\ref{sec:preliminaries} is devoted to the foundational \(C^1\), Sobolev, compactness, and local splitting theory for \(Q\)-valued functions needed in the proofs. The results collected there are natural analogues of familiar facts from the single-valued theory. Their multiple-valued formulations nevertheless require some care, and the corresponding arguments are rather technical. A parallel calculus for second jets is developed separately later in the paper, since it is needed only for the \(C^{2,\alpha}\) theory.

Beyond this foundational material, the paper contains three new ideas. These ideas are largely modular. The first is used in every dimension and underlies both Schauder estimates. The remaining two are used principally to establish the higher-dimensional homogeneity gap which supplies the Liouville theorem needed in the blow-up arguments.

The first new idea is a clustered blow-up argument for multiple-valued Schauder estimates. Once a suitable Liouville theorem is available, Simon's scaling method reduces the estimate to the analysis of a normalized blow-up sequence. The blueprint for the first-order argument is the proof presented in \cite{SW16} for the case \(Q=2\). For general \(Q\), however, different groups of sheets may separate at different rates in the relevant jet space. To deal with this situation, we introduce a finer clustering procedure which partitions the sheets according to their relative (first or second order) jet separation at the blow-up scale.

The second new idea is a Hodge-type structure for the differential of Dirichlet-stationary \(Q\)-valued maps. We first prove an inductive Bochner inequality which implies that every \(C^1\) map which is stationary with respect to outer variations belongs locally to \(W^{2,2}\). We then show that, if
\[
v\in W_{\mathrm{loc}}^{2,2}\bigl(\Omega,\mathcal A_Q(\mathbb R^k)\bigr)
\]
is stationary with respect to outer variations, then both \(v\) and its differential
\[
Dv\in W_{\mathrm{loc}}^{1,2}\bigl(\Omega,\mathcal A_Q(\mathbb R^{k\times n})\bigr)
\]
are Dirichlet-stationary. Moreover, if
\[
v=\sum_{\ell=1}^Q\llbracket v^\ell\rrbracket,
\]
then, for almost every point, every sheet index \(\ell\in\{1,\ldots,Q\}\), and every target component \(a\in\{1,\ldots,k\}\), the matrix
\[
D^2v^{\ell,a}\in\mathbb R^{n\times n}
\]
is symmetric and trace-free. This Hodge-type structure yields, sheet by sheet, the proper aperture estimate
\[
D^2v^\ell(D^2v^\ell)^*
\leq \frac{n-1}{n}|D^2v^\ell|^2I.
\]
The resulting aperture bound is the structural input for the higher-dimensional homogeneity gap, while the same Hodge structure also reappears in the second-order blow-up argument.

The third new idea is a target-stress argument for the higher-dimensional homogeneity gap, based on the following monotonicity formula for positive-semidefinite matrix-valued measures. Let
\[
\mathsf S\in\mathcal M\bigl(B^m_R;\operatorname{Sym}^+(\mathbb R^m)\bigr),
\qquad
\mu:=\operatorname{tr}\mathsf S,
\]
and write
\[
d\mathsf S(y)=P(y)\,d\mu(y),
\qquad
\operatorname{tr}P(y)=1
\quad\text{for \(\mu\)-a.e. }y.
\]
Suppose that, for some \(c\in(0,1]\),
\[
P(y)\leq c I
\]
for \(\mu\)-almost every \(y\). If
\[
-\operatorname{div}\mathsf S=a\delta_0
\]
for some \(a\in\mathbb R^m\), including the divergence-free case \(a=0\), then
\[
r\longmapsto\frac{\mu(B^m_r(0))}{r^{1/c}}
\]
is nondecreasing on $(0,R)$. The atomic term does not contribute to the radial variations underlying the formula. If \(c<1\), the resulting upper growth estimate is incompatible with the linear lower growth forced by a nonzero atomic load. In particular, necessarily \(a=0\).

The divergence-free case of this monotonicity formula first appeared in \cite[Lemma~4.11]{Huang26} in the context of conductance maximisation for singularly distributed media, and may be viewed as a generalisation of the classical monotonicity formula for stationary varifolds. In the present paper, the version with atomic divergence is applied with \(m=kn\) in a contradiction argument proving the homogeneity gap at \(1\). Suppose that
\[
v_h\in W_{\mathrm{loc}}^{2,2}\bigl(\mathbb R^n,\mathcal A_Q(\mathbb R^k)\bigr)
\]
is a sequence of homogeneous Dirichlet-stationary maps of degrees \(\sigma_h>1\) satisfying \(\sigma_h\to1\), normalized so that
\[
\int_{\mathbb S^{n-1}}|Dv_h|^2=1.
\]
We consider what we call the normalized target-stress measures associated with \(Dv_h\):
\[
\mathsf S_h(dy)
:=
\frac1{\sigma_h-1}
\int_{B_1^n}
\sum_{\ell=1}^Q
D^2v_h^\ell(x)
\bigl(D^2v_h^\ell(x)\bigr)^*
\,\delta_{Dv_h^\ell(x)}(dy)\,dx.
\]
After passing to a subsequence, these measures converge weakly-* to a matrix-valued measure \(\mathsf S\). The Hodge structure of \(Dv_h\) gives the required aperture condition with \(c=\frac{n-1}{n}<1\), while the outer stationarity and homogeneity of \(Dv_h\) produce a nonzero atomic divergence for \(\mathsf S\). This contradicts the preceding exclusion principle coming from the monotonicity formula, yielding the gap
\[
\sigma=1
\qquad\text{or}\qquad
\sigma\geq1+\delta(n,k,Q)
\]
among homogeneities \(\sigma\geq1\).

In dimension \(2\), the higher-dimensional target-stress argument is unnecessary. The explicit homogeneity gap
\[
\sigma=1
\qquad\text{or}\qquad
\sigma\geq1+\frac1Q
\]
is obtained directly from the angular ordinary differential equation and the monodromy of its solutions.

The homogeneity gaps yield the Liouville theorem by considering the frequency at infinity and taking a blow-down. The global Hölder bound implies that the limiting homogeneity is at most \(1+\alpha\), while the gap excludes every value strictly between \(1\) and the relevant threshold. After establishing a global bound for the differential, its Dirichlet stationarity implies that it is constant. The clustered blow-up argument then converts the Liouville theorem into the Schauder estimates.

As an application of the first-order theorem, we obtain small-slope Schauder estimates for \(Q\)-valued maps with stationary graph varifold. Indeed, if
\[
u\in C^{1,\alpha}\bigl(B^n_1,\mathcal A_Q(\mathbb R^k)\bigr)
\]
has stationary graph varifold, then \(u\) is a weak solution of
\[
\Delta u+\operatorname{div}\mathscr E_{\mathrm{area}}(Du)=0,
\qquad
\mathscr E_{\mathrm{area}}(P):=D_P\sqrt{\det(I+P^TP)}-P.
\]
Since \(D_P\mathscr E_{\mathrm{area}}(0)=0\), the first-order theorem gives, for the admissible values of \(\alpha\),
\[
\|u\|_{C^{1,\alpha}(B^n_{1/2})}
\leq C\left(\int_{B^n_1}|u|^2\right)^{1/2},
\]
provided that \(\sup_{B^n_1}|Du|\leq\varepsilon_0\). Applying this estimate to rescalings of an entire graph also gives a small-slope Bernstein theorem: if \(u\in C_{\mathrm{loc}}^{1,\alpha}\bigl(\mathbb R^n,\mathcal A_Q(\mathbb R^k)\bigr)\), its graph varifold is stationary, and \(\sup_{\mathbb R^n}|Du|\) is sufficiently small, then \(u\) is affine.

Finally, we use the first-order estimate to obtain a dimension bound for branch sets of stationary multiple-valued graphs. Theorem~\ref{thm:branch-dimension} states that, if \(\Omega\subset\mathbb R^n\) is open, \(u\in C_{\mathrm{loc}}^{1,\alpha}\bigl(\Omega,\mathcal A_Q(\mathbb R^k)\bigr)\) for some \(\alpha>0\), and the graph varifold associated with \(u\) is stationary, then
\[
\dim_{\mathcal H}\mathcal B_u\leq n-2.
\]
No smallness assumption on \(Du\) is required here. The proof combines the Schauder estimate with the abstract branch-set stratification theorem of Krummel, Minter, and Wickramasekera in \cite{KMW26}. The main point is to verify the required \(\varepsilon\)-regularity property for the class of stationary varifolds which, after rotation, are locally represented by small-slope \(Q\)-valued graphs. This follows from the \(Q\)-valued analogue of the Arzelà--Ascoli theorem, a \(C^{1,\alpha}\) reparametrization argument, and the Schauder estimate.

The dimension bound is sharp. For \(k\geq 2\), this follows from a simple example: for \(Q\geq 2\), the complex curve
\[
\bigl\{(z,w)\in\mathbb C^2:w^Q=z^{Q+1}\bigr\}
\]
is locally the graph over the \(z\)-plane of a \(C^{1,1/Q}\) \(Q\)-valued map with a branch point at the origin. With its natural orientation, the curve defines an integral current which is locally area-minimizing by calibration. Thus its product with \(\mathbb R^{n-2}\) defines an \(n\)-dimensional stationary varifold in \(\mathbb R^{n+2}\) with branch set of dimension \(n-2\). The bound is also sharp in codimension \(k=1\), as shown by the branched minimal graphs constructed by Simon and Wickramasekera in \cite{SW07}, Rosales in \cite{Ros10}, and Krummel in \cite{Kru19}.

\subsection*{Acknowledgements}

This work was supported by the UK Engineering and Physical Sciences Research Council (EPSRC) Studentship EP/T517847/1 and by the Cambridge Trust, and was carried out while the author was a PhD student at the University of Cambridge under the supervision of Professor Neshan Wickramasekera.

The author is grateful to Paul Minter and Neshan Wickramasekera for many helpful conversations surrounding this work, as well as for their personal support.

\section{Preliminaries on $Q$-valued functions}
\label{sec:preliminaries}

\subsection{Notation and basic definitions}
\label{subsec:notation}

Throughout the paper, we use the following notation:
\begin{itemize}
    \item   We fix $\Omega\subset \mathbb{R}^n$ to be an open set, $n\geq 2$. For $y\in \R^n$, $d(y,\partial\Omega)$ denotes the distance from $y$ to $\partial\Omega$.

    \item We denote by \(G(n,m)\) the Grassmannian of unoriented \(n\)-dimensional linear subspaces of \(\mathbb R^m\). For \(T\in G(n,m)\), \(\pi_T\) denotes the orthogonal projection onto \(T\).
    
    \item \(E\) will always denote a finite-dimensional Euclidean space. If \(v,w\in E\), \(v\cdot w\) denotes their Euclidean inner product. If \(F\) is another finite-dimensional Euclidean space and \(A,B\in\operatorname{Hom}(E,F)\), we write \(A:B:=\operatorname{tr}(A^*B)\) for their Hilbert--Schmidt inner product. Thus \(|A|^2=A:A\).
    
    \item \(\mathcal A_Q(E)\) is the space of unordered \(Q\)-tuples of points in \(E\), counted with multiplicity, in the sense of Almgren \cite[Section~1.1]{Alm00} (see also \cite{DLS11} for a modern treatment). If \(T=\sum_{\ell=1}^Q\llbracket T_\ell\rrbracket\) and \(S=\sum_{\ell=1}^Q\llbracket S_\ell\rrbracket\) belong to \(\mathcal A_Q(E)\), we write \(\mathcal G(T,S):=\min_{\sigma\in\mathfrak S_Q}\left(\sum_{\ell=1}^Q|T_\ell-S_{\sigma(\ell)}|^2\right)^{1/2}\), where \(\mathfrak S_Q\) denotes the group of permutations of \(\{1,\ldots,Q\}\). We also write \(|T|^2:=\mathcal G\bigl(T,Q\llbracket0\rrbracket\bigr)^2=\sum_{\ell=1}^Q|T_\ell|^2\), and for a measurable map \(u:\Omega\to\mathcal A_Q(E)\), we set \(\|u\|_{L^2(\Omega)}:=\left(\int_\Omega|u(x)|^2 \,dx\right)^{1/2}\).
    
    \item For $T=\sum_{i=1}^Q\llbracket T_i\rrbracket\in\mathcal A_Q(E)$, we denote its average by $\eta\circ T:=\frac1Q\sum_{i=1}^Q T_i$.
    
    \item We equip the first-jet space \(\mathbb R^k\times\operatorname{Hom}(\mathbb R^n,\mathbb R^k)\) with the Euclidean norm $|(a,A)|^2:=|a|^2+|A|^2$.

    \item If \(T=\sum_{i=1}^Q\llbracket T_i\rrbracket\in\mathcal A_Q(E)\), \(b\in E\), and \(\lambda\in\mathbb R\), we write $T\oplus b:=\sum_{i=1}^Q\llbracket T_i+b\rrbracket$, $T\ominus b:=T\oplus(-b)$, $\lambda T:=\sum_{i=1}^Q\llbracket\lambda T_i\rrbracket$.
    We use the same notation pointwise for multiple-valued maps. Thus, if \(u:\Omega\to\mathcal A_Q(E)\) and \(\ell:\Omega\to E\) is single-valued, then $(u\ominus\ell)(x):=u(x)\ominus\ell(x)$.

    \item We call \(u:\Omega\to\mathcal A_Q(\mathbb R^k)\) affine if there are \(a_i\in\mathbb R^k\) and \(A_i\in\operatorname{Hom}(\mathbb R^n,\mathbb R^k)\) such that $u(x)=\sum_{i=1}^Q\llbracket a_i+A_ix\rrbracket$ for every \(x\in\Omega\).

    \item If \(u\in C^1(\Omega,\mathcal A_Q(\mathbb R^k))\), we denote by \(\mathbf v(u)\) its graph varifold. More precisely, if $j_1u(x)=\sum_{i=1}^Q\llbracket(u_i(x),A_i(x))\rrbracket$
    is any measurable selection of the first jet (see Definition \ref{def:C1-Q-valued} for the definition of first jet), then
    \[
    \mathbf v(u)(\varphi)
    :=
    \int_\Omega\sum_{i=1}^Q
    \varphi\bigl((x,u_i(x)),P_{A_i(x)}\bigr)
    \sqrt{\det\bigl(I+A_i(x)^TA_i(x)\bigr)}\,dx
    \]
    for every \(\varphi\in C_c\bigl(\Omega\times\mathbb R^k\times G(n,n+k)\bigr)\), where $P_A:=(\operatorname{Id},A)[\mathbb R^n]$.

    \item For \(u\in C^1(\Omega,\mathcal A_Q(\mathbb R^k))\), we call \(x\in\Omega\) a collision point of \(u\) if at least two sheets of \(u(x)\) have the same value. We denote by $C_u:=\left\{x\in\Omega:u(x)=Q\llbracket a\rrbracket\text{ for some }a\in\mathbb R^k\right\}$ its complete value-collision set, and by $K_u:=\left\{x\in\Omega:j_1u(x)=Q\llbracket(a,A)\rrbracket\text{ for some }(a,A)\right\}$ its complete first-jet collision set.

    \item We define
    \[
    \operatorname{reg}u
    :=
    \left\{
    x\in\Omega:
    \begin{array}{l}
    \text{there are a neighbourhood \(W\) of \(x\) and maps}\\
    u_1,\ldots,u_Q\in C^1(W,\mathbb R^k)
    \text{ such that }
    u|_W=\sum_{i=1}^Q\llbracket u_i\rrbracket
    \end{array}
    \right\},
    \]
    where repetitions among the \(u_i\) are allowed, and call $\mathcal B_u:=\Omega\setminus\operatorname{reg}u$ the branch set of \(u\).
\end{itemize}

\subsection{\(C^1\) multiple-valued maps and first jets}

\begin{definition}
\label{def:C1-Q-valued}
Let $u:\Omega\to \mathcal{A}_Q(\mathbb{R}^k)$ be a $Q$-valued map. We say that $u$ is differentiable at $x\in \Omega$ if there are linear maps $A_1,...,A_Q\in \operatorname{Hom}(\mathbb{R}^n, \mathbb{R}^k)$ such that
$$
\mathcal G(u(x+h),\sum_{\ell=1}^Q \llbracket a_\ell + A_\ell h\rrbracket )=o(|h|)\qquad \text{as $|h|\to 0$},
$$
where $u(x)=\sum_{\ell=1}^Q \llbracket a_\ell\rrbracket $. We call $Du(x):=\sum_{\ell=1}^Q \llbracket A_\ell\rrbracket $ the differential of $u$ at $x$ and $j_1u(x):=\sum_{\ell=1}^Q \llbracket (a_\ell, A_\ell)\rrbracket \in \mathcal{A}_Q(\R^k\times \operatorname{Hom}(\R^n,\R^k))$ the first-order jet of $u$ at $x$.

We say that \(u\in C^1(\Omega)\) if \(u\) is differentiable at every \(x\in\Omega\) and the \(Q\)-valued map \(x\mapsto j_1u(x)\) is continuous. We set \(\|u\|_{C^1(\Omega)} := \sup_{\Omega}|u| + \sup_{\Omega}|Du|\).

We say that $u\in C^{1,\alpha}(\Omega)$ if $u\in C^1(\Omega)$ and there is $C>0$ such that, for all $x,y\in \Omega$, $\mathcal G (Du(x), Du(y))\leq C|x-y|^{\alpha}$. We indicate the least such constant \(C\) by \([Du]_{\alpha;\Omega}\), and set \(\|u\|_{C^{1,\alpha}(\Omega)} := \sup_{\Omega}|u| + \sup_{\Omega}|Du| + [Du]_{\alpha;\Omega}\).

Finally, we say that \(u_j\to u\) locally in \(C^1(\Omega,\mathcal A_Q(\mathbb R^k))\) if $j_1u_j\longrightarrow j_1u$ locally uniformly in $\mathcal A_Q\bigl(\mathbb R^k\times\operatorname{Hom}(\mathbb R^n,\mathbb R^k)\bigr)$.
\end{definition}

\begin{remark}
Let \(\Omega\subset\mathbb R^n\) be a convex and bounded domain and let \(u\in C^1(\Omega,\mathcal A_Q(\mathbb R^k))\). It can be proven (using Lemma \ref{lem:one-dimensional-jet-selection} as in the proof of Lemma \ref{lem:C1-compactness} below) that \(u\in C^{1,\alpha}(\Omega)\) if and only if there is \(M<\infty\) such that $\mathcal G\bigl(u(y),j_1u(z)[y-z]\bigr)\le M|y-z|^{1+\alpha}$ for all $y,z\in\Omega$, where $j_1u(z)[h] :=\sum_{i=1}^Q\llbracket u_i(z)+Du_i(z)h\rrbracket$.
Moreover, one may take $M = \frac{C(Q)}{1+\alpha}[Du]_{\alpha,\Omega}$.
We will not use this characterization below; it is included only to motivate the preceding definition.
\end{remark}

\begin{lemma}[Elementary properties of first jets]
\label{lem:elementary-first-jet-calculus}
Let \(u\in C^1(\Omega,\mathcal A_Q(\mathbb R^k))\). Then:
\begin{enumerate}[label=\textup{(\roman*)}]
\item the first jet \(j_1u(x)\) is uniquely determined at every \(x\in\Omega\);
\item if \(\gamma:I\to\Omega\) is \(C^1\), then \(u\circ\gamma\in C^1(I)\) and, whenever $j_1u(\gamma(t))=\sum_{i=1}^Q\llbracket(a_i,A_i)\rrbracket$, one has $j_1(u\circ\gamma)(t) = \sum_{i=1}^Q\llbracket(a_i,A_i\dot\gamma(t))\rrbracket$;
\item \(u\) is locally Lipschitz, and its first jet agrees almost everywhere with its approximate first-order jet in the sense of \cite{DLS11}.
\end{enumerate}
\end{lemma}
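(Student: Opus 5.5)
For (i), I would argue by contradiction. Suppose two collections \(\sum\llbracket A_\ell\rrbracket\) and \(\sum\llbracket B_\ell\rrbracket\) both satisfy the definition at \(x\), where \(u(x)=\sum\llbracket a_\ell\rrbracket\). Fix a direction \(e\in\mathbb S^{n-1}\) and take \(h=te\) with \(t\downarrow0\). The triangle inequality gives
\[
\mathcal G\Bigl(\sum\llbracket a_\ell+tA_\ell e\rrbracket,\sum\llbracket a_\ell+tB_\ell e\rrbracket\Bigr)=o(t).
\]
To pass to the directional derivatives, I would group the indices by the distinct values of \(a_\ell\). Once \(t\) is small, any optimal matching must pair points with equal \(a\)-value, since mismatched pairs cost at least half the minimal separation. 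Within each cluster, with common value \(a\), translating by \(a\) and scaling by \(1/t\) (using homogeneity of \(\mathcal G\)) gives
\[
\mathcal G\Bigl(\sum\llbracket A_\ell e\rrbracket,\sum\llbracket B_\ell e\rrbracket\Bigr)=o(1),
\]
so \(\sum_{\ell\in\text{cluster}}\llbracket A_\ell e\rrbracket=\sum_{\ell\in\text{cluster}}\llbracket B_\ell e\rrbracket\) for every \(e\).

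The main obstacle is the next step: recovering equality of the unordered families of matrices from equality of the directional families for all \(e\). A matching that works for one \(e\) need not work for another. I would resolve this by a genericity argument. Choose \(e\) outside the finitely many hyperplanes \(\{(A_i-A_j)e=0\}\) and \(\{(A_i-B_j)e=0\}\) where \(A_i\neq A_j\), resp. \(A_i\neq B_j\). Such \(e\) form an open dense set. On a connected open piece of this set the matching \(\sigma\) with \(A_\ell e=B_{\sigma(\ell)}e\) is locally constant, since the values are separated. Then \(A_\ell=B_{\sigma(\ell)}\) on an open set of directions, and by linearity for all directions. This gives equality of the jets, counted with multiplicity, within each cluster, and hence uniqueness.

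For (ii), I would compose directly. From the definition at \(\gamma(t)\) with \(h=\gamma(t+s)-\gamma(t)=s\dot\gamma(t)+o(s)\), and since \(|A_\ell h-sA_\ell\dot\gamma|=o(s)\), the triangle inequality gives differentiability of \(u\circ\gamma\) with the stated jet. Continuity of \(t\mapsto j_1(u\circ\gamma)(t)\) follows because the map \(\sum\llbracket(a_i,A_i)\rrbracket\mapsto\sum\llbracket(a_i,A_i v)\rrbracket\) is Lipschitz on \(\mathcal A_Q\), uniformly for bounded \(v\), and \(\dot\gamma\) is continuous.

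For (iii), on a ball \(B\Subset\Omega\) let \(L=\sup_B|Du|\), which is finite by continuity. For \(y,z\in B\), take the segment \(\gamma(t)=z+t(y-z)\). By (ii), \(u\circ\gamma\) is a \(C^1\) map from an interval into \(\mathcal A_Q\) whose derivative is bounded by \(L|y-z|\). For one-variable maps I would use a one-dimensional selection (Lemma~\ref{lem:one-dimensional-jet-selection}), or cover \([0,1]\) by finitely many subintervals on which the clusters split. Either way this yields \(\mathcal G(u(y),u(z))\le L|y-z|\); if needed, the constant is \(\sqrt Q L\). Finally, by Rademacher's theorem for \(Q\)-valued Lipschitz maps \cite{DLS11}, \(u\) is differentiable a.e. in the sense of De Lellis--Spadaro. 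At points of classical differentiability the approximate differential coincides with ours, by the uniqueness in (i) applied to the approximate notion: an approximate limit along a density-one set, combined with the genuine \(o(|h|)\) expansion, forces the same jet.
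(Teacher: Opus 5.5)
Your treatment of (i), (ii) and of the almost-everywhere identification in (iii) follows the paper's argument. In (i) you separate by values, reduce to \(\sum\llbracket A_\ell e\rrbracket=\sum\llbracket B_\ell e\rrbracket\) for every \(e\), and pick a generic direction. In (ii) you insert \(\gamma(t+s)-\gamma(t)=s\dot\gamma(t)+o(s)\). In (iii) you use Rademacher plus uniqueness. Including the cross-differences \(A_i-B_j\) among the excluded kernels is a good choice: a single generic \(e\) then already forces \(A_\ell=B_{\sigma(\ell)}\), so the ``locally constant matching'' step is unnecessary.

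The step that does not work as written is the Lipschitz bound in (iii).

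\emph{Circularity.} Your main route invokes Lemma~\ref{lem:one-dimensional-jet-selection}. In the paper, that lemma is proved using (iii): it needs \(u\circ\gamma\) to be Lipschitz to show that the continuous selections \(a_\ell\) are Lipschitz, hence \(C^1\) with \(\dot a_\ell=A_\ell\dot\gamma\).

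\emph{The fallback does not supply the estimate.} Covering \([0,1]\) by subintervals on which the clusters split gives nothing at a point where all sheets collide. There is nothing to split there, and that is exactly where control is needed.

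\emph{The missing ingredient} is a continuation argument that uses only pointwise differentiability.
\begin{enumerate}[label=\textup{(\arabic*)}]
\item Set \(v:=u\circ\gamma\), \(w:=y-z\), and \(M:=L|w|\).
\item Differentiability of \(u\) at \(\gamma(t)\), together with \(\mathcal G\bigl(\sum\llbracket a_\ell+sA_\ell w\rrbracket,\sum\llbracket a_\ell\rrbracket\bigr)\le|s|\,|Du(\gamma(t))|\,|w|\), gives the following. For every \(\varepsilon>0\) and \(t\in[0,1]\) there is \(\delta_t>0\) with \(\mathcal G(v(s),v(t))\le(M+\varepsilon)|s-t|\) whenever \(|s-t|<\delta_t\).
\item Let \(S\) be the set of \(\tau\in[0,1]\) such that \(\mathcal G(v(\sigma),v(0))\le(M+\varepsilon)\sigma\) for all \(\sigma\le\tau\). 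Then \(S\) is closed, by continuity of \(v\).
\item By the local bound and the triangle inequality, \(S\) extends past any of its points below \(1\). Hence \(S=[0,1]\).
\item Letting \(\varepsilon\downarrow0\) gives \(\mathcal G(u(y),u(z))\le L|y-z|\), with no selection and no \(\sqrt Q\) loss.
\end{enumerate}
This is the ``simple continuation argument along line segments'' the paper relies on. The selection lemma is built on top of it, not the other way around.
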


\begin{proof}
For uniqueness, separate first the distinct values of \(u(x)\). Within a cluster having common value, suppose that \(\{A_i\}\) and \(\{B_i\}\) give two first-order approximations. Evaluating at \(h=te\) and letting \(t\to0\) gives $\sum_i\llbracket A_i e\rrbracket = \sum_i\llbracket B_i e\rrbracket$ for every $e\in\mathbb R^n$.
After grouping repeated matrices, choose \(e_0\) outside the kernels of all their nonzero pairwise differences. Near \(e_0\) the matching is fixed, and hence \(A_i e=B_{\pi(i)}e\) on an open set of \(e\)'s for some permutation \(\pi\). Thus \(A_i=B_{\pi(i)}\), proving \textup{(i)}.

Assertion \textup{(ii)} follows directly by inserting \(\gamma(t+s)-\gamma(t)=\dot\gamma(t)s+o(s)\) into the defining first-order expansion.

Since \(|Du|\) is locally bounded, the defining first-order expansion and a simple continuation argument along line segments show that \(u\) is locally Lipschitz. The almost-everywhere identification then follows from the Rademacher theorem \cite[Theorem~1.13]{DLS11} and uniqueness.
\end{proof}

\begin{lemma}[One-dimensional selection of first jets]
\label{lem:one-dimensional-jet-selection}
Let \(u\in C^1(\Omega,\mathcal A_Q(\mathbb R^k))\) and let \(\gamma:I\to\Omega\) be a \(C^1\) curve, where \(I\) is an interval. Then there are maps \(a_i\in C^1(I,\mathbb R^k)\) and \(A_i\in C^0(I,\operatorname{Hom}(\mathbb R^n,\mathbb R^k))\) such that $j_1u (\gamma(t)) = \sum_{i=1}^Q\llbracket (a_i(t),A_i(t)) \rrbracket$ and $\dot a_i(t)=A_i(t)\dot\gamma(t)$ for every \(t\in I\). Moreover, if \(j_1u \circ\gamma\) is given as a continuous sum of sub-jets, the selection can be chosen to respect this decomposition.
\end{lemma}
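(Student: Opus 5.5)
By Lemma~\ref{lem:elementary-first-jet-calculus}\textup{(ii)}, the curve \(w:=u\circ\gamma\) is a \(C^1\) \(Q\)-valued map on the interval \(I\), and its jet is obtained from \(j_1u\circ\gamma\) by contracting each \(A_i\) with \(\dot\gamma(t)\). The plan is to construct the selection of the enlarged jet \(J(t):=j_1u(\gamma(t))\in\mathcal A_Q(\mathbb R^k\times\operatorname{Hom}(\mathbb R^n,\mathbb R^k))\). This is a continuous \(Q\)-valued map of one real variable. We build it locally in \(t\), by induction on \(Q\), and then patch the pieces together along the interval.

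\textbf{Local step.} Fix \(t_0\in I\). First split \(J(t_0)\) into clusters of equal jets \((a,A)\). By continuity of \(J\), for \(t\) near \(t_0\) the jet decomposes continuously as \(J=\sum_c J_c\), with \(J_c\) a \(Q_c\)-valued continuous map near the cluster \(c\). If there are at least two clusters, each \(J_c\) is again the jet of a \(C^1\) \(Q_c\)-valued map: its value part is the corresponding piece of \(u\circ\gamma\), which is differentiable with the correct derivative because the clusters separate. Induction on \(Q\) then gives local selections. If there is a single cluster, \(J(t_0)=Q\llbracket(a,A)\rrbracket\), we cannot split. In that case we use the standard one-dimensional continuous selection of a continuous \(\mathcal A_Q\)-valued curve (Almgren / \cite[Proposition~1.2]{DLS11}) applied to \(t\mapsto J(t)\). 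This gives continuous \((a_i(t),A_i(t))\).

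The next task, which I expect to be the main obstacle, is to check that \(a_i\) is \(C^1\) with \(\dot a_i=A_i\dot\gamma\). A naive continuous selection might swap sheets at points where values coincide but jets differ, or might pair values and matrices incorrectly. Selecting the full jets \((a_i,A_i)\) jointly rules out the second problem. For the first problem, differentiability at a time \(s\) follows from Lemma~\ref{lem:elementary-first-jet-calculus}\textup{(ii)}: \(\mathcal G\bigl(w(s+h),\sum_i\llbracket a_i(s)+A_i(s)\dot\gamma(s)h\rrbracket\bigr)=o(h)\). At \(s\), group the indices by distinct jets \((a_i(s),A_i(s))\). For small \(h\), continuity of the selected jets forces the optimal matching between \(w(s+h)\) and the first-order predictions to respect this grouping. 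Sheets whose values differ at \(s\) are separated. Sheets with equal values but different slopes \(A_i\dot\gamma\) separate linearly in \(h\); here one needs care, since the matching of \(a_i(s+h)\) with the predicted points could fail if two predictions coincide to first order. But coinciding predictions have identical first-order expansions, so which of them is matched is irrelevant. This yields \(|a_i(s+h)-a_i(s)-A_i(s)\dot\gamma(s)h|=o(h)\) for every \(i\). Continuity of \(A_i\) then makes \(a_i\in C^1\).

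\textbf{Globalisation.} Cover \(I\) by the open intervals obtained above, and proceed outward from a point by a connectedness and maximal-extension argument. On an overlap, two selections of the same continuous jet curve differ by a permutation. We re-index the new selection so that the two agree at one point. Where sheets coincide as jets, the selections can be glued by swapping indices within a block of identical jets, which preserves continuity and the \(C^1\) relation. The maximal interval on which a selection exists is both open and closed in \(I\): closedness at an endpoint comes from applying the local step there and gluing. Hence it is all of \(I\). Finally, if \(J\) is given as a continuous sum of sub-jets, run the entire construction separately on each summand. Each summand is again a jet curve of the above type, since its value part is differentiable with the prescribed derivative, and concatenating the selections respects the decomposition.
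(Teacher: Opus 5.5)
There is a genuine gap, located exactly at the step you identify as the main obstacle. The claim that ``continuity of the selected jets forces the optimal matching between \(w(s+h)\) and the first-order predictions to respect this grouping'' is the entire content of the lemma. It does not follow from what you invoke, namely the multiset expansion of \(w=u\circ\gamma\) at the single time \(s\) together with continuity of \((a_i,A_i)\) at \(s\). When several sheets share the value \(a_i(s)\) but have different slopes \(A_i(s)\dot\gamma(s)\), all the corresponding predictions converge to that common value. So continuity of \(a_i\) cannot decide which prediction \(a_i(s+h)\) follows, and continuity of \(A_i\) only locates the differential of the atom, not its value.

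A concrete illustration: the curve \(J(t)=\llbracket(t,-1)\rrbracket+\llbracket(-t,1)\rrbracket\), \(t\geq0\), is continuous, and \(J(0)\) is the correct first jet at \(0\) of its value curve \(\llbracket t\rrbracket+\llbracket -t\rrbracket\). Yet the continuous selection with \(A_1(0)=-1\) has \(a_1(h)=h\neq a_1(0)+A_1(0)h\). What rules this out for a genuine \(C^1\) map is that \(J(t)\) is also the jet of the value curve at every \(t\neq s\). That information can only be carried back to \(s\) by integrating the relation between values and differentials along \([s,s+h]\).

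The same problem is hidden in your local step and in your treatment of prescribed sub-jets. Clusters that are separated in the jet space may still share values, so ``the clusters separate'' does not by itself make their value parts differentiable with the right derivative. That statement is Proposition~\ref{prop:local-splitting}, which the paper deduces from the present lemma, so using it here is circular.

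The paper supplies the missing integration as follows. It takes an arbitrary continuous selection of \(j_1u\circ\gamma\) (separately on each prescribed sub-jet). The chain-of-balls argument shows that each \(a_\ell\) is Lipschitz with constant \((2Q-1)\operatorname{Lip}(u\circ\gamma)\). At almost every \(t\) all \(a_\ell\) are differentiable, and uniqueness of the first jet gives \(\sum_\ell\llbracket(a_\ell,\dot a_\ell)\rrbracket=\sum_\ell\llbracket(a_\ell,A_\ell\dot\gamma)\rrbracket\). Sobolev locality (\(\dot a_\ell=\dot a_{\ell'}\) a.e.\ on \(\{a_\ell=a_{\ell'}\}\)) removes the permutation ambiguity, so \(\dot a_\ell=A_\ell\dot\gamma\) a.e. Continuity of the right-hand side then gives \(a_\ell\in C^1\) with the identity holding everywhere.

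Your inductive framework can also be repaired. If the predictions at \(s\) are all equal, the expansion at \(s\) already suffices. Otherwise the values of \(w\) split on \((s,s+\varepsilon)\) into value-separated groups of multiplicity \(<Q\), and the induction hypothesis applies there. The fundamental theorem of calculus on \((s,s+h]\) (and symmetrically on the left), together with continuity of \(A_i\) at \(s\), then gives \(a_i(s+h)=a_i(s)+A_i(s)\dot\gamma(s)h+o(h)\). In either form every continuous selection works, so your single-cluster case distinction and the globalisation by patching become unnecessary.
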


\begin{proof}
Since \(I\) is an interval, every continuous \(Q\)-valued map on \(I\) admits continuous selections; see \cite[Remark~1.3]{DLS11}. Applying this to \(j_1u\circ\gamma\), and separately to each prescribed continuous sub-jet, we obtain continuous maps \(a_\ell\) and \(A_\ell\) such that \(j_1u(\gamma(t))=\sum_{\ell=1}^Q\llbracket(a_\ell(t),A_\ell(t))\rrbracket\), with the selections respecting the prescribed decomposition.

Set \(v:=u\circ\gamma\). Since the regularity of the selected maps can be checked on compact subintervals, we may assume without loss of generality that \(I\) is compact. By Lemma~\ref{lem:elementary-first-jet-calculus}\textup{(iii)} and the compactness of \(I\), the map \(v\) is then Lipschitz. Fix \(s<t\) in \(I\). Since \(\mathcal G(v(\tau),v(s))\leq\operatorname{Lip}(v)(t-s)\) for every \(\tau\in[s,t]\), the path \(a_\ell([s,t])\) is contained in the union of the \(Q\) closed balls of radius \(\operatorname{Lip}(v)(t-s)\) centred at the values of \(v(s)\). By continuity, it remains in the connected component containing \(a_\ell(s)\). A chain of at most \(Q\) such balls therefore gives
\[
|a_\ell(t)-a_\ell(s)|\leq(2Q-1)\operatorname{Lip}(v)|t-s|.
\]
Thus each \(a_\ell\) is Lipschitz.

At almost every \(t\), all the \(a_\ell\) are differentiable, and uniqueness of the first jet, together with the chain rule, gives
\[
\sum_{\ell=1}^Q\llbracket(a_\ell(t),\dot a_\ell(t))\rrbracket=j_1v(t)=\sum_{\ell=1}^Q\llbracket(a_\ell(t),A_\ell(t)\dot\gamma(t))\rrbracket.
\]
By Sobolev locality, \(\dot a_\ell=\dot a_{\ell'}\) almost everywhere on \(\{a_\ell=a_{\ell'}\}\). Hence the preceding identity implies \(\dot a_\ell=A_\ell\dot\gamma\) almost everywhere on \(I\), for every \(\ell\). Since the right-hand side is continuous, \(a_\ell\in C^1(I,\mathbb R^k)\) and the identity holds everywhere.
\end{proof}

\begin{lemma}[\(C^1\)-compactness]
\label{lem:C1-compactness}
Let \(u_j\in C^{1,\alpha}(\Omega,\mathcal A_Q(\mathbb R^k))\). Suppose that, for every \(\Omega'\Subset\Omega\), $\sup_j\left(\sup_{\Omega'}|u_j| +\sup_{\Omega'}|Du_j| +[Du_j]_{\alpha,\Omega'} \right)<\infty$.
Then, after passing to a subsequence, there exists
\(u\in C^{1,\alpha}_{\mathrm{loc}}(\Omega,\mathcal A_Q(\mathbb R^k))\) such that $u_j\longrightarrow u$ locally in $C^1(\Omega)$.
\end{lemma}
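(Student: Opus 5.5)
The plan is an Arzelà--Ascoli argument applied to the first jets, followed by identification of the jet of the limit. Fix an exhaustion of \(\Omega\) by convex sets, for instance balls \(B\Subset\Omega\) covering \(\Omega\); it is enough to work on a fixed ball \(B\) and conclude with a diagonal argument.

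\textbf{Step 1: convergence of the jets.} On \(B\) the maps \(j_1u_j:B\to\mathcal A_Q(\mathbb R^k\times\operatorname{Hom}(\mathbb R^n,\mathbb R^k))\) are uniformly bounded. They are also equicontinuous, for two reasons. The \(Du_j\) are uniformly \(\alpha\)-Hölder by assumption. The \(u_j\) are uniformly Lipschitz on \(B\): by Lemma~\ref{lem:one-dimensional-jet-selection} applied along segments, each sheet \(a_i\) satisfies \(|a_i(t)-a_i(s)|\le \sup|Du_j|\,|t-s|\). Equicontinuity of the jets then follows from \(\mathcal G(j_1u_j(x),j_1u_j(y))^2\le C\bigl(\mathcal G(u_j(x),u_j(y))^2+\mathcal G(Du_j(x),Du_j(y))^2\bigr)\). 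Some care is needed here, since the optimal matchings for the two components may differ. I would avoid this issue by using the segment selection of Lemma~\ref{lem:one-dimensional-jet-selection}: it produces a single labelling of the jets along \([x,y]\), and with respect to that labelling both the values and the differentials vary continuously. Since \(\mathcal A_Q\) with \(\mathcal G\) is a complete metric space in which bounded closed sets are compact, Arzelà--Ascoli gives a subsequence with \(j_1u_j\to J\) uniformly on \(B\). Write \(J=\sum_i\llbracket(a_i(x),A_i(x))\rrbracket\) and let \(u\) be the first projection, so that \(u=\sum_i\llbracket a_i\rrbracket\). The Hölder bound passes to the limit for the second projection \(\Lambda(x):=\sum\llbracket A_i(x)\rrbracket\), because projections are 1-Lipschitz for \(\mathcal G\).

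\textbf{Step 2: \(u\) is differentiable with \(j_1u=J\).} This is the main obstacle. The difficulty is that \(\mathcal A_Q\) has no linear structure, so differentiability cannot simply be passed to the limit. I would first prove a uniform first-order Taylor estimate for each \(u_j\):
\[
\mathcal G\bigl(u_j(x+h),\textstyle\sum_i\llbracket a^j_i(x)+A^j_i(x)h\rrbracket\bigr)\le C(Q)[Du_j]_\alpha|h|^{1+\alpha}.
\]
The proof uses Lemma~\ref{lem:one-dimensional-jet-selection} along \(\gamma(t)=x+th\). This gives \(C^1\) sheets \(a_i(t)\) with \(\dot a_i(t)=A_i(t)h\), and hence
\[
a_i(1)-a_i(0)-A_i(0)h=\int_0^1(A_i(t)-A_i(0))h\,dt .
\]
The remaining task is to control \(|A_i(t)-A_i(0)|\) by \(\mathcal G(Du_j(\gamma(t)),Du_j(x))\), up to a factor \(C(Q)\). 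The continuous selection need not be the optimal matching, so this is not automatic. I would handle it as in the chaining argument of Lemma~\ref{lem:one-dimensional-jet-selection}: a continuous path of \(Q\)-points that stays within \(\mathcal G\)-distance \(\rho\) of \(Du_j(x)\) remains in the cluster of balls of radius \(\rho\) around the points of \(Du_j(x)\), and this costs at most a factor \(2Q-1\). With that control, the integral is bounded by \(C(Q)[Du_j]_\alpha t^\alpha|h|^\alpha\), and integrating in \(t\) gives the estimate.

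\textbf{Step 3: passing to the limit.} The Taylor estimate is stable under uniform convergence of the jets, since both sides are continuous in \(\mathcal G\). In the limit we therefore get
\[
\mathcal G\bigl(u(x+h),\textstyle\sum_i\llbracket a_i(x)+A_i(x)h\rrbracket\bigr)\le C|h|^{1+\alpha}=o(|h|).
\]
Hence \(u\) is differentiable at every point and its first jet is \(J\); this identification is unique by Lemma~\ref{lem:elementary-first-jet-calculus}(i). The map \(J\) is continuous, so \(u\in C^1(B)\) and \(u_j\to u\) in \(C^1(B)\). The limit of the Hölder bound from Step 1 then gives \([Du]_{\alpha,B}<\infty\).

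Finally, a diagonal argument over an exhaustion by finitely many balls on each \(\Omega'\Subset\Omega\) gives \(u\in C^{1,\alpha}_{\mathrm{loc}}(\Omega)\) and local \(C^1\) convergence. The limits obtained on overlapping balls agree, because on each overlap they are uniform limits of the same subsequence.
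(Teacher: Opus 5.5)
Your proposal is correct and follows essentially the paper's own route. The segment selections of Lemma~\ref{lem:one-dimensional-jet-selection}, combined with the chain-of-balls bound, give both the equicontinuity of \(j_1u_j\) and the uniform Taylor estimate \(\mathcal G\bigl(u_j(x+h),\sum_i\llbracket a^j_i(x)+A^j_i(x)h\rrbracket\bigr)\le C(Q)M|h|^{1+\alpha}\); Arzel\`a--Ascoli and passage to the limit in that estimate then identify the limiting jet with \(j_1u\).

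One remark on Step~1: the componentwise inequality you first display is false without a common labelling. For example, \(\llbracket(0,0)\rrbracket+\llbracket(1,1)\rrbracket\) and \(\llbracket(0,1)\rrbracket+\llbracket(1,0)\rrbracket\) have identical value and differential projections but distinct jets, so the segment-selection argument you switch to is necessary, not just convenient.
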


\begin{proof}
Fix \(B_{2r}(x_0)\Subset\Omega\), and let \(M<\infty\) be such that $\sup_j\sup_{B_{2r}(x_0)}\bigl(|u_j|+|Du_j|\bigr)\le M$, $\sup_j[Du_j]_{\alpha,B_{2r}(x_0)}\le M$.
Fix \(x,y\in B_r(x_0)\), set \(h:=y-x\), and let \(\gamma(t):=x+th\). By Lemma \ref{lem:one-dimensional-jet-selection}, we may write $j_1u_j(\gamma(t)) = \sum_{i=1}^Q\llbracket(a_i(t),A_i(t))\rrbracket$, $\dot a_i(t)=A_i(t)h$.
For every \(t\in[0,1]\), $\mathcal G\bigl(Du_j(\gamma(t)),Du_j(x)\bigr)
\le Mt^\alpha|h|^\alpha$.
Since \(A_i\) is continuous, the usual chain-of-balls argument gives $|A_i(t)-A_i(0)| \le C(Q)Mt^\alpha|h|^\alpha$.
Indeed, \(A_i([0,t])\) is contained in the union of the \(Q\) balls of radius \(Mt^\alpha|h|^\alpha\) centred at the differential values occurring in \(Du_j(x)\), and the component containing \(A_i(0)\) consists of a chain of at most \(Q\) such balls.

Moreover, $|a_i(1)-a_i(0)| \le \int_0^1|A_i(t)|\,|h|\,dt \le M|h|$.
Using the matching supplied by these selections at \(t=0\) and \(t=1\), we obtain $\mathcal G\bigl(j_1u_j(x),j_1u_j(y)\bigr) \le C(Q)M\bigl(|x-y|+|x-y|^\alpha\bigr)$.
Thus \(j_1u_j\) is uniformly bounded and equicontinuous on \(B_r(x_0)\).

As each closed and bounded subset of $\mathcal A_Q\bigl(\mathbb R^k\times\operatorname{Hom}(\mathbb R^n,\mathbb R^k)\bigr)$ is compact, the metric Arzelà--Ascoli theorem and a diagonal argument give, after passing to a subsequence, $j_1u_j\longrightarrow J$ locally uniformly in $\Omega$ for some continuous \(Q\)-valued map \(J\). Let \(u\) be the value projection of \(J\).

It remains to show that \(J=j_1u\). Fix \(x\in\Omega\) and \(h\) sufficiently small. Applying the preceding argument along \(\gamma(t):=x+th\), we have $a_i(1)-a_i(0)-A_i(0)h = \int_0^1\bigl(A_i(t)-A_i(0)\bigr)h\,dt$, and therefore $\mathcal G\bigl(u_j(x+h),j_1u_j(x)[h]\bigr) \le C(Q)M|h|^{1+\alpha}$.
Passing to the limit as \(j\to\infty\) gives $\mathcal G\bigl(u(x+h),J(x)[h]\bigr) \le C(Q)M|h|^{1+\alpha} = o(|h|)$.
Thus \(u\) is differentiable at \(x\) and \(J(x)=j_1u(x)\). Since \(x\) was arbitrary and \(J\) is continuous, \(u\in C^1(\Omega,\mathcal A_Q(\mathbb R^k))\), and \(u_j\to u\) locally in \(C^1\).

Finally, passing to the limit in $\mathcal G\bigl(Du_j(x),Du_j(y)\bigr)\le M|x-y|^\alpha$ shows that \([Du]_{\alpha,B_r(x_0)}\le M\). Since \(B_{2r}(x_0)\Subset\Omega\) was arbitrary, \(u\in C^{1,\alpha}_{\mathrm{loc}}(\Omega,\mathcal A_Q(\mathbb R^k))\).
\end{proof}

\begin{proposition}[Local splitting under separation of the first jets]
\label{prop:local-splitting}
Let \(u\in C^1(\Omega,\mathcal A_Q(\mathbb R^k))\).
Assume that, at \(x_0\in\Omega\), $j_1u(x_0)=\sum_{\alpha=1}^NQ_\alpha\llbracket(a^\alpha,A^\alpha)\rrbracket$, where \((a^\alpha,A^\alpha)\neq(a^\beta,A^\beta)\) for \(\alpha\neq\beta\) and \(\sum_\alpha Q_\alpha=Q\).
Then there are a neighbourhood \(U\) of \(x_0\) and maps \(u^\alpha\in C^1(U,\mathcal A_{Q_\alpha}(\mathbb R^k))\) such that $u=\sum_{\alpha=1}^Nu^\alpha$, $u^\alpha(x_0)=Q_\alpha\llbracket a^\alpha\rrbracket$, $Du^\alpha(x_0)=Q_\alpha\llbracket A^\alpha\rrbracket$.
\end{proposition}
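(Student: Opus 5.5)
The plan is to split the first jet near \(x_0\) by continuity, and then show that each cluster is itself the first jet of a \(C^1\) map.

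\textbf{Step 1: splitting the jet.} Let
\[
2\rho:=\min_{\alpha\neq\beta}\bigl|(a^\alpha,A^\alpha)-(a^\beta,A^\beta)\bigr|>0 .
\]
By continuity of \(j_1u\) there is a ball \(U=B_r(x_0)\Subset\Omega\) with \(\mathcal G\bigl(j_1u(x),j_1u(x_0)\bigr)<\rho/2\) for all \(x\in U\). Then for each \(x\in U\) the \(Q\) jet values of \(j_1u(x)\) split uniquely into groups. The \(\alpha\)-th group consists of exactly \(Q_\alpha\) points within \(\rho/2\) of \((a^\alpha,A^\alpha)\).

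Define \(J^\alpha(x)\in\mathcal A_{Q_\alpha}\bigl(\mathbb R^k\times\operatorname{Hom}(\mathbb R^n,\mathbb R^k)\bigr)\) to be this group, so that \(j_1u=\sum_\alpha J^\alpha\) on \(U\). Each \(J^\alpha\) is continuous: if \(\mathcal G(j_1u(x),j_1u(y))\) is small, the optimal matching must pair points within the same group, so \(\mathcal G(J^\alpha(x),J^\alpha(y))\le\mathcal G(j_1u(x),j_1u(y))\). Let \(u^\alpha\) be the value projection of \(J^\alpha\). Then \(u=\sum_\alpha u^\alpha\), \(u^\alpha(x_0)=Q_\alpha\llbracket a^\alpha\rrbracket\), and the derivative part of \(J^\alpha(x_0)\) is \(Q_\alpha\llbracket A^\alpha\rrbracket\).

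\textbf{Step 2: each \(u^\alpha\) is differentiable with \(j_1u^\alpha=J^\alpha\).} This is the main point. The difficulty is that the values \(a^\alpha\) need not be distinct across clusters; only the full jets are separated. So the values of \(u(x+h)\) cannot be assigned to clusters by proximity of values alone, and the assignment must use the curve structure. Fix \(x\in U\) and \(h\) small with \([x,x+h]\subset U\), and set \(\gamma(t)=x+th\). Apply Lemma~\ref{lem:one-dimensional-jet-selection} to \(j_1u\circ\gamma=\sum_\alpha J^\alpha\circ\gamma\), with the selection respecting this continuous decomposition. This gives \(C^1\) maps \(a_i\) and continuous maps \(A_i\) with \(\dot a_i=A_ih\), where for \(i\) in the index set \(I_\alpha\) the points \((a_i(t),A_i(t))\) enumerate \(J^\alpha(\gamma(t))\).

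Then
\[
a_i(1)-a_i(0)-A_i(0)h=\int_0^1\bigl(A_i(t)-A_i(0)\bigr)h\,dt .
\]
This gives
\[
\mathcal G\Bigl(u^\alpha(x+h),\textstyle\sum_{i\in I_\alpha}\llbracket a_i(0)+A_i(0)h\rrbracket\Bigr)\le|h|\,\sup_{t}\,\max_{i\in I_\alpha}|A_i(t)-A_i(0)| .
\]
It remains to show that the right-hand side is \(o(|h|)\). Continuity of \(J^\alpha\) makes \(\mathcal G\bigl(J^\alpha(\gamma(t)),J^\alpha(x)\bigr)\) small uniformly in \(t\) as \(|h|\to0\). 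The chain-of-balls argument from the proof of Lemma~\ref{lem:C1-compactness} then converts this into a uniform bound \(|A_i(t)-A_i(0)|\le C(Q)\,\omega(|h|)\), where \(\omega\) is a modulus of continuity of \(J^\alpha\) at \(x\). This bound is independent of the direction of \(h\). Hence \(u^\alpha\) is differentiable at \(x\), and \(j_1u^\alpha(x)=J^\alpha(x)\).

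\textbf{Step 3: conclusion.} Since \(J^\alpha\) is continuous, \(u^\alpha\in C^1(U,\mathcal A_{Q_\alpha}(\mathbb R^k))\). Uniqueness of first jets (Lemma~\ref{lem:elementary-first-jet-calculus}(i)) ensures that these jets are consistent with \(j_1u=\sum_\alpha j_1u^\alpha\). The identities at \(x_0\) follow from Step 1.
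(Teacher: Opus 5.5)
Your proposal is correct and follows essentially the paper's proof: you split \(j_1u\) near \(x_0\) into continuous sub-jets \(J^\alpha\) by continuity, then apply Lemma~\ref{lem:one-dimensional-jet-selection} along segments and integrate \(\dot a_i=A_ih\) to obtain the \(o(|h|)\) expansion of \(u^\alpha\). The differences are cosmetic. You invoke the decomposition-respecting clause of that lemma, whereas the paper assigns the selected branches to clusters using connectedness of \([0,1]\) and disjointness of the balls. You also make the chain-of-balls step explicit where the paper just cites continuity of \(D^\alpha\), and your final estimate should carry a harmless factor \(\sqrt{Q_\alpha}\).
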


\begin{proof}
Choose \(\rho>0\) so that the closed balls \(\overline B_\rho((a^\alpha,A^\alpha))\) in the first-jet space are pairwise disjoint. By continuity of \(j_1u\), after shrinking to a neighbourhood \(U\) of \(x_0\), one has
\[
j_1u=\sum_{\alpha=1}^NJ^\alpha,
\qquad
J^\alpha(x)=\sum_{\ell=1}^{Q_\alpha}\llbracket(a^\alpha_\ell(x),A^\alpha_\ell(x))\rrbracket,
\]
where \(J^\alpha\) is the continuous sub-jet whose components lie in \(B_\rho((a^\alpha, A^\alpha))\).

Let \(u^\alpha\) and \(D^\alpha\) be the value and differential projections of \(J^\alpha\). Then \(u=\sum_\alpha u^\alpha\), and both \(u^\alpha\) and \(D^\alpha\) are continuous. We show that \(u^\alpha\) is differentiable with differential \(D^\alpha\).

Fix $\alpha$, \(x\in U\) and \(h\) sufficiently small, and set \(\gamma(t):=x+th\), \(0\leq t\leq1\). Applying Lemma \ref{lem:one-dimensional-jet-selection} to \(u\) along
\(\gamma\), we obtain
\[
j_1u(\gamma(t))
=
\sum_{i=1}^Q\llbracket(a_i(t),A_i(t))\rrbracket,
\qquad
\dot a_i(t)=A_i(t)h,
\]
where each \(t\mapsto (a_i(t),A_i(t))\) is continuous. For every \(t\in[0,1]\), each \((a_i(t),A_i(t))\) belongs to one of the pairwise disjoint balls \(B_\rho((a^\beta,A^\beta))\).
Since \([0,1]\) is connected and \((a_i,A_i)\) is continuous, for each \(i\) there is a unique \(\beta(i)\) such that $(a_i(t),A_i(t))\in B_\rho((a^{\beta(i)},A^{\beta(i)}))$ for every $t\in[0,1]$.
After relabelling, the indices for which \(\beta(i)=\alpha\) are
\(1,\ldots,Q_\alpha\). By the definition of \(J^\alpha\), it follows that
\[
J^\alpha(\gamma(t))
=
\sum_{\ell=1}^{Q_\alpha}
\llbracket(a_\ell(t),A_\ell(t))\rrbracket,
\qquad
\dot a_\ell(t)=A_\ell(t)h.
\]
By continuity of \(D^\alpha\), $\omega_\alpha(x,h):=\max_{\substack{0\leq t\leq1\\1\leq\ell\leq Q_\alpha}}|A_\ell(t)-A_\ell(0)|\longrightarrow0$ as $h\to0$.
As $a_\ell(1)-a_\ell(0)-A_\ell(0)h=\int_0^1(A_\ell(t)-A_\ell(0))h\,dt$, it follows that $\mathcal G\left(u^\alpha(x+h),\sum_{\ell=1}^{Q_\alpha}\llbracket a_\ell(0)+A_\ell(0)h\rrbracket\right)\le\sqrt{Q_\alpha}\,\omega_\alpha(x,h)|h|=o(|h|)$.
Thus \(u^\alpha\) is differentiable at \(x\), with \(Du^\alpha(x)=D^\alpha(x)\). Since \(x\) was arbitrary and \(J^\alpha\) is continuous, $u^\alpha\in C^1(U,\mathcal A_{Q_\alpha}(\mathbb R^k))$, $j_1u^\alpha=J^\alpha$.
The assertions at \(x_0\) follow directly from the construction.
\end{proof}

\begin{corollary}[Quantitative persistence of separated first-jet clusters]
\label{cor:persistence-jet-clusters}
Let \(0<\alpha<1\), \(R<\infty\), and \(u\in C^{1,\alpha}\bigl(B_R,\mathcal A_Q(\mathbb R^k)\bigr)\). Write \(j_1u(0)=\sum_{\ell=1}^Q\llbracket(a_\ell,A_\ell)\rrbracket\), and let \(\{1,\ldots,Q\}=I_1\cup\cdots\cup I_N\), \(N\geq2\), be a partition. Set
\[
d:=\min_{\substack{\gamma\neq\gamma'\\ \ell\in I_\gamma,\;\ell'\in I_{\gamma'}}}|(a_\ell,A_\ell)-(a_{\ell'},A_{\ell'})|.
\]
For every \(M<\infty\), there exists \(d_0=d_0(Q,\alpha,M,R)>0\) such that, if \([Du]_{\alpha;B_R}\leq M\) and \(d\geq d_0\), then there are uniquely determined maps \(u^\gamma\in C^{1,\alpha}\bigl(B_R,\mathcal A_{|I_\gamma|}(\mathbb R^k)\bigr)\), \(\gamma=1,\ldots,N\), such that \(u=\sum_{\gamma=1}^N u^\gamma\), \(j_1u^\gamma(0)=\sum_{\ell\in I_\gamma}\llbracket(a_\ell,A_\ell)\rrbracket\), and \([Du^\gamma]_{\alpha;B_R}\leq C_Q[Du]_{\alpha;B_R}\).
\end{corollary}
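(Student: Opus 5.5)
The idea is to show that, along every segment issuing from \(0\), the sheets that start in different groups \(I_\gamma\) remain far apart in first-jet space throughout \(B_R\). The decomposition is then obtained by continuation, in the same way as in Proposition~\ref{prop:local-splitting}.

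\emph{Step 1: separation along rays.} Fix \(x\in B_R\) and set \(\gamma(t):=tx\) for \(t\in[0,1]\). Lemma~\ref{lem:one-dimensional-jet-selection} gives continuous selections \((a_i(t),A_i(t))\) with \(\dot a_i=A_i x\) and \((a_i(0),A_i(0))=(a_i,A_i)\). By the chain-of-balls argument in the proof of Lemma~\ref{lem:C1-compactness},
\[
|A_i(t)-A_i|\le C(Q)M(t|x|)^\alpha\le C(Q)MR^\alpha.
\]
Integrating \(\dot a_i-\dot a_{i'}=(A_i-A_{i'})x\) then gives, for every pair \(i,i'\),
\[
\bigl|(a_i-a_{i'},A_i-A_{i'})(t)-(a_i-a_{i'},A_i-A_{i'})\bigr|\le C(Q)M(R^\alpha+R^{1+\alpha}).
\]
We take \(d_0:=4C(Q)M(R^\alpha+R^{1+\alpha})\), with the convention that \(d_0\) is also positive when \(M=0\). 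With this choice, sheets in different groups stay at first-jet distance at least \(d/2\) along the whole ray.

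\emph{Step 2: definition and well-posedness.} We set \(u^\gamma(x):=\sum_{i\in I_\gamma}\llbracket a_i(1)\rrbracket\), assigning indices to groups according to their value at \(t=0\). This assignment must not depend on the selection chosen. Suppose two selections along the same ray differ. At each \(t\) they describe the same multiset \(j_1u(tx)\). Distinct groups stay \(d/2\)-separated, while each group moves by less than \(d/4\) from its initial cluster, so the multiset \(j_1u(tx)\) splits canonically into \(N\) clusters. Hence the sub-jet \(J^\gamma(tx)\) is uniquely determined. The same reasoning shows that \(J^\gamma\) is continuous on \(B_R\): nearby points lie on nearby rays, and one can compare along a short segment joining them using the Hölder bound. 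Continuity of \(J^\gamma\) gives \(j_1u=\sum_\gamma J^\gamma\) with each \(J^\gamma\) continuous. The differentiability argument of Proposition~\ref{prop:local-splitting} then shows that \(u^\gamma\in C^1\) with \(j_1u^\gamma=J^\gamma\), and it also gives \(j_1u^\gamma(0)=\sum_{\ell\in I_\gamma}\llbracket(a_\ell,A_\ell)\rrbracket\). Uniqueness holds because any other decomposition with the same jets at \(0\) is continuous and must coincide with the canonical clustering along each ray.

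\emph{Step 3: Hölder bound.} For \(x,y\in B_R\), the splitting of \(j_1u\) into \(N\) well-separated clusters holds at both points. An optimal matching realizing \(\mathcal G(Du(x),Du(y))\) need not respect the clusters, however, because the separation is only known in the full jet and not in the \(Du\)-component alone. To handle this, join \(x\) and \(y\) by the segment. The segment lies in \(B_R\) by convexity, and Step 1 applies along it with base point \(x\), where the clusters are known to be separated, by the triangle inequality with the bounds from \(0\). Along this segment the continuous selections respect the groups, and the chain-of-balls estimate gives
\[
\mathcal G\bigl(Du^\gamma(x),Du^\gamma(y)\bigr)\le C(Q)M|x-y|^\alpha.
\]

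The main obstacle is this last issue: the separation is in the full jet and not in the differential, and the segment from \(x\) to \(y\) is not based at \(0\). The plan is to handle both by the uniform control of Step 1, applied from \(0\) to each endpoint, together with the fact that along any segment the selections cannot cross between clusters.
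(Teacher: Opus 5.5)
There is a genuine gap in Step~1, and Step~2 inherits it. Your overall plan is the paper's: rays from the origin, persistence of separation, canonical grouping, Proposition~\ref{prop:local-splitting} for differentiability, and chain of balls along $[x,y]$ for the H\"older bound. The problem is the value component. Integrating $\dot a_i-\dot a_{i'}=(A_i(t)-A_{i'}(t))x$ gives
\[
(a_i-a_{i'})(t)-(a_i-a_{i'})(0)=t\,\bigl(A_i(0)-A_{i'}(0)\bigr)x+\int_0^t\bigl[(A_i(s)-A_i(0))-(A_{i'}(s)-A_{i'}(0))\bigr]x\,ds .
\]
Only the integral is controlled by $M$. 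The drift term $t(A_i(0)-A_{i'}(0))x$ can be as large as $|A_i(0)-A_{i'}(0)|\,R$ and has nothing to do with the H\"older seminorm.

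Concretely, take $u(x)=\llbracket 0\rrbracket+\llbracket B(x-x_1)\rrbracket$, with $B$ of rank one and $x_1\in B_R$ chosen so that $|Bx_1|=|B|\,|x_1|$. Then $[Du]_{\alpha;B_R}=0$ and $d=|B|\sqrt{1+|x_1|^2}$, which can be made as large as you like by scaling $B$. At $x_1$ the two jets are $(0,0)$ and $(0,B)$, at distance $d/\sqrt{1+|x_1|^2}$. This is $<d/2$ as soon as $|x_1|>\sqrt3$, however large $d_0$ is. Moreover, sheet~$2$ has moved by $|Bx_1|$, which exceeds $d/4$ once $|x_1|>1/\sqrt{15}$. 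So neither the $d/2$-separation nor the claim that each group stays within $d/4$ of its initial jets holds. The canonical clustering of Step~2 is therefore not justified as written.

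The missing idea is to remove the affine drift before measuring displacement. The paper transports jets back to the origin: $K(x)$ is the image of $j_1u(x)$ under $(a,A)\mapsto(a-Ax,A)$, which sends the first jet at $x$ of an affine map to its first jet at $0$.
\begin{itemize}
\item Along the ray, $a_\ell(1)-A_\ell(1)x-a_\ell=\int_0^1(A_\ell(t)-A_\ell(1))x\,dt$. A chain-of-balls argument based at $t=1$ then gives $\mathcal G(K(x),j_1u(0))\leq C_Q[Du]_{\alpha;B_R}|x|^\alpha(1+|x|)$, with no drift.
\item Choosing $d_0$ with $C_QMR^\alpha(1+R)<d_0/3$, every atom of $K(x)$ lies in one of the fixed, pairwise disjoint sets $\mathcal N_\gamma=\bigcup_{\ell\in I_\gamma}B_{d/3}((a_\ell,A_\ell))$.
\item This grouping is defined pointwise, so continuity of the sub-jets follows directly from continuity of $j_1u$. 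This replaces your somewhat vague ``nearby rays'' argument.
\item Applying the injective inverse map $(b,B)\mapsto(b+Bx,B)$ yields a continuous decomposition $j_1u=J^1+\cdots+J^N$ into pairwise disjoint sub-jets.
\end{itemize}
From there your remaining steps go through as in the paper. In particular, Step~3 does not need to rerun Step~1 from the base point $x$. Lemma~\ref{lem:one-dimensional-jet-selection}, with selections respecting this decomposition, together with the chain of balls for the full $Du$, already gives $[Du^\gamma]_{\alpha;B_R}\leq C_Q[Du]_{\alpha;B_R}$.
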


\begin{proof}
For \(x\in B_R\), let \(K(x)\) be the image of \(j_1u(x)\) under the map
\[
(a,A)\longmapsto(a-Ax,A),
\]
which takes the first jet at \(x\) of an affine map to its first jet at the origin. Then \(K\) is continuous and \(K(0)=j_1u(0)\).

Fix \(x\in B_R\). By Lemma~\ref{lem:one-dimensional-jet-selection}, we may write
\[
j_1u(tx)=\sum_{\ell=1}^Q\llbracket(a_\ell(t),A_\ell(t))\rrbracket,\qquad \dot a_\ell(t)=A_\ell(t)x,
\]
with \((a_\ell(0),A_\ell(0))=(a_\ell,A_\ell)\). The Hölder bound for \(Du\) and the usual chain-of-balls argument, now based at \(t=1\), give
\(|A_\ell(t)-A_\ell(1)|\leq C_Q[Du]_{\alpha;B_R}(1-t)^\alpha|x|^\alpha\). Hence
\[
|a_\ell(1)-A_\ell(1)x-a_\ell|
=
\left|\int_0^1(A_\ell(t)-A_\ell(1))x\,dt\right|
\leq
C_Q[Du]_{\alpha;B_R}|x|^{1+\alpha}.
\]
Together with the preceding estimate at \(t=0\), this yields
\[
\mathcal G(K(x),j_1u(0))\leq C_Q[Du]_{\alpha;B_R}|x|^\alpha(1+|x|).
\]

Choose \(d_0>0\) such that \(C_QMR^\alpha(1+R)<d_0/3\). Since \([Du]_{\alpha;B_R}\leq M\) and \(d\geq d_0\), the preceding estimate shows that every atom of \(K(x)\) belongs to the union of the fixed, pairwise disjoint sets
\[
\mathcal N_\gamma:=\bigcup_{\ell\in I_\gamma}B_{d/3}((a_\ell,A_\ell)),\qquad \gamma=1,\ldots,N.
\]
By continuity, each \(\mathcal N_\gamma\) contains precisely \(|I_\gamma|\) atoms of \(K(x)\), counted with multiplicity, for every \(x\in B_R\). Applying the inverse transformation \((b,B)\mapsto(b+Bx,B)\) to each group gives a continuous decomposition \(j_1u=J^1+\cdots+J^N\). Since this transformation is injective, the groups remain pairwise disjoint in the first-jet space.

Applying Proposition~\ref{prop:local-splitting} locally to the distinct first-jet atoms and grouping the resulting components according to \(J^\gamma\) shows that their value projections \(u^\gamma\) belong to \(C^1(B_R)\) and satisfy \(j_1u^\gamma=J^\gamma\). The decomposition is unique, since along any segment from the origin the transported jet sheets cannot pass between the disjoint sets \(\mathcal N_\gamma\).

Finally, apply Lemma~\ref{lem:one-dimensional-jet-selection} along an arbitrary segment joining \(x,y\in B_R\), with selections respecting the sub-jet decomposition. The same chain-of-balls argument gives \(\mathcal G(Du^\gamma(x),Du^\gamma(y))\leq C_Q[Du]_{\alpha;B_R}|x-y|^\alpha\), and hence \([Du^\gamma]_{\alpha;B_R}\leq C_Q[Du]_{\alpha;B_R}\), completing the proof.
\end{proof}

\subsection{Sobolev spaces for multifunctions}

We first introduce the Sobolev framework in which Dirichlet stationarity will be formulated. We use the notions of approximate differentiability and approximate differential for multiple-valued maps introduced by Almgren \cite[Section~1.4]{Alm00}; see \cite[Section~2.2.1]{DLS11} for a modern exposition.

\begin{definition}[Sobolev spaces for multifunctions]
\label{def:sobolev-spaces}
Let \(\xi:\mathcal A_Q(\mathbb R^k)\to\mathbb R^N\) be an Almgren embedding, normalized so that \(\xi(Q\llbracket0\rrbracket)=0\). We say that \(u\in W^{1,2}\bigl(\Omega,\mathcal A_Q(\mathbb R^k)\bigr)\) if \(\xi\circ u\in W^{1,2}(\Omega,\mathbb R^N)\). Writing \(j_1u=\sum_{\ell=1}^Q\llbracket(a_\ell,A_\ell)\rrbracket\) for the approximate first-order jet of \(u\), we define its approximate differential by \(Du:=\sum_{\ell=1}^Q\llbracket A_\ell\rrbracket\) and set
\[
\|u\|_{W^{1,2}(\Omega)}^2:=\|j_1u\|_{L^2(\Omega)}^2=\int_\Omega\left\{|u|^2+|Du|^2\right\}.
\]
We say that \(u_j\to u\) strongly in \(W^{1,2}(\Omega,\mathcal A_Q(\mathbb R^k))\) if \(\int_\Omega\mathcal G(j_1u_j,j_1u)^2\longrightarrow0\).

We say that \(u\in W^{2,2}\bigl(\Omega,\mathcal A_Q(\mathbb R^k)\bigr)\) if \(u\in W^{1,2}\) and \(j_1u\in W^{1,2}\). Writing the approximate first-order jet of \(j_1u\) as \(j_1(j_1u)=\sum_{\ell=1}^Q\llbracket((a_\ell,A_\ell),(B_\ell,S_\ell))\rrbracket\), we define the approximate second jet and the second differential of \(u\) by
\[
j_2u:=\sum_{\ell=1}^Q\llbracket(a_\ell,A_\ell,S_\ell)\rrbracket,
\qquad
D^2u:=\sum_{\ell=1}^Q\llbracket S_\ell\rrbracket.
\]
We set
\[
\|u\|_{W^{2,2}(\Omega)}^2:=\|j_2u\|_{L^2(\Omega)}^2=\int_\Omega\left\{|u|^2+|Du|^2+|D^2u|^2\right\}.
\]
We say that \(u_j\to u\) strongly in \(W^{2,2}(\Omega,\mathcal A_Q(\mathbb R^k))\) if \(\int_\Omega\mathcal G(j_2u_j,j_2u)^2\longrightarrow0\).

The local spaces \(W^{1,2}_{\mathrm{loc}}\) and \(W^{2,2}_{\mathrm{loc}}\) and the corresponding local notions of strong convergence are defined analogously.
\end{definition}

\begin{remark}
\label{rem:sobolev-equivalences}
With the notation of Definition~\ref{def:sobolev-spaces}, the following hold.
\begin{enumerate}[label=\textup{(\roman*)}]
\item The spaces \(W^{1,2}\) and \(W^{2,2}\) and the corresponding notions of strong convergence are independent of the choice of Almgren embeddings.

\item One has \(A_\ell=B_\ell\) almost everywhere for every \(\ell\). Indeed, the chain rule applied to the projection \((a,A)\mapsto a\) gives \(\sum_{\ell=1}^Q\llbracket(a_\ell,B_\ell)\rrbracket=j_1u=\sum_{\ell=1}^Q\llbracket(a_\ell,A_\ell)\rrbracket\), and the conclusion follows because \(A_\ell=A_m\) almost everywhere on \(\{a_\ell=a_m\}\).

Hence \(|D(j_1u)|^2=|Du|^2+|D^2u|^2\) a.e. in \(\Omega\). Consequently,
\[
\int_\Omega\left\{|j_1u|^2+|D(j_1u)|^2\right\}=\|u\|_{W^{2,2}(\Omega)}^2+\|Du\|_{L^2(\Omega)}^2\simeq\|u\|_{W^{2,2}(\Omega)}^2.
\]
Moreover, if \(\Xi\) is an Almgren embedding of the first-jet space, normalized so that \(\Xi(Q\llbracket(0,0)\rrbracket)=0\), then \(\|u\|_{W^{2,2}(\Omega)}\simeq\|\Xi\circ j_1u\|_{W^{1,2}(\Omega)}\).

\item It is possible to show that strong \(W^{1,2}\)-convergence is equivalently characterized either by \(u_j\to u\) in \(L^2(\Omega,\mathcal A_Q(\mathbb R^k))\) and \(\int_\Omega|Du_j|^2\to\int_\Omega|Du|^2\) or by the strong \(W^{1,2}\)-convergence of \(\xi\circ u_j\) to \(\xi\circ u\) for any Almgren embedding \(\xi\). However, we will not need these characterisations in the rest of the paper.
\end{enumerate}
\end{remark}

\begin{lemma}[Rellich compactness in \(W^{2,2}\)]
\label{lem:rellich-W22}
Let \(\Omega\subset\mathbb R^n\) be a bounded Lipschitz domain and let \(u_j\in W^{2,2}(\Omega,\mathcal A_Q(\mathbb R^k))\) satisfy $\sup_j\|u_j\|_{W^{2,2}(\Omega)}<\infty$. Then, after passing to a subsequence, there exists \(u\in W^{2,2}(\Omega,\mathcal A_Q(\mathbb R^k))\) such that $u_j\longrightarrow u$ strongly in $W^{1,2}(\Omega,\mathcal A_Q(\mathbb R^k))$.
\end{lemma}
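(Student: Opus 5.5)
The plan is to apply the single-valued Rellich theorem twice, once to the Almgren embedding of \(u_j\) and once to the Almgren embedding of the first jet \(j_1u_j\), and then to use the bi-Lipschitz property of Almgren embeddings to convert Euclidean convergence into convergence in \(\mathcal G\).

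\emph{Step 1: uniform bounds.} Let \(\Xi\) be an Almgren embedding of \(\mathcal A_Q(\mathbb R^k\times\operatorname{Hom}(\mathbb R^n,\mathbb R^k))\), normalized so that \(\Xi(Q\llbracket(0,0)\rrbracket)=0\). By Remark~\ref{rem:sobolev-equivalences}\textup{(ii)},
\[
\|\Xi\circ j_1u_j\|_{W^{1,2}(\Omega)}\simeq\|u_j\|_{W^{2,2}(\Omega)},
\]
and the right-hand side is uniformly bounded. Since \(\Omega\) is bounded and Lipschitz, the classical Rellich theorem lets us pass to a subsequence along which \(\Xi\circ j_1u_j\to g\) strongly in \(L^2\) and almost everywhere, and weakly in \(W^{1,2}\).

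\emph{Step 2: identify the limit.} Because \(\Xi(\mathcal A_Q)\) is closed, the a.e. limit satisfies \(g\in\Xi(\mathcal A_Q)\) a.e. Set \(J:=\Xi^{-1}\circ g\). Since \(\Xi^{-1}\) is Lipschitz on the image,
\[
\int_\Omega\mathcal G(j_1u_j,J)^2\le C\int_\Omega|\Xi\circ j_1u_j-g|^2\to0.
\]
Let \(u\) be the value projection of \(J\). It remains to show \(u\in W^{1,2}\) with \(j_1u=J\) a.e.; this is the step I expect to be the main obstacle.

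\emph{Step 3: strong \(W^{1,2}\) convergence of \(u_j\).} Here I would also apply Rellich to \(\xi\circ u_j\), which is bounded in \(W^{1,2}\). This gives \(u_j\to u\) in \(L^2\), and weak lower semicontinuity yields \(u\in W^{1,2}\). To see that the approximate differential of \(u\) is the differential projection \(D\) of \(J\), I plan to pass to the limit in the integrated first-order expansion along segments. Lemma~\ref{lem:one-dimensional-jet-selection} is only available for \(C^1\) maps, so for Sobolev maps I would use the a.e.-line characterization of \cite{DLS11}: for a.e. line, \(u_j\) restricted to it is absolutely continuous with selections \(\dot a_\ell=A_\ell e\). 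Alternatively, one can pass the chain-rule identity \(j_1(\pi\circ j_1u_j)\) to the limit using the closedness of the graph of the differential under weak convergence. With \(j_1u=J\) established, \(\mathcal G(j_1u_j,j_1u)\to0\) in \(L^2\), which is exactly strong \(W^{1,2}\) convergence.

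\emph{Step 4: \(u\in W^{2,2}\).} We have \(J=j_1u\) and \(\Xi\circ J=g\in W^{1,2}\). Hence \(j_1u\in W^{1,2}\), so \(u\in W^{2,2}\) by Definition~\ref{def:sobolev-spaces}.
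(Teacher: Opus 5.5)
Your Steps 1, 2 and 4 are exactly the paper's argument: Rellich for \(\Xi\circ j_1u_j\), closed image and Lipschitz inverse to get \(J\) with \(\int_\Omega\mathcal G(j_1u_j,J)^2\to0\), \(u:=\pi_\#J\), and the conclusion once \(J=j_1u\) is known. The gap is in Step~3. The identification \(J=j_1u\) is the whole content of the lemma, and you rightly flag it as the main obstacle. However, neither of your two routes contains the ingredient that makes it true.

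Your first route passes to the limit in \(a_{j,\ell}(t)-a_{j,\ell}(s)=\int_s^tA_{j,\ell}e\) along lines, using only first-order information on \(u_j\) together with \(j_1u_j\to J\) in \(L^2\). This fails. The continuous value selections converge, but the differential selections \(A_{j,\ell}e\) converge only weakly, and a weak limit of selections need not select the strong limit \(J\). For a counterexample, take \(k=1\), \(Q=2\), and maps depending only on \(x_1\). Let \(\delta_j(x_1):=j^{-1}s(jx_1)\), where \(s\) is a triangle wave with slopes \(\pm1\) and values in \([0,\tfrac{1}{2}]\), and set \(f_j:=\llbracket\delta_j/2\rrbracket+\llbracket-\delta_j/2\rrbracket\). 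Then \(f_j\to2\llbracket0\rrbracket\) uniformly with \(|Df_j|\) bounded, and \(\mathcal G\bigl(j_1f_j,\llbracket(0,\tfrac{1}{2}e_1^*)\rrbracket+\llbracket(0,-\tfrac{1}{2}e_1^*)\rrbracket\bigr)=\delta_j/\sqrt2\to0\). Yet \(j_1(2\llbracket0\rrbracket)=2\llbracket(0,0)\rrbracket\). Your alternative, "closedness of the graph of the differential under weak convergence", fails on the same example, since the weak limit of \(\pm\tfrac{1}{2}\) is \(0\). It is only true because of the second-order bound, and that is precisely what has to be proved.

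The missing idea is to use the bound on \(D(j_1u_j)\) inside the identification, line by line, as the paper does.

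\begin{enumerate}
\item By Fubini, after a subsequence, the restrictions of \(J_j=j_1u_j\) to a.e. line parallel to \(e_i\) converge in \(L^2\) to those of \(J\).
\item By Fatou applied to \(\int|D(\Xi\circ J_j)|^2\), for a.e. such line a further, line-dependent subsequence has restrictions uniformly bounded in \(W^{1,2}\).
\item The one-dimensional Sobolev selection theorem then gives selections \((a_{j,\ell},b_{j,\ell})\) of \((P_i)_\#J_j\), where \(P_i(a,A):=(a,Ae_i)\). These are uniformly bounded in \(W^{1,2}\) and satisfy \(a_{j,\ell}'=b_{j,\ell}\), by Sobolev locality on coincidence sets.
\item Since \(W^{1,2}\hookrightarrow C^0\) is compact on intervals, these selections converge uniformly. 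The limits therefore both select \((P_i)_\#J\) and satisfy \(a_\ell'=b_\ell\).
\item Compare with the chain rule for \(u\) on the line, and use that the differential sheets of \(u\) agree a.e. where its value sheets coincide. This shows that the differential parts of \(J\) and \(j_1u\) agree on \(e_i\), hence \(J=j_1u\).
\end{enumerate}

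This equicontinuity of the differential selections is exactly what the example above lacks. There \(j_1f_j\) is discontinuous at the peaks of \(\delta_j\), and smoothing the corners makes \(\int|D(j_1f_j)|^2\) blow up. Your extra application of Rellich to \(\xi\circ u_j\) is harmless but unnecessary, since \(u=\pi_\#J\in W^{1,2}\) already.
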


\begin{proof}
Set \(E:=\mathbb R^k\times\operatorname{Hom}(\mathbb R^n,\mathbb R^k)\) and \(J_j:=j_1u_j\). Fix an Almgren embedding \(\Xi:\mathcal A_Q(E)\to\mathbb R^N\), normalized at the origin. By Remark~\ref{rem:sobolev-equivalences}\textup{(ii)}, the maps \(\Xi\circ J_j\) are uniformly bounded in \(W^{1,2}(\Omega,\mathbb R^N)\). Classical Rellich compactness gives, after passing to a subsequence, \(\Xi\circ J_j\rightharpoonup F\) weakly in \(W^{1,2}\) and \(\Xi\circ J_j\to F\) strongly in \(L^2\) and almost everywhere. Since the image of \(\Xi\) is closed and its inverse is Lipschitz, \(J:=\Xi^{-1}\circ F\) belongs to \(W^{1,2}(\Omega,\mathcal A_Q(E))\) and \(J_j\to J\) strongly in \(L^2\). Let \(\pi:E\to\mathbb R^k\) be the first projection and set \(u:=\pi_\#J\in W^{1,2}(\Omega,\mathcal A_Q(\mathbb R^k))\). It remains to identify \(J\) with \(j_1u\).

Since this is a local assertion, we may work in an arbitrary ball \(B\Subset\Omega\). For \(i=1,\ldots,n\), define \(P_i:E\to\mathbb R^k\times\mathbb R^k\) by \(P_i(a,A):=(a,Ae_i)\), where \(e_1,\ldots,e_n\) denote the standard basis of \(\mathbb R^n\). Fix \(i\), and set \(I_y:=\{t:y+te_i\in B\}\) for \(y\in e_i^\perp\). By Fubini's theorem, after passing to a subsequence, \(J_j(y+\cdot\,e_i)\to J(y+\cdot\,e_i)\) strongly in \(L^2(I_y)\) for almost every \(y\) with \(I_y\neq\varnothing\). The Sobolev bound and Fatou's lemma then give, for almost every such \(y\), a further subsequence (depending on \(y\)) along which these restrictions are uniformly bounded in \(W^{1,2}(I_y)\).

On each such line, the chain rule gives \((P_i)_\#J_j(y+\cdot\,e_i)=j_1[u_j(y+\cdot\,e_i)]\). We obtain two descriptions of the limit of this identity. The left-hand side converges strongly in \(L^2(I_y)\) to \((P_i)_\#J(y+\cdot\,e_i)\), since \((P_i)_\#\) is Lipschitz.

For the right-hand side, the one-dimensional Sobolev selection theorem gives uniformly \(W^{1,2}\)-bounded selections \((a_{j,\ell},b_{j,\ell})\), \(\ell=1,\ldots,Q\). Since these select the first jet of \(u_j(y+\cdot\,e_i)\), Sobolev locality on the sets where their first components coincide gives \(a_{j,\ell}'=b_{j,\ell}\) almost everywhere. After passing to a further subsequence, the selections converge weakly in \(W^{1,2}\) and strongly in \(L^2\) to \((a_\ell,b_\ell)\), with \(a_\ell'=b_\ell\). Their first components select \(u(y+\cdot\,e_i)\), so the right-hand side converges to \(\sum_{\ell=1}^Q\llbracket(a_\ell,a_\ell')\rrbracket=j_1[u(y+\cdot\,e_i)]\). Since we have already proven that \(u\in W^{1,2}\), by the chain rule for \(u\) this equals \((P_i)_\#j_1u(y+\cdot\,e_i)\).

So the complete chain of equalities becomes, writing \(J(y+te_i)=\sum_{\ell=1}^Q\llbracket(a_\ell(t),B_\ell(t))\rrbracket\) and \(j_1u(y+te_i)=\sum_{\ell=1}^Q\llbracket(a_\ell(t),A_\ell(t))\rrbracket\),
\[
\begin{aligned}
\sum_{\ell=1}^Q\llbracket(a_\ell,B_\ell e_i)\rrbracket
&=(P_i)_\#J(y+\cdot\,e_i)\\
&=\lim_j(P_i)_\#J_j(y+\cdot\,e_i)\\
&=\lim_j\sum_{\ell=1}^Q\llbracket(a_{j,\ell},a_{j,\ell}')\rrbracket\\
&=\sum_{\ell=1}^Q\llbracket(a_\ell,a_\ell')\rrbracket\\
&=(P_i)_\#j_1u(y+\cdot\,e_i)
=\sum_{\ell=1}^Q\llbracket(a_\ell,A_\ell e_i)\rrbracket.
\end{aligned}
\]

Since the matrices \(A_\ell\) agree almost everywhere whenever their values \(a_\ell\) coincide, the preceding identities imply \(B_\ell e_i=A_\ell e_i\) for every \(i\) and \(\ell\), under any common labelling of the value components, almost everywhere. Thus \(B_\ell=A_\ell\), and \(J=j_1u\).

Consequently, \(u\in W^{2,2}(\Omega,\mathcal A_Q(\mathbb R^k))\), and \(\int_\Omega\mathcal G(j_1u_j,j_1u)^2=\int_\Omega\mathcal G(J_j,J)^2\to0\), which is precisely the required strong \(W^{1,2}\)-convergence.
\end{proof}

\section{The Dirichlet-stationary problem}

\subsection{Stationarity and frequency}

\begin{definition}[Dirichlet stationarity]
Let \(v\in W_{\mathrm{loc}}^{1,2}(\Omega,\mathcal A_Q(\mathbb R^k))\).
Given \(\psi\in C_c^1(\Omega\times\mathbb R^k,\mathbb R^k)\), the outer variation generated by \(\psi\) is $v_t(x):=\sum_{\ell=1}^Q\llbracket v_\ell(x)+t\psi(x,v_\ell(x))\rrbracket$, where \(v=\sum_{\ell=1}^Q\llbracket v_\ell\rrbracket\).
Given \(\zeta\in C_c^1(\Omega,\mathbb R^n)\), set \(\Phi_t(x):=x+t\zeta(x)\); the inner variation generated by \(\zeta\) is $v_t:=v\circ\Phi_t$.

We say that \(v\) is Dirichlet-stationary if its Dirichlet energy has zero first variation at \(t=0\) with respect to every outer and inner variation. Equivalently, $\int_\Omega\sum_{\ell=1}^Q Dv_\ell:D\bigl(\psi(x,v_\ell)\bigr)=0$ for every \(\psi\in C_c^1(\Omega\times\mathbb R^k,\mathbb R^k)\), and $\int_\Omega\sum_{\ell=1}^Q\left\{\frac12|Dv_\ell|^2\operatorname{div}\zeta-Dv_\ell:\bigl(Dv_\ell D\zeta\bigr)\right\}=0$ for every \(\zeta\in C_c^1(\Omega,\mathbb R^n)\).
\end{definition}

\begin{lemma}[Outer stationarity of differential blocks]
\label{lem:stationarity-differential-blocks}
Let \(U\subset\mathbb R^n\) be a neighbourhood of \(x_0\), and let $v=\sum_{\alpha=1}^N v^\alpha$, $v^\alpha\in C^1\bigl(U,\mathcal A_{Q_\alpha}(\mathbb R^k)\bigr)$, $\sum_{\alpha=1}^NQ_\alpha=Q$, be a \(C^1\) decomposition. Suppose that, for some \(a\in\mathbb R^k\) and pairwise distinct $A^1,\ldots,A^N\in\operatorname{Hom}(\mathbb R^n,\mathbb R^k)$, one has $j_1v^\alpha(x_0) = Q_\alpha\llbracket(a,A^\alpha)\rrbracket$ for every \(\alpha=1,\ldots,N\).

If \(v\) is stationary with respect to outer variations for the Dirichlet energy, then, after possibly shrinking \(U\) around \(x_0\), every \(v^\alpha\) is stationary with respect to outer variations.
\end{lemma}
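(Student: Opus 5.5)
The plan is to test the outer stationarity of \(v\) against functions of \((x,y)\) that act on the sheets of \(v^\alpha\) and ignore the sheets of the other blocks \(v^\beta\), \(\beta\neq\alpha\). We then remove the localisation by a capacity-type limit. The one genuine difficulty is the collision set where a sheet of \(v^\alpha\) meets a sheet of some \(v^\beta\), since there no function of \((x,y)\) can separate the two. I have not closed this step; the last paragraph says what I would try.

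\emph{Setup.} Since \(v^\alpha\in C^1\) and \(Dv^\alpha(x_0)=Q_\alpha\llbracket A^\alpha\rrbracket\), after shrinking \(U\) to a ball around \(x_0\) we may assume
\[
\mathcal G\bigl(Dv^\alpha(x),Q_\alpha\llbracket A^\alpha\rrbracket\bigr)\le\eta:=\tfrac14\min_{\beta\neq\gamma}|A^\beta-A^\gamma|
\qquad\text{for all }x\in U\text{ and all }\alpha.
\]
Fix \(\alpha\) and a test field \(\psi\in C^1_c(U\times\mathbb R^k,\mathbb R^k)\). Define the inter-block separation
\[
s(x):=\min\bigl\{|y-z|:\ y\in\spt v^\alpha(x),\ z\in\spt v^\beta(x),\ \beta\neq\alpha\bigr\}.
\]
This \(s\) is Lipschitz, because each \(v^\gamma\) is. Let \(Z:=\{s=0\}\). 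At a point of \(Z\), some sheet difference \(v^\alpha_\ell-v^\beta_m\) vanishes while its differential stays close to \(A^\alpha-A^\beta\neq0\). Using the one-dimensional selections of Lemma~\ref{lem:one-dimensional-jet-selection} along lines in a direction \(e\) with \((A^\alpha-A^\beta)e\neq0\), each such line meets \(Z\) in isolated points. Hence \(Z\) has Lebesgue measure zero, and near every point it is contained in a finite union of Lipschitz graphs of codimension one.

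\emph{Localisation away from \(Z\).} For \(\varepsilon>0\) let \(\varphi_\varepsilon:=\chi(s/\varepsilon)\), where \(\chi\) is smooth, vanishes on \([0,1]\) and equals \(1\) on \([2,\infty)\). On \(\spt\varphi_\varepsilon\) the graph of \(v^\alpha\) lies at distance at least \(\varepsilon\) from the graphs of the other blocks. Mollifying the indicator of an \(\varepsilon/3\)-neighbourhood of \(\operatorname{graph}v^\alpha\) therefore gives \(\rho_\varepsilon\in C^\infty\) with two properties: \(\rho_\varepsilon\equiv1\) near the sheets of \(v^\alpha\), and \(\rho_\varepsilon\equiv0\) near the sheets of the other blocks, over \(\spt\varphi_\varepsilon\). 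Insert \(\Psi_\varepsilon(x,y):=\varphi_\varepsilon(x)\rho_\varepsilon(x,y)\psi(x,y)\) into the outer stationarity of \(v\). Since \(\rho_\varepsilon\) is locally constant at every sheet, its derivatives do not contribute, and we obtain
\[
\int\sum_{\ell\in\alpha}Dv_\ell:D\bigl(\varphi_\varepsilon\psi(x,v_\ell)\bigr)=0 .
\]
Letting \(\varepsilon\to0\), dominated convergence handles every term except the flux term
\[
F_\varepsilon:=\int\sum_{\ell\in\alpha}Dv_\ell:\bigl(\psi(x,v_\ell)\otimes D\varphi_\varepsilon\bigr).
\]

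\emph{Main obstacle: \(F_\varepsilon\to0\).} By the coarea formula \(F_\varepsilon\) is an average of fluxes across the level sets \(\{s=t\}\), \(t\in(\varepsilon,2\varepsilon)\). Because \(Z\) has codimension only one, it does not vanish by size alone. The idea I would try first is to pair the \(\alpha\)-flux with the fluxes of the blocks \(\beta\) meeting \(v^\alpha\) on \(Z\). Running the same construction with \(\rho_\varepsilon\) replaced by \(1-\rho_\varepsilon\), and using the stationarity of the whole of \(v\), shows that the total flux over all blocks tends to zero. Along \(Z\), the matched sheets \(v^\alpha_\ell\) and \(v^\beta_m\) carry the same value, so \(\psi(x,v_\ell)\) is the same for both. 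The remaining task is to prove that the one-sided limits \(\lim_\varepsilon F_\varepsilon\) are themselves zero, not merely that they sum to zero. My first attempt would be to exploit the freedom in \(\psi\): replace \(\psi\) by \(\psi(x,y)\,g\bigl(\xi\cdot(y-a-A^\alpha(x-x_0))\bigr)\) for a covector \(\xi\) chosen with \(\xi\cdot(A^\alpha-A^\beta)\neq0\) for all \(\beta\neq\alpha\). Along every crossing such a factor has a transverse derivative of definite sign, which makes the \(\alpha\)-flux odd in \(g\) while the paired \(\beta\)-flux is unchanged. Combining the identities for \(g\) and \(-g\) should then force \(\lim_\varepsilon F_\varepsilon=0\). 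If this linear-algebra step fails, the fallback is to prove that \(Z\) actually has codimension at least two, by a dimension count on the zero sets of the sheet differences. In that case \(\{s<2\varepsilon\}\) would have measure \(o(\varepsilon)\) and \(F_\varepsilon\to0\) would follow immediately.
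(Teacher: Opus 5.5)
\textbf{There is a genuine gap.} It is the step you flag yourself: nothing in the proposal shows \(F_\varepsilon\to0\), and neither of your suggested repairs works.

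\emph{Why the repairs fail.} The \(g\)-weighting cannot make the \(\alpha\)-flux odd while leaving the \(\beta\)-flux unchanged. The flux term contains no derivative of the test field, only its values \(\psi(x,v_\ell)g(\cdots)\) at the sheets. Both fluxes are therefore linear in \(g\), and at a crossing the matched sheets carry the same value and hence the same weight. The fallback is false: for \(k=1\), take \(v=\llbracket x_1\rrbracket+\llbracket -x_1\rrbracket\) with blocks \(v^1=\llbracket x_1\rrbracket\), \(v^2=\llbracket -x_1\rrbracket\) at \(x_0=0\). This satisfies every hypothesis, yet \(Z=\{x_1=0\}\) has codimension one. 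Pairing with the other blocks via \(1-\rho_\varepsilon\) is also unnecessary, since controlling \(F_\varepsilon\) does not use the stationarity of \(v\) at all.

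\emph{The missing idea.} The flux density \(J_\alpha(x):=\sum_{\ell}(Dv^\alpha_\ell(x))^*\psi(x,v^\alpha_\ell(x))\) is a symmetric function of the first-jet sheets of the \(C^1\) map \(v^\alpha\). It is therefore a continuous single-valued vector field on \(U\) and does not jump across \(Z\). So \(F_\varepsilon=\int J_\alpha\cdot D\varphi_\varepsilon\) is the net flux of a continuous field out of a thin two-sided layer, and the contributions from the two sides of each crossing cancel. Concretely, let \(J^\delta\) be a mollification of \(J_\alpha\) and \(K\) a compact set containing the relevant supports. Then \(|\int J^\delta\cdot D\varphi_\varepsilon|=|\int(1-\varphi_\varepsilon)\operatorname{div}J^\delta|\le C_\delta|\{s<2\varepsilon\}\cap K|\to0\). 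Also \(|\int(J_\alpha-J^\delta)\cdot D\varphi_\varepsilon|\le C\|J_\alpha-J^\delta\|_\infty\,\varepsilon^{-1}|\{s<2\varepsilon\}\cap K|\), and the last factor is bounded by the transversality you already observed. After shrinking \(U\), along each line in direction \(e\) a fixed component of every difference of selections \(v^\alpha_\ell-v^\beta_m\) is strictly monotone with speed bounded below. Each pair is then \(2\varepsilon\)-close only on a set of length \(O(\varepsilon)\). Hence \(\limsup_\varepsilon|F_\varepsilon|\le C\|J_\alpha-J^\delta\|_\infty\) for every \(\delta\), and so \(F_\varepsilon\to0\).

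\emph{Comparison with the paper.} This is exactly the mechanism the paper uses, organised differently. It removes only one \(C^1\) hypersurface \(H=\{e\cdot(\eta\circ v^\alpha-\eta\circ v^\beta)=0\}\), which contains the complete value-collision set. It gets stationarity of each block on \(U\setminus H\) by induction on \(Q\): first value clusters, then the lemma at lower multiplicity. It extends across \(H\) because the one-sided boundary terms converge to \(\int_H J\cdot\nu^\pm\) with \(\nu^-=-\nu^+\). With the continuity observation, your non-inductive route would also close, modulo smoothing \(s\) so that \(\varphi_\varepsilon\) is \(C^1\) and fixing the constants in \(\rho_\varepsilon\). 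As submitted, however, the decisive step is missing.
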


\begin{proof}
This is a special case of the localisation argument proved in greater detail and generality in Lemma~\ref{lem:weak-solution-differential-blocks}. We briefly indicate the argument here as well.

We argue by induction on \(Q\). The conclusion is immediate if \(Q=1\) or \(N=1\). Suppose that \(N\geq2\) and that the result has been proved at every multiplicity strictly smaller than \(Q\). For each \(\alpha\), set \(m^\alpha:=\eta\circ v^\alpha\). Choose \(\varrho>0\) so that the closed balls \(\overline B_\varrho(A^\alpha)\subset\operatorname{Hom}(\mathbb R^n,\mathbb R^k)\) are pairwise disjoint. Since \(Dv^\alpha(x_0)=Q_\alpha\llbracket A^\alpha\rrbracket\) and \(Dv^\alpha\) is continuous, after possibly shrinking \(U\) we may assume that \(\operatorname{spt}Dv^\alpha(x)\subset B_\varrho(A^\alpha)\) for every \(x\in U\) and every \(\alpha\). In particular, \(Dm^\alpha(x)\in B_\varrho(A^\alpha)\) for every \(x\in U\), and hence \(Dm^\alpha(x)\neq Dm^\beta(x)\) whenever \(\alpha\neq\beta\).
Choosing two blocks, there is \(e\in\mathbb R^k\) such that \(g:=e\cdot(m^\alpha-m^\beta)\) satisfies \(g(x_0)=0\) and \(Dg(x_0)\neq0\). After shrinking \(U\) once more, we may assume that \(Dg\neq0\) throughout \(U\). Thus \(H:=\{g=0\}\) is a \(C^1\) hypersurface containing the complete value-collision set of \(v\).

On \(U\setminus H\), the values of \(v\) split into clusters of multiplicity strictly smaller than \(Q\), and outer stationarity localizes to each value cluster. By the preceding separation of the differential sheets, every differential sub-block of a value cluster belongs to a unique prescribed component \(v^\gamma\); more precisely, \(v^\gamma\) is locally the sum of those differential sub-blocks whose differential sheets lie in \(B_\varrho(A^\gamma)\). The inductive hypothesis applied to the differential sub-blocks of each value cluster, together with the additivity of the outer-variation identity over the sheets, therefore shows that every prescribed block \(v^\gamma\) is outer-stationary in \(U\setminus H\).

It remains to extend the equation across \(H\). Applying the outer-variation identity for \(v^\gamma\) on the two sides of \(H\) with cutoffs vanishing in an \(\varepsilon\)-neighbourhood of \(H\), the boundary contributions converge to the corresponding normal fluxes on \(H\). These cancel because the integrand is continuous and symmetric in the first-jet sheets and the two outward unit normals are opposite. Letting \(\varepsilon\downarrow0\) gives the outer-variation identity for \(v^\gamma\) across \(H\), and hence throughout \(U\).
\end{proof}

\begin{definition}[Almgren's frequency function]
\label{def:frequency-function}
Suppose $v\in W^{1,2}_{\mathrm{loc}}(\Omega,\mathcal A_Q(\mathbb{R}^k))$. Let \(y\in \Omega\) and define $D_{v,y}(r):=r^{-n+2}\int_{B_r(y)}|Dv|^2$ and $H_{v,y}(r):=r^{-n+1} \int_{\partial B_r(y)}|v|^2$.
Whenever \(H_{v,y}(r)>0\), the frequency function is defined as $N_{v,y}(r):=\frac{D_{v,y}(r)}{H_{v,y}(r)}$. Whenever the limit exists, we call $N_v(y):=\lim_{r\downarrow0}N_{v,y}(r)$ the frequency of \(v\) at \(y\). We omit the subscript \(v\) when it is clear from the context.
\end{definition}

\begin{theorem}[Frequency monotonicity]
\label{prop:frequency-monotonicity-final}
Suppose $v\in W^{1,2}_{\mathrm{loc}}(\Omega,\mathcal A_Q(\mathbb R^k))$ is Dirichlet-stationary. Then, for every $y\in\Omega$, \(D_y\) and \(H_y\) are nondecreasing, and \(N_y\) is nondecreasing on every interval contained in \((0,d(y,\partial\Omega))\) on which \(H_y>0\). Moreover, for a.e. such \(r\),
\[
\frac{d}{dr}\log\left(H_y(r)\right)=\frac{2N_y(r)}{r}.
\]

Moreover:
\begin{enumerate}[label=\textup{(\roman*)}]

\item if \(v\) is not identically zero in the connected component of \(\Omega\) containing \(y\), then \(H_y(r)>0\) for every \(0<r<d(y,\partial\Omega)\). Moreover, for every \(0<r\leq R<d(y,\partial\Omega)\), \(\left(\frac Rr\right)^{2N_y(r)}H_y(r)\leq H_y(R)\leq\left(\frac Rr\right)^{2N_y(R)}H_y(r)\).

\item if \(v\) is not identically zero near \(y\) and $\mathcal G \bigl(v(x),Q\llbracket0\rrbracket\bigr) \le C|x-y|^\gamma$ near \(y\), then $N_y(0):=\lim_{r\downarrow0}N_y(r)\ge\gamma$;

\item if \(N_y\equiv\sigma\) on an interval, then \(v\) is homogeneous of
degree \(\sigma\) with centre \(y\) on the corresponding annulus;

\item if \(\Omega=\mathbb R^n\) and \(v\) is nonzero and homogeneous of degree \(\sigma\) with centre \(0\), then, for every \(y\in\mathbb R^n\), \(\lim_{r\to\infty}N_y(r)=\sigma\);

\item if \(\Omega=\mathbb R^n\), \(p\neq0\), \(v\) is homogeneous of degree \(\sigma\) with centre \(0\), and \(N_p\equiv\sigma\), then \(v\) has a representative satisfying \(v(x+tp)=v(x)\) for every \(x\in\mathbb R^n\) and \(t\in\mathbb R\).

\end{enumerate}
\end{theorem}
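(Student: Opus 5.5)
The plan follows the standard Almgren/De Lellis--Spadaro scheme. Everything rests on two identities obtained by testing Dirichlet stationarity with specific variations. For the outer identity, I take \(\psi(x,u)=\phi_\varepsilon(|x-y|)\,u\), with \(\phi_\varepsilon\) a smoothed cutoff approximating \(\mathbf 1_{[0,r)}\). The \(\psi\) is not compactly supported in \(u\), so I first multiply by a cutoff \(\chi(u/R)\) and let \(R\to\infty\) by dominated convergence, using \(v\in W^{1,2}_{\rm loc}\). Letting \(\varepsilon\to0\) at Lebesgue points of the radial integrals gives, for a.e. \(r\),
\[
\int_{B_r(y)}|Dv|^2=\int_{\partial B_r(y)}\sum_\ell v_\ell\cdot\partial_\nu v_\ell .
\]
For the inner identity, I take \(\zeta(x)=\phi_\varepsilon(|x-y|)(x-y)\), which in the same way yields
\[
(n-2)\int_{B_r}|Dv|^2=r\int_{\partial B_r}\bigl(|Dv|^2-2|\partial_\nu v|^2\bigr).
\]
Both identities use the a.e. pointwise sheet calculus of Definition~\ref{def:sobolev-spaces} and the chain rule for \(\xi\circ v\).

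The derivatives of \(D_y\) and \(H_y\) come next. From the inner identity,
\[
D_y'(r)=2r^{2-n}\int_{\partial B_r}|\partial_\nu v|^2\ge0 .
\]
Differentiating \(H_y\) (first for Lipschitz approximations, or via \(r\mapsto\int_{\partial B_1}|v(y+r\omega)|^2\), which is absolutely continuous) gives
\[
H_y'(r)=2r^{1-n}\int_{\partial B_r}\sum_\ell v_\ell\cdot\partial_\nu v_\ell=\frac{2}{r}D_y(r)\ge0 .
\]
This proves that \(D_y,H_y\) are nondecreasing and that \((\log H_y)'=2N_y/r\). Cauchy--Schwarz then gives
\[
N_y'=\frac{2r^{3-2n}}{H_y^2}\left[\int_{\partial B_r}|v|^2\int_{\partial B_r}|\partial_\nu v|^2-\Bigl(\int_{\partial B_r}\sum_\ell v_\ell\cdot\partial_\nu v_\ell\Bigr)^2\right]\ge0 ,
\]
and \(N_y\) is absolutely continuous because \(D_y\) and \(H_y\) are.

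Each consequence then follows in turn.
\begin{enumerate}[label=\textup{(\roman*)}]
\item Suppose \(H_y(r_0)=0\) with \(H_y>0\) on \((r_0,r_1)\). Integrating \((\log H_y)'\le 2N_y(r_1)/r\) from \(r_0\) upward contradicts \(\log H_y\to-\infty\). Hence \(H_y\equiv0\) near \(y\), so \(v\equiv0\) on a ball. Propagating through a chain of balls (unique continuation, via the same doubling argument applied at other centres) gives \(v\equiv0\) on the component. Integrating \((\log H_y)'=2N_y/r\) against monotone \(N_y\) gives the two-sided bound.
\item The hypothesis gives \(H_y(r)\le Cr^{2\gamma}\). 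If \(N_y(0)<\gamma\), the lower half of (i) would give \(H_y(r)\ge cr^{2N_y(r_1)}\), which is too large.
\item \(N_y'\equiv0\) forces equality in Cauchy--Schwarz on a.e. sphere, i.e. \(\partial_\nu v_\ell=\lambda(r)v_\ell\). The identity \(H'=2D/r\) gives \(\lambda=\sigma/r\). Solving the radial ODE along a.e. ray, sheetwise via the one-dimensional selection of Lemma~\ref{lem:one-dimensional-jet-selection} in its Sobolev version, gives \(v(y+\rho\omega)=(\rho/r)^\sigma v(y+r\omega)\).
\item Homogeneity gives \(H_0(r)=cr^{2\sigma}\). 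For \(r\gg|y|\), the balls satisfy \(B_{r-|y|}(0)\subset B_r(y)\subset B_{r+|y|}(0)\). Combining this with the monotonicity of \(D\) and with \(H_y\) compared to averaged \(H_0\) via (i) yields \(N_y(r)\to\sigma\).
\item Apply (iii) at \(p\): \(v\) is homogeneous of degree \(\sigma\) about both \(0\) and \(p\). Differentiating the two relations \(x\cdot Dv=\sigma v\) and \((x-p)\cdot Dv=\sigma v\) along dilations and subtracting gives \(p\cdot Dv_\ell=0\) a.e. Hence \(v\) is invariant along lines parallel to \(p\), after choosing a representative by Fubini.
\end{enumerate}

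I expect the main obstacle to be rigour at the low regularity \(W^{1,2}_{\rm loc}\). The delicate points are justifying the outer variation with the unbounded field \(\psi=u\), and the a.e. differentiability of \(H_y\) with its trace identity. Equally delicate is extracting homogeneity in (iii) from a.e. equality without a continuous selection of sheets. For the last point, I would work through \(\xi\circ v\) or through the \(Q\)-valued restriction to rays, where one-dimensional Sobolev selections exist, and use that \(\mathcal G\bigl(v(t\omega),t^{\sigma}v(\omega)\bigr)\) has vanishing derivative a.e.
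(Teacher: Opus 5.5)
Your proposal is correct and is essentially the argument the paper relies on. The paper gives no independent proof; it cites Almgren's and De Lellis--Spadaro's derivation (outer and inner variation identities on spheres, the Cauchy--Schwarz formula for \(N_y'\), and integration of \((\log H_y)'=2N_y/r\)) and notes that only Dirichlet stationarity is used, which is exactly what you reconstruct, with (iv) being the one step you leave only sketched. To complete (iv), use nested balls and homogeneity to get \(\int_{B_R(y)}|v|^2\asymp R^{n+2\sigma}\); together with the monotonicity of \(H_y\) this gives \(H_y(R)\asymp R^{2\sigma}\), and letting \(R\to\infty\) in both inequalities of (i) gives \(N_y(r)\to\sigma\).
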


\begin{proof}
The proof is due to Almgren and can be reconstructed from his arguments in \cite[Chapter~2]{Alm00}. In particular, the monotonicity formula is proved in \cite[Theorem~2.6]{Alm00}. See \cite[Section~3.4]{DLS11} for a modern exposition. The setting there is that of Dirichlet minimizing \(Q\)-valued functions, but the proof only uses Dirichlet stationarity.
\end{proof}

\subsection{\(W^{2,2}\) regularity and compactness}

We briefly recall the following classical extension lemma.

\begin{lemma}[Zero extension]
\label{lem:zero-extension}
Let \(K\subset\Omega\) be relatively closed and let $f\in C^0(\Omega,\mathbb R^N)\cap W^{1,2}_{\mathrm{loc}}(\Omega\setminus K,\mathbb R^N)$.
Suppose that \(f=0\) on \(K\) and that $\int_{\Omega'\setminus K}|Df|^2<\infty$ for every \(\Omega'\Subset\Omega\). Then \(f\in W^{1,2}_{\mathrm{loc}}(\Omega,\mathbb R^N)\), with $Df=\mathbf 1_{\Omega\setminus K}Df$ a.e. in $\Omega$.
In particular, $Df=0$ a.e. on $K$.
\end{lemma}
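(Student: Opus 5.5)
The plan is to show directly that the distributional derivative of \(f\) on all of \(\Omega\) is the \(L^2_{\mathrm{loc}}\) function \(G:=\mathbf 1_{\Omega\setminus K}Df\). I will do this by approximating \(f\) with truncations that vanish in a neighbourhood of \(K\). Since the statement is local and componentwise, I fix a component \(f^a\), write \(f\) for it, and work in some \(\Omega'\Subset\Omega\). For \(\varepsilon>0\), let \(\phi_\varepsilon:\mathbb R\to\mathbb R\) be the Lipschitz function \(\phi_\varepsilon(s):=\operatorname{sign}(s)\max(|s|-\varepsilon,0)\), and set \(f_\varepsilon:=\phi_\varepsilon\circ f\).

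The first step is to check that \(f_\varepsilon\in W^{1,2}(\Omega')\) with \(Df_\varepsilon=\mathbf 1_{\{|f|>\varepsilon\}}Df\). Because \(f\) is continuous and vanishes on \(K\), the set \(\{|f|\le\varepsilon\}\) is a relatively open neighbourhood of \(K\), so \(f_\varepsilon\) vanishes there. Each point of \(\Omega'\) either lies in this open set, where \(f_\varepsilon\equiv0\), or lies in the open set \(\{|f|>\varepsilon/2\}\subset\Omega\setminus K\). On the latter set \(f\in W^{1,2}_{\mathrm{loc}}\), so the classical chain rule for Lipschitz truncations applies. A partition of unity subordinate to these two open sets then gives \(f_\varepsilon\in W^{1,2}_{\mathrm{loc}}(\Omega)\), with \(Df_\varepsilon=\mathbf 1_{\{|f|>\varepsilon\}}Df\) almost everywhere. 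This function is bounded in \(L^2(\Omega')\) by \(\int_{\Omega'\setminus K}|Df|^2\). The gluing is the only delicate point, and it works precisely because \(\{|f|<\varepsilon\}\) is open by continuity.

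Next I pass to the limit \(\varepsilon\downarrow0\). For this step I first note that \(f\in L^2(\Omega')\), since \(f\) is continuous and \(\overline{\Omega'}\) is compact. Moreover \(|f_\varepsilon-f|\le\varepsilon\) uniformly, so \(f_\varepsilon\to f\) uniformly. Meanwhile \(\mathbf 1_{\{|f|>\varepsilon\}}Df\to\mathbf 1_{\{f\neq0\}\setminus K}Df\) in \(L^2(\Omega')\) by dominated convergence. For \(\varphi\in C^1_c(\Omega')\) this gives
\[
\int f\,\partial_i\varphi=\lim_{\varepsilon\downarrow0}\int f_\varepsilon\,\partial_i\varphi=-\lim_{\varepsilon\downarrow0}\int \mathbf 1_{\{|f|>\varepsilon\}}\partial_if\,\varphi=-\int_{\{f\neq0\}\setminus K}\partial_if\,\varphi .
\]

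Finally, on \(\{f=0\}\setminus K\) we have \(Df=0\) almost everywhere, by the standard Sobolev locality fact applied to \(f\in W^{1,2}_{\mathrm{loc}}(\Omega\setminus K)\). Hence the right-hand side above equals \(-\int\mathbf 1_{\Omega\setminus K}\partial_if\,\varphi\). Therefore \(f\in W^{1,2}_{\mathrm{loc}}(\Omega)\) with \(Df=\mathbf 1_{\Omega\setminus K}Df\), and in particular \(Df=0\) almost everywhere on \(K\). I expect no real obstacle in this argument; the main thing to keep track of is that the truncation vanishes on a genuine open neighbourhood of \(K\).
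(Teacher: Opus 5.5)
Your proof is correct and is essentially the paper's argument. Both truncate $f$ near $0$ so that the truncation vanishes on an open neighbourhood of $K$, check that it lies in $W^{1,2}_{\mathrm{loc}}(\Omega)$ locally around each point, integrate by parts, let the truncation parameter tend to zero by dominated convergence, and finish with the level-set property $Df=0$ a.e.\ on $\{f=0\}\setminus K$. The differences are cosmetic: you use the piecewise-linear shrinkage $\phi_\varepsilon$ instead of the paper's smooth odd cutoff $\gamma_\delta$, and you spell out the gluing the paper leaves implicit. One small slip: the open neighbourhood of $K$ on which $f_\varepsilon\equiv0$ is $\{|f|<\varepsilon\}$, not the closed set $\{|f|\le\varepsilon\}$, as you correctly use afterwards.
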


\begin{proof}
Without loss of generality assume $N=1$. Let $\gamma_\delta:\mathbb{R}\to \mathbb{R}$ be odd, smooth, non-decreasing and convex on $(0,\infty)$, with
\[
\gamma_\delta=0\quad\text{on }[0,\delta],\qquad 0\le\gamma_\delta'\le1,\qquad \gamma_\delta(t)\to t,\qquad \gamma_\delta'(t)\to1
\]
for every $t>0$. Then $\gamma_\delta(f)\in W^{1,2}_{\mathrm{loc}}(\Omega)$ (as can be checked locally around each point). So, if $\varphi\in C^1_c(\Omega)$ is an arbitrary test function, $\int \gamma_\delta(f) D\varphi = -\int D(\gamma_\delta(f)) \varphi$. Expanding the RHS and letting \(\delta\downarrow0\), the dominated convergence theorem yields \(\int_\Omega f\,D\varphi=-\int_{\Omega\setminus\{f=0\}}Df\,\varphi\). Since \(Df=0\) almost everywhere on \(\{f=0\}\setminus K\) by the level-set property for Sobolev functions, the last integral equals \(-\int_{\Omega\setminus K}Df\,\varphi\). This proves the claimed extension formula.
\end{proof}

\begin{proposition}[Inductive Bochner inequality]
\label{prop:inductive-bochner}
Let \(v\in C^1\bigl(\Omega,\mathcal A_q(\mathbb R^k)\bigr)\) be stationary with respect to outer variations. Then \(v\in W^{2,2}_{\mathrm{loc}}(\Omega)\) and $\Delta|Dv|^2\ge 2|D^2v|^2$ in the sense of distributions. Moreover, for every \(B_R\Subset\Omega\),
\[
\sup_{B_{R/2}}|Dv|^2+R^{-n}\int_{B_{R/2}}|Dv|^2+R^{2-n}\int_{B_{R/2}}|D^2v|^2\le CR^{-n-2}\int_{B_R}|v|^2,
\]
where \(C=C(n)\).
\end{proposition}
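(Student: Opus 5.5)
We argue by induction on the multiplicity \(q\), in the spirit of Lemma~\ref{lem:stationarity-differential-blocks}. For \(q=1\), outer stationarity means that \(v\) is weakly harmonic, hence smooth. The Bochner identity \(\Delta|Dv|^2=2|D^2v|^2\) then holds, and the estimate follows from standard interior estimates for harmonic functions.

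For general \(q\), it suffices to prove \(v\in W^{2,2}_{\mathrm{loc}}\) together with the distributional inequality. Indeed, once \(\Delta|Dv|^2\geq2|D^2v|^2\) is known, \(|Dv|^2\) is subharmonic. The mean value inequality gives \(\sup_{B_{R/2}}|Dv|^2\leq CR^{-n}\int_{B_{3R/4}}|Dv|^2\). Testing the Bochner inequality against a cutoff bounds \(R^{2-n}\int_{B_{R/2}}|D^2v|^2\) by \(CR^{-n}\int_{B_{3R/4}}|Dv|^2\). Finally, a Caccioppoli inequality bounds \(\int_{B_{3R/4}}|Dv|^2\) by \(CR^{-2}\int_{B_R}|v|^2\). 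To obtain Caccioppoli, test outer stationarity with \(\psi(x,y)=\varphi(x)^2y\), suitably truncated in \(y\) so that \(\psi\in C^1_c\); this is legitimate because \(v\) is locally bounded. All constants depend only on \(n\), by scaling.

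For the qualitative part, split \(v=(v-\eta\circ v)+\eta\circ v\). Testing outer stationarity with \(\psi(x,y)=\varphi(x)e\) shows that \(\eta\circ v\) is harmonic. Subtracting it, which preserves outer stationarity, we may assume \(\eta\circ v=0\). Then \(C_v=K_v^{0}:=\{v=q\llbracket0\rrbracket\}\) is relatively closed, and we treat \(\Omega\setminus C_v\) and \(C_v\) separately.

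\emph{Near a point of \(\Omega\setminus C_v\).} The values of \(v\) separate into clusters of multiplicity \(<q\), so \(v\) locally splits as a \(C^1\) sum by Proposition~\ref{prop:local-splitting}, applied to the value part. Outer stationarity localizes to each cluster, since \(\psi\) can be chosen supported near one cluster in \(y\). The inductive hypothesis then gives \(W^{2,2}\) and the Bochner inequality for each cluster. Summing, we get \(\Delta|Dv|^2\geq2|D^2v|^2\) on \(\Omega\setminus C_v\), together with the local estimate on balls avoiding \(C_v\).

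\emph{Across \(C_v\): the main obstacle.} On \(C_v\), \(v\) is \(C^1\) with every sheet at \(0\). Differentiability then gives \(Dv=q\llbracket A\rrbracket\) there, and \(\eta\circ v=0\) forces \(A=0\). Hence \(Dv=0\) on \(C_v\).

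We first aim for a uniform \(L^2\) bound on \(D^2v\) in \(\Omega'\setminus C_v\). We plan to use the Bochner inequality on \(\Omega\setminus C_v\) tested with \(\varphi^2\gamma_\delta'(|Dv|^2)\)-type truncations, where \(\gamma_\delta\) is as in Lemma~\ref{lem:zero-extension}. This makes the test function supported where \(|Dv|^2>\delta\), which is a compact subset of \(\Omega\setminus C_v\) locally, since \(Dv\) is continuous and vanishes on \(C_v\). Writing \(w=|Dv|^2\), one gets
\[
\int\varphi^2\gamma_\delta'(w)\,2|D^2v|^2\leq-\int D(\varphi^2\gamma_\delta'(w))\cdot Dw .
\]
The term containing \(\gamma_\delta''\) has a good sign, and the remaining term is controlled by Cauchy--Schwarz using \(|Dw|\leq2|Dv||D^2v|\) and \(\sup|Dv|<\infty\). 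Letting \(\delta\downarrow0\) gives \(\int_{\Omega'\setminus C_v}|D^2v|^2<\infty\).

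Lemma~\ref{lem:zero-extension}, applied to \(\Xi\circ j_1v\) (normalized to vanish at \(q\llbracket(0,0)\rrbracket\)) with \(K=C_v\), then yields \(j_1v\in W^{1,2}_{\mathrm{loc}}\). Hence \(v\in W^{2,2}_{\mathrm{loc}}\) with \(D^2v=0\) a.e. on \(C_v\). Finally, \(\gamma_\delta(w)\to w\) and the same truncated computation pass to the distributional inequality \(\Delta w\geq2|D^2v|^2\) on all of \(\Omega\), because the only extra terms involve \(\gamma_\delta''\geq0\), which have the favourable sign. The delicate points are this sign bookkeeping and checking that the truncated tests are admissible for the inductive inequality on \(\Omega\setminus C_v\).
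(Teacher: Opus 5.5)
There is a genuine gap at the step ``Differentiability then gives $Dv=q\llbracket A\rrbracket$ there''. At a point where $v(x)=q\llbracket0\rrbracket$, differentiability only gives $\mathcal G\bigl(v(x+h),\sum_{\ell}\llbracket A_\ell h\rrbracket\bigr)=o(|h|)$, and the $A_\ell$ need not coincide.

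For instance, $v(x)=\llbracket x_1\rrbracket+\llbracket -x_1\rrbracket$ is $C^1$, outer-stationary and average-free. Its complete value-collision set is $C_v=\{x_1=0\}$, yet $Dv=\llbracket dx_1\rrbracket+\llbracket -dx_1\rrbracket$ on $C_v$. Likewise, for the branched map $z\mapsto\sum_{w^2=z^3}\llbracket\operatorname{Re}w\rrbracket$ on $\mathbb R^2$, the set $C_v$ consists of three rays on which $|Dv|\sim r^{1/2}>0$. So $C_v$ may contain pieces of hypersurfaces on which $|Dv|$ is bounded below.

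Everything built on ``$Dv=0$ on $C_v$'' then breaks:
\begin{itemize}
\item Your test functions $\varphi^2\gamma_\delta'(|Dv|^2)$ are not supported in $\Omega\setminus C_v$, which is the only region where you know the Bochner inequality.
\item Lemma~\ref{lem:zero-extension} cannot be applied to $\Xi\circ j_1v$ with $K=C_v$, because $j_1v\neq q\llbracket(0,0)\rrbracket$ there.
\item The inductive hypothesis is never applied in a neighbourhood of such points. Your argument therefore gives no $L^2$ bound for $D^2v$ up to these hypersurfaces, and nothing in it rules out a singular part of $\Delta|Dv|^2$ concentrated on them.
\end{itemize}

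The missing idea is the treatment of the points of $C_v\setminus K_v$, where $K_v:=\{v=q\llbracket0\rrbracket,\ Dv=q\llbracket0\rrbracket\}$. At such a point the values coincide, but by the zero-average condition the differential sheets are not all equal. Proposition~\ref{prop:local-splitting} then splits $v$ locally into differential blocks of multiplicity $<q$.

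One must then show that each block is itself outer-stationary, which is the content of Lemma~\ref{lem:stationarity-differential-blocks}. The argument has three parts:
\begin{itemize}
\item the complete value-collision set lies in the $C^1$ hypersurface $H=\{e\cdot(\eta\circ v^\alpha-\eta\circ v^\beta)=0\}$;
\item the blocks are stationary off $H$, by value-splitting and induction;
\item the flux terms from the two sides of $H$ cancel, by continuity of $j_1v$.
\end{itemize}
Only then does the induction apply on all of $\Omega\setminus K_v$. The truncation and zero-extension steps then go through with $K=K_v$, where $j_1v$ really does vanish.

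You cite this lemma in your first sentence but never use it. Once $C_v$ is replaced by $K_v$ and this step is added, the rest of your outline matches the paper's proof: average subtraction, value-cluster localization, the $\gamma_\delta$ computation, and Caccioppoli plus the mean-value inequality for the quantitative estimate.
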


\begin{proof}
We argue by induction on $q$. For $q=1$, the map is harmonic and the conclusion follows from the classical identity $\Delta |Dv|^2=2|D^2v|^2$.

Assume the result proved for every multiplicity smaller than $q$.
Write $v_a:=\eta\circ v=\frac1q\sum_{i=1}^q v_i$, $v_f:=\sum_{i=1}^q\llbracket v_i-v_a\rrbracket$.
The average $v_a$ is harmonic, while the average-free part $v_f$ is stationary with respect to outer variations and satisfies $\eta\circ v_f=0$.
Moreover, $|v|^2=|v_f|^2+q|v_a|^2$, $|Dv|^2=|Dv_f|^2+q|Dv_a|^2$, and, once the second derivatives have been obtained, $|D^2v|^2=|D^2v_f|^2+q|D^2v_a|^2$.
It is therefore enough to prove the result for $v_f$. Thus, replacing $v$ by $v_f$, we may assume that $v_a=0$.

Let $K_v:=\{x\in\Omega:v(x)=q\llbracket0\rrbracket,\ Dv(x)=q\llbracket0\rrbracket\}$.
Let \(x\in\Omega\setminus K_v\). If the values of \(v(x)\) are not all equal, then, after shrinking to a neighbourhood \(U\) of \(x\), the map decomposes into value-separated clusters $v=\sum_{\beta=1}^M v^\beta$, each of multiplicity strictly smaller than \(q\). Every \(v^\beta\) is outer-stationary: indeed, one may localize any outer variation to a neighbourhood of the graph of \(v^\beta\) which is disjoint from the graphs of the other clusters. Hence the inductive hypothesis applies to each \(v^\beta\).
If instead \(v(x)=q\llbracket0\rrbracket\), then the differential sheets are not all equal, since otherwise the zero-average condition would give \(Dv(x)=q\llbracket0\rrbracket\) and hence \(x\in K_v\). Proposition~\ref{prop:local-splitting} therefore gives a local decomposition $v=\sum_{\alpha=1}^N v^\alpha$ into differential blocks of multiplicity strictly smaller than \(q\). By Lemma~\ref{lem:stationarity-differential-blocks}, every \(v^\alpha\) is outer-stationary, and the inductive hypothesis applies again.
Thus, in either case, \(v\) decomposes locally on \(\Omega\setminus K_v\) into outer-stationary maps of strictly smaller multiplicity. Summing the conclusions for the components gives
\[
v\in W^{2,2}_{\mathrm{loc}}(\Omega\setminus K_v),
\qquad
\Delta|Dv|^2\ge2|D^2v|^2
\quad\text{in }\Omega\setminus K_v.
\]

Let $\gamma_\delta$ be as in the proof of Lemma \ref{lem:zero-extension}. Since $|Dv|^2$ is continuous and vanishes on $K_v$, the support of $\gamma_\delta(|Dv|^2)$ is locally contained in $\Omega\setminus K_v$. Thus, for every $\zeta\in C_c^\infty(\Omega)$,
\begin{equation}
\label{eq:truncated-inequality}
2\int \zeta^2\gamma_\delta'(|Dv|^2)|D^2v|^2\le \int\gamma_\delta(|Dv|^2)\Delta(\zeta^2)\le C\int |Dv|^2 \bigl(|D\zeta|^2+|\zeta||D^2\zeta|\bigr).
\end{equation}
Letting $\delta\downarrow0$ gives
\[
2\int_{\{|Dv|^2>0\}}\zeta^2|D^2v|^2\le C\int |Dv|^2 \bigl(|D\zeta|^2+|\zeta||D^2\zeta|\bigr).
\]

Let $\xi_D$ be an Almgren embedding of $\mathcal A_q\bigl(\operatorname{Hom}(\mathbb R^n,\mathbb R^k)\bigr)$.
Since $v\in W^{2,2}_{\mathrm{loc}}(\Omega\setminus K_v)$, the differential $Dv:\Omega\setminus K_v\to\mathcal A_q\bigl(\operatorname{Hom}(\mathbb R^n,\mathbb R^k)\bigr)$ belongs to $W^{1,2}_{\mathrm{loc}}(\Omega\setminus K_v)$.
The standard level-set property applied to $\xi_D\circ Dv$ gives $D(\xi_D\circ Dv)=0$ a.e. on $\{|Dv|^2=0\}\setminus K_v$.
Since $\xi_D^{-1}$ is Lipschitz on its image, it follows that $D^2v=0$ a.e. on $\{|Dv|^2=0\}\setminus K_v$.
Consequently,
\[
\int_{\Omega\setminus K_v}\zeta^2|D^2v|^2\le C\int |Dv|^2 \bigl(|D\zeta|^2+|\zeta||D^2\zeta|\bigr).
\]

Let \(\Xi\) be an Almgren embedding of \(\mathcal A_q\bigl(\mathbb R^k\times\operatorname{Hom}(\mathbb R^n,\mathbb R^k)\bigr)\). Since \(v\in W^{2,2}_{\mathrm{loc}}(\Omega\setminus K_v)\), one has \(\Xi\circ j_1v\in W^{1,2}_{\mathrm{loc}}(\Omega\setminus K_v)\), with $|D(\Xi\circ j_1v)|^2\leq C\bigl(|Dv|^2+|D^2v|^2\bigr)$ a.e. on $\Omega\setminus K_v$.
The preceding estimate therefore shows that \(\Xi\circ j_1v\) has locally finite Dirichlet energy on \(\Omega\setminus K_v\). Moreover, since \(v\in C^1\), $\Xi\circ j_1v-\Xi\bigl(q\llbracket(0,0)\rrbracket\bigr)$ is continuous on \(\Omega\) and vanishes on \(K_v\). Lemma \ref{lem:zero-extension} gives $\Xi\circ j_1v\in W^{1,2}_{\mathrm{loc}}(\Omega)$, $ D(\Xi\circ j_1v)=0$ a.e. on $K_v$.
Hence \(v\in W^{2,2}_{\mathrm{loc}}(\Omega)\). Since \(\Xi^{-1}\) is Lipschitz on its image, the last identity also implies $D^2v=0$ a.e. on $K_v$.

Finally, the first inequality in \eqref{eq:truncated-inequality}, with $\varphi$ in place of $\zeta^2$, for any nonnegative $\varphi\in C_c^\infty(\Omega)$, gives
\[
2\int\varphi\,\gamma_\delta'(|Dv|^2)|D^2v|^2\le \int\gamma_\delta(|Dv|^2)\Delta\varphi.
\]
Letting $\delta\downarrow0$, and using that $D^2v=0$ almost everywhere on $\{|Dv|^2=0\}$, we obtain
\begin{equation}
\label{eq:bochner}
2\int\varphi|D^2v|^2\le\int |Dv|^2\Delta\varphi,
\end{equation}
which is the desired distributional Bochner inequality.

We now prove the quantitative estimate. The outer-variation identity, tested with $\zeta^2v$ for a cutoff $\zeta$ equal to one on $B_{3R/4}$ and supported in $B_R$, gives
\[
\int_{B_{3R/4}}|Dv|^2\le CR^{-2}\int_{B_R}|v|^2.
\]
Applying \eqref{eq:bochner} with a cutoff $\varphi$ equal to one on $B_{R/2}$ and supported in $B_{3R/4}$, we obtain
\[
\int_{B_{R/2}}|D^2v|^2\le CR^{-2}\int_{B_{3R/4}}|Dv|^2\le CR^{-4}\int_{B_R}|v|^2.
\]
Moreover, since $|Dv|^2$ is subharmonic, for every $x\in B_{R/2}$ the mean-value inequality on $B_{R/4}(x)\subset B_{3R/4}$ gives
\[
|Dv(x)|^2\le CR^{-n}\int_{B_{3R/4}}|Dv|^2\le CR^{-n-2}\int_{B_R}|v|^2.
\]
Therefore,
\[
\sup_{B_{R/2}}|Dv|^2+R^{-n}\int_{B_{R/2}}|Dv|^2+R^{2-n}\int_{B_{R/2}}|D^2v|^2\le CR^{-n-2}\int_{B_R}|v|^2.
\]
\end{proof}

\begin{remark}
In hindsight, the Bochner inequality proved above is actually an equality. Indeed, Theorem~\ref{thm:stationarity-of-differential} applies under the present assumption of outer stationarity and gives that \(Dv\) is Dirichlet-stationary. Testing the outer-variation identity for \(Dv\) with a cutoff approximation of \(\varphi Dv\) then gives $\Delta|Dv|^2=2|D^2v|^2$ in $\mathcal D'(\Omega)$.
\end{remark}

\begin{corollary}[Strong compactness]
\label{cor:strong-compactness}
Let \(v_j\in C^1(\mathbb R^n,\mathcal A_q(\mathbb R^k))\) be Dirichlet-stationary maps, and suppose that \(\sup_j\int_{B_R}|v_j|^2<\infty\) for every \(R<\infty\). Then, after passing to a subsequence, \(v_j\to v\) strongly in \(W^{1,2}_{\mathrm{loc}}(\mathbb R^n)\) and locally uniformly, where \(v\in W^{2,2}_{\mathrm{loc}}(\mathbb R^n)\) is Dirichlet-stationary.
\end{corollary}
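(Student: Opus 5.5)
The plan is to obtain uniform $W^{2,2}$ bounds from the inductive Bochner inequality, extract a limit with Rellich compactness, and then pass both stationarity identities to the limit using strong $W^{1,2}$ convergence.

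\textbf{Uniform bounds.} Fix $R<\infty$. Since each $v_j$ is $C^1$ and, in particular, stationary with respect to outer variations, Proposition~\ref{prop:inductive-bochner} applies on $B_{2R}$. It gives
\[
\sup_{B_R}|Dv_j|^2+\int_{B_R}|D^2v_j|^2\le C(n,R)\int_{B_{2R}}|v_j|^2,
\]
which is bounded uniformly in $j$. Together with the $L^2$ bound and the resulting $L^2$ bound on $Dv_j$, this shows that the $v_j$ are uniformly bounded in $W^{2,2}(B_R)$ and uniformly Lipschitz on $B_R$.

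\textbf{Extracting the limit.} By the uniform Lipschitz bound and the $L^2$ bound, the $v_j$ are also uniformly bounded pointwise on $B_R$. The metric Arzel\`a--Ascoli theorem, applied to $\mathcal A_q(\mathbb R^k)$ where closed bounded sets are compact, then gives a locally uniformly convergent subsequence. Lemma~\ref{lem:rellich-W22} on each ball $B_R$, combined with a diagonal argument over $R\in\mathbb N$, gives a further subsequence and a limit $v\in W^{2,2}_{\mathrm{loc}}$ with $v_j\to v$ strongly in $W^{1,2}_{\mathrm{loc}}$. The two limits coincide because both convergences imply $L^2$ convergence.

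\textbf{Passing stationarity to the limit.} The inner variation identity is an integral of expressions quadratic in $Dv_\ell$, summed over sheets, and is symmetric under permutations. Writing it as $\int F(Dv_j)\cdot(\text{bounded test data})$ with $F$ continuous and quadratically bounded, strong convergence $\int\mathcal G(j_1v_j,j_1v)^2\to0$ lets us pass to the limit term by term. This step also uses the pointwise bound $\bigl|\sum_\ell F(A_\ell)-\sum_\ell F(B_\ell)\bigr|\le C(|A|+|B|)\mathcal G(A,B)$ for the optimal matching, together with Cauchy--Schwarz.

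For the outer variations, write
\[
\sum_\ell Dv_\ell:D(\psi(x,v_\ell))=\sum_\ell\Bigl(Dv_\ell:D_x\psi(x,v_\ell)+Dv_\ell:\bigl(D_y\psi(x,v_\ell)Dv_\ell\bigr)\Bigr).
\]
This is a permutation-symmetric function $G(x,j_1v)$ of the first jet which is continuous and satisfies $|G(x,J)-G(x,J')|\le C(1+|J|+|J'|)\mathcal G(J,J')$, since $\psi\in C^1_c$ with uniformly continuous derivatives. Strictly speaking, Lipschitz dependence in $y$ requires $D_y\psi$ to be Lipschitz. I would therefore first use $\psi\in C^2_c$ and conclude by density, or use dominated convergence with a.e. convergence of the jets along a subsequence together with the uniform $L^\infty$ bound on $Dv_j$. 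The second route is cleaner: the jets are uniformly bounded on compacta, so dominated convergence applies directly.

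The main obstacle I expect is this justification that the nonlinear, sheet-wise integrands converge. It requires phrasing the integrands as continuous functions on $\mathcal A_q$ of the jet and using a.e. convergence of $j_1v_j$ in the $\mathcal G$ metric together with the uniform local gradient bound.
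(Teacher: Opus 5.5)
Your proposal is correct and follows essentially the paper's argument. You get uniform $W^{2,2}$ and gradient bounds from Proposition~\ref{prop:inductive-bochner}, extract a limit with Lemma~\ref{lem:rellich-W22} and a diagonal argument, get local uniform convergence from the uniform Lipschitz bounds, and pass both variation identities to the limit using a.e.\ convergence of the first jets and dominated convergence under the uniform local gradient bound. Your Lipschitz-plus-Cauchy--Schwarz treatment of the inner variation is an equally valid minor variant. You are right to discard the Lipschitz route for merely $C^1$ outer test fields in favour of dominated convergence.
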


\begin{proof}
For every \(R>0\), Proposition~\ref{prop:inductive-bochner} and the assumed \(L^2\)-bounds give uniform bounds for \(v_j\) in \(W^{2,2}(B_R)\) and for \(|Dv_j|\) in \(L^\infty(B_R)\). By Lemma~\ref{lem:rellich-W22} and a diagonal argument, after passing to a subsequence there exists \(v\in W^{2,2}_{\mathrm{loc}}(\mathbb R^n,\mathcal A_q(\mathbb R^k))\) such that \(v_j\to v\) strongly in \(W^{1,2}_{\mathrm{loc}}(\mathbb R^n)\). The uniform local Lipschitz bounds and the strong \(L^2_{\mathrm{loc}}\)-convergence also imply that \(v_j\to v\) locally uniformly.

It remains to verify that \(v\) is Dirichlet-stationary. By the strong convergence of the first jets, after passing to a further subsequence we may assume that \(j_1v_j\to j_1v\) almost everywhere. For fixed test fields, the integrands in the outer- and inner-variation identities are symmetric sums of continuous functions of the first-jet sheets, and therefore converge almost everywhere to the corresponding integrands for \(v\). They are supported in a fixed compact set and bounded in absolute value by \(C(1+|Dv_j|^2)\). The uniform local gradient bounds therefore allow us to pass to the limit in both identities by dominated convergence. Thus \(v\) is Dirichlet-stationary.
\end{proof}

\subsection{Hodge structure and stationarity of the differential}
\label{sec:Hodge-structure}

The purpose of this section is to establish structural properties of the linearised problem that will be used both in the Schauder theory and in the higher-dimensional homogeneity gap proved in the next section. We show that a \(W^{2,2}\) outer-stationary multiple-valued map is Dirichlet-stationary, and that its differential carries a natural Hodge structure and is itself Dirichlet-stationary.

Throughout this section, we use superscripts, rather than subscripts, to label the sheets of multiple-valued maps.

\begin{lemma}[Paired Stokes formula]
\label{lem:paired-stokes}
Let \(u=\sum_{\ell=1}^Q\llbracket(f^\ell,g^\ell)\rrbracket\in W^{1,2}_{\mathrm{loc}}\bigl(\Omega,\mathcal A_Q(\mathbb R^a\times\mathbb R^a)\bigr)\), and set \(f:=\sum_{\ell=1}^Q\llbracket f^\ell\rrbracket\) and \(g:=\sum_{\ell=1}^Q\llbracket g^\ell\rrbracket\). Suppose that \(\operatorname{spt}f\cap\operatorname{spt}g\Subset\Omega\). Then, for every \(i,j\in\{1,\ldots,n\}\),
\begin{equation}
\label{eq:paired-stokes}
\int_\Omega\sum_{\ell=1}^Q D_if^\ell\cdot D_jg^\ell=\int_\Omega\sum_{\ell=1}^Q D_jf^\ell\cdot D_ig^\ell.
\end{equation}
\end{lemma}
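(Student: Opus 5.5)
The plan is to reduce \eqref{eq:paired-stokes} to the classical identity for smooth single-valued maps with compact support, $\int D_iF\cdot D_jG=\int D_jF\cdot D_iG$, which follows from two integrations by parts. The obstruction is that the sheets $f^\ell,g^\ell$ are not individually Sobolev, and only the product structure matters. So I will express the integrand as a symmetric function of the jet of $u$ and approximate.

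\emph{Step 1: Almgren embedding.} Fix an Almgren embedding $\xi:\mathcal A_Q(\mathbb R^a\times\mathbb R^a)\to\mathbb R^N$. The integrand $\sum_\ell D_if^\ell\cdot D_jg^\ell$ is the pairing
\[
\Phi_{ij}(j_1u):=\sum_\ell (A^\ell e_i)\cdot(B^\ell e_j)
\]
evaluated on the approximate jet $j_1u=\sum_\ell\llbracket((f^\ell,g^\ell),(A^\ell,B^\ell))\rrbracket$. The quantity I need is therefore
\[
\int \Phi_{ij}(j_1u)-\Phi_{ji}(j_1u),
\]
which is the antisymmetric part. I will try to show that it is given by an exact form. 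Concretely, the two-form $\omega:=\sum_\ell df^\ell\wedge dg^\ell$, with the dot product in the target, is well defined on the space of $Q$-tuples because it is permutation invariant. Its $dx^i\wedge dx^j$ coefficient is exactly $\Phi_{ij}-\Phi_{ji}$. On the other hand, $\omega=d\theta$ with $\theta:=\sum_\ell f^\ell\cdot dg^\ell$, which is also symmetric.

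\emph{Step 2: localisation.} Because $\operatorname{spt}f\cap\operatorname{spt}g\Subset\Omega$, choose a cutoff $\chi\in C_c^\infty(\Omega)$ equal to one near this set. Off it, on each point either all $f^\ell=0$ or all $g^\ell=0$. In the first case the integrand vanishes since $Df^\ell=0$ a.e. on $\{f^\ell=0\}$; in the second, similarly $Dg^\ell=0$ a.e. The same holds for the symmetric term. So we may insert $\chi$ into both sides. Then I need
\[
\int\chi\,\omega(e_i,e_j)=0.
\]
Formally, integrating by parts gives $-\int\theta\wedge d\chi$. The integrand $\sum_\ell f^\ell\cdot D g^\ell\,D\chi$ vanishes a.e. by the same support argument, since $D\chi$ is supported where $\chi\neq1$, which lies away from $\operatorname{spt} f\cap\operatorname{spt}g$.

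\emph{Step 3: justification.} This is the main obstacle, namely making $\omega=d\theta$ rigorous for Sobolev $Q$-valued maps. I would do it along lines. For a.e. two-dimensional affine slice parallel to $\operatorname{span}(e_i,e_j)$, the restriction of $u$ is $W^{1,2}$ by Fubini applied to $\xi\circ u$. On such a plane I would approximate $u$ strongly in $W^{1,2}$ by Lipschitz $Q$-valued maps, using Lipschitz approximation (De Lellis--Spadaro) on the good set together with truncation. For Lipschitz $Q$-valued maps, the identity reduces to the one-dimensional selection: on almost every line in direction $e_j$ the sheets admit $W^{1,2}$ selections, as in the proof of Lemma~\ref{lem:rellich-W22}, so that
\[
\int \chi\sum_\ell D_j(f^\ell\cdot D_ig^\ell)
\]
can be computed sheetwise. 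Alternatively, I can mollify the symmetric polynomial structure and pass to the limit using the continuity of $\Phi_{ij}$ under strong $W^{1,2}$ convergence, since the pairing is controlled by $|Du|^2$. If a direct approach is needed, I would first try the simpler case where $u$ is $C^1$ (by Proposition~\ref{prop:local-splitting}, local sheets exist off the collision set, and the flux cancels across it as in Lemma~\ref{lem:stationarity-differential-blocks}), then conclude by density in $W^{1,2}$.
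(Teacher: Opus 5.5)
Your Steps 1 and 2 are only a reformulation of the lemma. The distributional identity $d\theta=\omega$ that you want to justify is
\[
\int\chi\,J_{ij}(u)=-\int\sum_{\ell}f^\ell\cdot\bigl(D_i\chi\,D_jg^\ell-D_j\chi\,D_ig^\ell\bigr)\quad\text{for all }\chi\in C_c^\infty(\Omega),\qquad J_{ij}(u):=\sum_\ell\bigl(D_if^\ell\cdot D_jg^\ell-D_jf^\ell\cdot D_ig^\ell\bigr),
\]
and this is exactly the lemma applied to the pair $(\chi f,g)$. So all the content sits in Step 3, and there the proof has a genuine gap.

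Passing from Sobolev to Lipschitz maps by the De Lellis--Spadaro approximation is fine. The paper does the same, using $u_h=u$ off sets $E_h$ with $\int_{E_h}(|Du_h|^2+|Du|^2)\to0$. The problem is that you never prove the Lipschitz case:
\begin{itemize}
\item One-dimensional selections along lines in direction $e_j$ only select the sheets restricted to each line. The transverse derivative $D_ig^\ell$ along such a line is merely bounded and measurable in $t$, so $D_j(f^\ell\cdot D_ig^\ell)$ has no sheetwise meaning. The selections on different lines are also not coherent in the $e_i$-direction.
\item The identity is genuinely two-dimensional. Its only obstruction is branching around points where the sheets collide, and no line-by-line argument can see it.
\item There is no mollification of a $Q$-valued map. $\Phi_{ij}$ is defined on $Q$-point jets, and mollifying $\xi\circ u$ leaves the image of $\xi$.
\item Density of $C^1$ $Q$-valued maps in $W^{1,2}$ is not something you can quote.
\item The flux cancellation of Lemma~\ref{lem:stationarity-differential-blocks} works only because there the collision set lies in a $C^1$ hypersurface, which came from the blocks having distinct differentials. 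Near a point of complete first-jet collapse, such as the origin for $z\mapsto\sum_{w^2=z}\llbracket w^3\rrbracket$, no such hypersurface exists.
\end{itemize}

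What is missing is a quantitative cut-off at the complete collision set, organised as an induction on $Q$ for Lipschitz maps (after making $u$ compactly supported as in your Step 2).
\begin{itemize}
\item Subtract the averages $\bar f,\bar g$. The mixed terms cancel, and what remains besides the zero-average part is a single-valued term handled classically, so you may assume $\eta\circ u=0$.
\item Set $r:=\mathcal G(u,Q\llbracket(0,0)\rrbracket)$. On $\{r>0\}$ the map splits locally into Lipschitz maps of smaller multiplicity.
\item Apply the inductive hypothesis to these pieces, multiplying only the first component by $\rho_\beta\chi_\varepsilon$, where $\chi_\varepsilon:=\chi(r/\varepsilon)$ and $\{\rho_\beta\}$ is a partition of unity. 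This is exactly why the lemma is stated for pairs with $\operatorname{spt}f\cap\operatorname{spt}g\Subset\Omega$. It yields
\[
0=\int\chi_\varepsilon J_{ij}(u)+\int\sum_\ell f^\ell\cdot\bigl(D_i\chi_\varepsilon\,D_jg^\ell-D_j\chi_\varepsilon\,D_ig^\ell\bigr).
\]
\item The zero-average normalisation gives $(\sum_\ell|f^\ell|^2)^{1/2}\le r$, and $|Dr|\le|Du|$. So the last integral is bounded by $C\int_{\{0<r<2\varepsilon\}}\frac r\varepsilon|Du|^2\le C\int_{\{0<r<2\varepsilon\}}|Du|^2\to0$.
\item Since $Du=0$ a.e. on $\{r=0\}$, letting $\varepsilon\downarrow0$ gives $\int J_{ij}(u)=0$.
\end{itemize}
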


\begin{proof}
Write \(J_{ij}(u):=\sum_{\ell=1}^Q(D_if^\ell\cdot D_jg^\ell-D_jf^\ell\cdot D_ig^\ell)\). By Sobolev locality, multiplying both \(f\) and \(g\) by a smooth compactly supported cutoff equal to one near \(\operatorname{spt}f\cap\operatorname{spt}g\) does not change either integral in \eqref{eq:paired-stokes}. We may therefore assume that \(u\) is compactly supported in \(\Omega\).

We first prove the result for Lipschitz maps by induction on \(Q\). The case \(Q=1\) follows by smooth approximation and ordinary integration by parts. Assume that the result holds for every multiplicity smaller than \(Q\). Let \(\bar f:=Q^{-1}\sum_{\ell=1}^Qf^\ell\), \(\bar g:=Q^{-1}\sum_{\ell=1}^Qg^\ell\), and \(u_0:=\sum_{\ell=1}^Q\llbracket(f^\ell-\bar f,g^\ell-\bar g)\rrbracket\). Cancellation of the mixed terms gives
\[
J_{ij}(u)=J_{ij}(u_0)+Q(D_i\bar f\cdot D_j\bar g-D_j\bar f\cdot D_i\bar g).
\]
The last term has zero integral by the case \(Q=1\). Thus we may assume that \(u\) has zero average.

Set \(r(x):=\mathcal G(u(x),Q\llbracket(0,0)\rrbracket)\) and \(C:=\{r=0\}\). Since \(u\) is Lipschitz and has zero average, it splits locally on \(\Omega\setminus C\) into Lipschitz maps of multiplicity smaller than \(Q\). Choose a smooth function \(\chi:[0,\infty)\to[0,1]\) such that \(\chi=0\) on \([0,1]\) and \(\chi=1\) on \([2,\infty)\), and set \(\chi_\varepsilon:=\chi(r/\varepsilon)\). Since \(u\) is compactly supported, \(\operatorname{spt}\chi_\varepsilon\Subset\Omega\setminus C\).

Choose a finite smooth partition of unity \(\{\rho_\beta\}\) on a neighbourhood of \(\operatorname{spt}\chi_\varepsilon\), with each \(\rho_\beta\) compactly supported in an open set on which \(u\) splits into maps of smaller multiplicity. Apply the induction hypothesis to each such map, with its first component multiplied by \(\rho_\beta\chi_\varepsilon\). Summing over the pieces and over \(\beta\), and expanding the derivatives, gives
\[
0=\int_\Omega\chi_\varepsilon J_{ij}(u)+\int_\Omega\sum_{\ell=1}^Q f^\ell\cdot\left(D_i\chi_\varepsilon D_jg^\ell-D_j\chi_\varepsilon D_ig^\ell\right).
\]
Since \(|Dr|\leq|Du|\) and \((\sum_\ell|f^\ell|^2)^{1/2}\leq r\), the absolute value of the second integral is bounded by
\[
C\int_{\{0<r<2\varepsilon\}}\frac{r}{\varepsilon}|Du|^2\leq C\int_{\{0<r<2\varepsilon\}}|Du|^2\longrightarrow0.
\]
Moreover, \(Du=0\) almost everywhere on \(C\), so \(J_{ij}(u)=0\) almost everywhere there. Dominated convergence therefore gives \(\int_\Omega J_{ij}(u)=0\), completing the induction.

Finally, let \(u\) be Sobolev, still with compact support. By the Lipschitz approximation of \cite[Proposition~4.4]{DLS11}\footnote{A simpler extrinsic proof can be obtained using the special Almgren embedding known as Brian White's embedding and usually denoted by \(\xi_{\mathrm{BW}}\), which enjoys the property of being a local isometry in the sense of \cite[Corollary~2.2]{DLS11}. In particular, \(|D(\xi_{\mathrm{BW}}\circ u)|=|Du|\) a.e.; see \cite[Proposition~2.20]{DLS11}.}, followed by multiplication by a fixed spatial cutoff, there exist Lipschitz maps \(u_h\), supported in a fixed compact subset of \(\Omega\), and measurable sets \(E_h\) such that \(u_h=u\) almost everywhere outside \(E_h\) and
\[
\int_{E_h}\left(|Du_h|^2+|Du|^2\right)\longrightarrow0.
\]
By Sobolev locality, \(J_{ij}(u_h)=J_{ij}(u)\) almost everywhere outside \(E_h\). Hence
\[
\|J_{ij}(u_h)-J_{ij}(u)\|_{L^1(\Omega)}\leq C\int_{E_h}\left(|Du_h|^2+|Du|^2\right)\longrightarrow0.
\]
Since \(\int_\Omega J_{ij}(u_h)=0\) by the Lipschitz case, passing to the limit proves the claim.
\end{proof}

\begin{lemma}[Hodge structure of the second jet]
\label{lem:Hodge-second-jet}
Let $v=\sum_{\ell=1}^Q\llbracket v^\ell\rrbracket\in W_{\mathrm{loc}}^{2,2}\bigl(\Omega,\mathcal A_Q(\mathbb R^k)\bigr)$ and let \(j_2v:=\sum_{\ell=1}^Q\llbracket(v^\ell,Dv^\ell,D^2v^\ell)\rrbracket\) be the approximate second jet of \(v\), as in Definition~\ref{def:sobolev-spaces}. Then, for a.e. \(x\in\Omega\) and every \(\ell\), \(D^2v^\ell(x)\) is symmetric.

If \(v\) is also outer stationary, then, for a.e. \(x\in\Omega\) and every \(\ell\), \(D^2v^\ell(x)\) is also trace-free, that is
\[
D^2_{ij}v^\ell(x)=D^2_{ji}v^\ell(x),
\qquad
\sum_{i=1}^n D^2_{ii}v^\ell(x)=0.
\]
\end{lemma}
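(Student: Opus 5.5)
Symmetry will come from the paired Stokes formula (Lemma~\ref{lem:paired-stokes}), and trace-freeness from the outer-variation identity tested with functions that single out individual sheets.

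\emph{Symmetry.} Fix a component \(a\) and indices \(i,j\). Consider the \(Q\)-valued map \(x\mapsto\sum_\ell\llbracket(D_iv^{\ell,a},\,\varphi v^{\ell,a}\dots)\rrbracket\). The goal is a weak identity of the form \(\int\sum_\ell D_j(D_iv^\ell)\,\psi^\ell=\int\sum_\ell D_i(D_jv^\ell)\,\psi^\ell\) for test quantities \(\psi^\ell\) that are arbitrary continuous symmetric functions of the first jet. I would apply Lemma~\ref{lem:paired-stokes} twice, via the chain rule.
\begin{itemize}
\item First pairing: take \(f^\ell=\partial_{A_{ai}}\Phi(x,j_1v^\ell)\)-type compositions with \(g^\ell=\) the value \(v^{\ell,a}\).
\item Simpler pairing: take \(f^\ell=Dv^{\ell}\) components and \(g^\ell=\zeta(x)\,\theta(j_1v^\ell)\), with \(\theta\) a smooth compactly supported function on jet space. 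Both lie in \(W^{1,2}\) because \(j_1v\in W^{1,2}\), and the map \((f,g)\) is a \(Q\)-valued \(W^{1,2}\) map in the sense of the lemma.
\end{itemize}
Lemma~\ref{lem:paired-stokes} applied with \(f^\ell=v^{\ell,a}\) and \(g^\ell=\zeta\,\theta(j_1 v^\ell)\) gives \(\int\sum_\ell D_iv^{\ell,a}D_j g^\ell=\int\sum_\ell D_jv^{\ell,a}D_ig^\ell\). Expanding \(D_jg^\ell\) by the chain rule produces the terms \(\partial_{A}\theta\cdot D_jDv^\ell\), where \(D_jDv^\ell=S^\ell e_j\) by Remark~\ref{rem:sobolev-equivalences}(ii). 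Choosing \(\theta(a,A)=A_{ai}\) cut off at large values (so \(D_iv^{\ell,a}=\theta\) locally), and also varying \(\zeta\), isolates \(\int\zeta\sum_\ell\kappa(j_1v^\ell)(S^\ell_{ij}-S^\ell_{ji})\cdot(\dots)=0\) for a rich family of symmetric weights \(\kappa\). Since such symmetric functions of the sheets separate the sheets of \(j_2v\) almost everywhere, this gives \(S^\ell_{ij}=S^\ell_{ji}\) a.e. for each \(\ell\). An alternative that avoids the bookkeeping is to use the fact that, a.e., \(j_1v\) is approximately differentiable and \(v\) has an approximate second-order Taylor expansion sheetwise. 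The approximate Hessian of each sheet is then symmetric by the classical argument for approximate second derivatives (Whitney/Federer: a function with an approximate second-order expansion has symmetric approximate Hessian). Concretely, Remark~\ref{rem:sobolev-equivalences}(ii) shows that the approximate jet of \(j_1v\) at a density point is \(\sum\llbracket((a_\ell,A_\ell),(A_\ell,S_\ell))\rrbracket\), which is the jet of a \(Q\)-valued quadratic expansion. I would actually run this second route: the Taylor polynomial \(a_\ell+A_\ell h+\tfrac12 S_\ell(h,h)\), compared on the two iterated directional lines, forces \(S_\ell=S_\ell^T\). The one technical point is that approximate limits along lines are only available for a.e. line, which Fubini handles.

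\emph{Trace-freeness.} Test the outer-variation identity with \(\psi(x,y)=\zeta(x)\,\chi(y)\,e_a\), where \(\chi\) is smooth and compactly supported. The identity gives
\[
\int\sum_\ell\Bigl(\zeta\chi(v^\ell)\,D v^{\ell,a}\cdot D\zeta\ \text{-terms}+\zeta\,D\chi(v^\ell)[Dv^\ell]\cdot D v^{\ell,a}\Bigr)=0 .
\]
Since \(Dv^\ell\in W^{1,2}\), I would integrate by parts sheetwise, which is legitimate because the integrand \(\chi(v^\ell)D_iv^{\ell,a}\) is a symmetric \(W^{1,2}\) function of \(j_1v\). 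Applying the single-valued divergence theorem to the symmetric sum \(\sum_\ell\chi(v^\ell)D_iv^{\ell,a}\), whose derivative is computed by the chain rule through the Almgren embedding, yields
\[
\int\zeta\sum_\ell\chi(v^\ell)\operatorname{tr}S^{\ell,a}=0
\]
for all \(\zeta\) and \(\chi\). Hence \(\sum_\ell\chi(v^\ell)\operatorname{tr}S^{\ell,a}=0\) a.e. for all \(\chi\), so summing over sheets with a common value gives zero trace. This is not yet sheetwise when distinct sheets share a value but differ in \(Dv\). To fix this, replace \(\chi(y)\) by a jet-dependent weight. Use instead \(\psi\) to realise variations of the form \(\zeta\chi(v^\ell)\) and then test a second time with the \(W^{1,2}\) symmetric function \(\sum_\ell\chi(v^\ell)\,\theta(Dv^\ell)D_iv^{\ell,a}\). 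Outer variations cannot directly see \(\theta(Dv^\ell)\), however. The resolution I would try first is locality: on the set where two sheets share a value but have different differentials, Remark~\ref{rem:sobolev-equivalences}(ii) (\(A_\ell=A_m\) a.e. on \(\{a_\ell=a_m\}\)) shows that this set is null. Likewise \(S_\ell=S_m\) a.e. where \(j_1\) sheets coincide. Therefore value-clusters coincide with jet-clusters a.e., and the value-weighted identity already separates the distinct second-jet sheets. I expect this separation step, making the clustered identity sheetwise, to be the main obstacle.
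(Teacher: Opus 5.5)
Your trace-free argument is correct and is the paper's argument. You test outer stationarity with $\zeta(x)\chi(y)e_a$, subtract the divergence theorem for the single-valued function $\zeta\sum_\ell\chi(v^\ell)D_iv^{\ell,a}$, and localize with a countable dense family of $\chi$. You then use that, a.e.\ on $\{v^\ell=v^m\}$, one has $Dv^\ell=Dv^m$ and hence $D^2v^\ell=D^2v^m$, so each value cluster carries a single second-order sheet. The separation step you call the main obstacle is therefore already settled by the locality argument you give.

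The gap is in the symmetry part, in the route you say you would actually run. First, calling $\sum_\ell\llbracket((a_\ell,A_\ell),(A_\ell,S_\ell))\rrbracket$ ``the jet of a $Q$-valued quadratic expansion'' is circular. For $q(h)=a+Ah+\frac12S(h,h)$ one has $j_1(j_1q)(0)=((a,A),(A,\frac12(S+S^T)))$, so the identification presupposes $S_\ell=S_\ell^T$. Second, a second-order expansion $a_\ell+A_\ell h+\frac12S_\ell(h,h)$ of $v$ only sees the symmetric part of $S_\ell$, so it cannot force the antisymmetric part to vanish. Third, such an expansion does not even follow from approximate differentiability of $j_1v$. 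Passing from $Dv$ to $v$ requires integrating along paths through the given point $x$. Density-one information says nothing there, and Fubini only gives a.e.\ line in a fixed direction, not the lines through a fixed point. Detecting $S_\ell-S_\ell^T$ pointwise would need a Schwarz-type mixed difference around small rectangles, hence a consistent selection of sheets along both iterated paths. Such a selection is not available in general for $Q$-valued Sobolev maps: sheets can be permuted around loops, and for $n\geq4$ a $W^{2,2}$ map need not even be continuous. The ``Whitney/Federer'' fact does not bridge this. In the Sobolev setting, symmetry of the Hessian is the commutation of distributional derivatives, and its $Q$-valued substitute is exactly Lemma~\ref{lem:paired-stokes}.

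Your first route is the right one, but as written it is incomplete. The paired Stokes formula by itself only exchanges first derivatives, and no choice of $\theta$ (such as $A_{ai}$) makes $S^\ell_{ij}-S^\ell_{ji}$ appear; you must combine it with the divergence theorem. Set $g^\ell:=\Psi(x,v^\ell)$ and expand both $0=\int D_i\bigl(\sum_\ell D_jv^\ell\cdot g^\ell\bigr)$ and $0=\int D_j\bigl(\sum_\ell D_iv^\ell\cdot g^\ell\bigr)$. Subtract them, and cancel $\int\sum_\ell\bigl(D_jv^\ell\cdot D_ig^\ell-D_iv^\ell\cdot D_jg^\ell\bigr)$ using Lemma~\ref{lem:paired-stokes} applied to $\sum_\ell\llbracket(v^\ell,g^\ell)\rrbracket$. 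This gives $\int\sum_\ell\bigl(D_iD_jv^\ell-D_jD_iv^\ell\bigr)\cdot\Psi(x,v^\ell)=0$. The same cluster argument you used for the trace then yields symmetry sheet by sheet. Value-dependent tests suffice, so jet-dependent weights are unnecessary; this is precisely the paper's proof.
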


\begin{proof}
Set \(G:=Dv\in W^{1,2}_{\mathrm{loc}}(\Omega, \mathcal A_Q(\mathbb R^{k\times n}))\) and write $G=\sum_{\ell=1}^Q\llbracket G^\ell\rrbracket$, $G^\ell=(G^\ell_1,\ldots,G^\ell_n)$, $G^\ell_i\in\mathbb R^k$.
Fix $\Psi\in C_c^1(\Omega\times\mathbb R^k;\mathbb R^k)$, $\Psi^\ell(x):=\Psi(x,v^\ell(x))$.
The divergence theorem and Lemma \ref{lem:paired-stokes} (applied to $\sum_{\ell=1}^Q \llbracket (v^\ell, \Psi^\ell)\rrbracket$) give
\[
\begin{aligned}
0 & = \int_\Omega D_i \bigg(\sum_{\ell=1}^Q G_j^\ell \cdot \Psi^\ell\bigg) - D_j \bigg( \sum_{\ell=1}^Q G_i^\ell \cdot \Psi^\ell\bigg)\\
& = \int_\Omega \sum_{\ell=1}^Q (D_iG_j^\ell-D_jG_i^\ell)\cdot\Psi^\ell + \int_\Omega \sum_{\ell=1}^Q \big[(G^\ell_j \cdot D_i\Psi^\ell) - (G_i^\ell \cdot D_j\Psi^\ell)\big]\\
& = \int_\Omega \sum_{\ell=1}^Q (D_iG_j^\ell-D_jG_i^\ell)\cdot\Psi^\ell.
\end{aligned}
\]
Similarly, if \(v\) is also outer stationary, the divergence theorem and outer stationarity give
\[
\begin{aligned}
0 &= \int_\Omega \sum_i D_i\bigg(\sum_{\ell=1}^Q G^\ell_i \cdot \Psi^\ell \bigg)\\
& = \int_\Omega \sum_i \sum_{\ell=1}^Q D_iG_i^\ell \cdot\Psi^\ell + \int_\Omega \sum_i \sum_{\ell=1}^Q G^\ell_i \cdot D_i\Psi^\ell\\
& = \int_\Omega \sum_{\ell=1}^Q \left(\sum_iD_iG_i^\ell\right) \cdot\Psi^\ell=0.
\end{aligned}
\]
Thus, taking \(F^\ell=D_iG_j^\ell-D_jG_i^\ell\), or \(F^\ell=\sum_iD_iG_i^\ell\) when \(v\) is outer stationary, we have \(\int_\Omega\sum_{\ell=1}^QF^\ell(x)\cdot\Psi(x,v^\ell(x))\,dx=0\).
Taking \(\Psi(x,y)=\varphi(x)\eta(y)z\), with \(\varphi\in C_c^\infty(\Omega)\), \(\eta\in C_c^\infty(\mathbb R^k)\), and \(z\in\mathbb R^k\), yields \(\sum_{\ell=1}^Q\eta(v^\ell(x))F^\ell(x)\cdot z=0\) for a.e. \(x\).
By first choosing \(\eta\) in a countable dense family and \(z\) among the vectors of the standard basis of \(\mathbb R^k\), and intersecting the corresponding sets of full measure, a density argument gives a common set of full measure on which \(\sum_{\ell=1}^Q\eta(v^\ell(x))F^\ell(x)=0\) for every \(\eta\in C_c^\infty(\mathbb R^k)\).
Thus, fixing \(x\) in this set and \(p\in\operatorname{spt}v(x)\), we may choose \(\eta\) equal to \(1\) near \(p\) and vanishing near the other points of \(\operatorname{spt}v(x)\), obtaining \(\sum_{\{\ell:v^\ell(x)=p\}}F^\ell(x)=0\).

Finally, as $v\in W^{2,2}$ and the approximate differentials of a multifunction in $W^{1,2}$ agree a.e. when the values are equal, it follows that for a.e. $x\in \Omega$ and for any $\ell,m=1,...,Q$,
\[
v^\ell(x)=v^m(x)
\Longrightarrow
G^\ell(x)=G^m(x)
\Longrightarrow
DG^\ell(x)=DG^m(x).
\]
Hence all the \(F^\ell(x)\) in each coincidence cluster are equal.
Their sum being zero, each of them vanishes. Applying this to the two
choices of \(F^\ell\) proves the claim.
\end{proof}

\begin{theorem}[Dirichlet stationarity from outer stationarity]
\label{thm:stationarity-of-differential}
Let $\Omega\subset\mathbb R^n$ be open. If $v\in W^{2,2}_{\mathrm{loc}}\bigl(\Omega,\mathcal A_Q(\mathbb R^k)\bigr)$ is stationary with respect to outer variations, then both \(v\) and \(Dv\) are Dirichlet-stationary.
\end{theorem}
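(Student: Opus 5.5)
The plan is to derive all four identities (outer and inner for \(v\), outer and inner for \(G:=Dv\)) from Lemma~\ref{lem:Hodge-second-jet} and Lemma~\ref{lem:paired-stokes}. Write \(G=\sum_\ell\llbracket G^\ell\rrbracket\), \(G^\ell=(G^\ell_1,\ldots,G^\ell_n)\), \(G^\ell_i\in\mathbb R^k\). Lemma~\ref{lem:Hodge-second-jet} gives, a.e. and for every \(\ell\),
\[
D_iG^\ell_j=D_jG^\ell_i,
\qquad
\textstyle\sum_iD_iG^\ell_i=0 .
\]
Outer stationarity of \(v\) is the hypothesis. For inner stationarity of \(v\), I would use the stress tensor
\[
T_{ij}:=\sum_\ell\bigl(\tfrac12|Dv^\ell|^2\delta_{ij}-D_iv^\ell\cdot D_jv^\ell\bigr).
\]
Each entry is a symmetric quadratic function of the sheets of \(G\in W^{1,2}_{\mathrm{loc}}\), so the chain rule for \(Q\)-valued Sobolev maps of \cite{DLS11} should give \(T_{ij}\in W^{1,1}_{\mathrm{loc}}\) with sheetwise derivatives. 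I would justify this for quadratic integrands by truncation, or via the local isometry \(\xi_{\mathrm{BW}}\). The sheetwise computation gives
\[
\partial_iT_{ij}=\sum_\ell\bigl(Dv^\ell:D_jDv^\ell-\Delta v^\ell\cdot D_jv^\ell-D_iv^\ell\cdot D_iD_jv^\ell\bigr).
\]
The first and third terms cancel by symmetry of \(D^2v^\ell\), and the middle term vanishes by tracelessness. Hence \(\operatorname{div}T=0\) in \(L^1_{\mathrm{loc}}\). Since the inner-variation integral equals \(\int T:D\zeta=-\int\zeta\cdot\operatorname{div}T\), it vanishes.

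Outer stationarity of \(G\) is the cleanest step. Take \(\psi\in C^1_c(\Omega\times\mathbb R^{k\times n},\mathbb R^{k\times n})\) with columns \(\psi_j\), and set \(\psi^\ell:=\psi(x,G^\ell)\), which lies in \(W^{1,2}\) sheetwise with compact support. Using symmetry, then Lemma~\ref{lem:paired-stokes} applied to \(\sum_\ell\llbracket(G_i^\ell,\psi_j^\ell)\rrbracket\in W^{1,2}_{\mathrm{loc}}(\Omega,\mathcal A_Q(\mathbb R^k\times\mathbb R^k))\) to swap \(i\leftrightarrow j\), and finally tracelessness, I expect
\[
\int\sum_{\ell,i,j}D_iG^\ell_j\cdot D_i\psi^\ell_j
=\int\sum D_jG^\ell_i\cdot D_i\psi^\ell_j
=\int\sum D_iG^\ell_i\cdot D_j\psi^\ell_j
=0 .
\]

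The main obstacle is inner stationarity of \(G\). Here the stress tensor \(T(G)\) involves \(DG\) quadratically, and we have no a priori \(W^{2,2}\) control on \(G\), so the divergence computation above is not available. I would try two routes.

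\emph{First route: Paired Stokes.} Expand \(\int T(G):D\zeta\) and use \(D_aG_b=D_bG_a\) to move indices. The aim is to rewrite the offending terms \(\int D_iG^\ell_b\cdot D_jG^\ell_b\,D_j\zeta_i\) as paired-Stokes expressions \(\int D_if\cdot D_jg\) with \(f=G^\ell_b\) and \(g\) built from \(\zeta\) and first-order quantities of \(v\), such as \(g^\ell=\zeta_iD_jv^\ell\)-type terms. These \(g\) lie in \(W^{1,2}\) because \(v\in W^{2,2}\). The swaps produced by Lemma~\ref{lem:paired-stokes} should then reduce everything to terms containing \(\sum_iD_iG_i^\ell=0\).

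\emph{Fallback: difference quotients.} If the index bookkeeping does not close, I would first upgrade regularity. Outer stationarity of \(G\) (just proved) and the tracelessness structure should allow a Caccioppoli/difference-quotient argument in the spirit of Proposition~\ref{prop:inductive-bochner}: a localized induction on \(Q\) away from the collision set, combined with Lemma~\ref{lem:zero-extension}. This would give \(G\in W^{2,2}_{\mathrm{loc}}\), i.e.\ \(v\in W^{3,2}_{\mathrm{loc}}\). The Hodge lemma then applies to \(G\) in place of \(v\), and the stress-tensor argument used for \(v\) can be repeated verbatim for \(G\).
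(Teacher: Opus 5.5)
Your arguments for the inner stationarity of \(v\) (divergence-free stress tensor from the symmetry and tracelessness of \(D^2v^\ell\)) and for the outer stationarity of \(G=Dv\) (symmetry, a paired-Stokes swap, then the trace condition) match the paper's. The gap is the step you flag yourself: the inner stationarity of \(G\), which is the real content of the theorem. Neither of your two routes closes it as written.

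\emph{The fallback aims at a false statement.} Take \(n=2\) and
\(v(re^{i\theta}):=\llbracket r^{3/2}\cos(3\theta/2)\rrbracket+\llbracket -r^{3/2}\cos(3\theta/2)\rrbracket\).
This map is Dirichlet-stationary: away from \(0\) it is locally \(\llbracket f\rrbracket+\llbracket -f\rrbracket\) with \(f\) harmonic, and the origin is removable. It lies in \(W^{2,2}_{\mathrm{loc}}\), since \(|D^2v|^2\sim r^{-1}\). However, \(D^3v\) is nonzero and homogeneous of degree \(-3/2\), so \(|D^3v|^2\sim r^{-3}\) is not integrable near the origin and \(Dv\notin W^{2,2}_{\mathrm{loc}}\). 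For general \(n\), take the product with \(\mathbb R^{n-2}\). So no Caccioppoli or difference-quotient scheme can give \(v\in W^{3,2}_{\mathrm{loc}}\). Independently, the induction in Proposition~\ref{prop:inductive-bochner} needs continuity of the first jet to split locally, which \(G\in W^{1,2}\) does not have.

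\emph{Your first route is in the right family but is not yet an argument.} Pairing against \(g^\ell\) proportional to \(G^\ell_j=D_jv^\ell\) is the right move. But a paired-Stokes swap by itself never lowers the number of \(DG\) factors. Also, the computation does not land on ``terms containing \(\sum_iD_iG^\ell_i\)''; it lands on the inner-variation identity for \(v\).

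The missing step is to test the outer identity for \(G\) that you just proved with fields linear in \(Y\): \(\Phi=\varphi(x)Y\) and \(\Phi=\varphi(x)(Y_i\otimes e_j+Y_j\otimes e_i)\). These are admissible after a cutoff in \(Y\), or directly via Lemma~\ref{lem:paired-stokes} with \(g^\ell=\varphi\,G^\ell_j\). This gives, in \(\mathcal D'(\Omega)\),
\[
\tfrac12\Delta|G|^2=|DG|^2,\qquad \tfrac12\Delta\sum_{\ell}G^\ell_i\cdot G^\ell_j=\sum_{\ell,r}D_rG^\ell_i\cdot D_rG^\ell_j .
\]
By the symmetry \(D_rG^\ell_i=D_iG^\ell_r\), the right-hand sides are exactly the quadratic quantities \(|DG|^2\) and \(\sum_\ell D_iG^\ell:D_jG^\ell\) that make up the stress tensor of \(Dv\). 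Hence \(T(Dv)=\frac12\Delta T(v)\) in \(\mathcal D'(\Omega)\), and
\(\operatorname{div}T(Dv)=\frac12\Delta\operatorname{div}T(v)=0\).
Equivalently, for smooth \(\zeta\), the inner-variation integral of \(Dv\) in direction \(\zeta\) equals one half of the inner-variation integral of \(v\) in direction \(\Delta\zeta\), which vanishes by the step you already proved. This moves both derivatives off the quadratic \(DG\)-terms onto the test field, so no third derivatives of \(v\) are needed. This is how the paper concludes.
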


\begin{proof}
Set
\[
G:=Dv\in W^{1,2}_{\mathrm{loc}}\bigl(\Omega,\mathcal A_Q(\mathbb R^{k\times n})\bigr).
\]
Write $G=\sum_{\ell=1}^Q\llbracket G^\ell\rrbracket$, $G^\ell=(G^\ell_1,\ldots,G^\ell_n)$, with $G^\ell_i\in\mathbb R^k$. By Lemma~\ref{lem:Hodge-second-jet}, almost everywhere,
\begin{equation}
\label{eq:Hodge-field-identities}
D_jG^\ell_i=D_iG^\ell_j,\qquad \sum_{j=1}^nD_jG^\ell_j=0
\end{equation}
for every $\ell=1,\ldots,Q$.

Throughout the proof, in order not to burden the notation, the Euclidean scalar product in $\mathbb R^k$ is left implicit.

We first prove outer stationarity of $G$. Let $\Psi=(\Psi_1,\ldots,\Psi_n)\in C_c^1(\Omega\times\R^{k\times n},\R^{k\times n})$, $H_i^\ell:=\Psi_i(x,G^\ell(x))$, and
\[
H_i:=\sum_{\ell=1}^Q\llbracket H_i^\ell\rrbracket\in W_c^{1,2}\bigl(\Omega,\mathcal A_Q(\R^k)\bigr).
\]
By \eqref{eq:Hodge-field-identities} and Lemma~\ref{lem:paired-stokes}, applied to $\sum_{\ell=1}^Q\llbracket(G^\ell_j,H^\ell_i)\rrbracket\in W^{1,2}_{\mathrm{loc}}\bigl(\Omega,\mathcal A_Q(\R^k\times\R^k)\bigr)$, we have
\[
\begin{aligned}
\int_\Omega\sum_{\ell,j}D_jG^\ell\cdot D_j[\Psi(x,G^\ell)] &= \int_\Omega\sum_{\ell,i,j}D_jG^\ell_i\,D_jH^\ell_i\\
&= \int_\Omega\sum_{\ell,i,j}D_iG^\ell_j\,D_jH^\ell_i\\
&= \int_\Omega\sum_{\ell,i,j}D_jG^\ell_j\,D_iH^\ell_i=0.
\end{aligned}
\]
Thus $G$ is stationary with respect to outer variations.

By a standard cutoff approximation in the $Y$-variable, outer stationarity may also be tested against variations $\Phi$ which are compactly supported in $x$ and satisfy $|D_Y\Phi(x,Y)|\leq C<\infty$ and $|\Phi(x,Y)|+|D_x\Phi(x,Y)|\leq C(1+|Y|)$. Indeed, applying the formula to $\vartheta(|Y|/R)\Phi(x,Y)$, where $\vartheta\in C_c^\infty(\R)$ is a smooth cutoff supported in $(-2,2)$ and such that $\vartheta\equiv1$ on $(-1,1)$, and letting $R\to\infty$, the cutoff error is bounded in absolute value by $C\sum_{\ell=1}^Q|DG^\ell|^2\mathbf 1_{\{R\leq|G^\ell|\leq2R\}}$, and hence converges to zero in $L^1$.

We next prove inner stationarity, first for $v$ and then for $G$. Define, for an arbitrary \(Q\)-valued map \(w\in W^{1,2}_{\mathrm{loc}}\), the symmetric tensor
\[
T(w):=\left[|Dw|^2\delta_{ij}-2\sum_{\ell=1}^QD_iw^\ell D_jw^\ell\right]dx^i\otimes dx^j.
\]
We first show that $T(v)=\left[|G|^2\delta_{ij}-2\sum_{\ell=1}^QG_i^\ell G_j^\ell\right]dx^i\otimes dx^j$ is divergence-free. Indeed, for every $X\in C_c^1(\Omega,\R^n)$, integration by parts and \eqref{eq:Hodge-field-identities} give
\[
\begin{aligned}
\frac12\int_\Omega\sum_{i,j=1}^nT_{ij}(v)D_iX^j &= \int_\Omega\sum_{\ell=1}^Q\left\{\frac12|G^\ell|^2\operatorname{div}X-\sum_{i,j=1}^nG_i^\ell\, G_j^\ell \,D_iX^j\right\}\\
&= -\int_\Omega\sum_{\ell,i,j}G_i^\ell \,D_jG_i^\ell\,X^j+\int_\Omega\sum_{\ell,i,j}\left(D_iG_i^\ell \,G_j^\ell+G_i^\ell\, D_iG_j^\ell\right)X^j=0.
\end{aligned}
\]
Thus, for all $j=1,\ldots,n$,
\begin{equation}
\label{eq:Hodge-field-T-divergence}
\operatorname{div}T_j(v)=0\qquad\text{in }\mathcal D'(\Omega).
\end{equation}
Since $v$ is stationary with respect to outer variations by assumption and \(T(v)\) is the integrand of the inner variation formula for \(v\), it follows that $v$ is Dirichlet-stationary.

Testing the outer-variation formula for $G$ with $\Phi(x,Y)=\varphi(x)Y$, where $\varphi\in C_c^\infty(\Omega)$, gives
\begin{equation}
\label{eq:Hodge-field-first-member-stress}
\frac12\Delta|G|^2=|DG|^2\qquad\text{in }\mathcal D'(\Omega).
\end{equation}
Similarly, testing with $\Phi(x,Y):=\varphi(x)(Y_i\otimes e_j+Y_j\otimes e_i)$ gives
\begin{equation}
\label{eq:Hodge-field-second-member-stress}
\frac12\Delta\left[\sum_{\ell=1}^QG_i^\ell G_j^\ell\right]=\sum_{\ell=1}^Q\sum_{r=1}^nD_rG_i^\ell D_rG_j^\ell\qquad\text{in }\mathcal D'(\Omega).
\end{equation}
Consequently, by \eqref{eq:Hodge-field-first-member-stress} and \eqref{eq:Hodge-field-second-member-stress}, together with the symmetry \(D_iG_r^\ell=D_rG_i^\ell\) in \eqref{eq:Hodge-field-identities}, we get
\begin{equation}
\label{eq:Hodge-field-stress-laplacian}
T(Dv)=\frac12\Delta T(v)\qquad\text{in }\mathcal D'(\Omega).
\end{equation}

Combining \eqref{eq:Hodge-field-T-divergence} and \eqref{eq:Hodge-field-stress-laplacian}, we obtain, for all $j=1,\ldots,n$,
\[
\operatorname{div}T_j(Dv)=\frac12\Delta\bigl(\operatorname{div}T_j(v)\bigr)=0.
\]
This is precisely the inner-variation identity for $Dv$, and hence $Dv$ is Dirichlet-stationary as well.
\end{proof}

\subsection{Homogeneity gap at $1$}

\subsubsection{The case $n=2$}

\begin{theorem}[$2$-dimensional homogeneity gap]
\label{thm:2d-cone-gap}
Let \(w\in W_{\mathrm{loc}}^{2,2}\bigl(\mathbb R^2,\mathcal A_q(\mathbb R^k)\bigr)\) be a nonzero Dirichlet-stationary map, homogeneous of degree \(\sigma>1\). Then \(\sigma\ge1+\frac1q\).
\end{theorem}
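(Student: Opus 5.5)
The plan is to reduce the statement to an ordinary differential equation on the unit circle and then read off the possible degrees from the monodromy of the sheets. Write \(w(r,\theta)=r^\sigma\,\omega(\theta)\), where \(\omega:=w|_{\mathbb S^1}\) is a \(q\)-valued map on the circle. By Theorem~\ref{thm:stationarity-of-differential}, \(w\) is Dirichlet-stationary. More importantly, Lemma~\ref{lem:Hodge-second-jet} tells us that, for a.e. \(x\) and every sheet \(\ell\), the matrix \(D^2w^\ell(x)\) is symmetric and trace-free, so each sheet is ``harmonic almost everywhere''. In polar coordinates, the Laplacian of \(r^\sigma f(\theta)\) equals \(r^{\sigma-2}\bigl(f''+\sigma^2f\bigr)\). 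The sheetwise trace-free condition should therefore translate into \(f''+\sigma^2f=0\) for every sheet \(f\) of \(\omega\).

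To make this rigorous, I would first work in the first-jet space. Since \(w\in W^{2,2}_{\mathrm{loc}}\), the first jet \(j_1w\) lies in \(W^{1,2}_{\mathrm{loc}}\). By homogeneity (\(j_1w\) transforms explicitly under dilations), Fubini gives that the restriction of the angular jet \(\theta\mapsto(\omega(\theta),\partial_\theta\omega(\theta))\) to the circle is a \(W^{1,2}\) \(q\)-valued map of one variable. Lift to \(\theta\in\mathbb R\) and apply the one-dimensional Sobolev selection theorem (as in the proof of Lemma~\ref{lem:rellich-W22}). This gives \(W^{1,2}\) selections \((f^\ell,g^\ell)\), and Sobolev locality on coincidence sets yields \((f^\ell)'=g^\ell\). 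Hence \(f^\ell\in W^{2,2}_{\mathrm{loc}}(\mathbb R)\), and the trace-free identity, again via locality on coincidence sets, gives \((f^\ell)''=-\sigma^2f^\ell\) a.e. It follows that each \(f^\ell\) is a \(C^2\) solution, that is,
\[
f^\ell(\theta)=a_\ell\cos\sigma\theta+b_\ell\sin\sigma\theta\qquad\text{on all of }\mathbb R.
\]

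Next comes the monodromy step. The jet multiset is \(2\pi\)-periodic, so \(\sum_\ell\llbracket(f^\ell,(f^\ell)')(2\pi)\rrbracket=\sum_\ell\llbracket(f^\ell,(f^\ell)')(0)\rrbracket\). Hence there is a permutation \(\pi\) matching the jets at \(0\) and \(2\pi\). By uniqueness for the ODE, \(f^{\ell}(\cdot+2\pi)=f^{\pi(\ell)}(\cdot)\) for all \(\theta\). When several sheets share a jet, the corresponding sinusoids coincide, so the ambiguity in choosing \(\pi\) is harmless. If \(m\le q\) is the length of the cycle of \(\pi\) containing \(\ell\), then \(f^\ell\) is \(2\pi m\)-periodic.

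Since \(w\neq0\), some \(f^\ell\) is a nonzero sinusoid, whose minimal period is \(2\pi/\sigma\). Thus \(2\pi m\in(2\pi/\sigma)\mathbb Z\), i.e. \(\sigma=p/m\) with \(p\in\mathbb N\). From \(\sigma>1\) we get \(p\ge m+1\), so \(\sigma\ge1+1/m\ge1+1/q\).

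I expect the main obstacle to be the first step: justifying that the one-dimensional jet selection along the circle exists with enough regularity and that the a.e. trace-free Hessian passes to the selected branches. The coincidence sets must be handled by Sobolev locality exactly as in Lemma~\ref{lem:Hodge-second-jet}, and one must check that restricting the \(W^{1,2}\) jet to circles is legitimate, which homogeneity makes possible.
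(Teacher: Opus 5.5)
Your proposal is correct and follows essentially the same route as the paper: slicing plus homogeneity to get a \(W^{1,2}\) jet on the circle, one-dimensional Sobolev selection, the sheetwise reduction to \(f''+\sigma^2f=0\), and the monodromy permutation forcing \(\sigma m\in\mathbb N\) with \(m\le q\). The differences are cosmetic. You select the angular jet \((\omega,\partial_\theta\omega)\) rather than the full first jet, and you quote the trace-free conclusion of Lemma~\ref{lem:Hodge-second-jet} where the paper re-derives \(\Delta w_\ell=0\) from the outer-variation identity; either way the argument should be run on the \(W^{2,2}\) selections \(r^\sigma f^\ell(\theta)\) on sectors of opening less than \(2\pi\), as the paper does. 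Finally, the appeal to Theorem~\ref{thm:stationarity-of-differential} is unnecessary, since Dirichlet stationarity is already a hypothesis.
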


\begin{proof}
As \(w\) is homogeneous of degree \(\sigma\), we can fix a representative of \(j_1w\), still denoted by \(j_1w\), satisfying the corresponding pointwise scaling law, with degrees \(\sigma\) and \(\sigma-1\) in its value and differential components, respectively. Then, by slicing and homogeneity, the map \(\theta\mapsto j_1w(\cos\theta,\sin\theta)\) belongs to \(W^{1,2}_{\mathrm{loc}}(\mathbb R)\).

By the one-dimensional Sobolev selection theorem, we can choose \(\phi_\ell\in W^{1,2}_{\mathrm{loc}}(\mathbb R,\mathbb R^k)\) and \(A_\ell\in W^{1,2}_{\mathrm{loc}}\bigl(\mathbb R,\operatorname{Hom}(\mathbb R^2,\mathbb R^k)\bigr)\) such that \(j_1w(\cos\theta,\sin\theta) =\sum_{\ell=1}^q\llbracket(\phi_\ell(\theta),A_\ell(\theta))\rrbracket\) for almost every \(\theta\in\mathbb R\). By homogeneity, \(j_1w(r\cos\theta,r\sin\theta) = \sum_{\ell=1}^q \llbracket(r^\sigma\phi_\ell(\theta),r^{\sigma-1}A_\ell(\theta)) \rrbracket\) for almost every \((r,\theta)\in(0,\infty)\times\mathbb R\).

Let \(I\subset\mathbb R\) be an open interval with \(|I|<2\pi\) and set \(\mathcal C_I:=\{(r\cos\theta,r\sin\theta):r>0,\ \theta\in I\}\). The maps \(w_\ell(r\cos\theta,r\sin\theta):=r^\sigma\phi_\ell(\theta)\) belong to \(W^{1,2}_{\mathrm{loc}}(\mathcal C_I,\mathbb R^k)\) and form a selection of \(w\) on \(\mathcal C_I\). The first-jet identity and the classical chain rule in polar coordinates give \(\phi_\ell'(\theta)=A_\ell(\theta)(-\sin \theta ,\cos \theta )\) almost everywhere on \(I\). Since \(A_\ell\in W^{1,2}_{\mathrm{loc}}(I)\), it follows that \(\phi_\ell\in W^{2,2}_{\mathrm{loc}}(I)\) and hence \(w_\ell\in W^{2,2}_{\mathrm{loc}}(\mathcal C_I,\mathbb R^k)\).

Integration by parts in the outer-variation identity gives \(\int_{\mathcal C_I}\sum_\ell\Delta w_\ell(x)\cdot\Psi(x,w_\ell(x))\,dx=0\) for every \(\Psi\in C_c^1(\mathcal C_I\times\mathbb R^k,\mathbb R^k)\). Testing with \(\Psi(x,Y)=\eta(x)\psi(Y)\), and using a countable uniformly dense family of target tests, shows that for almost every \(x\) and every \(a\in\operatorname{spt}w(x)\), one has \(\sum_{\{\ell:w_\ell(x)=a\}}\Delta w_\ell(x)=0\).

On the other hand, Sobolev locality applied successively to \(w_\ell-w_{\ell'}\) and its gradient gives \(D^2w_\ell=D^2w_{\ell'}\) almost everywhere on \(\{w_\ell=w_{\ell'}\}\). Thus all the Laplacians in each preceding sum coincide, and hence \(\Delta w_\ell=0\) almost everywhere for every \(\ell\). The polar-coordinate formula \(\Delta w_\ell(r\cos\theta,r\sin\theta)=r^{\sigma-2}(\phi_\ell''(\theta)+\sigma^2\phi_\ell(\theta))\) therefore gives \(\phi_\ell''+\sigma^2\phi_\ell=0\) almost everywhere on \(I\). Since \(I\) was arbitrary, this equation holds on \(\mathbb R\), and consequently \(\phi_\ell(\theta)=a_\ell\cos(\sigma\theta)+b_\ell\sin(\sigma\theta)\) for some \(a_\ell,b_\ell\in\mathbb R^k\).

Since \(\theta\mapsto j_1w(\cos\theta,\sin\theta)\) and \((-\sin,\cos)\) are \(2\pi\)-periodic, so is the multiple-valued map \(\theta\mapsto\sum_{\ell=1}^q\llbracket(\phi_\ell(\theta),\phi_\ell'(\theta))\rrbracket\). Hence there exists a permutation \(\pi\) of \(\{1,\ldots,q\}\) such that \((\phi_\ell(2\pi),\phi_\ell'(2\pi))=(\phi_{\pi(\ell)}(0),\phi_{\pi(\ell)}'(0))\). Uniqueness for the Cauchy problem gives \(\phi_\ell(\theta+2\pi)=\phi_{\pi(\ell)}(\theta)\) for every \(\theta\in\mathbb R\).

Since \(w\not\equiv0\), choose \(\ell\) with \(\phi_\ell\not\equiv0\), and let \(m\leq q\) be the length of its cycle under \(\pi\). Then \(\phi_\ell(\theta+2\pi m)=\phi_\ell(\theta)\). The explicit representation of \(\phi_\ell\) implies \(\sigma m\in\mathbb N\). Since \(\sigma>1\), one has \(\sigma m\geq m+1\), and therefore \(\sigma\geq1+\frac1m\geq1+\frac1q\).
\end{proof}

\subsubsection{The case $n\geq 3$}

\begin{lemma}[Monotonicity formula for matrix-valued measures with an aperture bound]
\label{lem:atomic-load}
Let $m\geq 1$ be an integer, $\mathsf S\in \mathcal M\bigl(B^m_1;\operatorname{Sym}^+(\mathbb R^m)\bigr)$ be a finite positive-semidefinite matrix-valued measure, and set $\mu:=\operatorname{tr}\mathsf S$.
Assume that
\begin{equation}
\label{eq:atomic-divergence}
-\operatorname{div}\mathsf S=a\delta_0
\qquad\text{in }B^m_1,
\end{equation}
in the sense that $\int_{B^m_1}D\Psi:d\mathsf S = a\cdot\Psi(0)$ for every $
\Psi\in C_c^1(B^m_1;\mathbb R^m)$.
Suppose moreover that there is \(c\in (0,1]\) such that for every $e\in\mathbb S^{m-1}$
\begin{equation}
\label{eq:aperture-lemma}
e^T\mathsf S e\leq c\mu \qquad\text{as measures}.
\end{equation}
Then $r\mapsto r^{-1/c}\mu(B^m_r)$ is nondecreasing on $(0,1)$. If \(c<1\), then \(a=0\).
\end{lemma}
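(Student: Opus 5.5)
Test the divergence identity with the radial field \(\Psi(y)=\phi(|y|)\,y\), where \(\phi\) is a cutoff approximating \(\mathbf 1_{[0,r)}\); the atomic term then vanishes and the aperture bound turns the identity into a differential inequality for \(\mu(B_r)\).

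\textbf{Step 1: the radial test field.} Fix \(0<r<1\). For small \(\varepsilon>0\), let \(\phi_\varepsilon\in C^1([0,\infty))\) be nonincreasing, equal to \(1\) on \([0,r]\) and to \(0\) on \([r+\varepsilon,\infty)\). Set \(\Psi(y):=\phi_\varepsilon(|y|)\,y\). Then \(\Psi\in C^1_c(B^m_1;\mathbb R^m)\) and \(\Psi(0)=0\), so the right-hand side \(a\cdot\Psi(0)\) vanishes; this is precisely why the atomic load does not contribute. We have
\[
D\Psi(y)=\phi_\varepsilon(|y|)I+\phi_\varepsilon'(|y|)\,|y|\,\hat y\otimes\hat y,\qquad \hat y:=y/|y|.
\]
Since \(I:d\mathsf S=d\mu\), the identity becomes
\[
\int\phi_\varepsilon(|y|)\,d\mu=-\int\phi_\varepsilon'(|y|)\,|y|\,\hat y^T\,d\mathsf S\,\hat y .
\]

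\textbf{Step 2: the aperture bound.} Write \(d\mathsf S=P\,d\mu\). Hypothesis \eqref{eq:aperture-lemma}, imposed for every unit vector \(e\), passes to \(\mu\)-a.e. pointwise statements \(e^TP(y)e\le c\) by using a countable dense set of directions and continuity in \(e\). In particular \(\hat y^TP(y)\hat y\le c\) for \(\mu\)-a.e. \(y\). The point \(y=0\) needs no care, because \(\phi_\varepsilon'=0\) near \(0\). Since \(-\phi_\varepsilon'\ge0\), this gives
\[
\int\phi_\varepsilon(|y|)\,d\mu\le c\int(-\phi_\varepsilon')(|y|)\,|y|\,d\mu .
\]

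\textbf{Step 3: letting \(\varepsilon\downarrow0\).} Write \(I(r):=\mu(B_r)\), a nondecreasing function. The left-hand side tends to \(\mu(\overline B_r)\ge I(r)\). The right-hand side is a Stieltjes average of \(|y|\,d\mu\) over the shell \(r\le|y|\le r+\varepsilon\), which is approximately \(r\,dI\). The clean way is to integrate the inequality against a test function in \(r\), which yields the distributional inequality
\[
I(r)\le c\,r\,I'(r)\qquad\text{on }(0,1),
\]
with \(I'\) the distributional derivative, a positive measure. This gives \(\frac{d}{dr}\bigl(r^{-1/c}I(r)\bigr)\ge0\) in \(\mathcal D'(0,1)\). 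A monotone function with nonnegative distributional derivative, when taken in its left-continuous representative, is nondecreasing. So \(r\mapsto r^{-1/c}\mu(B_r)\) is nondecreasing. I expect this step, the passage to the limit in the Stieltjes sense, to be the main technical point; everything else is algebraic.

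\textbf{Step 4: the case \(c<1\).} Monotonicity gives \(\mu(B_r)\le r^{1/c}\,\mu(B_{1/2})\,2^{1/c}\) for \(r<1/2\), which is \(o(r)\) because \(1/c>1\). In particular \(\mu(\{0\})=0\). Now fix a unit vector \(e\) and test with \(\Psi(y)=\chi(|y|/r)\,e\), where \(\chi\) is a fixed cutoff equal to \(1\) near \(0\). Then \(a\cdot e=\int D\Psi:d\mathsf S\), and \(|D\Psi|\le C/r\) is supported in \(B_r\). Since \(|\mathsf S|\le\mu\) (as \(\mathsf S\ge0\) and \(\mu=\operatorname{tr}\mathsf S\)),
\[
|a\cdot e|\le C\,r^{-1}\mu(B_r)=O\bigl(r^{1/c-1}\bigr)\longrightarrow0 .
\]
Hence \(a=0\).
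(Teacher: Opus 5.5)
Your proposal is correct and follows essentially the paper's route: the radial field $\phi(|y|)\,y$ kills the atom, the pointwise bound $\hat y^TP\hat y\le c$ gives the differential inequality $\mu(B_r)\le c\,r\,\tfrac{d}{dr}\mu(B_r)$, and testing with $e\,\chi(|y|/r)$ against the resulting $r^{1/c}$ growth forces $a=0$. The only difference is in the limit passage: the paper uses the scaled cutoff $\varphi_\delta(|y|/\rho)$, so the smoothed mass $M_\delta$ is $C^1$ and satisfies $M_\delta\le c\rho M_\delta'$ classically before letting $\delta\downarrow0$, whereas you pass to the distributional inequality $I\,dr\le c\,r\,dI$ and use left-continuity of $r^{-1/c}\mu(B_r)$, which is equally valid.
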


\begin{proof}
Since \(\mathsf S\) is positive semidefinite and \(\mu=\operatorname{tr}\mathsf S\), one has \(\mathsf S\ll\mu\). We may therefore write
\begin{equation}
\label{eq:polar-decomposition-S}
d\mathsf S(y)=P(y)\,d\mu(y),
\end{equation}
where $P(y)\in\operatorname{Sym}^+(\mathbb R^m)$, $\operatorname{tr}P(y)=1$ for \(\mu\)-almost every \(y\).

Using a countable dense subset of \(\mathbb S^{m-1}\), the measure inequality \eqref{eq:aperture-lemma} implies that, outside a single \(\mu\)-negligible set, \begin{equation}
\label{eq:pointwise-aperture}
e^TP(y)e\leq c \qquad \text{ for every $e\in\mathbb S^{m-1}$.}
\end{equation}

Fix \(\delta\in(0,1)\), and choose a nonincreasing function $\varphi_\delta\in C^1([0,\infty),[0,1])$ such that
\[
\varphi_\delta(t)=1
\quad\text{for }0\leq t\leq1,
\qquad
\varphi_\delta(t)=0
\quad\text{for }t\geq1+\delta.
\]
For $0<\rho<\frac{1}{1+\delta}$, define the smoothed mass $M_\delta(\rho) := \int_{B_1} \varphi_\delta\left(\frac{|y|}{\rho}\right)\,d\mu(y)$.
The function \(M_\delta\) is continuously differentiable on \((0,1/(1+\delta))\), with $M_\delta'(\rho) = \int_{B_1} \frac{\partial}{\partial\rho} \left[ \varphi_\delta\left(\frac{|y|}{\rho}\right) \right]\,d\mu(y)$.
Moreover, $\frac{\partial}{\partial\rho} \left[ \varphi_\delta\left(\frac{|y|}{\rho}\right) \right] = -\frac{|y|}{\rho^2} \varphi_\delta'\left(\frac{|y|}{\rho}\right) \geq0$.

We test \eqref{eq:atomic-divergence} with the radial vector field $\Psi_\rho(y) := \varphi_\delta\left(\frac{|y|}{\rho}\right)y$.
Since \(\Psi_\rho\in C_c^1(B_1,\mathbb R^m)\) and \(\Psi_\rho(0)=0\), we obtain
\begin{equation}
\label{eq:radial-test-zero}
\int_{B_1}D\Psi_\rho:d\mathsf S=0.
\end{equation}

Set \(r:=|y|\), and for \(y\neq0\) let $\widehat y:=\frac{y}{|y|}$. A direct computation gives
\begin{equation}
\label{eq:radial-test-derivative}
D\Psi_\rho(y)
=
\varphi_\delta\left(\frac r\rho\right)I
-
\rho
\frac{\partial}{\partial\rho}
\left[
\varphi_\delta\left(\frac r\rho\right)
\right]
\widehat y\otimes\widehat y.
\end{equation}
Indeed, $D\left[ \varphi_\delta\left(\frac r\rho\right)y \right] = \varphi_\delta\left(\frac r\rho\right)I + \frac r\rho \varphi_\delta'\left(\frac r\rho\right) \widehat y\otimes\widehat y$ and $\frac r\rho \varphi_\delta'\left(\frac r\rho\right) = -\rho \frac{\partial}{\partial\rho}\left[
\varphi_\delta\left(\frac r\rho\right) \right]$.

Substituting \eqref{eq:polar-decomposition-S} and \eqref{eq:radial-test-derivative} into \eqref{eq:radial-test-zero}, and using \(\operatorname{tr}P=1\), yields
$$
M_\delta(\rho)
=
\rho
\int_{B_1}
\frac{\partial}{\partial\rho}
\left[
\varphi_\delta\left(\frac{|y|}{\rho}\right)
\right]
\widehat y^TP(y)\widehat y
\,d\mu(y).
$$

Both factors in the integrand on the right-hand side are
nonnegative. By \eqref{eq:pointwise-aperture}, $\widehat y^TP(y)\widehat y\leq c$ for \(\mu\)-almost every \(y\neq0\). Hence
\begin{equation}
\label{eq:smoothed-differential-inequality}
M_\delta(\rho)
\leq
c\rho
\int_{B_1}
\frac{\partial}{\partial\rho}
\left[
\varphi_\delta\left(\frac{|y|}{\rho}\right)
\right]\,d\mu(y)
=
c\rho M_\delta'(\rho).
\end{equation}

Set $p:=\frac1c\geq 1$.
It follows from \eqref{eq:smoothed-differential-inequality} that $M_\delta'(\rho) - \frac p\rho M_\delta(\rho) \geq0$.
Therefore $\frac{d}{d\rho} \left( \rho^{-p}M_\delta(\rho) \right) = \rho^{-p} \left( M_\delta'(\rho) - \frac p\rho M_\delta(\rho) \right) \geq0$.
Thus \(\rho\mapsto\rho^{-p}M_\delta(\rho)\) is nondecreasing. Letting $\delta\downarrow0$ and then approximating the radii from below,
we conclude that $r\mapsto r^{-p}\mu(B_r)$ is nondecreasing on $(0,1)$.
Consequently, for $0<r<R_0<1$, we have $\mu(B_r) \leq \mu(B_{R_0}) \left(\frac r{R_0}\right)^p$.
In particular, for a suitable constant \(C_0<\infty\),
\begin{equation}
\label{eq:power-growth}
\mu(B_r(0))\leq C_0r^p,
\qquad
p\geq 1.
\end{equation}

Suppose now that \(c<1\), so that \(p>1\), and assume toward a contradiction that \(a\neq0\). Set \(e:=\frac a{|a|}\).
For \(r>0\) sufficiently small, choose \(\chi_r\in C_c^1(B_{2r}(0))\) such that
\[
0\leq\chi_r\leq1,
\qquad
\chi_r=1\text{ on }B_r(0),
\qquad
|D\chi_r|\leq\frac Cr.
\]
Testing \eqref{eq:atomic-divergence} with $\Psi(y):=e\,\chi_r(y)$ gives $|a| = \int_{B_1}D(e\chi_r):d\mathsf S$.
Using \(d\mathsf S=P\,d\mu\), \(\operatorname{tr}P=1\), and $|A:P| \leq |A|\,\operatorname{tr}P$ for $P\in\operatorname{Sym}^+(\mathbb R^m)$, we obtain
\[
|a|
\leq
\int_{B_1}
|D\chi_r|\,d\mu
\leq
\frac Cr
\mu\bigl(B_{2r}(0)\setminus B_r(0)\bigr)
\leq
\frac Cr\mu(B_{2r}(0)).
\]
Hence
\begin{equation}
\label{eq:lower-growth}
\mu(B_{2r}(0))
\geq C^{-1}|a|\,r.
\end{equation}

On the other hand, \eqref{eq:power-growth} gives
\[
\mu(B_{2r}(0))
\leq C_0(2r)^p.
\]
Since \(p>1\), this contradicts
\eqref{eq:lower-growth} as \(r\downarrow0\). Therefore \(a=0\).
\end{proof}

\begin{remark}[Relation with the monotonicity formula for varifolds]
\label{rem:varifold-monotonicity}
The preceding argument is the matrix-valued analogue of the classical
monotonicity argument for stationary varifolds. Indeed, if \(V\) is an
\(n\)-dimensional varifold in \(\mathbb R^m\), consider the
positive-semidefinite matrix-valued measure
\[
\mathsf S_V(A)
:=
\frac 1n \int_{A\times G(n,m)} P_T\,dV(x,T),
\]
where \(P_T\) denotes the orthogonal projection onto \(T\). Then
\[
\operatorname{tr}\mathsf S_V=\|V\|,
\qquad
-\operatorname{div}\mathsf S_V= \frac 1n \delta V,
\]
and hence \(\operatorname{div}\mathsf S_V=0\) whenever \(V\) is
stationary. To verify $-\operatorname{div}\mathsf S_V= \frac 1 n \delta V$ it is enough to note that $P_T: D\Psi= \operatorname{div}_T \Psi$, which in turn follows by diagonalizing $P_T = \sum_{i=1}^n \tau_i\otimes \tau_i$, where $(\tau_i)_{i=1,...,n}$ is an orthonormal basis of $T$ in $\R^m$, so that $P_T :D\Psi = \sum_{i=1}^n \tau_i\otimes \tau_i : D\Psi = \sum_{i=1}^n \tau_i \cdot D\Psi[\tau_i] = \operatorname{div}_T\Psi$.

Moreover, for every \(e\in\mathbb S^{m-1}\), $e^TP_Te=|P_Te|^2\leq 1 =\frac{1}{n}\operatorname{tr}P_T$.
Consequently,
\[
e^T\mathsf S_Ve
\leq
\frac{1}{n}\operatorname{tr}\mathsf S_V,
\]
so the aperture constant is \(c=1/n\). The exponent produced by the
preceding radial argument is therefore $p=\frac{1}{c}=n$, and the corresponding monotone quantity is precisely the usual density
ratio $\rho^{-n}\|V\|(B_\rho)$.
If, instead of using only \(|P_T\widehat x|^2\leq1\), one retains the identity $1-|P_T\widehat x|^2=|P_{T^\perp}\widehat x|^2$, one recovers the standard defect term in the varifold monotonicity formula.
Thus the exponent \(p=1/c\) is the natural generalization of the familiar varifold mechanism.
In particular, if \(\mathsf S\) in Lemma~\ref{lem:atomic-load} were induced by an \(n\)-dimensional varifold \(V\) with $n\geq 2$, the lemma recovers a standard consequence of
the varifold monotonicity formula: the first variation cannot be locally
equal to a nonzero isolated atom.
\end{remark}

\begin{lemma}[Quantitative aperture of trace-free symmetric tensors]
\label{lem:trace-free-aperture}
Let \(E:=\R^{k\times n}\simeq(\R^n)^k\). Suppose that \(P\in\operatorname{Hom}(\R^n,E) \) is such that $Pz=(P^1z,\ldots,P^kz)$, where each $P^a\in \operatorname{Sym}_0(\R^n)$ (the space of symmetric and trace-free homomorphisms).
Then $\|P\|_{\mathrm{op}}^2\leq \frac{n-1}{n} |P|^2$, where $|P|^2:= \operatorname{tr}(PP^*)$.
Equivalently,
\[
PP^*\leq\frac{n-1}{n} |P|^2 I_E
\]
in the sense of quadratic forms.
\end{lemma}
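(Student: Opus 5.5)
Since \(\|P\|_{\mathrm{op}}^2=\|P^*P\|_{\mathrm{op}}\), and \(PP^*\) and \(P^*P\) have the same nonzero eigenvalues, it suffices to prove \(z^TP^*Pz\le\frac{n-1}{n}|P|^2\) for every unit vector \(z\in\R^n\). Here
\[
z^TP^*Pz=|Pz|^2=\sum_{a=1}^k|P^az|^2,
\qquad
|P|^2=\sum_{a=1}^k|P^a|^2 .
\]
So the whole statement reduces to the single-matrix inequality
\[
|Sz|^2\le\frac{n-1}{n}|S|^2
\qquad\text{for }S\in\operatorname{Sym}_0(\R^n),\ |z|=1,
\]
and summing over \(a=1,\ldots,k\) then gives the claim. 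The equivalent quadratic-form version on \(E\) follows directly from the identity \(\|PP^*\|_{\mathrm{op}}=\|P\|_{\mathrm{op}}^2\).

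For a single \(S\), I would diagonalize it as \(S=\sum_i\lambda_i\,f_i\otimes f_i\) with \(\sum_i\lambda_i=0\). Then \(|Sz|^2=\sum_i\lambda_i^2z_i^2\le\max_i\lambda_i^2\), where \(z_i\) are the coordinates of \(z\) in the basis \((f_i)\). It therefore suffices to show that \(\lambda_1^2\le\frac{n-1}{n}\sum_i\lambda_i^2\) whenever \(\sum_i\lambda_i=0\).

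This last bound follows from Cauchy–Schwarz. Since \(\lambda_1=-\sum_{i\ge2}\lambda_i\), we get \(\lambda_1^2\le(n-1)\sum_{i\ge2}\lambda_i^2=(n-1)\bigl(|S|^2-\lambda_1^2\bigr)\), hence \(n\lambda_1^2\le(n-1)|S|^2\).

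There is no genuine obstacle. The only point needing care is the bookkeeping of identifications: \(E\simeq(\R^n)^k\) must carry the Hilbert–Schmidt norm, so that \(|Pz|^2=\sum_a|P^az|^2\). Also, taking the operator norm on the \(\R^n\) side, via \(P^*P=\sum_a(P^a)^2\), is what makes the per-component argument legitimate: the maximizing \(z\) is common to all \(a\), so we never need the top eigenvectors of the different \(P^a\) to coincide. Summing the single-matrix bounds at that fixed \(z\) is enough.
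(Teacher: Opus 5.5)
Your proposal is correct and matches the paper's proof essentially step for step. Both arguments prove the single-matrix bound $\lambda_i^2\le(n-1)(|S|^2-\lambda_i^2)$ by applying Cauchy--Schwarz to the trace-free relation, sum the componentwise estimates $|P^az|^2\le\frac{n-1}{n}|P^a|^2|z|^2$ at a fixed $z$, and deduce the quadratic-form version from $\|PP^*\|_{\mathrm{op}}=\|P\|_{\mathrm{op}}^2$.
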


\begin{proof}
We first consider the case \(k=1\). Let \(P\in\operatorname{Sym}_0(\R^n)\), and let \(\lambda_1,\ldots,\lambda_n\) be its eigenvalues. Since \(P\) is trace-free, $\lambda_i=-\sum_{j\neq i}\lambda_j$ for every \(i\). Hence $\lambda_i^2\leq(n-1)\sum_{j\neq i}\lambda_j^2=(n-1)(|P|^2-\lambda_i^2)$, and therefore $\|P\|_{\mathrm{op}}^2=\max_i\lambda_i^2\leq\frac{n-1}{n}|P|^2$.

For general \(k\), for every \(z\in\R^n\),
\[
|Pz|^2 =\sum_{a=1}^k|P^az|^2 \leq\sum_{a=1}^k\|P^a\|_{\mathrm{op}}^2|z|^2 \leq\frac{n-1}{n}\sum_{a=1}^k|P^a|^2|z|^2 =\frac{n-1}{n}|P|^2|z|^2.
\]
Taking the supremum over \(|z|=1\) gives $\|P\|_{\mathrm{op}}^2\leq\frac{n-1}{n}|P|^2$.

To show the final equivalence, let \(e\in E\) and compute
\[
PP^*e\cdot e = |P^*e|^2 \leq \|P^*\|_{\mathrm{op}}^2|e|^2 = \|P\|_{\mathrm{op}}^2|e|^2 \leq \frac{n-1}{n}|P|^2|e|^2.
\]
\end{proof}

\begin{theorem}[Homogeneity gap for $n\geq3$]
\label{thm:homogeneity-one-gap}
Let \(n\geq3\), \(k\geq1\), and \(Q\geq1\). There exists \(\delta=\delta(n,k,Q)>0\) with the following property. Let $v\in W^{2,2}_{\mathrm{loc}}\bigl(\R^n,\mathcal A_Q(\R^k)\bigr)$ be nonzero, Dirichlet-stationary, and homogeneous of degree \(\sigma\geq1\). Then
\[
\sigma=1\quad\text{or}\quad\sigma\geq1+\delta(n,k,Q).
\]
\end{theorem}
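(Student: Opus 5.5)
The argument is by contradiction, following the target-stress scheme sketched in the introduction. Suppose no such \(\delta\) exists. Then there are nonzero Dirichlet-stationary maps \(v_h\in W^{2,2}_{\mathrm{loc}}(\R^n,\mathcal A_Q(\R^k))\), homogeneous of degrees \(\sigma_h>1\) with \(\sigma_h\downarrow1\).

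First we normalize and pass to a limit. Normalize so that \(\int_{\mathbb S^{n-1}}|Dv_h|^2=1\). By Theorem~\ref{thm:stationarity-of-differential}, each \(G_h:=Dv_h\) is Dirichlet-stationary and homogeneous of degree \(\sigma_h-1\downarrow0\). Lemma~\ref{lem:Hodge-second-jet} says each \(D^2v_h^\ell\) is symmetric and trace-free a.e. Lemma~\ref{lem:trace-free-aperture} then gives the pointwise aperture bound
\[
D^2v_h^\ell(D^2v_h^\ell)^*\leq\tfrac{n-1}{n}|D^2v_h^\ell|^2 I_{\R^{k\times n}}.
\]
Next quantify the energy of \(G_h\). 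The frequency of \(G_h\) at \(0\) equals \(\sigma_h-1\). Hence
\[
\int_{B_1}|DG_h|^2=\frac{\sigma_h-1}{n-2+2(\sigma_h-1)}\int_{\mathbb S^{n-1}}|G_h|^2\sim\frac{\sigma_h-1}{n-2}.
\]
This is where \(n\geq3\) enters. So the normalized target-stress measures
\[
\mathsf S_h:=\frac1{\sigma_h-1}\int_{B_1}\sum_\ell D^2v_h^\ell(D^2v_h^\ell)^*\,\delta_{Dv_h^\ell(x)}\,dx
\]
have bounded mass \(\mu_h=\operatorname{tr}\mathsf S_h\approx 1/(n-2)\). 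Here \(m=kn\). The measures \(\mathsf S_h\) are supported in a fixed ball of \(\R^{kn}\). This uses the homogeneity \(|G_h(x)|=|x|^{\sigma_h-1}|G_h(\hat x)|\) together with the sup bound on \(|Dv_h|\) on \(\mathbb S^{n-1}\) supplied by Proposition~\ref{prop:inductive-bochner}, after checking the \(C^1\) hypothesis there or using the \(W^{2,2}\) version via Theorem~\ref{thm:stationarity-of-differential}. We may rescale the target so that the support lies inside \(B_{1/2}^{kn}\). Extract \(\mathsf S_h\rightharpoonup^*\mathsf S\). The aperture inequality \(e^T\mathsf S_he\le\frac{n-1}n\mu_h\) passes to the limit, giving the hypothesis of Lemma~\ref{lem:atomic-load} with \(c=\frac{n-1}{n}<1\). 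Mass is not lost, because the total mass converges to a positive constant computed from the frequency identity.

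The core step is to compute \(\operatorname{div}\mathsf S\). Take \(\Psi\in C^1_c(\R^{kn};\R^{kn})\). Use \(\Psi\) as an outer variation of \(G_h\), cut off in \(x\) on \(B_1\). Since \(D_r(\Psi(G^\ell))=D\Psi(G^\ell)[D_rG^\ell]\), outer stationarity of \(G_h\) on the ball gives
\[
\int_{B_1}\sum_\ell D\Psi(G_h^\ell):\big(DG_h^\ell(DG_h^\ell)^*\big)\,dx=\int_{\partial B_1}\sum_\ell \partial_rG_h^\ell\cdot\Psi(G_h^\ell).
\]
Homogeneity gives \(\partial_rG_h^\ell=(\sigma_h-1)G_h^\ell\) on \(\partial B_1\). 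Dividing by \(\sigma_h-1\), we obtain
\[
\int D\Psi:d\mathsf S_h=\int_{\mathbb S^{n-1}}\sum_\ell G_h^\ell\cdot\Psi(G_h^\ell).
\]
As \(h\to\infty\), \(G_h\) converges in \(L^2(\mathbb S^{n-1})\), and in fact strongly in \(W^{1,2}_{\mathrm{loc}}\) by Corollary~\ref{cor:strong-compactness}, to a Dirichlet-stationary map \(G\) homogeneous of degree \(0\). The gradient energy of such a map vanishes by the frequency formula, so \(G\equiv\sum_\ell\llbracket A_\ell\rrbracket\) is constant, with \(\sum|A_\ell|^2=1\).

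Now translate the target by each \(A_\ell\). The limit satisfies
\[
-\operatorname{div}\mathsf S=|\mathbb S^{n-1}|\sum_\ell A_\ell\,\delta_{A_\ell}.
\]
Localize near one atom \(A\ne0\); one exists since \(\sum_\ell|A_\ell|^2=1\). Applying Lemma~\ref{lem:atomic-load} in a small ball around \(A\) forces the load \(|\mathbb S^{n-1}|\,\#\{\ell:A_\ell=A\}\,A\) to vanish. This is a contradiction, and it proves the theorem.

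The main obstacle is the passage to the limit in the divergence identity. It needs strong convergence of the boundary traces \(G_h|_{\mathbb S^{n-1}}\) to the constant \(G\), so that the right-hand side converges for arbitrary \(C^1_c\) test fields \(\Psi\). My plan is to derive this from the energy decay \(\int_{B_1}|DG_h|^2\to0\), obtained from the frequency computation above, together with the trace inequality applied to \(G_h\) composed with an Almgren embedding. A second delicate point is the rigorous justification of the outer-variation identity with boundary term on \(B_1\). I would first test with \(\varphi_\epsilon(|x|)\Psi(G)\) and let \(\epsilon\to0\). The homogeneity of \(G_h\) makes the resulting radial terms explicit, which replaces any trace argument there.
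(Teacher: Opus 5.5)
Your proposal is correct and is essentially the paper's proof. Both run the contradiction with $\sigma_h\downarrow1$, build the normalized target-stress measures of $G_h=Dv_h$, use the trace-free aperture bound with $c=\frac{n-1}{n}$ and the radially cut-off outer variation of $G_h$ that produces the atomic load $|\mathbb S^{n-1}|\sum_\beta q_\beta A^\beta\delta_{A^\beta}$, and then apply Lemma~\ref{lem:atomic-load} at a nonzero atom.

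There are two slips to fix.

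First, your energy identity is off by a factor. Since $N_{G_h}\equiv\sigma_h-1$ (equivalently, test your own boundary identity with $\Psi(Y)=Y$, cut off outside the bounded range of $G_h$), you get exactly $\int_{B_1}|DG_h|^2=(\sigma_h-1)\int_{\mathbb S^{n-1}}|G_h|^2$. So $\operatorname{tr}\mathsf S_h$ has total mass $1$, not about $1/(n-2)$, and $n\geq3$ plays no role in this step.

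Second, the $L^2$ convergence of $G_h|_{\mathbb S^{n-1}}$ to a constant $Q$-point should come from this energy decay, either via your trace-inequality plan or via a Poincar\'e inequality on $\mathbb S^{n-1}$ as in the paper. It cannot come from Corollary~\ref{cor:strong-compactness}, which is stated only for $C^1$ maps.
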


\begin{proof}
Suppose by contradiction that the conclusion is false. Then there are nonzero Dirichlet-stationary maps $v_h\in W^{2,2}_{\mathrm{loc}}\bigl(\R^n,\mathcal A_Q(\R^k)\bigr)$ which are homogeneous of degree \(\sigma_h=1+\alpha_h\), where $\alpha_h>0$ and $\alpha_h\longrightarrow0$.
Set $E:=\R^{k\times n}$ and $G_h:=Dv_h$.
By Theorem~\ref{thm:stationarity-of-differential}, \(G_h\) is Dirichlet-stationary and in particular outer-stationary. Moreover, by Lemma~\ref{lem:Hodge-second-jet}, writing $G_h=\sum_{\ell=1}^Q\llbracket G_h^\ell\rrbracket$, $G_h^\ell=(G_{h,1}^\ell,\ldots,G_{h,n}^\ell)$, one has
\begin{equation}
\label{eq:gap-Hodge-identities}
D_iG_{h,j}^\ell=D_jG_{h,i}^\ell,
\qquad
\sum_{i=1}^nD_iG_{h,i}^\ell=0
\end{equation}
almost everywhere.

Since \(v_h\) is homogeneous of degree \(1+\alpha_h\), the map \(G_h\) is homogeneous of degree \(\alpha_h\). Multiplying \(v_h\) by a constant, we may assume that
\begin{equation}
\label{eq:gap-normalization}
\int_{\mathbb S^{n-1}}|G_h|^2=1.
\end{equation}
Since \(G_h\) is Dirichlet-stationary and homogeneous of degree \(\alpha_h\), its frequency centred at the origin is identically equal to \(\alpha_h\). Hence
\begin{equation}
\label{eq:gap-bulk-energy}
\int_{B_1^n}|DG_h|^2
=
\alpha_h\int_{\mathbb S^{n-1}}|G_h|^2
=
\alpha_h.
\end{equation}

\smallskip
\noindent
\emph{Step 1: convergence to a constant \(Q\)-point.}
The decomposition in polar coordinates of the Dirichlet energy of the homogeneous map \(G_h\), together with \eqref{eq:gap-normalization} and \eqref{eq:gap-bulk-energy}, gives
\[
\int_{\mathbb S^{n-1}}|D_{\mathbb S^{n-1}}G_h|^2
=
\alpha_h(\alpha_h+n-2).
\]
By the Poincar\'e inequality for \(Q\)-valued functions on \(\mathbb S^{n-1}\), there exists \(T_h\in\mathcal A_Q(E)\) such that
\[
\int_{\mathbb S^{n-1}}\mathcal G(G_h,T_h)^2
\leq
C(n,Q)\alpha_h(\alpha_h+n-2)
\longrightarrow0.
\]
The normalization \eqref{eq:gap-normalization} implies that the \(T_h\) are bounded. After passing to a subsequence,
\begin{equation}
\label{eq:gap-constant-limit}
T_h\longrightarrow
T=\sum_{\beta=1}^Nq_\beta\llbracket A^\beta\rrbracket,
\qquad
G_h|_{\mathbb S^{n-1}}\longrightarrow T
\quad\text{strongly in }L^2(\mathbb S^{n-1}),
\end{equation}
where the \(A^\beta\in E\) are distinct, \(q_\beta\geq1\), and \(\sum_{\beta=1}^Nq_\beta=Q\). Moreover,
\begin{equation}
\label{eq:gap-nonzero-limit}
|\mathbb S^{n-1}|
\sum_{\beta=1}^Nq_\beta|A^\beta|^2
=
1,
\end{equation}
so at least one \(A^\beta\) is nonzero.

Outer stationarity of \(G_h\), tested with a target-cutoff approximation of \((x,Y)\mapsto\varphi(x)Y\), gives $\Delta|G_h|^2=2|DG_h|^2\geq0$ in \(\mathcal D'( \R^n)\). The mean-value inequality, homogeneity, and \eqref{eq:gap-normalization} therefore give
\begin{equation}
\label{eq:gap-uniform-range}
\operatorname*{ess\,sup}_{B_1^n}|G_h|^2\leq C(n).
\end{equation}

\smallskip
\noindent
\emph{Step 2: the target stress.}
Define the matrix-valued measure \(\mathsf S_h\) by
\begin{equation}
\label{eq:gap-target-stress}
\mathsf S_h(dy)
:=
\frac1{\alpha_h}
\int_{B_1^n}
\sum_{\ell=1}^Q
DG_h^\ell(x)(DG_h^\ell(x))^*
\,\delta_{G_h^\ell(x)}(dy)\,dx.
\end{equation}
Since $\alpha_h=\int_{B_1^n}|DG_h|^2$, the measure \(\mathsf S_h\) is precisely the target-stress measure associated with \(G_h\), normalized by the total Dirichlet energy of \(G_h\) in \(B_1^n\). Thus $\operatorname{tr}\mathsf S_h(E)=1$.
Moreover, \eqref{eq:gap-uniform-range} shows that the measures \(\mathsf S_h\) are supported in a fixed compact subset of \(E\). Then, after passing to a further subsequence,
\[
\mathsf S_h\stackrel{*}{\rightharpoonup}\mathsf S
\qquad\text{in }
\mathcal M\bigl(E;\operatorname{Sym}^+(E)\bigr).
\]

Let \(\Psi\in C_c^1(E;E)\). Testing the outer-stationarity identity for \(G_h\) with a radial cutoff approximation of the map $(x,Y)\longmapsto\mathbf 1_{B_1^n}(x)\Psi(Y)$ gives
\[
\int_{B_1^n}
\sum_{\ell=1}^Q
D\Psi(G_h^\ell):
\bigl(DG_h^\ell(DG_h^\ell)^*\bigr)
=
\int_{\mathbb S^{n-1}}
\sum_{\ell=1}^Q
\Psi(G_h^\ell)\cdot\partial_rG_h^\ell.
\]
Since \(G_h\) is homogeneous of degree \(\alpha_h\), \(\partial_rG_h^\ell=\alpha_hG_h^\ell\) on \(\mathbb S^{n-1}\). Consequently,
\[
\int_E D\Psi:d\mathsf S_h
=
\int_{\mathbb S^{n-1}}
\sum_{\ell=1}^Q
\Psi(G_h^\ell)\cdot G_h^\ell.
\]
Passing to the limit using \eqref{eq:gap-constant-limit} and \eqref{eq:gap-uniform-range}, we obtain
\begin{equation}
\label{eq:gap-atomic-equilibrium}
-\operatorname{div}_Y\mathsf S
=
|\mathbb S^{n-1}|
\sum_{\beta=1}^N
q_\beta A^\beta\,\delta_{A^\beta}.
\end{equation}

\smallskip
\noindent
\emph{Step 3: the aperture estimate and conclusion.}
By \eqref{eq:gap-Hodge-identities}, for almost every \(x\) and every sheet \(\ell\), the linear map \(DG_h^\ell(x)\in\operatorname{Hom}(\mathbb R^n,E)\) satisfies the assumptions of Lemma~\ref{lem:trace-free-aperture}. Hence
\[
DG_h^\ell(DG_h^\ell)^*
\leq
\frac{n-1}{n}|DG_h^\ell|^2I_E.
\]
It follows directly from \eqref{eq:gap-target-stress} that, for every unit vector \(e\in E\),
\[
e^T\mathsf S_he
\leq
\frac{n-1}{n}\operatorname{tr}\mathsf S_h
\qquad\text{as measures}.
\]
Passing to the limit gives
\begin{equation}
\label{eq:gap-limit-aperture}
e^T\mathsf S e
\leq
\frac{n-1}{n}\operatorname{tr}\mathsf S
\qquad\text{as measures}
\end{equation}
for every unit vector \(e\in E\).

Choose \(A^\beta\neq0\), whose existence follows from \eqref{eq:gap-nonzero-limit}, and choose \(R_\beta>0\) so that \(B_{R_\beta}^E(A^\beta)\) contains no other \(A^\gamma\). In this ball, \eqref{eq:gap-atomic-equilibrium} reads
\[
-\operatorname{div}_Y\mathsf S
=
|\mathbb S^{n-1}|q_\beta A^\beta\,\delta_{A^\beta}.
\]
After translating \(A^\beta\) to the origin and rescaling the ball to the unit ball, Lemma~\ref{lem:atomic-load} applies by \eqref{eq:gap-limit-aperture}. Since \((n-1)/n<1\), it gives $|\mathbb S^{n-1}|q_\beta A^\beta=0$, contradicting \(A^\beta\neq0\). This proves the theorem.
\end{proof}

Decreasing the constants if necessary, we assume that $\delta(n,k,Q)\leq1$ and that $Q\mapsto\delta(n,k,Q)$ is nonincreasing.

\subsection{A Liouville theorem}

\begin{theorem}[Liouville theorem]\label{thm:liouville}
Let \(n\geq2\), \(v\in C^{1,\alpha}(\mathbb R^n,\mathcal A_q(\mathbb R^k))\) be Dirichlet-stationary and suppose \([Dv]_{\alpha,\mathbb R^n}<\infty\), where \(0<\alpha<\frac1q\) if \(n=2\), and \(0<\alpha<\delta(n,k,q)\) if \(n\geq3\), with \(\delta(n,k,q)\) as in Theorem~\ref{thm:homogeneity-one-gap}.
Then \(v\) is affine.
\end{theorem}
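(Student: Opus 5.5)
The plan follows the route sketched in the introduction: compute the frequency at infinity, take a blow-down, use the homogeneity gap to force frequency at most $1$, deduce a global gradient bound, and conclude that the differential is constant.

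\emph{Step 0: reduction.} The average $\eta\circ v$ is a single-valued harmonic map with $[D(\eta\circ v)]_{\alpha;\mathbb R^n}\leq[Dv]_{\alpha;\mathbb R^n}<\infty$, so it is affine by the classical Liouville theorem. The map $w:=v\ominus\eta\circ v$ is still $C^{1,\alpha}$, Dirichlet-stationary, has zero average and $[Dw]_{\alpha}<\infty$, and $v$ is affine iff $w$ is. We may assume $w\not\equiv0$. Integrating along rays with Lemma~\ref{lem:one-dimensional-jet-selection} and the chain-of-balls argument gives
\[
|w(x)|\leq C(1+|x|^{1+\alpha}),
\qquad\text{hence}\qquad
H_{w,0}(r)\leq Cr^{2+2\alpha}\quad\text{for } r\geq1.
\]
Theorem~\ref{prop:frequency-monotonicity-final}(i) gives $H_0(R)\geq (R/r)^{2N_0(r)}H_0(r)$ with $H_0(r)>0$. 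Letting $R\to\infty$ yields $N_0(r)\leq1+\alpha$ for all $r$, so $N_\infty:=\lim_{r\to\infty}N_0(r)\leq1+\alpha$ exists.

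\emph{Step 1: blow-down.} Set $w_R(x):=w(Rx)/\sqrt{H_0(R)}$. These are $C^1$ Dirichlet-stationary maps with $H_{w_R,0}(1)=1$. By the doubling bound in Theorem~\ref{prop:frequency-monotonicity-final}(i) together with $N\leq1+\alpha$, they satisfy uniform $L^2(B_\rho)$ bounds for every $\rho$. Corollary~\ref{cor:strong-compactness} gives a subsequence converging strongly in $W^{1,2}_{\mathrm{loc}}$ and locally uniformly to a Dirichlet-stationary $w_\infty\in W^{2,2}_{\mathrm{loc}}$. Local uniform convergence preserves $H(1)=1$, so $w_\infty\neq0$. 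Strong convergence of $D$ and $H$ gives $N_{w_\infty,0}(\rho)=\lim N_0(R\rho)=N_\infty$ for every $\rho$, so by (iii) $w_\infty$ is homogeneous of degree $\sigma=N_\infty\leq1+\alpha$.

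To get $\sigma\geq1$, I would use the uniform $\sup_{B_{1/2}}|Dw_R|$ bound from Proposition~\ref{prop:inductive-bochner}. This makes $w_\infty$ Lipschitz near $0$. A nonzero homogeneous Lipschitz map vanishing at $0$ (homogeneity forces $w_\infty(0)=Q\llbracket0\rrbracket$) must have degree at least $1$. The gap (Theorem~\ref{thm:2d-cone-gap} if $n=2$, Theorem~\ref{thm:homogeneity-one-gap} if $n\geq3$) together with $\alpha<1/q$, respectively $\alpha<\delta(n,k,q)$, then forces $\sigma=1$.

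\emph{Step 2: global gradient bound.} Since $N_0$ is nondecreasing with limit $1$, we have $N_0\leq1$. Hence $H_0(R)\leq CR^2$ for $R\geq1$ by the same doubling inequality, and therefore $\int_{B_R}|w|^2\leq CR^{n+2}$. The quantitative estimate in Proposition~\ref{prop:inductive-bochner} then gives $\sup_{B_{R/2}}|Dw|^2\leq C$ uniformly in $R$, so $Dw$ is globally bounded.

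\emph{Step 3: the differential is constant.} By Proposition~\ref{prop:inductive-bochner} and Theorem~\ref{thm:stationarity-of-differential}, $G:=Dw$ is Dirichlet-stationary and bounded. If $G\not\equiv0$, then for every centre $y$ the quantity $H_{G,y}$ is positive and bounded, so $H_{G,y}(R)\geq (R/r)^{2N_{G,y}(r)}H_{G,y}(r)$ forces $N_{G,y}(r)=0$ for every $r$. Thus $\int_{B_r(y)}|DG|^2=0$, and $G$ is a constant $Q$-point. Then $w$ has constant differential, and integrating along segments with Lemma~\ref{lem:one-dimensional-jet-selection} shows that $w$ is affine, hence so is $v$.

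The main obstacle is Step 1. One must make sure that the normalization is uniform enough for Corollary~\ref{cor:strong-compactness}, that the blow-down is nonzero, and above all that its degree is at least $1$, which is where the Lipschitz bound from Proposition~\ref{prop:inductive-bochner} is essential.
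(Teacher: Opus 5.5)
Your proposal is correct and follows essentially the same route as the paper. Both arguments bound the frequency at infinity by \(1+\alpha\), blow down to a homogeneous Dirichlet-stationary limit, use the homogeneity gap to get \(N_\infty\le1\), deduce quadratic growth of \(H\) and hence a global gradient bound, and then use Theorem~\ref{thm:stationarity-of-differential} and the frequency of \(Dv\) to show the differential is constant.

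The differences are inessential. Subtracting the average in Step~0 and the Lipschitz argument for \(\sigma\ge1\) are both unnecessary: the paper simply supposes \(1<N_v(\infty)\le1+\alpha\) and contradicts the gap. Your claim that homogeneity forces \(w_\infty(0)=q\llbracket0\rrbracket\) actually fails when \(\sigma=0\), for instance for a nonzero constant zero-average \(q\)-point. This is harmless, because only \(N_\infty\le1\) is used afterwards.
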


\begin{proof}
Let \(D_v(r)\), \(H_v(r)\), and \(N_v(r)\) denote the frequency quantities associated with \(v\) and centred at the origin. If \(v\equiv0\), there is nothing to prove. Otherwise, by Theorem~\ref{prop:frequency-monotonicity-final}\textup{(i)}, \(H_v(r)>0\) for every \(r>0\), and hence \(N_v\) is well defined and nondecreasing on \((0,\infty)\).

The global Hölder bound on \(Dv\) gives \(|v(x)|\leq C(1+|x|^{1+\alpha})\), and hence \(H_v(r)\leq C(1+r^{2+2\alpha})\). Therefore \(N_v(\infty):=\lim_{r\to\infty}N_v(r)\leq1+\alpha\). Indeed, if \(N_v(r_0)>1+\alpha\), Theorem~\ref{prop:frequency-monotonicity-final}\textup{(i)} would give \(H_v(r)\geq H_v(r_0)(r/r_0)^{2N_v(r_0)}\) for every \(r\geq r_0\), contradicting the preceding growth bound.

We claim that \(N_v(\infty)\leq1\). Suppose instead that \(1<N_v(\infty)\leq1+\alpha\). Choose \(R_j\to\infty\) and define \(v_j(x):=v(R_jx)/H_v(R_j)^{1/2}\). Then \(H_{v_j}(1)=1\) and \(N_{v_j}(r)=N_v(R_jr)\) for every \(r>0\). Theorem~\ref{prop:frequency-monotonicity-final}\textup{(i)} and the monotonicity of \(H_v\) give \(H_{v_j}(r)\leq\max\{1,r^{2+2\alpha}\}\) for every \(r>0\), and hence \(\sup_j\int_{B_R}|v_j|^2<\infty\) for every \(R<\infty\). By Corollary~\ref{cor:strong-compactness}, after passing to a subsequence, \(v_j\to w\) strongly in \(W^{1,2}_{\mathrm{loc}}(\mathbb R^n)\) and locally uniformly, for some Dirichlet-stationary \(w\in W^{2,2}_{\mathrm{loc}}(\mathbb R^n)\).

By local uniform convergence, \(H_w(1)=1\), so \(w\not\equiv0\). Hence Theorem~\ref{prop:frequency-monotonicity-final}\textup{(i)} gives \(H_w(r)>0\) for every \(r>0\). Strong \(W^{1,2}_{\mathrm{loc}}\)-convergence and local uniform convergence therefore give, for every \(r>0\),
\[
N_w(r)=\frac{D_w(r)}{H_w(r)}=\lim_{j\to\infty}\frac{D_{v_j}(r)}{H_{v_j}(r)}=\lim_{j\to\infty}N_v(R_jr)=N_v(\infty).
\]
Thus \(w\) is nonzero and, by Theorem~\ref{prop:frequency-monotonicity-final}\textup{(iii)}, homogeneous of degree \(N_v(\infty)\).

If \(n=2\), Theorem~\ref{thm:2d-cone-gap} gives \(N_v(\infty)\geq1+\frac1q\), whereas if \(n\geq3\), Theorem~\ref{thm:homogeneity-one-gap} gives \(N_v(\infty)\geq1+\delta(n,k,q)\). Both conclusions contradict \(N_v(\infty)\leq1+\alpha\). Hence \(N_v(\infty)\leq1\).

Since \(N_v(R)\leq N_v(\infty)\leq1\), Theorem~\ref{prop:frequency-monotonicity-final}\textup{(i)} gives \(H_v(R)\leq H_v(1)R^{2N_v(R)}\leq CR^2\) for every \(R\geq1\). Consequently, for every \(R\geq1\),
\[
\int_{B_R}|Dv|^2=R^{n-2}N_v(R)H_v(R)\leq CR^n.
\]
By Proposition~\ref{prop:inductive-bochner}, \(v\in W^{2,2}_{\mathrm{loc}}(\mathbb R^n)\) and \(|Dv|^2\) is subharmonic. For every \(R\geq1\), the mean-value inequality gives
\[
\sup_{B_{R/2}}|Dv|^2\leq\frac{C}{R^n}\int_{B_R}|Dv|^2\leq C.
\]
Letting \(R\to\infty\), we obtain a global upper bound for \(|Dv|^2\), and hence \(Dv\) is bounded.

Set \(G:=Dv\). By Theorem~\ref{thm:stationarity-of-differential}, \(G\) is Dirichlet-stationary. Let \(D_G\), \(H_G\), and \(N_G\) denote its frequency quantities centred at the origin. The boundedness of \(G\) gives \(H_G(R)\leq C\) for every \(R>0\).

If \(G\equiv q\llbracket0\rrbracket\), the conclusion is immediate. Otherwise, by Theorem~\ref{prop:frequency-monotonicity-final}\textup{(i)}, \(H_G(r)>0\) for every \(r>0\). If \(N_G(r_0)=\beta>0\) for some \(r_0>0\), the same theorem gives \(H_G(R)\geq H_G(r_0)(R/r_0)^{2\beta}\) for every \(R\geq r_0\), contradicting the boundedness of \(H_G\). Therefore \(N_G(R)=0\), and hence \(D_G(R)=0\), for every \(R>0\). It follows that \(DG=0\) almost everywhere, so \(G=Dv\) is constant.

Fix a labelling \(j_1v(0)=\sum_{\ell=1}^q\llbracket(a_\ell,A_\ell)\rrbracket\). Applying Lemma~\ref{lem:one-dimensional-jet-selection} along the segment joining \(0\) to an arbitrary point \(x\), the continuous differential labels remain equal to the fixed matrices \(A_\ell\). Integration therefore gives \(v(x)=\sum_{\ell=1}^q\llbracket a_\ell+A_\ell x\rrbracket\). Thus \(v\) is affine.
\end{proof}

\begin{remark}[An alternative conclusion in dimension two]
\label{rem:planar-liouville-alternative}
When \(n=2\), the preceding proof can be concluded without using Theorem~\ref{thm:stationarity-of-differential}. Indeed, once the boundedness of \(Dv\) has been established, \(|Dv|^2\) is a bounded-above subharmonic function on \(\mathbb R^2\), and is therefore constant by the classical Liouville theorem for subharmonic functions in \(\mathbb R^2\). The Bochner inequality then gives \(D^2v=0\) almost everywhere, so \(Dv\) is constant and \(v\) is affine.
\end{remark}

\section{\(Q\)-valued quasilinear systems and Schauder estimates}

\subsection{$C^{1,\alpha}$ Schauder estimates}
\label{sec:first-order-schauder}

We start by introducing the weak divergence-form formulation used in the
first-order Schauder theory.

\begin{definition}[\(Q\)-valued weak solutions]
\label{def:Q-valued-weak-solution}
Let $\mathscr E: \Omega\times\mathbb R^k\times\operatorname{Hom}(\mathbb R^n,\mathbb R^k)\to\operatorname{Hom}(\mathbb R^n,\mathbb R^k)$ and $\mathscr B:\Omega\times\mathbb R^k\times\operatorname{Hom}(\mathbb R^n,\mathbb R^k)\to\mathbb R^k$ be continuous, and let $f\in L^\infty_{\mathrm{loc}}(\Omega,\mathbb R^k)$. Let $u\in C^1\bigl(\Omega,\mathcal A_Q(\mathbb R^k)\bigr)$. We say that \(u\) is a \(Q\)-valued weak solution of
\begin{equation}
\label{eq:perturbative-system}
\Delta u+\operatorname{div}_x\big(\mathscr E(x,u,Du)\big)+\mathscr B(x,u,Du)=f
\end{equation}
if, for every $\Psi\in C_c^1\bigl(\Omega\times\mathbb R^k,\mathbb R^k\bigr)$, one has
\begin{equation}
\label{eq:Q-valued-weak-formulation}
-\int_\Omega\sum_{\ell=1}^Q\left(Du_\ell+\mathscr E(x,u_\ell,Du_\ell)\right):D\bigl[\Psi(x,u_\ell)\bigr]
+
\int_\Omega\sum_{\ell=1}^Q\mathscr B(x,u_\ell,Du_\ell)\cdot\Psi(x,u_\ell)
=
\int_\Omega\sum_{\ell=1}^Q f(x)\cdot\Psi(x,u_\ell),
\end{equation}
where $j_1u=\sum_{\ell=1}^Q\llbracket(u_\ell,Du_\ell)\rrbracket$ is any measurable selection of the first jet and $D\bigl[\Psi(x,u_\ell)\bigr]=D_x\Psi(x,u_\ell)+D_y\Psi(x,u_\ell)Du_\ell$ denotes the total differential with respect to \(x\).
\end{definition}

\begin{remark}[Basic properties and examples]
\label{rem:Q-valued-weak-solutions-examples}

\begin{enumerate}[label=\textup{(\roman*)}]
    \item The expressions in \eqref{eq:Q-valued-weak-formulation} are symmetric in the first-jet sheets. The definition is therefore independent of the measurable selection used to represent \(j_1u\). The forcing term \(f\) is single-valued and acts identically on every sheet.

    \item When \(Q=1\), Definition~\ref{def:Q-valued-weak-solution} is clearly equivalent to the usual weak formulation of \eqref{eq:perturbative-system}.

    \item Stationarity with respect to outer variations for the Dirichlet energy corresponds to $\mathscr E=0$, $\mathscr B=0$, and $f=0$.

    More generally, let $G: \Omega\times\mathbb R^k \times\operatorname{Hom}(\mathbb R^n,\mathbb R^k) \longrightarrow\mathbb R$ be a \(C^1\) function, and consider the functional $u\mapsto \int_\Omega \sum_{\ell=1}^Q \left\{ \frac12|Du_\ell|^2 + G(x,u_\ell,Du_\ell) \right\}$.
    Stationarity with respect to target variations is equivalent to \eqref{eq:Q-valued-weak-formulation} with $\mathscr E=D_PG$, $\mathscr B=-D_yG$, and $f=0$.

    In particular, let $F_{\mathrm{area}}(P) := \sqrt{\det(I+P^TP)}$ and set $\mathscr E_{\mathrm{area}}(P) := DF_{\mathrm{area}}(P)-P$.
    If the graph varifold associated with \(u\) is stationary, then \(u\) is a \(Q\)-valued weak solution of
    \[
    \Delta u + \operatorname{div}_x \big( \mathscr E_{\mathrm{area}}(Du)\big) =0.
    \]
    Moreover, $D_P\mathscr E_{\mathrm{area}}(0) = D^2F_{\mathrm{area}}(0)-\operatorname{Id} = 0$, and hence $\sup_{|P|\leq L} |D_P\mathscr E_{\mathrm{area}}(P)| \longrightarrow0$ as $L\downarrow0$.
    Thus the small-slope area system is a small perturbation of the Dirichlet system in the sense used below.

    \item In the symmetric \(2\)-valued case, \(Du_2=-Du_1\), so there is a single-valued symmetric matrix \(a(x)\in \R^{n\times n}\) such that \(\mathscr E_{\mathrm{area}}^{i}(Du_\ell(x))= \sum_{j=1}^n a^{ij}(x) D_ju_\ell(x)\) for \(\ell=1,2\) and \(i=1,\ldots,n\), where \(\mathscr E_{\mathrm{area}}^i(P):=\mathscr E_{\mathrm{area}}(P)e_i\). Thus the preceding equation becomes the linear system
    \[
    \Delta u+\sum_{i,j=1}^n D_i\bigl(a^{ij}(x)D_j u\bigr)=0.
    \]
    More generally, if \(u\in C^1(\Omega,\mathcal A_2(\R^k))\) is a solution of the area system but does not necessarily have zero average, the function \(u_f:=u-\eta\circ u\) still satisfies a linear system of the same form $\Delta u_f+\sum_{i,j=1}^n D_i\bigl(b^{ij}(x)D_j u_f\bigr)=0$, with single-valued matrix coefficients \(b^{ij}(x)\in\R^{k\times k}\).
    This is why Simon and Wickramasekera in \cite{SW16} only need to consider linear systems for their \(2\)-valued Schauder estimates.
    For general \(Q\), the coefficients obtained by linearisation need not be common to all sheets, so one needs to consider a genuine quasilinear theory to include the area functional.
\end{enumerate}
\end{remark}

\begin{lemma}[Localisation to differential blocks]
\label{lem:weak-solution-differential-blocks}
Let \(U\subset\mathbb R^n\) be a neighbourhood of \(x_0\), and let $u=\sum_{\alpha=1}^N u^\alpha$, $u^\alpha\in C^1\bigl(U,\mathcal A_{Q_\alpha}(\mathbb R^k)\bigr)$, $\sum_{\alpha=1}^N Q_\alpha=Q$, be a \(C^1\) decomposition. Suppose that, for some \(a\in\mathbb R^k\) and pairwise distinct $A^1,\ldots,A^N \in \operatorname{Hom}(\mathbb R^n,\mathbb R^k)$, one has $j_1u^\alpha(x_0) = Q_\alpha\llbracket(a,A^\alpha)\rrbracket$ for every \(\alpha=1,\ldots,N\).

If \(u\) is a \(Q\)-valued weak solution of
\eqref{eq:perturbative-system}, then, after possibly shrinking \(U\) around \(x_0\), every \(u^\alpha\) is a \(Q_\alpha\)-valued weak solution of the same system.
\end{lemma}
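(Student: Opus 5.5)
We will argue by induction on \(Q\), following the scheme sketched in the proof of Lemma~\ref{lem:stationarity-differential-blocks}. The conclusion is trivial if \(Q=1\) or \(N=1\), so we assume \(N\geq2\) and that the statement holds at every multiplicity strictly smaller than \(Q\), for the same coefficients \(\mathscr E,\mathscr B,f\).

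\emph{Step 1: a separating hypersurface.} Choose \(\varrho>0\) so that the balls \(\overline B_\varrho(A^\alpha)\) are pairwise disjoint. By continuity of \(Du^\alpha\), after shrinking \(U\) we have \(\operatorname{spt}Du^\alpha(x)\subset B_\varrho(A^\alpha)\) for all \(x\in U\). Set \(m^\alpha:=\eta\circ u^\alpha\); this is \(C^1\) with \(Dm^\alpha\in B_\varrho(A^\alpha)\). Fix \(\alpha\neq\beta\) and \(e\in\mathbb R^k\) with \(e^T(A^\alpha-A^\beta)\neq0\). Then \(g:=e\cdot(m^\alpha-m^\beta)\) satisfies \(g(x_0)=0\) and \(Dg\neq0\) on \(U\) after shrinking. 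Hence \(H:=\{g=0\}\) is a \(C^1\) hypersurface. Off \(H\) we have \(m^\alpha\neq m^\beta\), so \(u(x)\) is not a single \(Q\)-fold point, and the complete value-collision set \(C_u\cap U\) lies in \(H\).

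\emph{Step 2: the equation off \(H\).} Near any \(y\in U\setminus H\), the values of \(u(y)\) split into several distinct points. By continuity and Proposition~\ref{prop:local-splitting}, \(u\) decomposes near \(y\) into value clusters \(u=\sum_\beta w^\beta\) of multiplicity \(<Q\) with disjoint graphs. The weak formulation~\eqref{eq:Q-valued-weak-formulation} localizes to each cluster: take \(\Psi\) supported in \(x\) near \(y\) and, in \(Y\), near the graph of \(w^\beta\) only. This is legitimate because the integrand acts sheetwise and \(f\) is single-valued, so each \(w^\beta\) is a weak solution.

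Inside a cluster \(w^\beta\), split its first jet at \(y\) into distinct differential values and apply the inductive hypothesis at multiplicity \(<Q\). This shows that each differential sub-block is a weak solution near \(y\). Each such sub-block has its differential sheets in exactly one \(B_\varrho(A^\gamma)\), so it is a sub-block of exactly one \(u^\gamma\). Near \(y\), \(u^\gamma\) is the sum of the sub-blocks whose differential sheets lie in \(B_\varrho(A^\gamma)\), across all value clusters. Since~\eqref{eq:Q-valued-weak-formulation} is additive over sheets, \(u^\gamma\) is a weak solution near \(y\). A partition of unity in \(x\) then gives the identity for \(u^\gamma\) against all \(\Psi\) supported in \((U\setminus H)\times\mathbb R^k\).

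\emph{Step 3: crossing \(H\).} This is the main obstacle. For a general \(\Psi\in C^1_c(U\times\mathbb R^k)\), write \(\Psi=\chi_\varepsilon\Psi+(1-\chi_\varepsilon)\Psi\), where \(\chi_\varepsilon=\chi(g/\varepsilon)\) vanishes near \(H\) and \(|D\chi_\varepsilon|\lesssim\varepsilon^{-1}\) on the slab \(\{|g|<2\varepsilon\}\). Step 2 applies to \(\chi_\varepsilon\Psi\). The lower-order terms and the \(f\)-term in the remainder vanish as \(\varepsilon\to0\), since \(|\{|g|<2\varepsilon\}\cap\operatorname{spt}\Psi|\to0\). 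What remains is
\[
\int\sum_{\ell}\bigl(Du^\gamma_\ell+\mathscr E(x,u^\gamma_\ell,Du^\gamma_\ell)\bigr):\Psi(x,u^\gamma_\ell)\otimes D\chi_\varepsilon .
\]
Its limit is the difference of the fluxes \(\int_H\sum_\ell(Du^\gamma_\ell+\mathscr E(\cdot))\nu\cdot\Psi(x,u^\gamma_\ell)\), taken from the two sides of \(H\). The integrand is a continuous symmetric function of the first jet of \(u^\gamma\), and \(u^\gamma\) is \(C^1\) across \(H\), so the two one-sided traces agree. Since the normals are opposite and \(\chi'\) is odd-symmetric in \(g\) (integrate over \(g\in(-2\varepsilon,0)\) and \((0,2\varepsilon)\) using the coarea formula with \(|Dg|\) bounded below), the contributions cancel in the limit by uniform continuity. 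The care needed is in justifying this limit with only continuous, not Lipschitz, coefficients. I would do it by comparing the slab integral with its value for the jet frozen at the nearest point of \(H\), using the uniform continuity of \(j_1u^\gamma\) on compacts. Letting \(\varepsilon\downarrow0\) yields~\eqref{eq:Q-valued-weak-formulation} for \(u^\gamma\) on \(U\).
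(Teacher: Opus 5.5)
Your proposal is correct and follows essentially the same route as the paper. Both arguments proceed by induction on \(Q\) using the \(C^1\) hypersurface \(H=\{e\cdot(m^\alpha-m^\beta)=0\}\), which contains the complete value-collision set; both localise to value clusters and their differential sub-blocks off \(H\), and both cancel the continuous flux \(\sum_\ell(Du^\gamma_\ell+\mathscr E(x,u^\gamma_\ell,Du^\gamma_\ell))^*\Psi(x,u^\gamma_\ell)\) across \(H\) via the coarea formula. Your two-sided cutoff \(\chi(g/\varepsilon)\) is just the sum of the paper's one-sided cutoffs \(\eta_\varepsilon^\pm\), so your limiting argument in Step~3 is the same computation.
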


\begin{proof}
We argue by induction on \(Q\). The conclusion is immediate if
\(Q=1\) or \(N=1\). Suppose that \(N\geq2\) and that the result has been proved at every multiplicity strictly smaller than \(Q\).

For each \(\alpha\), set \(m^\alpha:=\eta\circ u^\alpha\). Choose \(\varrho>0\) so that the closed balls \(\overline B_\varrho(A^\alpha)\subset\operatorname{Hom}(\mathbb R^n,\mathbb R^k)\) are pairwise disjoint. Since \(Du^\alpha(x_0)=Q_\alpha\llbracket A^\alpha\rrbracket\) and \(Du^\alpha\) is continuous, after possibly shrinking \(U\) we may assume that \(\operatorname{spt}Du^\alpha(x)\subset B_\varrho(A^\alpha)\) for every \(x\in U\) and every \(\alpha\). In particular, \(Dm^\alpha(x)\in B_\varrho(A^\alpha)\) for every \(x\in U\), and hence \(Dm^\alpha(x)\neq Dm^\beta(x)\) whenever \(\alpha\neq\beta\).

Choose two distinct blocks \(u^\alpha\) and \(u^\beta\). Since all the values of \(u\) coincide at \(x_0\), $m^\alpha(x_0)=m^\beta(x_0)$, whereas $Dm^\alpha(x_0)=A^\alpha \neq A^\beta=Dm^\beta(x_0)$.
Hence there exist \(e\in\mathbb R^k\) and \(\nu\in\mathbb R^n\) such that $e\cdot(A^\alpha-A^\beta)\nu\neq0$.
Set $g(x):= e\cdot\bigl(m^\alpha(x)-m^\beta(x)\bigr)$.
Then $g(x_0)=0$, $D_\nu g(x_0)\neq0$.
After shrinking \(U\) again, we may assume that \(Dg\neq0\) throughout \(U\). Thus $H:=\{x\in U:g(x)=0\}$ is a \(C^1\) hypersurface.

As $\left\{ x\in U: u(x)=Q\llbracket b\rrbracket \text{ for some }b\in\mathbb R^k \right\}\subset H$, at every point of \(U\setminus H\), the values of \(u\) split locally into at least two separated clusters. Localising the test variations in the target shows that each value cluster is a weak solution of the same system. Each such cluster has multiplicity strictly smaller than \(Q\). By the preceding separation of the differential sheets, every differential sub-block of a value cluster belongs to a unique prescribed component \(u^\gamma\); more precisely, \(u^\gamma\) is locally the sum of those differential sub-blocks whose differential sheets lie in \(B_\varrho(A^\gamma)\). The inductive hypothesis applied to the differential sub-blocks of each value cluster, together with the additivity of the weak formulation over the sheets, therefore shows that every prescribed block \(u^\gamma\) is a weak solution in \(U\setminus H\).

It remains to extend the equation across \(H\). Fix $\gamma\in\{1,\ldots,N\}$,
and let \(W^\pm\) be the two sides of \(H\) in a sufficiently small neighbourhood $W\Subset U$.
Let \(\rho\in C^1(W)\) be a defining function for \(H\), with $W^\pm=\{\pm\rho>0\}$, and set $\eta_\varepsilon^\pm(x) := \chi\left(\frac{\pm\rho(x)}{\varepsilon}\right)$, where \(\chi\in C^\infty(\mathbb R)\) is nondecreasing, $\chi=0$ on $\left(-\infty,\frac12\right]$, $\chi=1$ on $[1,\infty)$.

Fix $\Psi\in C_c^1\bigl(W\times\mathbb R^k,\mathbb R^k\bigr)$.
Since \(\eta_\varepsilon^\pm\) vanishes near \(H\), the map $(x,y)\longmapsto \eta_\varepsilon^\pm(x)\Psi(x,y)$ is an admissible test map for \(u^\gamma\) over \(W^\pm\). Writing $j_1u^\gamma = \sum_{\ell=1}^{Q_\gamma} \llbracket
(u_\ell^\gamma,Du_\ell^\gamma) \rrbracket$, we obtain
\[
\begin{aligned}
0&=
\int_{W^\pm}
\eta_\varepsilon^\pm
\sum_{\ell=1}^{Q_\gamma}
\Bigl\{
\left(
Du_\ell^\gamma
+
\mathscr E(x,u_\ell^\gamma,Du_\ell^\gamma)
\right)
:
D\bigl[\Psi(x,u_\ell^\gamma)\bigr]
-
\mathscr B(x,u_\ell^\gamma,Du_\ell^\gamma)
\cdot
\Psi(x,u_\ell^\gamma)
+
f(x)\cdot\Psi(x,u_\ell^\gamma)
\Bigr\}
\\
&+
\int_{W^\pm}
\sum_{\ell=1}^{Q_\gamma}
\left(
Du_\ell^\gamma
+
\mathscr E(x,u_\ell^\gamma,Du_\ell^\gamma)
\right)
:
\bigl(
\Psi(x,u_\ell^\gamma)
\otimes D\eta_\varepsilon^\pm
\bigr).
\end{aligned}
\]
The first integral converges by dominated convergence.

Define the single-valued vector field
\[
\mathcal J_\gamma(x)
:=
\sum_{\ell=1}^{Q_\gamma}
\bigg(
Du_\ell^\gamma(x)
+
\mathscr E
\bigl(
x,u_\ell^\gamma(x),Du_\ell^\gamma(x)
\bigr)
\bigg)^*
\Psi\bigl(x,u_\ell^\gamma(x)\bigr)
\in\mathbb R^n.
\]
The expression defining \(\mathcal J_\gamma\) is symmetric in the first-jet sheets. Since \(j_1u^\gamma\) is continuous and \(\mathscr E\) is continuous, \(\mathcal J_\gamma\) is continuous. Moreover, the last integral above is precisely $\int_{W^\pm} \mathcal J_\gamma\cdot D\eta_\varepsilon^\pm$.
Since \(D\rho\neq0\) near \(H\), the nearby level sets of \(\rho\) are
\(C^1\) hypersurfaces. By the coarea formula,
\[
\int_{W^\pm}
\mathcal J_\gamma\cdot D\eta_\varepsilon^\pm
=
\int_{W^\pm}
\frac1\varepsilon
\chi'\left(\frac{\pm\rho}{\varepsilon}\right)
\mathcal J_\gamma\cdot D(\pm\rho)
=
\int_{1/2}^1
\chi'(t)
\int_{\{\pm\rho=\varepsilon t\}}
\mathcal J_\gamma\cdot
\frac{D(\pm\rho)}{|D\rho|}
\,d\mathcal H^{n-1}\,dt.
\]
Since \(\mathcal J_\gamma\) is continuous and compactly supported in \(W\), the \(C^1\) convergence of the level sets \(\{\pm\rho=\varepsilon t\}\) to \(H\), together with dominated convergence in \(t\), gives $\int_{W^\pm} \mathcal J_\gamma\cdot D\eta_\varepsilon^\pm \longrightarrow - \int_H \mathcal J_\gamma\cdot \nu^{\pm} \,d\mathcal H^{n-1}$, where we also used that $\int_{1/2}^1\chi'(t)\,dt=1$, and that, on \(H\), the vector \(D(\pm\rho)/|D\rho|\) points into \(W^\pm\), and is therefore equal to \(-\nu^\pm\), where \(\nu^\pm\) is the outward-pointing unit normal to \(W^\pm\).

Since $\nu^-=-\nu^+$, the two boundary contributions cancel. Summing the identities on \(W^+\) and \(W^-\) and letting \(\varepsilon\downarrow0\), we obtain
\[
0=
\int_W
\sum_{\ell=1}^{Q_\gamma}
\left(
Du_\ell^\gamma
+
\mathscr E(x,u_\ell^\gamma,Du_\ell^\gamma)
\right)
:
D\bigl[\Psi(x,u_\ell^\gamma)\bigr]
-
\int_W
\sum_{\ell=1}^{Q_\gamma}
\mathscr B(x,u_\ell^\gamma,Du_\ell^\gamma)
\cdot
\Psi(x,u_\ell^\gamma)
+
\int_W
\sum_{\ell=1}^{Q_\gamma}
f(x)\cdot
\Psi(x,u_\ell^\gamma).
\]
Thus \(u^\gamma\) is a \(Q_\gamma\)-valued weak solution across \(H\).
Since the equation is local, \(u^\gamma\) is a weak solution throughout \(U\). Since \(\gamma\) was arbitrary, the proof is complete.
\end{proof}

For the remainder of this subsection, hypothesis \((\star)\) in a ball $B$ denotes the following conditions on a pair \((\mathscr E,\mathscr B)\) as in Definition~\ref{def:Q-valued-weak-solution}, with parameters \(0<\beta\leq1\), \(L,\Lambda<\infty\), and \(\varepsilon>0\):
\[
(\star)\qquad
\left\{
\begin{aligned}
&\mathscr E,\mathscr B\text{ are continuous},\\
&\mathscr E(x,a,0)=0,\\
&\left|\mathscr E(x,a,P)-\mathscr E(x',a',P)\right|\leq\Lambda\left(|x-x'|^\beta+|a-a'|\right)|P|,\\
&\left|\mathscr E(x,a,P)-\mathscr E(x,a,P')\right|\leq\varepsilon|P-P'|,\\
&\mathscr B(x,0,0)=0,\\
&\left|\mathscr B(x,a,P)-\mathscr B(x,a',P')\right|\leq\Lambda\left(|a-a'|+|P-P'|\right),
\end{aligned}
\right.
\]
for every \(x,x'\in B\), \(a,a'\in\mathbb R^k\), and \(|P|,|P'|\leq L\).

We next isolate the linearisation step for the normalized blow-up sequence arising in the proof of the Schauder estimate.

\begin{lemma}[Perturbative linearised blow-ups]
\label{lem:perturbative-linearised-blow-ups}
Let \(q\geq1\), \(0<\alpha\leq\beta\leq1\), \(L,\Lambda<\infty\), and \(R>0\).
Let \(r_h\downarrow0\). For every \(h\), let \((\mathscr E_h,\mathscr B_h)\) satisfy hypothesis \((\star)\) on \(B_{r_hR}\) with parameters \(\beta,L,\Lambda,\varepsilon_h\), where $\varepsilon_h\longrightarrow0$.

Let $f_h\in L^\infty(B_{r_hR},\mathbb R^k)$, and let $u_h\in C^1\bigl(B_{r_hR},\mathcal A_q(\mathbb R^k)\bigr)$ be a \(q\)-valued weak solution of
\[
\Delta u_h
+
\operatorname{div}_x
\mathscr E_h(x,u_h,Du_h)
+
\mathscr B_h(x,u_h,Du_h)
=
f_h
\]
in \(B_{r_hR}\), with $\sup_{B_{r_hR}}|Du_h|\leq L$.

Let $a_h\in \R^k$ and $P_h\in \operatorname{Hom}(\R^n, \R^k)$ satisfy $|P_h|\leq L$, and define $\ell_h(x):=a_h+P_hx$. Let \(H_h>0\) satisfy
$$
\frac{|a_h|+|P_h|}{H_h}\longrightarrow0 \qquad \text{and}\qquad
\frac{r_h^{1-\alpha}\|f_h\|_{L^\infty(B_{r_hR})}}{H_h}
\longrightarrow0,
$$
and
\[
w_h(z)
:=
\frac{u_h(r_hz)\ominus\ell_h(r_hz)}{r_h^{1+\alpha}H_h}
\longrightarrow w
\qquad
\text{locally in }C^1(B_R)
\]
for some $w\in C^1\bigl(B_R,\mathcal A_q(\mathbb R^k)\bigr)$.
Then \(w\) is Dirichlet-stationary in \(B_R\).
\end{lemma}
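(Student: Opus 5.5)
The plan is to prove only that \(w\) is stationary with respect to outer variations, and then upgrade. Proposition~\ref{prop:inductive-bochner} gives \(w\in W^{2,2}_{\mathrm{loc}}(B_R)\) for a \(C^1\) outer-stationary map. Theorem~\ref{thm:stationarity-of-differential} then shows that \(w\) is Dirichlet-stationary, so inner variations never have to be handled directly. Outer stationarity is obtained by rescaling the weak formulation \eqref{eq:Q-valued-weak-formulation} of \(u_h\) and passing to the limit.

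\emph{Choice of test maps.} Fix \(\Psi\in C_c^1(B_R\times\mathbb R^k,\mathbb R^k)\) and set \(\varepsilon_h' := r_h^{1+\alpha}H_h\). Define
\[
\Psi_h(x,y):=\Psi\bigl(x/r_h,\,(y-\ell_h(x))/\varepsilon_h'\bigr),
\]
which is admissible on \(B_{r_hR}\). Writing \(z=x/r_h\) and \(u_{h,\ell}(r_hz)=\ell_h(r_hz)+\varepsilon_h' w_{h,\ell}(z)\), one has \(Du_{h,\ell}(r_hz)=P_h+r_h^\alpha H_h Dw_{h,\ell}(z)\). The chain rule gives \(D_x[\Psi_h(x,u_{h,\ell})]=r_h^{-1}D_z[\Psi(z,w_{h,\ell})]\). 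After the change of variables, I multiply the identity by \(r_h^{1-n-\alpha}H_h^{-1}\), so that the principal term becomes exactly \(\int\sum_\ell Dw_{h,\ell}:D_z[\Psi(z,w_{h,\ell})]\). By the local \(C^1\) convergence of \(j_1w_h\), the symmetric integrands converge uniformly on \(\operatorname{spt}\Psi\), so this term tends to the outer-variation integral for \(w\). It then remains to show that every other term vanishes in the limit.

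\emph{The key cancellation.} Two terms are not small a priori: the affine contribution \(\int\sum_\ell P_h:D[\Psi_h(x,u_{h,\ell})]\), and the constant part \(\mathscr E_h(0,a_h,P_h)\) of the perturbation. Both have the form \(\int\sum_\ell M:D[g_\ell]\) with \(M\) a constant matrix and \(g_\ell=\Psi_h(x,u_{h,\ell})\). The function \(\sum_\ell g_\ell\) is a symmetric function of the first-jet sheets, hence single-valued, Lipschitz and compactly supported. Along lines, Lemma~\ref{lem:one-dimensional-jet-selection} shows that its distributional derivative is \(\sum_\ell D[g_\ell]\), so both integrals are zero. I expect this step to be the main point requiring care, since it is exactly where multivaluedness enters.

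\emph{Error estimates.} The remainder of \(\mathscr E_h\) splits in two, and I use \(\alpha\leq\beta\) throughout. The first piece is \(|\mathscr E_h(x,u,Du)-\mathscr E_h(x,u,P_h)|\leq\varepsilon_h r_h^\alpha H_h|Dw_h|\), which after normalisation is \(O(\varepsilon_h)\to0\). The second is \(|\mathscr E_h(x,u,P_h)-\mathscr E_h(0,a_h,P_h)|\leq\Lambda(|x|^\beta+|u-a_h|)|P_h|\). Since \(|u-a_h|\leq |P_h|r_hR+\varepsilon_h'|w_h|\), this piece normalises to at most
\[
C\Lambda|P_h|\bigl(r_h^{\beta-\alpha}+|P_h|r_h^{1-\alpha}+r_h\bigr)/H_h\to0.
\]
The lower-order term carries one extra factor \(r_h\). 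Using \(\mathscr B_h(x,0,0)=0\), its normalised size is at most
\[
C\Lambda r_h^{1-\alpha}\bigl(|a_h|+|P_h|+|P_h|r_h+\varepsilon_h'+r_h^\alpha H_h\bigr)/H_h\to0,
\]
by \((|a_h|+|P_h|)/H_h\to0\). The forcing term normalises to \(Cr_h^{1-\alpha}\|f_h\|_{L^\infty}/H_h\to0\). Passing to the limit gives \(\int\sum_\ell Dw_\ell:D[\Psi(z,w_\ell)]=0\), which is outer stationarity, and the two results quoted at the start then finish the argument.
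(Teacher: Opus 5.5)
Your proposal is correct and is essentially the paper's proof. The paper tests the rescaled equation for $v_h(z)=u_h(r_hz)$ with $\Phi_h(z,a)=\Psi\bigl(z,(a-\ell_h(r_hz))/(r_h^{1+\alpha}H_h)\bigr)$, which is your $\Psi_h$ after the change of variables, and it uses the same ingredients: the constant-matrix cancellation for $P_h$ and $\mathscr E_h(0,a_h,P_h)$, the $(\star)$ estimates with $\alpha\leq\beta$, and the upgrade via Proposition~\ref{prop:inductive-bochner} and Theorem~\ref{thm:stationarity-of-differential}. There is one harmless slip: in your normalised bound for $|\mathscr E_h(x,u,P_h)-\mathscr E_h(0,a_h,P_h)|$, the last term should be $\Lambda r_h|P_h|\,|w_{h,\ell}|$ rather than $\Lambda|P_h|r_h/H_h$, and it still tends to zero because $|P_h|\leq L$ and $w_h$ is locally bounded.
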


\begin{proof}
Set $v_h(z):=u_h(r_hz)$.
By the definition of \(v_h\), $Dv_h(z)=r_hDu_h(r_hz)$, and a change of variables in the weak formulation shows that \(v_h\) is a \(q\)-valued weak solution of
\[
\Delta v_h
+
r_h\operatorname{div}_z
\mathscr E_h
\left(
r_hz,v_h,\frac{Dv_h}{r_h}
\right)
+
r_h^2
\mathscr B_h
\left(
r_hz,v_h,\frac{Dv_h}{r_h}
\right)
=
r_h^2f_h(r_hz)
\]
in \(B_R\). Moreover, $\sup_{B_R}|Dv_h|/r_h\leq\sup_{B_{r_hR}}|Du_h|\leq L$.

For a measurable selection \(j_1w_h=\sum_{\ell=1}^q\llbracket(w_{h,\ell},Dw_{h,\ell})\rrbracket\), let \(j_1v_h=\sum_{\ell=1}^q\llbracket(v_{h,\ell},Dv_{h,\ell})\rrbracket\) be the corresponding selection of \(j_1v_h\). Then
\begin{equation}
\label{eq:rescaled-first-jet-identities}
v_{h,\ell}=\ell_h(r_hz)+r_h^{1+\alpha}H_hw_{h,\ell}\qquad\text{and}\qquad\frac{Dv_{h,\ell}}{r_h}=P_h+r_h^\alpha H_hDw_{h,\ell}.
\end{equation}

Let $\Psi\in C_c^1(B_R\times\mathbb R^k,\mathbb R^k)$ and define
\[
\Phi_h(z,a)
:=
\Psi\left(
z,
\frac{a-\ell_h(r_hz)}{r_h^{1+\alpha}H_h}
\right).
\]
Then $\Phi_h(z,v_{h,\ell})=\Psi(z,w_{h,\ell})$ and $D\bigl[\Phi_h(z,v_{h,\ell})\bigr]=D\bigl[\Psi(z,w_{h,\ell})\bigr]$.
Using \(\Phi_h\) in the weak formulation for \(v_h\), dividing by
\(r_h^{1+\alpha}H_h\), and observing that $\int_{B_R}r_hP_h:D\left[\sum_{\ell=1}^q\Psi(z,w_{h,\ell})\right]=0$, we obtain
\begin{equation}
\label{eq:PDE-tested}
\begin{aligned}
0={}&
\int_{B_R}
\sum_{\ell=1}^q
Dw_{h,\ell}:
D\bigl[\Psi(z,w_{h,\ell})\bigr]
+
\frac1{r_h^\alpha H_h}
\int_{B_R}
\sum_{\ell=1}^q
\mathscr E_h
\bigl(
r_hz,
v_{h,\ell},
\frac{Dv_{h,\ell}}{r_h}
\bigr)
:
D\bigl[\Psi(z,w_{h,\ell})\bigr]
\\
&-
\frac{r_h}{r_h^\alpha H_h}
\int_{B_R}
\sum_{\ell=1}^q
\mathscr B_h
\bigl(
r_hz,
v_{h,\ell},
\frac{Dv_{h,\ell}}{r_h}
\bigr)
\cdot
\Psi(z,w_{h,\ell})
+
\frac{r_h}{r_h^\alpha H_h}
\int_{B_R}
\sum_{\ell=1}^q
f_h(r_hz)
\cdot
\Psi(z,w_{h,\ell}).
\end{aligned}
\end{equation}

We show that the last three terms tend to zero. Let \(z\in B_R\). By \eqref{eq:rescaled-first-jet-identities} and the conditions on $\mathscr E_h$ in hypothesis $(\star)$, we get
\[
\begin{aligned}
&\frac1{r_h^\alpha H_h}
\left|
\mathscr E_h
\bigl(
r_hz,
v_{h,\ell},
\frac{Dv_{h,\ell}}{r_h}
\bigr)
-
\mathscr E_h
\bigl(
0,a_h,P_h
\bigr)
\right|
\\
&\leq
\frac{1}{r_h^\alpha H_h}
\left|
\mathscr E_h
\left(
r_hz,
v_{h,\ell},
\frac{Dv_{h,\ell}}{r_h}
\right)
-
\mathscr E_h
\left(
r_hz,
v_{h,\ell},
P_h
\right)
\right|+
\frac{1}{r_h^\alpha H_h}
\left|
\mathscr E_h
\left(
r_hz,
v_{h,\ell},
P_h
\right)
-
\mathscr E_h
\left(
0,
a_h,
P_h
\right)
\right|
\\
&\leq
\varepsilon_h|Dw_{h,\ell}|
+
\Lambda\left[
R^\beta r_h^{\beta-\alpha}\frac{|P_h|}{H_h}
+
r_hL|w_{h,\ell}|
+
Rr_h^{1-\alpha}L\frac{|P_h|}{H_h}
\right].
\end{aligned}
\]
The right-hand side converges to zero locally uniformly, since
\(\varepsilon_h\to0\), \((|a_h|+|P_h|)/H_h\to0\), \(r_h\to0\),
\(\beta\geq\alpha\), and \(w_h\to w\) locally in \(C^1(B_R)\).
Since $\mathscr E_h(0,a_h,P_h)$ is constant in \(z\), its contraction with $D\left[\sum_{\ell=1}^q\Psi(z,w_{h,\ell})\right]$ integrates to zero. Thus the term of \eqref{eq:PDE-tested} involving \(\mathscr E_h\) converges to zero.

Similarly, for the lower-order term, \eqref{eq:rescaled-first-jet-identities} and the conditions on \(\mathscr B_h\) in hypothesis $(\star)$ give, on \(B_R\),
\[
\frac{r_h}{r_h^\alpha H_h}
\left|
\mathscr B_h
\bigl(
r_hz,
v_{h,\ell},
\frac{Dv_{h,\ell}}{r_h}
\bigr)
\right|
\leq
\Lambda\left[
r_h^2|w_{h,\ell}|
+
r_h^{1-\alpha}\frac{|a_h|}{H_h}
+
Rr_h^{2-\alpha}\frac{|P_h|}{H_h}
+
r_h|Dw_{h,\ell}|
+
r_h^{1-\alpha}\frac{|P_h|}{H_h}
\right].
\]
The right-hand side converges to zero locally uniformly, since
\(r_h\to0\), \((|a_h|+|P_h|)/H_h\to0\), \(0<\alpha\leq1\), \(R\) is fixed, and
\(w_h\to w\) locally in \(C^1(B_R)\).
Hence the term of \eqref{eq:PDE-tested} involving \(\mathscr B_h\) also converges to zero.

Finally,
\[
\left|
\frac{r_h}{r_h^\alpha H_h}
\int_{B_R}
\sum_{\ell=1}^q
f_h(r_hz)\cdot\Psi(z,w_{h,\ell})
\right|
\leq
C
\frac{r_h^{1-\alpha}\|f_h\|_{L^\infty(B_{r_hR})}}{H_h}
\longrightarrow0.
\]
Thus the term involving \(f_h\) also converges to zero.

Letting \(h\to\infty\), and using the local \(C^1\)-convergence of \(w_h\) to \(w\), we obtain
\[
\int_{B_R}
\sum_{\ell=1}^q
Dw_\ell:
D\bigl[\Psi(z,w_\ell)\bigr]
=0.
\]
Thus \(w\) is stationary with respect to outer variations for the Dirichlet energy in \(B_R\).

Since \(w\in C^1\), Proposition~\ref{prop:inductive-bochner} gives $w\in W^{2,2}_{\mathrm{loc}}\bigl(B_R,\mathcal A_q(\mathbb R^k)\bigr)$, and Theorem~\ref{thm:stationarity-of-differential} implies that \(w\) is Dirichlet-stationary in \(B_R\).
\end{proof}

\begin{theorem}[\(C^{1,\alpha}\) Schauder estimates for \(Q\)-valued quasilinear systems]
\label{thm:quasilinear-Schauder}
Let \(0<\alpha<1/Q\) if \(n=2\), and \(0<\alpha<\delta(n,k,Q)\) if \(n\geq3\). Let \(\alpha\leq\beta\leq1\) and \(L,\Lambda<\infty\).
There exist $\varepsilon_0 = \varepsilon_0(n,k,Q,\alpha,\beta,L,\Lambda)>0$
and $C =C(n,k,Q,\alpha,\beta,L,\Lambda)<\infty$ with the following property.

Let \((\mathscr E,\mathscr B)\) satisfy hypothesis \((\star)\) on \(B^n_1\) with parameters $\beta,L,\Lambda,\varepsilon_0$, let $f\in L^\infty(B^n_1,\mathbb R^k)$, and let $u\in C^{1,\alpha}\bigl(B^n_1,\mathcal A_Q(\mathbb R^k)\bigr)$ be a \(Q\)-valued weak solution of
\[
\Delta u
+
\operatorname{div}_x\mathscr E(x,u,Du)
+
\mathscr B(x,u,Du)
=
f
\]
in \(B^n_1\), with $\sup_{B^n_1}|Du|\leq L$.
Then
\[
\|u\|_{C^{1,\alpha}(B_{1/2}^n)}
\leq
C\left(
\|u\|_{L^2(B_1^n)}
+
\|f\|_{L^\infty(B_1^n)}
\right).
\]
\end{theorem}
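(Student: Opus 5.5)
The plan is Simon's blow-up (scaling) argument by contradiction. The central step is a seminorm estimate with small loss: for every \(\theta>0\) there is \(C_\theta\) such that
\[
[Du]_{\alpha;B_{1/2}}\leq\theta\,[Du]_{\alpha;B_1}^{\ast}+C_\theta\bigl(\|u\|_{L^2(B_1)}+\|f\|_{L^\infty(B_1)}\bigr).
\]
In a weighted formulation, \([Du]^{\ast}\) denotes the interior seminorm \(\sup_{x,y}\min(d_x,d_y)^{1+\alpha}\mathcal G(Du(x),Du(y))/|x-y|^\alpha\). Standard absorption (Simon's lemma on subadditive quantities) combined with interpolation between \(C^{1,\alpha}\), \(C^1\) and \(L^2\) then gives the stated bound. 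These interpolation inequalities should hold for \(Q\)-valued maps by running Lemma~\ref{lem:one-dimensional-jet-selection} along segments.

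To prove the seminorm estimate, suppose it fails. Then there are \(u_h\), \((\mathscr E_h,\mathscr B_h)\) satisfying \((\star)\) with \(\varepsilon_h\to0\) (one may also argue for fixed small \(\varepsilon_0\) by contradiction, letting \(\varepsilon_h\to0\) along the sequence), data \(f_h\), and points \(x_h,y_h\) nearly attaining the weighted seminorm \(M_h\). We also have \(\|u_h\|_{L^2}+\|f_h\|_{L^\infty}\leq M_h/h\). Set \(r_h:=|x_h-y_h|\). Boundedness of \(Du_h\) by \(L\) forces \(r_h\to0\). The interpolation then makes the relevant distances to the boundary large compared with \(r_h\).

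We rescale around \(x_h\) by subtracting the average affine map \(\ell_h(x)=a_h+P_hx\), where \((a_h,P_h)=\eta\circ j_1u_h(x_h)\), and dividing by \(r_h^{1+\alpha}H_h\) with \(H_h\approx[Du_h]\) at that scale. This yields \(w_h\) with \([Dw_h]_{\alpha}\leq C\) on balls \(B_R\) and \(\mathcal G(Dw_h(0),Dw_h(e_h))\geq1/2\) for some \(|e_h|=1\). The conditions \((|a_h|+|P_h|)/H_h\to0\) and \(r_h^{1-\alpha}\|f_h\|/H_h\to0\) follow from the smallness of the lower-order norms relative to \(M_h\).

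The main obstacle is that \(j_1w_h(0)\) need not stay bounded. Sheets may separate at different rates in jet space, so Lemma~\ref{lem:C1-compactness} does not apply directly. This is where the clustering enters. I would partition \(\{1,\dots,q\}\) according to the relative separations of the rescaled jets \(j_1w_h(0)\), iteratively merging sheets whose mutual jet distance stays bounded. This gives clusters whose mutual distances diverge and whose internal diameters stay bounded. By Corollary~\ref{cor:persistence-jet-clusters}, once the separation exceeds \(d_0(Q,\alpha,C,R)\), \(w_h\) splits on \(B_R\) into \(C^{1,\alpha}\) pieces \(w_h^\gamma\) with controlled seminorms. Correspondingly \(u_h=\sum_\gamma u_h^\gamma\) on \(B_{r_hR}(x_h)\).

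Each piece \(u_h^\gamma\) must itself be a weak solution. For pieces separated in value this follows by localising the test functions in the target. For pieces with common value but separated differentials I would use Lemma~\ref{lem:weak-solution-differential-blocks}, combined with the continuity and fixed-sign separation of the pieces on all of \(B_{r_hR}\). The oscillation \(\mathcal G(Dw_h(0),Dw_h(e_h))\geq1/2\) survives in at least one cluster \(\gamma\), because the clusters persist along the segment \([0,e_h]\). For that cluster we subtract its own average jet, so that its rescaling stays bounded. The new affine parts \(a_h^\gamma,P_h^\gamma\) still satisfy \((|a^\gamma_h|+|P^\gamma_h|)/H_h\to0\), since we change them only by \(r_h^{1+\alpha}H_h\) times bounded quantities. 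We also keep \(|P^\gamma_h|\leq L\) since it is an average of differentials.

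Lemma~\ref{lem:C1-compactness} and a diagonal argument over \(R\to\infty\) then give a limit \(w^\gamma\in C^{1,\alpha}(\mathbb R^n,\mathcal A_{q_\gamma}(\mathbb R^k))\) with \([Dw^\gamma]_\alpha\leq C\) and nonconstant differential. Lemma~\ref{lem:perturbative-linearised-blow-ups} shows that \(w^\gamma\) is Dirichlet-stationary. Since \(q_\gamma\leq Q\) and \(\delta\) is nonincreasing in \(Q\), the admissible range of \(\alpha\) for multiplicity \(Q\) is also admissible for \(q_\gamma\) (and \(1/Q\leq 1/q_\gamma\) when \(n=2\)). Theorem~\ref{thm:liouville} therefore forces \(w^\gamma\) to be affine. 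Then \(Dw^\gamma\) is a constant \(q_\gamma\)-tuple, which contradicts the surviving oscillation of \(Dw^\gamma\) between \(0\) and \(\lim e_h\).
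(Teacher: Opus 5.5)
Your proposal follows essentially the same route as the paper's proof: Simon's scaling argument by contradiction with \(\varepsilon_h\to0\), clustering of the rescaled first jets and splitting via Corollary~\ref{cor:persistence-jet-clusters}, the weak-solution property of the clusters via target localisation and Lemma~\ref{lem:weak-solution-differential-blocks}, subtraction of each cluster's own average affine map, Lemma~\ref{lem:perturbative-linearised-blow-ups}, and Theorem~\ref{thm:liouville}; using an \(L^2\)/weighted contradiction hypothesis and a single oscillating cluster, instead of the paper's \(C^1\)-normalised reduction and concatenation of matchings over all clusters, are inessential variations. Two justifications need repair, although the claims themselves are true: \(r_h\to0\) does not follow from \(|Du_h|\le L\) alone, since \(H_h\) need not blow up, but from \(\sup|Du_h|=o(H_h)\) near the base points, which your interpolation provides; and for \(N\ge2\) the cluster averages are \emph{not} bounded perturbations of the global average in rescaled coordinates, because the clusters separate to infinity, so \((|a_h^\gamma|+|P_h^\gamma|)/H_h\to0\) should be deduced, as in the paper, from the fact that \(a_h^\gamma=\eta\circ u_h^\gamma(x_h)\) and \(P_h^\gamma=\eta\circ Du_h^\gamma(x_h)\) are averages of actual sheets and hence bounded by \(|u_h(x_h)|+|Du_h(x_h)|=o(H_h)\).
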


\begin{proof}

\smallskip
\noindent
\emph{\underline{Step 1: reduction to a simpler estimate.}}
We first note that hypothesis \((\star)\) is preserved under graphical rescalings. Indeed, if \(0<r\leq1\), \((\mathscr E,\mathscr B)\) satisfies hypothesis \((\star)\) on \(B_r(x_0)\), and \(u\) solves the equation there with right-hand side \(f\), then
\[
u_{x_0,r}(z):=\frac{u(x_0+rz)}{r}
\]
solves an equation of the same form in \(B_1\), with
\[
\mathscr E_r(z,a,P):=\mathscr E(x_0+rz,ra,P),
\qquad
\mathscr B_r(z,a,P):=r\,\mathscr B(x_0+rz,ra,P),
\qquad
f_r(z):=r f(x_0+rz).
\]
Moreover, \((\mathscr E_r,\mathscr B_r)\) satisfies hypothesis \((\star)\) on \(B_1\) with the same parameters, and
\[
\|f_r\|_{L^\infty(B_1)}
\leq
r\|f\|_{L^\infty(B_r(x_0))}.
\]
Hence, by the \(Q\)-valued versions of Simon's interpolation inequality \cite[(1.5)]{Sim97} and the absorbing lemma in \cite[Section~4]{Sim97}, which follow by the same compactness and covering arguments as in the single-valued case, with the required \(Q\)-valued Arzelà--Ascoli compactness provided by Lemma~\ref{lem:C1-compactness}, it is enough to show that, for every \(\delta>0\), there exist \(\varepsilon_\delta>0\) and \(C_\delta<\infty\) such that
\begin{equation}
\label{eq:reduction-estimate}
[Du]_{\alpha;B_{1/2}}
\leq
\delta[Du]_{\alpha;B_1}
+
C_\delta\left(\|u\|_{C^1(B_1)} +\|f\|_{L^\infty(B_1)}\right)
\end{equation}
whenever \((\mathscr E,\mathscr B)\) satisfies hypothesis \((\star)\) on $B_1$ with parameters \(\beta,L,\Lambda,\varepsilon_\delta\), \(f\in L^\infty(B_1,\mathbb R^k)\), and $u\in C^{1,\alpha}\bigl(B_1,\mathcal A_Q(\mathbb R^k)\bigr)$ is a \(Q\)-valued weak solution of
\[
\Delta u+\operatorname{div}_x\mathscr E(x,u,Du)+\mathscr B(x,u,Du)=f
\]
in \(B_1\), with \(\sup_{B_1}|Du|\leq L\).

\smallskip
\noindent
\emph{\underline{Step 2: contradiction sequence and rescaling.}}
Suppose that \eqref{eq:reduction-estimate} fails for some fixed \(\delta>0\). Then there are pairs \((\mathscr E_h,\mathscr B_h)\) satisfying hypothesis \((\star)\) on \(B_1\) with parameters \(\beta,L,\Lambda,h^{-1}\), functions \(f_h\in L^\infty(B_1,\mathbb R^k)\), and \(Q\)-valued weak solutions $u_h\in C^{1,\alpha}\bigl(B_1,\mathcal A_Q(\mathbb R^k)\bigr)$ of
\[
\Delta u_h+\operatorname{div}_x\mathscr E_h(x,u_h,Du_h)+\mathscr B_h(x,u_h,Du_h)=f_h
\]
in \(B_1\), with \(\sup_{B_1}|Du_h|\leq L\), such that
\[
[Du_h]_{\alpha;B_{1/2}}
>
\delta[Du_h]_{\alpha;B_1}
+
h\left(\|u_h\|_{C^1(B_1)} +\|f_h\|_{L^\infty(B_1)}\right).
\]
In particular,
\begin{equation}
\label{eq:contradiction-ratios}
\frac{[Du_h]_{\alpha;B_1}}{[Du_h]_{\alpha;B_{1/2}}}\leq\delta^{-1},
\qquad
\frac{\|u_h\|_{C^1(B_1)} +\|f_h\|_{L^\infty(B_1)}}{[Du_h]_{\alpha;B_{1/2}}}\longrightarrow0.
\end{equation}

Choose \(x_h,y_h\in B_{1/2}\) such that, writing \(\rho_h:=|x_h-y_h|\), \(\xi_h:=\frac{y_h-x_h}{\rho_h}\), we have
\begin{equation}
\label{eq:lower-bound-Holder}
\mathcal G\bigl(Du_h(x_h),Du_h(y_h)\bigr)
\geq
\frac12[Du_h]_{\alpha;B_{1/2}}\rho_h^\alpha.
\end{equation}
Since
\[
\rho_h^\alpha
\leq
\frac{4\sup_{B_1}|Du_h|}{[Du_h]_{\alpha;B_{1/2}}}
\leq
4\frac{\|u_h\|_{C^1(B_1)} +\|f_h\|_{L^\infty(B_1)}}{[Du_h]_{\alpha;B_{1/2}}}
\longrightarrow0,
\]
we have \(\rho_h\to0\). After passing to a subsequence, we may assume that \(\xi_h\to\xi\in\mathbb S^{n-1}\).

Set $\Omega_h:=\{z\in\mathbb R^n:x_h+\rho_hz\in B_1\}(=B_{1/\rho_h}\left(-x_h/\rho_h\right))$ and define
\[
\widetilde w_h(z)
:=
\frac{u_h(x_h+\rho_hz)}
{\rho_h^{1+\alpha}[Du_h]_{\alpha;B_{1/2}}}.
\]
The domains \(\Omega_h\) contain \(B_{1/(2\rho_h)}\) and therefore exhaust \(\mathbb R^n\). Moreover, by \eqref{eq:contradiction-ratios},
\begin{equation}
\label{eq:rescaled-Holder-bound}
[D\widetilde w_h]_{\alpha;\Omega_h}
=
\frac{[Du_h]_{\alpha;B_1}}{[Du_h]_{\alpha;B_{1/2}}}
\leq\delta^{-1},
\end{equation}
while \eqref{eq:lower-bound-Holder} becomes
\begin{equation}
\label{eq:rescaled-gradient-separation}
\mathcal G\bigl(D\widetilde w_h(0),D\widetilde w_h(\xi_h)\bigr)\geq\frac12.
\end{equation}

\smallskip
\noindent
\emph{\underline{Step 3: first-jet clustering.}}
Write $j_1\widetilde w_h(0) = \sum_{i=1}^Q\llbracket(a_{h,i},A_{h,i})\rrbracket$.
Since there are only finitely many pairs of indices, after passing to a subsequence we may assume that $\bigl|(a_{h,i},A_{h,i})-(a_{h,j},A_{h,j})\bigr|$ has a limit in \([0,\infty]\) for every \(i,j\). Declare
\[
i\sim j
\qquad\Longleftrightarrow\qquad
\lim_h\bigl|(a_{h,i},A_{h,i})-(a_{h,j},A_{h,j})\bigr|<\infty.
\]
This is an equivalence relation; denote its equivalence classes by
\[
\{1,\ldots,Q\}=I_1\cup\cdots\cup I_N,
\qquad
q_m:=|I_m|.
\]
Thus the jets belonging to the same class have uniformly bounded mutual distance, whereas, if \(m\neq m'\),
\[
d_h^{m,m'}
:=
\min_{\substack{i\in I_m\\j\in I_{m'}}}
\bigl|(a_{h,i},A_{h,i})-(a_{h,j},A_{h,j})\bigr|
\longrightarrow\infty.
\]

If \(N=1\), we simply set \(\widetilde w_h^{(1)}:=\widetilde w_h\) and choose any \(R_h\to\infty\) such that \(B_{R_h}\subset\Omega_h\).

If \(N\geq2\), set \(d_h:=\min_{m\neq m'}d_h^{m,m'}\). By \eqref{eq:rescaled-Holder-bound}, the seminorms \([D\widetilde w_h]_{\alpha;B_R}\) remain bounded uniformly in \(h\) and \(R\), whenever \(B_R\subset\Omega_h\). Since \(d_h\to\infty\), Corollary~\ref{cor:persistence-jet-clusters} shows that, for every fixed \(R<\infty\) and every sufficiently large \(h\), the prescribed clusters determine a unique decomposition $\widetilde w_h = \sum_{m=1}^N\widetilde w_h^{(m)}$ on $B_R$, where $\widetilde w_h^{(m)} \in C^{1,\alpha}\bigl(B_R,\mathcal A_{q_m}(\mathbb R^k)\bigr)$ and $[D\widetilde w_h^{(m)}]_{\alpha;B_R} \leq C_Q[D\widetilde w_h]_{\alpha;B_R} \leq C_Q\delta^{-1}$. By a diagonal argument and the uniqueness of the decomposition, we may choose \(R_h\to\infty\), with \(B_{R_h}\subset\Omega_h\), so that these decompositions are defined on \(B_{R_h}\).

Let \(u_h^{(m)}\) be the corresponding components of \(u_h\), defined by
\[
u_h^{(m)}(x_h+\rho_hz)
:=
\rho_h^{1+\alpha}[Du_h]_{\alpha;B_{1/2}}
\widetilde w_h^{(m)}(z),
\qquad z\in B_{R_h}.
\]
Then $u_h=\sum_{m=1}^N u_h^{(m)}$ on \(B_{\rho_hR_h}(x_h)\), and each \(u_h^{(m)}\) is a \(q_m\)-valued weak solution of the same system with right-hand side \(f_h\). Indeed, locally one first separates the distinct value clusters and localizes the weak formulation in the target. Within each value cluster, the local first-jet splitting decomposes it into differential blocks, to which Lemma~\ref{lem:weak-solution-differential-blocks} applies. The prescribed component \(u_h^{(m)}\) is locally a sum of such blocks, and hence is itself a weak solution. Since the equation is local, \(u_h^{(m)}\) is therefore a weak solution throughout \(B_{\rho_hR_h}(x_h)\).

\smallskip
\noindent
\emph{\underline{Step 4: extraction of Dirichlet-stationary blow-ups.}}
Set $\tilde a_h^{(m)} := \eta\circ\widetilde w_h^{(m)}(0)$, $\tilde P_h^{(m)} := \eta\circ D\widetilde w_h^{(m)}(0)$, and define
\[
\widehat w_h^{(m)}(z)
:=
\widetilde w_h^{(m)}(z)
\ominus
\bigl(\tilde a_h^{(m)}+\tilde P_h^{(m)}z\bigr).
\]
Since the jets belonging to \(I_m\) have uniformly bounded mutual distance at the origin, $|\widehat w_h^{(m)}(0)| + |D\widehat w_h^{(m)}(0)| \leq C_m$.
Moreover, $[D\widehat w_h^{(m)}]_{\alpha;B_{R_h}} = [D\widetilde w_h^{(m)}]_{\alpha;B_{R_h}} \leq C_Q\delta^{-1}$.
It follows that, for every fixed \(R<\infty\),
\[
\|\widehat w_h^{(m)}\|_{C^{1,\alpha}(B_R)}
\leq C_R
\]
for all sufficiently large \(h\). By Lemma~\ref{lem:C1-compactness}, after passing to a diagonal subsequence,
\[
\widehat w_h^{(m)}
\longrightarrow
w^{(m)}
\qquad\text{locally in }C^1(\mathbb R^n)
\]
for every \(m\), where $[Dw^{(m)}]_{\alpha;\mathbb R^n} \leq C_Q\delta^{-1}$.

We claim that each \(w^{(m)}\) is Dirichlet-stationary. Fix \(R<\infty\). For all sufficiently large \(h\), \(R<R_h\) and \(x_h+B_{\rho_hR}\subset B_1\). Define $u_h^{(m),0}(x):=u_h^{(m)}(x_h+x)$, $\mathscr E_h^0(x,a,P):=\mathscr E_h(x_h+x,a,P)$, $\mathscr B_h^0(x,a,P):=\mathscr B_h(x_h+x,a,P)$, and $f_h^0(x):=f_h(x_h+x)$.
Then \(u_h^{(m),0}\) is a \(q_m\)-valued weak solution on \(B_{\rho_hR}\) with right-hand side \(f_h^0\), and \((\mathscr E_h^0,\mathscr B_h^0)\) satisfies hypothesis \((\star)\) there with parameters \(\beta,L,\Lambda,h^{-1}\).

We apply Lemma~\ref{lem:perturbative-linearised-blow-ups} with $q:=q_m$, $r_h:=\rho_h$, $H_h:=[Du_h]_{\alpha;B_{1/2}}$, $a_h^{(m)}:= \rho_h^{1+\alpha} H_h \tilde a_h^{(m)}$, $P_h^{(m)} := \rho_h^\alpha H_h \tilde P_h^{(m)}$, and $\ell^{(m)}_h(x) := a_h^{(m)} + P_h^{(m)} x$.
By definition, $\ell_h^{(m)}(\rho_hz) = \rho_h^{1+\alpha}H_h \bigl(\tilde a_h^{(m)}+ \tilde P_h^{(m)}z\bigr)$, and hence
\[
\frac{
u_h^{(m),0}(\rho_hz)
\ominus
\ell_h^{(m)}(\rho_hz)
}{
\rho_h^{1+\alpha}H_h
}
=
\widehat w_h^{(m)}(z)
\longrightarrow
w^{(m)}(z)
\qquad
\text{locally in }C^1(B_R).
\]
Moreover, $a_h^{(m)} = \eta\circ u_h^{(m)}(x_h)$, $P_h^{(m)} = \eta\circ Du_h^{(m)}(x_h)$.
Thus
\[
\frac{|a_h^{(m)}|
+
|P_h^{(m)}|}
{H_h}
\leq
\frac{\|u_h\|_{C^1(B_1)}}{H_h}
\longrightarrow0
\]
by \eqref{eq:contradiction-ratios}, while $|P_h^{(m)}|\leq L$.
Moreover, for all sufficiently large \(h\),
\[
\frac{\rho_h^{1-\alpha}\|f_h^0\|_{L^\infty(B_{\rho_hR})}}{H_h}
\leq
\frac{\|f_h\|_{L^\infty(B_1)}}{H_h}
\longrightarrow0
\]
by \eqref{eq:contradiction-ratios}.
Finally, $\sup_{B_{\rho_hR}}|Du_h^{(m),0}|\leq L$.
Lemma~\ref{lem:perturbative-linearised-blow-ups} therefore shows that \(w^{(m)}\) is Dirichlet-stationary in \(B_R\). Since \(R<\infty\) was arbitrary, \(w^{(m)}\) is Dirichlet-stationary in \(\mathbb R^n\).

\smallskip
\noindent
\emph{\underline{Step 5: conclusion via the Liouville theorem.}}
If \(n=2\), then $\alpha<\frac1Q\leq\frac1{q_m}$, while if \(n\geq3\), then $\alpha<\delta(n,k,Q)\leq\delta(n,k,q_m)$ by our convention that \(Q\mapsto\delta(n,k,Q)\) is nonincreasing. Thus Theorem~\ref{thm:liouville} implies that every \(w^{(m)}\) is affine. On the other hand, concatenating optimal matchings within the individual clusters gives
\[
\mathcal G\bigl(D\widetilde w_h(0),D\widetilde w_h(\xi_h)\bigr)^2
\leq
\sum_{m=1}^N
\mathcal G\bigl(D\widetilde w_h^{(m)}(0),D\widetilde w_h^{(m)}(\xi_h)\bigr)^2.
\]
Since the same affine map is subtracted at both points,
\[
\mathcal G\bigl(D\widetilde w_h^{(m)}(0),D\widetilde w_h^{(m)}(\xi_h)\bigr)
=
\mathcal G\bigl(D\widehat w_h^{(m)}(0),D\widehat w_h^{(m)}(\xi_h)\bigr).
\]
The local \(C^1\)-convergence, the fact that \(\xi_h\to\xi\), and the affineness of \(w^{(m)}\) therefore give
\[
\mathcal G\bigl(D\widetilde w_h(0),D\widetilde w_h(\xi_h)\bigr)^2
\longrightarrow0.
\]
This contradicts \eqref{eq:rescaled-gradient-separation}. The interpolation estimate follows, and the proof is complete.
\end{proof}

\subsection{$C^{2,\alpha}$ Schauder estimates}
\label{sec:second-order-schauder}

We now introduce the second-order notation and the basic \(C^2\) theory needed for the strong formulation.
For brevity, set $E_1:=\mathbb R^k\times\operatorname{Hom}(\mathbb R^n,\mathbb R^k)$.

\begin{definition}[\(C^2\) multiple-valued maps and second jets]
\label{def:C2-Q-valued}
We say that \(u\in C^2(\Omega)\) if $u\in C^1\bigl(\Omega,\mathcal A_Q(\mathbb R^k)\bigr)$ and $j_1u\in C^1\bigl(\Omega,\mathcal A_Q(E_1)\bigr)$.
Whenever $j_1u(x) = \sum_{\ell=1}^Q \llbracket(a_\ell,A_\ell)\rrbracket $ and $ j_1(j_1u)(x) = \sum_{\ell=1}^Q \llbracket \bigl( (a_\ell,A_\ell), (B_\ell,S_\ell) \bigr) \rrbracket$, where $B_\ell\in\operatorname{Hom}(\mathbb R^n,\mathbb R^k)$, $S_\ell\in \operatorname{Bil}(\mathbb R^n\times\mathbb R^n,\mathbb R^k)$, we define the second jet and the second differential of \(u\) at \(x\) by
\[
j_2u(x)
:=
\sum_{\ell=1}^Q
\llbracket(a_\ell,A_\ell,S_\ell)\rrbracket,
\qquad
D^2u(x)
:=
\sum_{\ell=1}^Q
\llbracket S_\ell\rrbracket.
\]

We equip the space of second jets with the product norm $|(a,A,S)|^2:=|a|^2+|A|^2+|S|^2$, where \(|A|\) and \(|S|\) denote the corresponding Hilbert--Schmidt norms, and set \(\|u\|_{C^2(\Omega)} := \sup_{\Omega}|u| + \sup_{\Omega}|Du| + \sup_{\Omega}|D^2u|\).
For \(0<\alpha<1\), we say that \(u\in C^{2,\alpha}(\Omega)\) if
\(j_1u\in C^{1,\alpha}\bigl(\Omega,\mathcal A_Q(E_1)\bigr)\). We write
\[
[D^2u]_{\alpha;\Omega}
:=
\sup_{\substack{x,y\in\Omega\\x\neq y}}
\frac{\mathcal G\bigl(D^2u(x),D^2u(y)\bigr)}{|x-y|^\alpha}.
\]
We also set \(\|u\|_{C^{2,\alpha}(\Omega)} := \sup_{\Omega}|u| + \sup_{\Omega}|Du| + \sup_{\Omega}|D^2u| + [D^2u]_{\alpha;\Omega}\).

Finally, we say that \(u_j\to u\) locally in
\(C^2(\Omega,\mathcal A_Q(\mathbb R^k))\) if
\(j_1u_j\to j_1u\) locally in
\(C^1\bigl(\Omega,\mathcal A_Q(E_1)\bigr)\).
\end{definition}

\begin{lemma}[Basic properties of second jets]
\label{lem:second-jet-calculus}
Let \(0<\alpha<1\). The following hold.

\begin{enumerate}[label=\textup{(\roman*)}]
\item
Let \(u\in C^2\bigl(\Omega,\mathcal A_Q(\mathbb R^k)\bigr)\). Then, with the notation of Definition~\ref{def:C2-Q-valued}, $B_\ell=A_\ell$ and \(D^2u(x)\) is symmetric. Thus $j_1(j_1u)(x) = \sum_{\ell=1}^Q \llbracket \bigl( (a_\ell,A_\ell),(A_\ell,S_\ell) \bigr) \rrbracket$ with $S_\ell\in\operatorname{Sym}^2(\mathbb R^n,\mathbb R^k)$.

\item
Let \(u\in C^2\bigl(\Omega,\mathcal A_Q(\mathbb R^k)\bigr)\) and let \(\gamma:I\to\Omega\) be a \(C^1\) curve, where \(I\) is an interval. Then there are $a_i\in C^1(I,\mathbb R^k)$, $A_i\in C^1\bigl(I,\operatorname{Hom}(\mathbb R^n,\mathbb R^k)\bigr)$, $S_i\in C^0\bigl(I,\operatorname{Sym}^2(\mathbb R^n,\mathbb R^k)\bigr)$ such that $j_2u(\gamma(t)) = \sum_{i=1}^Q \llbracket(a_i(t),A_i(t),S_i(t))\rrbracket$ and $\dot a_i(t)=A_i(t)\dot\gamma(t)$, $\dot A_i(t)=S_i(t)\bigl(\dot\gamma(t),\cdot\bigr)$ for every \(t\in I\). If \(j_2u\circ\gamma\) is given as a continuous sum of sub-jets, the selection can be chosen to respect this decomposition.

\item
Let $u_j\in C^{2,\alpha}\bigl(\Omega,\mathcal A_Q(\mathbb R^k)\bigr)$ and suppose that, for every \(\Omega'\Subset\Omega\),
\[
\sup_j\|u_j\|_{C^{2,\alpha}(\Omega')}<\infty.
\]
Then, after passing to a subsequence, there exists $u\in  C^{2,\alpha}_{\mathrm{loc}} \bigl(\Omega,\mathcal A_Q(\mathbb R^k)\bigr)$ such that \(u_j\to u\) locally in \(C^2(\Omega)\).

\item
Let \(u\in C^2\bigl(\Omega,\mathcal A_Q(\mathbb R^k)\bigr)\), and
suppose that, at \(x_0\in\Omega\), $j_2u(x_0) = \sum_{\beta=1}^N Q_\beta \llbracket(a^\beta,A^\beta,S^\beta)\rrbracket$, where the triples \((a^\beta,A^\beta,S^\beta)\) are pairwise distinct and \(\sum_{\beta=1}^NQ_\beta=Q\). Then there are a neighbourhood \(U\) of \(x_0\) and maps $u^\beta\in C^2\bigl(U,\mathcal A_{Q_\beta}(\mathbb R^k)\bigr)$ such that $u=\sum_{\beta=1}^Nu^\beta$, $j_2u^\beta(x_0) = Q_\beta \llbracket(a^\beta,A^\beta,S^\beta)\rrbracket$.
If \(u\in C^{2,\alpha}\), then each \(u^\beta\in C^{2,\alpha}\).

\item
Let \(R<\infty\) and \(u\in C^{2,\alpha}\bigl(B_R,\mathcal A_Q(\mathbb R^k)\bigr)\).
Write $j_2u(0) = \sum_{i=1}^Q\llbracket(a_i,A_i,S_i)\rrbracket$, and let
\[
\{1,\ldots,Q\}=I_1\cup\cdots\cup I_N,
\qquad N\geq2,
\]
be a partition. Set
\[
d
:=
\min_{\substack{\ell\neq m\\i\in I_\ell,\;h\in I_m}}
\bigl|
(a_i,A_i,S_i)-(a_h,A_h,S_h)
\bigr|.
\]
For every \(M<\infty\), there exists $d_0=d_0(Q,\alpha,M,R)>0$ such that, if $[D^2u]_{\alpha;B_R}\leq M$ and $d\geq d_0$, then there are uniquely determined maps $u^\ell\in C^{2,\alpha}\bigl(B_R,\mathcal A_{|I_\ell|}(\mathbb R^k)\bigr)$, $\ell=1,\ldots,N$, such that $u=\sum_{\ell=1}^Nu^\ell$, $j_2u^\ell(0) = \sum_{i\in I_\ell} \llbracket(a_i,A_i,S_i)\rrbracket$,
and $[D^2u^\ell]_{\alpha;B_R} \leq C_Q[D^2u]_{\alpha;B_R}$.
\end{enumerate}
\end{lemma}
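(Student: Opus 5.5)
The plan is to deduce every item from the first-order theory of Section~\ref{sec:preliminaries}, applied to the \(Q\)-valued map \(J:=j_1u\) with values in \(E_1\). The key observation is that \(u\in C^2\) means exactly \(J\in C^1(\Omega,\mathcal A_Q(E_1))\), and \(u\in C^{2,\alpha}\) means \(J\in C^{1,\alpha}\). Each statement about second jets of \(u\) should therefore become a statement about first jets of \(J\), plus one compatibility identity \(B_\ell=A_\ell\).

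\emph{Item (i).} Apply Lemma~\ref{lem:one-dimensional-jet-selection} to \(J\) along \(\gamma(t)=x+te\). This gives continuous labels \((a_\ell,A_\ell,B_\ell,S_\ell)\) with \(\dot a_\ell=B_\ell e\) and \(\dot A_\ell=S_\ell(e,\cdot)\). Applying the same lemma to \(u\) along \(\gamma\), or better, noting that \((a_\ell,A_\ell)\) is itself a continuous selection of \(j_1u\circ\gamma\), we also get \(\dot a_\ell=A_\ell e\). This uses uniqueness of \(C^1\) selections with the derivative identity on sets where the \(a_\ell\) coincide, as in the Sobolev-locality step of that lemma. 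Hence \(B_\ell e=A_\ell e\) for every \(e\).

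Here the labelling depends on \(e\). I would handle this as in the uniqueness proof of Lemma~\ref{lem:elementary-first-jet-calculus}: group equal values, use that \(\sum\llbracket(a_\ell,B_\ell e)\rrbracket=\sum\llbracket(a_\ell,A_\ell e)\rrbracket\) holds for all \(e\), and choose a generic \(e_0\) to fix the matching on an open set of \(e\)'s. This yields \(B_\ell=A_\ell\) as multisets paired with \(a_\ell\). Relabelling gives \(j_1J=\sum\llbracket((a_\ell,A_\ell),(A_\ell,S_\ell))\rrbracket\).

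Symmetry of \(S_\ell\) is the main obstacle. The plan is to work locally where the jets split. Where \(j_2u(x)\) has a cluster of \(Q\) equal second jets, I would use the a.e. Hodge symmetry in Lemma~\ref{lem:Hodge-second-jet}, since \(C^2\subset W^{2,2}_{\mathrm{loc}}\), which gives symmetry almost everywhere. By continuity of \(J\) and of its differential, this extends everywhere: the set of points where \(D^2u\) is symmetric is closed and of full measure, hence equal to \(\Omega\). I expect identifying the continuous \(D^2u\) with the approximate one to be routine.

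\emph{Item (ii).} This is Lemma~\ref{lem:one-dimensional-jet-selection} applied to \(J\), combined with (i) to replace \(B_i\) by \(A_i\).

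\emph{Item (iii).} Apply Lemma~\ref{lem:C1-compactness} to \(J_j=j_1u_j\) to get \(J_j\to J\) locally in \(C^1\), with \(J\in C^{1,\alpha}_{\mathrm{loc}}\). Since \(J_j\) converges uniformly, the value projection \(u_j\to u\) and \(j_1u_j\to J\), so Lemma~\ref{lem:C1-compactness} applied to \(u_j\) identifies \(J=j_1u\). Hence \(u\in C^{2,\alpha}_{\mathrm{loc}}\).

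\emph{Item (iv).} Apply Proposition~\ref{prop:local-splitting} to \(J\) at \(x_0\). The first jets of \(J\) at \(x_0\) are \(((a^\beta,A^\beta),(A^\beta,S^\beta))\), which are distinct exactly when the triples are distinct. This gives \(J=\sum J^\beta\) with \(J^\beta\in C^1\). Let \(u^\beta\) be the value projection. Proposition~\ref{prop:local-splitting} applied to \(u\) itself, or a direct argument along segments as in its proof, shows \(j_1u^\beta=J^\beta\). The Hölder bound for \(DJ^\beta\) follows by the chain-of-balls argument.

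\emph{Item (v).} Repeat Corollary~\ref{cor:persistence-jet-clusters} with the transport map \((a,A,S)\mapsto(a-Ax+\tfrac12S(x,x),\,A-S(x,\cdot),\,S)\). This map takes second jets of quadratic polynomials at \(x\) to their jets at \(0\). Using (ii) and the Taylor estimate \(|S_\ell(t)-S_\ell(1)|\le C_QM(1-t)^\alpha|x|^\alpha\), integrated twice along segments, the transported jets stay within \(C_QM R^\alpha(1+R)^2\) of \(j_2u(0)\). The rest of the argument proceeds verbatim, using (iv) for the local splitting.
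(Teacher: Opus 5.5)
Your architecture is the paper's. Everything is reduced to the first-order theory for $J=j_1u$, items \textup{(ii)}, \textup{(iv)}, \textup{(v)} follow the same lines, and the symmetry of $S_\ell$ comes, as in the paper, from the a.e.\ symmetry in Lemma~\ref{lem:Hodge-second-jet} plus closedness of the constraint under continuity of $j_1(j_1u)$. No localisation to clusters is needed for that step.

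For $B_\ell=A_\ell$ you use a classical line argument, whereas the paper goes through Sobolev theory ($u\in W^{2,2}_{\mathrm{loc}}$, Remark~\ref{rem:sobolev-equivalences}\textup{(ii)} a.e., then closedness). Your route works, and its output is already per atom: along $x+te$ you get $Be=Ae$ for every atom $((a,A),(B,S))$ of $j_1J(x)$. These atoms do not depend on $e$ (only their labelling does), so $B=A$ follows at once. The generic-$e_0$ detour is therefore unnecessary, and as you phrase it, it would not suffice. Equality of the multisets $\sum_\ell\llbracket(a_\ell,B_\ell e)\rrbracket$ and $\sum_\ell\llbracket(a_\ell,A_\ell e)\rrbracket$ forgets which $B$ shares an atom with which $(A,S)$. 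For instance, the atoms $((a,A_1),(A_2,S_1))$ and $((a,A_2),(A_1,S_2))$ pass that test for every $e$, and no relabelling changes an atom. The paper's multiset argument closes only because $A_\ell=A_m$ a.e.\ on $\{a_\ell=a_m\}$, which is false pointwise.

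The genuine omission is in \textup{(iii)}. Applying Lemma~\ref{lem:C1-compactness} to $J_j=j_1u_j$ requires $\sup_j[D(j_1u_j)]_{\alpha}$ to be bounded on compact sets. The hypothesis only bounds $[D^2u_j]_{\alpha}$, and there $\mathcal G$ matches the $S$-atoms without regard to their partner $A$-atoms. You need the step the paper supplies:
\begin{itemize}
\item by \textup{(ii)} along $\gamma(t)=x+th$, the chain-of-balls argument gives $|S_\ell(t)-S_\ell(0)|\le C_QMt^\alpha|h|^\alpha$;
\item integrating $\dot A_\ell=S_\ell(h,\cdot)$ gives $|A_\ell(t)-A_\ell(0)|\le Mt|h|$;
\item since $D(j_1u_j)=\sum_\ell\llbracket(A_\ell,S_\ell)\rrbracket$ by \textup{(i)}, this yields $[D(j_1u_j)]_{\alpha;B_r(x_0)}\le C(Q,\alpha,r)M$.
\end{itemize}
Your identification of the limit by also compacting $u_j$ is fine. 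It needs only the easier uniform bound on $[Du_j]_\alpha$, which comes from $\sup|D^2u_j|\le M$. The paper instead passes the compatibility of \textup{(i)} to the limit and uses the chain rule for $\pi_\#J$.

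In \textup{(iv)}, the option of applying Proposition~\ref{prop:local-splitting} to $u$ itself fails, since distinct triples may share the same pair $(a,A)$. Use your segment argument instead, or more simply note that every atom of $j_1J^\beta$ is an atom of $j_1J$, hence of the form $((a,A),(A,S))$. Then $\pi_\#J^\beta$ is differentiable with first jet $J^\beta$ directly from the definition.
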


\begin{proof}
For \textup{(i)}, Lemma~\ref{lem:elementary-first-jet-calculus}\textup{(iii)}, applied to \(u\) and \(j_1u\), shows that both maps are locally Lipschitz and that their classical first jets agree almost everywhere with their approximate first jets. Thus \(u\in W^{2,2}_{\mathrm{loc}}\bigl(\Omega,\mathcal A_Q(\mathbb R^k)\bigr)\), and Remark~\ref{rem:sobolev-equivalences}\textup{(ii)} gives \(B_\ell=A_\ell\) almost everywhere. Moreover, Lemma~\ref{lem:Hodge-second-jet} gives \(S_\ell\in\operatorname{Sym}^2(\mathbb R^n,\mathbb R^k)\) almost everywhere. Since \(j_1(j_1u)\) is continuous and these conditions define a closed subset of the jet space, both conclusions hold at every point.

For \textup{(ii)}, apply Lemma~\ref{lem:one-dimensional-jet-selection} to the \(C^1\) map \(j_1u\) along \(\gamma\). By \textup{(i)}, the resulting selections give
\[
j_2u(\gamma(t))=\sum_{\ell=1}^Q\llbracket(a_\ell(t),A_\ell(t),S_\ell(t))\rrbracket,\qquad
\dot a_\ell(t)=A_\ell(t)\dot\gamma(t),\qquad
\dot A_\ell(t)=S_\ell(t)(\dot\gamma(t),\cdot),
\]
with \(a_\ell\in C^1(I,\mathbb R^k)\), \(A_\ell\in C^1\bigl(I,\operatorname{Hom}(\mathbb R^n,\mathbb R^k)\bigr)\), and \(S_\ell\in C^0\bigl(I,\operatorname{Sym}^2(\mathbb R^n,\mathbb R^k)\bigr)\). If \(j_2u\circ\gamma\) is given as a continuous sum of sub-jets, then \textup{(i)} induces a corresponding continuous decomposition of \(j_1(j_1u)\circ\gamma\). Applying the same selection lemma with selections respecting this decomposition proves the final assertion.

For \textup{(iii)}, fix \(B_{2r}(x_0)\Subset\Omega\). By assumption there is \(M<\infty\), independent of \(j\), such that \(\|u_j\|_{C^{2,\alpha}(B_{2r}(x_0))}\leq M\).
We show that the \(E_1\)-valued maps \(j_1u_j\) satisfy the hypotheses of Lemma~\ref{lem:C1-compactness}. Let \(x,y\in B_r(x_0)\), set \(h:=y-x\), and apply \textup{(ii)} along \(\gamma(t):=x+th\). We may write \(j_2u_j(\gamma(t))=\sum_{\ell=1}^Q\llbracket(a_\ell(t),A_\ell(t),S_\ell(t))\rrbracket\), with \(\dot A_\ell(t)=S_\ell(t)(h,\cdot)\). The Hölder bound for \(D^2u_j\) and the usual chain-of-balls argument give \(|S_\ell(t)-S_\ell(0)|\leq C_QMt^\alpha|h|^\alpha\), while integration gives \(|A_\ell(t)-A_\ell(0)|\leq Mt|h|\). By \textup{(i)}, \(D(j_1u_j)(\gamma(t))=\sum_{\ell=1}^Q\llbracket(A_\ell(t),S_\ell(t))\rrbracket\), and hence
\[
[D(j_1u_j)]_{\alpha;B_r(x_0)}\leq C(Q,\alpha,r)M.
\]
The assumed bounds also give uniform bounds for \(j_1u_j\) and \(D(j_1u_j)\) on \(B_r(x_0)\). Lemma~\ref{lem:C1-compactness} and a diagonal argument therefore give, after passing to a subsequence, \(j_1u_j\to J\) locally in \(C^1(\Omega)\), for some \(J\in C^{1,\alpha}_{\mathrm{loc}}\bigl(\Omega,\mathcal A_Q(E_1)\bigr)\).

The compatibility in \textup{(i)} is preserved under this convergence. Thus, writing \(J=\sum_{\ell=1}^Q\llbracket(a_\ell,A_\ell)\rrbracket\), we have \(j_1J=\sum_{\ell=1}^Q\llbracket((a_\ell,A_\ell),(A_\ell,S_\ell))\rrbracket\). Let \(\pi:E_1\to\mathbb R^k\) be the projection \(\pi(a,A)=a\), and set \(u:=\pi_\#J\). The chain rule gives
\[
j_1u=\sum_{\ell=1}^Q\llbracket(a_\ell,A_\ell)\rrbracket=J.
\]
Since \(J\in C^{1,\alpha}_{\mathrm{loc}}\), it follows that \(u\in C^{2,\alpha}_{\mathrm{loc}}\). Finally, the convergence \(j_1u_j\to j_1u\) locally in \(C^1\) is precisely the required local \(C^2\)-convergence of \(u_j\) to \(u\).

For \textup{(iv)}, apply Proposition~\ref{prop:local-splitting} to the \(C^1\) map \(j_1u\), with target \(E_1\). By \textup{(i)},
\[
j_1(j_1u)(x_0)=\sum_{\beta=1}^NQ_\beta\llbracket((a^\beta,A^\beta),(A^\beta,S^\beta))\rrbracket,
\]
and these \(N\) atoms are pairwise distinct. We therefore obtain, after shrinking to a neighbourhood \(U\) of \(x_0\), a decomposition \(j_1u=\sum_{\beta=1}^NJ^\beta\), with \(J^\beta\in C^1\bigl(U,\mathcal A_{Q_\beta}(E_1)\bigr)\) and the prescribed first jets at \(x_0\).

By additivity of first jets, \(j_1(j_1u)=\sum_{\beta=1}^Nj_1J^\beta\), so every component \(J^\beta\) inherits the compatibility in \textup{(i)}. Let \(\pi:E_1\to\mathbb R^k\) be the projection \(\pi(a,A)=a\), and set \(u^\beta:=\pi_\#J^\beta\). As in \textup{(iii)}, the chain rule gives \(j_1u^\beta=J^\beta\). Thus \(u=\sum_{\beta=1}^Nu^\beta\), each \(u^\beta\) belongs to \(C^2\), and \(j_2u^\beta(x_0)=Q_\beta\llbracket(a^\beta,A^\beta,S^\beta)\rrbracket\).

If \(u\in C^{2,\alpha}\), shrink \(U\) to a ball. Applying Lemma~\ref{lem:one-dimensional-jet-selection} to \(j_1u\) along segments, with selections respecting the decomposition \(j_1(j_1u)=\sum_{\beta=1}^Nj_1J^\beta\), the usual chain-of-balls argument gives \([DJ^\beta]_{\alpha;U}\leq C_Q[D(j_1u)]_{\alpha;U}\). Hence \(J^\beta\in C^{1,\alpha}\), and therefore \(u^\beta\in C^{2,\alpha}\).

It remains to prove \textup{(v)}. For \(x\in B_R\), let \(K(x)\) be the image of \(j_2u(x)\) under the map
\[
(a,A,S)\longmapsto\left(a-Ax+\frac12S(x,x),A-S(x,\cdot),S\right),
\]
which takes the second jet at \(x\) of a quadratic polynomial to its second jet at the origin. Then \(K\) is continuous and \(K(0)=j_2u(0)\).

Fix \(x\in B_R\). By \textup{(ii)}, we may write
\[
j_2u(tx)=\sum_{\ell=1}^Q\llbracket(a_\ell(t),A_\ell(t),S_\ell(t))\rrbracket,\qquad 
\dot a_\ell(t)=A_\ell(t)x,
\qquad
\dot A_\ell(t)=S_\ell(t)(x,\cdot),
\]
with \((a_\ell(0),A_\ell(0),S_\ell(0))=(a_\ell,A_\ell,S_\ell)\). The Hölder bound for \(D^2u\) and the usual chain-of-balls argument, now based at \(t=1\), give \(|S_\ell(t)-S_\ell(1)|\leq C_Q[D^2u]_{\alpha;B_R}(1-t)^\alpha|x|^\alpha\). Taylor's formula at \(t=1\), applied to \(a_\ell\) to second order and to \(A_\ell\) to first order, therefore yields
\[
\mathcal G(K(x),j_2u(0))\leq C_Q[D^2u]_{\alpha;B_R}|x|^\alpha(1+|x|+|x|^2).
\]

Choose \(d_0>0\) such that \(C_QMR^\alpha(1+R+R^2)<d_0/3\). Since \([D^2u]_{\alpha;B_R}\leq M\) and \(d\geq d_0\), the preceding estimate shows that every atom of \(K(x)\) belongs to the union of the fixed, pairwise disjoint sets
\[
\mathcal N_\gamma:=\bigcup_{\ell\in I_\gamma}B_{d/3}((a_\ell,A_\ell,S_\ell)),\qquad \gamma=1,\ldots,N.
\]
As in the proof of Corollary~\ref{cor:persistence-jet-clusters}, continuity gives a unique assignment to these sets, each containing precisely \(|I_\gamma|\) atoms, counted with multiplicity. Transforming these groups back gives continuous sub-jets \(j_2u=J^1+\cdots+J^N\), with \(J^\gamma(0)=\sum_{\ell\in I_\gamma}\llbracket(a_\ell,A_\ell,S_\ell)\rrbracket\).

Applying \textup{(iv)} locally to the distinct second-jet atoms and grouping the resulting components according to \(J^\gamma\) shows that their value projections \(u^\gamma\) belong to \(C^2(B_R)\) and satisfy \(j_2u^\gamma=J^\gamma\). The decomposition is unique, since along any segment from the origin the transported jet sheets cannot pass between the disjoint sets \(\mathcal N_\gamma\).

Finally, apply \textup{(ii)} along an arbitrary segment joining \(x,y\in B_R\), with selections respecting the sub-jet decomposition. The same chain-of-balls argument gives \(\mathcal G(D^2u^\gamma(x),D^2u^\gamma(y))\leq C_Q[D^2u]_{\alpha;B_R}|x-y|^\alpha\), and hence \([D^2u^\gamma]_{\alpha;B_R}\leq C_Q[D^2u]_{\alpha;B_R}\), completing the proof.
\end{proof}

\begin{definition}[\(Q\)-valued strong solutions]
\label{def:Q-valued-strong-solution}
Let $\mathscr A^{ij}: \Omega\times\mathbb R^k\times \operatorname{Hom}(\mathbb R^n,\mathbb R^k) \longrightarrow \operatorname{End}(\mathbb R^k)$, $i,j=1,\ldots,n$, and $\mathscr C: \Omega\times\mathbb R^k\times \operatorname{Hom}(\mathbb R^n,\mathbb R^k) \longrightarrow \mathbb R^k$ be continuous, with \(\mathscr A^{ij}=\mathscr A^{ji}\). Let \(f\in C^0(\Omega,\mathbb R^k)\).
Let \(u\in C^2\bigl(\Omega,\mathcal A_Q(\mathbb R^k)\bigr)\), and let $j_2u=\sum_{\ell=1}^Q\llbracket(u_\ell,Du_\ell,D^2u_\ell)\rrbracket$ be any measurable selection of the second jet.

We say that $u$ is a \(Q\)-valued strong solution of
\[
\Delta u
+
\sum_{i,j=1}^n
\mathscr A^{ij}(x,u,Du)D_{ij}u
+
\mathscr C(x,u,Du)
=f
\]
in $\Omega$ if
\[
\operatorname{tr}D^2u_\ell(x)
+
\sum_{i,j=1}^n
\mathscr A^{ij}(x,u_\ell(x),Du_\ell(x))
D^2_{ij}u_\ell(x)
+
\mathscr C(x,u_\ell(x),Du_\ell(x))
=f(x)
\]
for every \(x\in\Omega\) and \(\ell=1,\ldots,Q\), where
\(D^2_{ij}u_\ell(x):=D^2u_\ell(x)[e_i,e_j]\).
\end{definition}

For the remainder of this subsection, hypothesis \((\star\star)\) in a ball $B$ denotes the following conditions on a pair \((\mathscr A,\mathscr C)\) as in Definition~\ref{def:Q-valued-strong-solution}, with parameters \(0<\beta\leq1\), \(L,\Lambda<\infty\), and \(\varepsilon>0\):
\[
(\star\star)\qquad
\left\{
\begin{aligned}
&\mathscr A^{ij},\mathscr C \text{ are continuous},\\
&\left|\mathscr A^{ij}(x,a,0)\right|\leq\varepsilon,\\
&\left|
\mathscr A^{ij}(x,a,P)
-
\mathscr A^{ij}(x',a',P)
\right|
\leq
\Lambda\left(
|x-x'|^\beta+|a-a'|
\right),\\
&\left|
\mathscr A^{ij}(x,a,P)
-
\mathscr A^{ij}(x,a,P')
\right|
\leq
\varepsilon|P-P'|,\\
&\mathscr C(x,0,0)=0,\\
&\left|
\mathscr C(x,a,P)
-
\mathscr C(x,a',P')
\right|
\leq
\Lambda\left(
|a-a'|+|P-P'|
\right),\\
&\left|
\mathscr C(x,a,P)
-
\mathscr C(x',a,P)
\right|
\leq
\Lambda |x-x'|^\beta\left(|a|+|P|\right)
\end{aligned}
\right.
\]
for every \(i,j=1,\ldots,n\), \(x,x'\in B\), \(a,a'\in\mathbb R^k\), and \(|P|,|P'|\leq L\).

\begin{lemma}[Perturbative quadratic blow-ups]
\label{lem:perturbative-quadratic-blow-ups}
Let \(q\geq1\), \(0<\alpha\leq\beta\leq1\), \(L,\Lambda<\infty\), and \(R>0\).
Let \(r_h\downarrow0\). For every \(h\), let \((\mathscr A_h,\mathscr C_h)\) satisfy hypothesis \((\star\star)\) on \(B_{r_hR}\) with parameters \(\beta,L,\Lambda,\varepsilon_h\), where $\varepsilon_h\longrightarrow0$.

Let $f_h\in C^{0,\alpha}(B_{r_hR},\mathbb R^k)$, and let $u_h\in C^2\bigl(B_{r_hR},\mathcal A_q(\mathbb R^k)\bigr)$ be a \(q\)-valued strong solution of
\[
\Delta u_h
+
\sum_{i,j=1}^n
\mathscr A_h^{ij}(x,u_h,Du_h)D_{ij}u_h
+
\mathscr C_h(x,u_h,Du_h)
=f_h
\]
in \(B_{r_hR}\), with $\sup_{B_{r_hR}}|Du_h|\leq L$.

Let $a_h\in\mathbb R^k$, $P_h\in\operatorname{Hom}(\mathbb R^n,\mathbb R^k)$, $S_h\in\operatorname{Sym}^2(\mathbb R^n,\mathbb R^k)$ satisfy $(a_h,P_h,S_h)\in\operatorname{spt}j_2u_h(0)$, and define
\[
q_h(x):=a_h+P_hx+\frac12S_h(x,x).
\]
Let \(H_h>0\). Suppose that, for some \(C_0<\infty\),
\[
\frac{|a_h|+|P_h|+|S_h|}{H_h}\longrightarrow0,
\qquad
\frac{[f_h]_{\alpha;B_{r_hR}}}{H_h}
\longrightarrow0,
\qquad
r_h|S_h|
+
r_h^{1-\alpha}\frac{|S_h|^2}{H_h}
\leq C_0,
\]
and that
\[
w_h(z)
:=
\frac{
u_h(r_hz)\ominus q_h(r_hz)
}{
r_h^{2+\alpha}H_h
}
\longrightarrow w
\qquad
\text{locally in }C^2\bigl(B_R,\mathcal A_q(\mathbb R^k)\bigr).
\]

Then \(w\) satisfies $\Delta w=0$ in the strong \(q\)-valued sense in \(B_R\), and \(Dw\) is Dirichlet-stationary in \(B_R\).
\end{lemma}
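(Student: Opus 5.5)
The plan mirrors Lemma~\ref{lem:perturbative-linearised-blow-ups}, except that the strong formulation lets us pass to the limit pointwise, sheet by sheet. The Dirichlet stationarity of \(Dw\) then follows from the Hodge structure.

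\emph{Setup.} Set \(v_h(z):=u_h(r_hz)\) and \(\tilde q_h(z):=q_h(r_hz)\). Fix a measurable selection \(j_2w_h=\sum_\ell\llbracket(w_{h,\ell},Dw_{h,\ell},D^2w_{h,\ell})\rrbracket\) and take the corresponding selection of \(j_2u_h\). At \(x=r_hz\) this gives
\[
u_{h,\ell}=q_h(x)+r_h^{2+\alpha}H_hw_{h,\ell},\qquad
Du_{h,\ell}=P_h+S_h(x,\cdot)+r_h^{1+\alpha}H_hDw_{h,\ell},\qquad
D^2u_{h,\ell}=S_h+r_h^{\alpha}H_hD^2w_{h,\ell}.
\]

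\emph{Reference equation.} Evaluate the strong equation at \(x=0\) on the sheet whose second jet is \((a_h,P_h,S_h)\). This is possible because \((a_h,P_h,S_h)\in\operatorname{spt}j_2u_h(0)\). We get
\[
\operatorname{tr}S_h+\mathscr A_h^{ij}(0,a_h,P_h)S_{h,ij}+\mathscr C_h(0,a_h,P_h)=f_h(0).
\]

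\emph{Subtraction.} Subtract this identity from the equation for the sheet \(\ell\) at \(x=r_hz\), and divide by \(r_h^\alpha H_h\). The result is
\[
\Delta w_{h,\ell}(z)+\mathscr A_h^{ij}(x,u_{h,\ell},Du_{h,\ell})D_{ij}w_{h,\ell}+\mathrm{I}_h+\mathrm{II}_h=\frac{f_h(x)-f_h(0)}{r_h^\alpha H_h},
\]
where
\[
\mathrm{I}_h=\frac{\bigl[\mathscr A_h^{ij}(x,u_{h,\ell},Du_{h,\ell})-\mathscr A_h^{ij}(0,a_h,P_h)\bigr]S_{h,ij}}{r_h^\alpha H_h},\qquad
\mathrm{II}_h=\frac{\mathscr C_h(x,u_{h,\ell},Du_{h,\ell})-\mathscr C_h(0,a_h,P_h)}{r_h^\alpha H_h}.
\]

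\emph{Estimating the terms.}
\begin{itemize}
\item The right-hand side is at most \(R^\alpha[f_h]_\alpha/H_h\to0\).
\item For the coefficient of \(D_{ij}w_{h,\ell}\): \((\star\star)\) together with \(\varepsilon_h\to0\) gives \(|\mathscr A^{ij}_h|\le\varepsilon_h(1+2L)\to0\). Since \(D^2w_{h,\ell}\) is locally bounded, this term tends to zero.
\item For \(\mathrm{I}_h\): \((\star\star)\) bounds the coefficient difference by \(\Lambda(|x|^\beta+|u_{h,\ell}-a_h|)+\varepsilon_h|Du_{h,\ell}-P_h|\). Multiplying by \(|S_h|/(r_h^\alpha H_h)\) produces the following contributions.
\begin{itemize}
\item The \(|x|^\beta\) part gives \(r_h^{\beta-\alpha}|S_h|/H_h\to0\).
\item The \(|u_{h,\ell}-a_h|\) part gives terms like \(r_h^{1-\alpha}|P_h||S_h|/H_h\to0\) and \(r_h^{2-\alpha}|S_h|^2/H_h\le C_0r_h\to0\), plus \(r_h^2|S_h||w_{h,\ell}|\to0\).
\item The \(|Du_{h,\ell}-P_h|\) part gives \(\varepsilon_h r_h^{1-\alpha}|S_h|^2/H_h\le C_0\varepsilon_h\to0\), plus \(\varepsilon_hr_h|S_h||Dw_{h,\ell}|\le C_0\varepsilon_h|Dw_{h,\ell}|\to0\).
\end{itemize}
This is the term for which the hypothesis \(r_h|S_h|+r_h^{1-\alpha}|S_h|^2/H_h\le C_0\) is designed.
\item For \(\mathrm{II}_h\): \((\star\star)\) gives
\[
|\mathrm{II}_h|\le\frac{\Lambda\bigl(|x|^\beta(|a_h|+|P_h|)+|u_{h,\ell}-a_h|+|Du_{h,\ell}-P_h|\bigr)}{r_h^\alpha H_h}.
\]
The terms are handled as follows.
\begin{itemize}
\item The first is at most \(r_h^{\beta-\alpha}(|a_h|+|P_h|)/H_h\to0\).
\item The \(|u_{h,\ell}-a_h|\) term gives \(r_h^{1-\alpha}|P_h|/H_h+r_h^{2-\alpha}|S_h|/H_h+r_h^2|w_{h,\ell}|\to0\).
\item The \(|Du_{h,\ell}-P_h|\) term gives \(r_h^{1-\alpha}|S_h|/H_h+r_h|Dw_{h,\ell}|\to0\).
\end{itemize}
Here I use \((|a_h|+|P_h|+|S_h|)/H_h\to0\) and \(\alpha\le\beta\le1\).
\end{itemize}
All of these bounds are uniform for \(z\) in compact subsets of \(B_R\).

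\emph{Passing to the limit.} The identity holds pointwise for each sheet, and \(j_2w_h\to j_2w\) locally uniformly. Since the identity is symmetric in the sheets, the limit is taken in the \(\mathcal A_q\) sense. One concludes that \(\operatorname{tr}D^2w_\ell=0\) at every point and for every sheet, which is exactly the strong meaning of \(\Delta w=0\). The main obstacle is the bookkeeping in \(\mathrm{I}_h\); the other terms are straightforward.

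\emph{Stationarity of \(Dw\).} Since \(w\in C^2\), Lemma~\ref{lem:second-jet-calculus}\textup{(i)} shows \(w\in W^{2,2}_{\mathrm{loc}}\). Outer stationarity of \(w\) then follows from \(\Delta w_\ell=0\) sheetwise, by the integration-by-parts argument of Lemma~\ref{lem:Hodge-second-jet} run in reverse. Concretely, with \(G=Dw\) and \(\Psi^\ell=\Psi(x,w_\ell)\), Lemma~\ref{lem:paired-stokes} and the divergence theorem give
\[
\int\sum_\ell Dw_\ell:D\Psi^\ell=-\int\sum_\ell\operatorname{tr}D^2w_\ell\cdot\Psi^\ell=0.
\]
Theorem~\ref{thm:stationarity-of-differential} then shows that \(Dw\) is Dirichlet-stationary in \(B_R\).
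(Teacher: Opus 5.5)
Your proposal is correct and follows essentially the same route as the paper. It uses the same rescaled jet identities, subtracts the sheet equation at the origin, estimates term by term via $(\star\star)$, and passes to the limit sheetwise. When invoking $(\star\star)$ you should state, as the paper does, that $|P_h|\le L$ because $P_h$ is an atom of $Du_h(0)$. The only other difference is cosmetic: you first derive outer stationarity of $w$ and then invoke Theorem~\ref{thm:stationarity-of-differential} as stated. The paper instead feeds the symmetric, trace-free Hessian atoms directly into the Hodge-identity argument from that theorem's proof.
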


\begin{proof}
Set \(v_h(z):=u_h(r_hz)\). A change of variables in the strong equation shows that \(v_h\) satisfies
\[
\Delta v_h
+
\sum_{i,j=1}^n
\mathscr A_h^{ij}
\left(
r_hz,
v_h,
\frac{Dv_h}{r_h}
\right)
D_{ij}v_h
+
r_h^2
\mathscr C_h
\left(
r_hz,
v_h,
\frac{Dv_h}{r_h}
\right)
=
r_h^2f_h(r_hz)
\]
in \(B_R\).

For a measurable selection \(j_2w_h=\sum_{\ell=1}^q\llbracket(w_{h,\ell},Dw_{h,\ell},D^2w_{h,\ell})\rrbracket\), let \(j_2v_h=\sum_{\ell=1}^q\llbracket(v_{h,\ell},Dv_{h,\ell},D^2v_{h,\ell})\rrbracket\) be the corresponding selection of $j_2v_h$. Then
\begin{equation}
\label{eq:rescaled-second-jet-identities}
v_{h,\ell}=q_h(r_hz)+r_h^{2+\alpha}H_hw_{h,\ell},\qquad
\frac{Dv_{h,\ell}}{r_h}=P_h+r_hS_h(z,\cdot)+r_h^{1+\alpha}H_hDw_{h,\ell},
\qquad
\frac{D^2v_{h,\ell}}{r_h^2}=S_h+r_h^\alpha H_hD^2w_{h,\ell}.
\end{equation}

Since $(a_h,P_h,S_h)\in\operatorname{spt}j_2u_h(0)$,
the strong equation at \(0\) gives
\[
\operatorname{tr}S_h
+
\sum_{i,j=1}^n
\mathscr A_h^{ij}(0,a_h,P_h)(S_h)_{ij}
+
\mathscr C_h(0,a_h,P_h)
=
f_h(0).
\]
Using \eqref{eq:rescaled-second-jet-identities}, we subtract \(r_h^2\) times this identity from the rescaled strong equation and divide by \(r_h^{2+\alpha}H_h\) to obtain, for every sheet,
\begin{equation}
\label{eq:quadratic-blow-up-equation}
\Delta w_{h,\ell}+E_{h,\ell}
=
\frac{f_h(r_hz)-f_h(0)}{r_h^\alpha H_h},
\end{equation}
where
\[
\begin{aligned}
E_{h,\ell}
={}&
\sum_{i,j=1}^n
\mathscr A_h^{ij}
\left(
r_hz,
v_{h,\ell},
\frac{Dv_{h,\ell}}{r_h}
\right)
D_{ij}w_{h,\ell}
+
\frac{1}{r_h^\alpha H_h}
\sum_{i,j=1}^n
\left[
\mathscr A_h^{ij}
\left(
r_hz,
v_{h,\ell},
\frac{Dv_{h,\ell}}{r_h}
\right)
-
\mathscr A_h^{ij}(0,a_h,P_h)
\right]
(S_h)_{ij}
\\
&
+
\frac{
\mathscr C_h
\left(
r_hz,
v_{h,\ell},
\frac{Dv_{h,\ell}}{r_h}
\right)
-
\mathscr C_h(0,a_h,P_h)
}{
r_h^\alpha H_h
}.
\end{aligned}
\]

We claim that $E_{h,\ell}\longrightarrow0$ locally uniformly in \(B_R\), uniformly over the sheets.

We first observe that, since $(a_h,P_h,S_h)\in\operatorname{spt}j_2u_h(0)$ and $\sup_{B_{r_hR}}|Du_h|\leq L$, we have $|P_h|\leq L$; moreover, $|Dv_{h,\ell}|/r_h\leq L$.
Then, since $\left|\mathscr A_h^{ij}(x,a,0)\right|\leq\varepsilon_h$ and the Lipschitz constant of \(\mathscr A_h^{ij}\) in \(P\) is at most
\(\varepsilon_h\), we have
\[
\begin{aligned}
\left|
\mathscr A_h^{ij}
\left(
r_hz,
v_{h,\ell},
\frac{Dv_{h,\ell}}{r_h}
\right)
\right|
&\leq
\left|\mathscr A_h^{ij}(r_hz,v_{h,\ell},0)\right|
+
\left|
\mathscr A_h^{ij}
\left(
r_hz,
v_{h,\ell},
\frac{Dv_{h,\ell}}{r_h}
\right)
-
\mathscr A_h^{ij}(r_hz,v_{h,\ell},0)
\right|\\
&\leq
\varepsilon_h
+
\varepsilon_h\frac{|Dv_{h,\ell}|}{r_h}
\leq
\varepsilon_h(1+L).
\end{aligned}
\]
The first term in \(E_{h,\ell}\) therefore converges to zero locally
uniformly, since \(w_h\to w\) locally in \(C^2\).

By \eqref{eq:rescaled-second-jet-identities} and the local \(C^2\)-convergence, on every compact subset of \(B_R\), we have
\begin{equation}
\label{eq:quadratic-blow-up-jet-bounds}
|v_{h,\ell}-a_h|
\leq
C\left(
r_h|P_h|
+
r_h^2|S_h|
+
r_h^{2+\alpha}H_h
\right)
\qquad\text{and}\qquad
\left|
\frac{Dv_{h,\ell}}{r_h}-P_h
\right|
\leq
C\left(
r_h|S_h|
+
r_h^{1+\alpha}H_h
\right).
\end{equation}
Using \eqref{eq:quadratic-blow-up-jet-bounds} and the conditions on \(\mathscr A_h^{ij}\) in hypothesis \((\star\star)\), we obtain
\[
\begin{aligned}
&
\frac{|S_h|}{r_h^\alpha H_h}
\left|
\mathscr A_h^{ij}
\left(
r_hz,
v_{h,\ell},
\frac{Dv_{h,\ell}}{r_h}
\right)
-
\mathscr A_h^{ij}(0,a_h,P_h)
\right|
\\
&\leq
C\Bigg[
r_h^{\beta-\alpha}\frac{|S_h|}{H_h}
+
r_h^{1-\alpha}\frac{|S_h|}{H_h}
+
r_h^{2-\alpha}\frac{|S_h|^2}{H_h}
+
r_h^2|S_h|
+
\varepsilon_h
r_h^{1-\alpha}\frac{|S_h|^2}{H_h}
+
\varepsilon_h r_h|S_h|
\Bigg],
\end{aligned}
\]
where \(C<\infty\) is independent of \(h\) and may depend on the fixed parameters and on the compact subset under consideration.
Every term on the right tends to zero by the assumptions of the lemma.

Similarly, \eqref{eq:quadratic-blow-up-jet-bounds} and hypothesis $(\star\star)$ on \(\mathscr C_h\) give
\[
\frac{
\left|
\mathscr C_h
\left(
r_hz,
v_{h,\ell},
\frac{Dv_{h,\ell}}{r_h}
\right)
-
\mathscr C_h(0,a_h,P_h)
\right|
}{
r_h^\alpha H_h
}
\leq
C\left[
r_h^{\beta-\alpha}\frac{|a_h|+|P_h|}{H_h}
+
r_h^{1-\alpha}\frac{|P_h|}{H_h}
+
r_h^{2-\alpha}\frac{|S_h|}{H_h}
+
r_h^2
+
r_h^{1-\alpha}\frac{|S_h|}{H_h}
+
r_h
\right],
\]
which also converges to zero. This proves the claim.

Moreover, on every compact subset of \(B_R\),
\[
\left|
\frac{f_h(r_hz)-f_h(0)}{r_h^\alpha H_h}
\right|
\leq
\frac{[f_h]_{\alpha;B_{r_hR}}}{H_h}|z|^\alpha
\longrightarrow0
\]
uniformly.

Passing to the limit in \eqref{eq:quadratic-blow-up-equation} gives $\Delta w=0$ in the strong \(q\)-valued sense in \(B_R\).
By Lemma~\ref{lem:second-jet-calculus}\textup{(i)}, the atoms of \(D^2w\) are symmetric, while the preceding equation shows that they are trace-free.
Hence \(G:=Dw\) satisfies the Hodge identities \eqref{eq:Hodge-field-identities}. The argument in the proof of Theorem~\ref{thm:stationarity-of-differential} therefore shows that \(Dw\) is Dirichlet-stationary in \(B_R\).
\end{proof}

\begin{theorem}[\(C^{2,\alpha}\) Schauder estimates for \(Q\)-valued quasilinear systems]
\label{thm:quasilinear-C2-Schauder}
Let \(0<\alpha<1/Q\) if \(n=2\), and $0<\alpha<\delta(n,kn,Q)$ if \(n\geq3\). Let \(\alpha\leq\beta\leq1\) and \(L,\Lambda<\infty\).
There exist $\varepsilon_0 =\varepsilon_0(n,k,Q,\alpha,\beta,L,\Lambda)>0$ and $C=C(n,k,Q,\alpha,\beta,L,\Lambda)<\infty$ with the following property.

Let \((\mathscr A,\mathscr C)\) satisfy hypothesis \((\star\star)\) on \(B^n_1\) with parameters \(\beta,L,\Lambda,\varepsilon_0\), let \(f\in C^{0,\alpha}(B^n_1,\mathbb R^k)\), and let $u\in C^{2,\alpha}\bigl(B^n_1,\mathcal A_Q(\mathbb R^k)\bigr)$ be a \(Q\)-valued strong solution of
\[
\Delta u+\sum_{i,j=1}^n\mathscr A^{ij}(x,u,Du)D_{ij}u+\mathscr C(x,u,Du)=f
\]
in \(B^n_1\), with $\sup_{B^n_1}|Du|\leq L$.
Then
\[
\|u\|_{C^{2,\alpha}(B_{1/2}^n)}
\leq
C\left(
\|u\|_{L^2(B_1^n)}
+
\|f\|_{C^{0,\alpha}(B_1^n)}
\right).
\]
\end{theorem}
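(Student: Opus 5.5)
The proof will follow the five-step structure of the proof of Theorem~\ref{thm:quasilinear-Schauder}, now one derivative higher. The Liouville theorem is applied to the differential of the blow-up, which is why the threshold is \(\delta(n,kn,Q)\).

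\emph{Step 1: reduction.} Hypothesis \((\star\star)\) is preserved under the rescaling \(u_{x_0,r}(z):=u(x_0+rz)/r\), with \(\mathscr A_r(z,a,P):=\mathscr A(x_0+rz,ra,P)\), \(\mathscr C_r:=r\,\mathscr C(x_0+rz,ra,P)\) and \(f_r:=rf(x_0+rz)\). Simon's interpolation and absorbing arguments then apply in the \(Q\)-valued setting, with compactness supplied by Lemma~\ref{lem:second-jet-calculus}\textup{(iii)}. Hence it suffices to prove that for every \(\delta>0\) there are \(\varepsilon_\delta,C_\delta\) with
\[
[D^2u]_{\alpha;B_{1/2}}\le\delta[D^2u]_{\alpha;B_1}+C_\delta\bigl(\|u\|_{C^2(B_1)}+\|f\|_{C^{0,\alpha}(B_1)}\bigr).
\]

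\emph{Step 2: contradiction sequence.} If this fails, we obtain \(u_h\) and \(H_h:=[D^2u_h]_{\alpha;B_{1/2}}\) such that \(\bigl(\|u_h\|_{C^2}+\|f_h\|_{C^{0,\alpha}}\bigr)/H_h\to0\) and \([D^2u_h]_{\alpha;B_1}\le\delta^{-1}H_h\). We also get points \(x_h,y_h\) with \(\rho_h:=|x_h-y_h|\to0\) at which \(D^2u_h\) nearly realizes the seminorm. Set \(\widetilde w_h(z):=u_h(x_h+\rho_hz)/(\rho_h^{2+\alpha}H_h)\). Then \([D^2\widetilde w_h]_\alpha\le\delta^{-1}\) and \(\mathcal G(D^2\widetilde w_h(0),D^2\widetilde w_h(\xi_h))\ge1/2\).

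\emph{Step 3: second-jet clustering.} Cluster the second jets \(j_2\widetilde w_h(0)\) according to which pairwise distances stay bounded. Lemma~\ref{lem:second-jet-calculus}\textup{(v)} then splits \(\widetilde w_h=\sum_m\widetilde w_h^{(m)}\) on balls \(B_{R_h}\) with \(R_h\to\infty\). Because the equation is pointwise and sheetwise, each cluster is automatically a strong solution; unlike the weak case, no localisation lemma is needed here. From each cluster subtract the quadratic polynomial given by a chosen atom \((\tilde a,\tilde P,\tilde S)\) of its second jet at \(0\). This keeps the normalized jets bounded, and Lemma~\ref{lem:second-jet-calculus}\textup{(iii)} yields limits \(w^{(m)}\) in \(C^2_{\mathrm{loc}}(\mathbb R^n)\) with \([D^2w^{(m)}]_{\alpha;\mathbb R^n}\le C_Q\delta^{-1}\).

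\emph{Step 4: blow-ups.} Apply Lemma~\ref{lem:perturbative-quadratic-blow-ups} with \(r_h=\rho_h\), \(S_h=\rho_h^\alpha H_h\tilde S\), \(P_h=\rho_h^{1+\alpha}H_h\tilde P\) and \(a_h=\rho_h^{2+\alpha}H_h\tilde a\). In the unscaled variables these are the actual jet atoms of \(u_h\) at \(x_h\), so they are bounded by \(\|u_h\|_{C^2}=o(H_h)\). This gives \((|a_h|+|P_h|+|S_h|)/H_h\to0\), and \([f_h]_\alpha/H_h\to0\) likewise.

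\emph{Main obstacle.} The delicate condition is \(r_h|S_h|+r_h^{1-\alpha}|S_h|^2/H_h\le C_0\). Since \(|S_h|\le\|u_h\|_{C^2}\le o(H_h)\) and also \(|S_h|\le C\) is not assumed, I would argue as follows:
\[
r_h^{1-\alpha}\frac{|S_h|^2}{H_h}\le\rho_h^{1-\alpha}\,\frac{\|u_h\|_{C^2}}{H_h}\,\|u_h\|_{C^2}.
\]
To control the last factor, I would either normalize beforehand, using homogeneity of the reduced estimate under \(u\mapsto\lambda u\) together with compatible rescaling of \(\mathscr C\), \(f\), \(L\), or exploit that \(\rho_h^\alpha H_h\lesssim\sup|D^2u_h|\). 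If necessary, I would redo the choice of \(x_h,y_h\) so that \(\rho_h^\alpha H_h\) is comparable to the oscillation of \(D^2u_h\). In that case \(r_h|S_h|\le\rho_h\|u_h\|_{C^2}\) and \(\rho_h^{1-\alpha}|S_h|^2/H_h\le\rho_h\|u_h\|_{C^2}^2/(\rho_h^\alpha H_h)\), which is bounded provided \(\|u_h\|_{C^2}\) is bounded. I expect this bookkeeping to be the hardest point.

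Granting it, the lemma shows that \(\Delta w^{(m)}=0\) strongly and that \(Dw^{(m)}\) is Dirichlet-stationary, with values in \(\mathcal A_{q_m}(\mathbb R^{k\times n})\) and \([D(Dw^{(m)})]_\alpha<\infty\).

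\emph{Step 5: conclusion.} Theorem~\ref{thm:liouville}, applied with target dimension \(kn\) and using \(\alpha<\delta(n,kn,Q)\le\delta(n,kn,q_m)\) (or \(\alpha<1/Q\) when \(n=2\)), shows that each \(Dw^{(m)}\) is affine. Hence \(D^2w^{(m)}\) is constant. Concatenating matchings across clusters as in Step~5 of the proof of Theorem~\ref{thm:quasilinear-Schauder} then forces \(\mathcal G(D^2\widetilde w_h(0),D^2\widetilde w_h(\xi_h))\to0\), contradicting the lower bound \(1/2\).
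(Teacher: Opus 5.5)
Your Steps 1--3 and 5 coincide with the paper's proof. However, the argument has a genuine gap exactly where you flagged it: you never establish \(\rho_h|S_h|+\rho_h^{1-\alpha}|S_h|^2/H_h\le C_0\), and none of your proposed fixes works.

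\begin{itemize}
\item Your final bound needs \(\sup_h\|u_h\|_{C^2(B_1)}<\infty\). The contradiction sequence only gives \(\|u_h\|_{C^2(B_1)}=o(H_h)\) with \(H_h\) possibly divergent, so \(\sup_{B_1}|D^2u_h|\) may blow up.
\item A vertical normalization \(u\mapsto\lambda u\) is not available for a quasilinear system. The map \(\lambda u\) solves the system with \(\widetilde{\mathscr A}^{ij}(x,a,P)=\mathscr A^{ij}(x,a/\lambda,P/\lambda)\), whose Lipschitz constants in \(a\) and \(P\) become \(\Lambda/\lambda\) and \(\varepsilon/\lambda\). With \(\lambda_h=\|u_h\|_{C^2}^{-1}\to0\), hypothesis \((\star\star)\) fails for the fixed parameters, and \(\varepsilon_h/\lambda_h\) need not tend to zero. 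These constants are precisely what Lemma~\ref{lem:perturbative-quadratic-blow-ups} uses.
\item Re-choosing \(x_h,y_h\) does not help either. Your estimate needs a \emph{lower} bound on \(\rho_h^\alpha H_h\) in terms of \(|S_h|\). The selection only gives \(\tfrac12\rho_h^\alpha H_h\le\mathcal G(D^2u_h(x_h),D^2u_h(y_h))\le 2\sup_{B_{3/4}}|D^2u_h|\), and the oscillation of \(D^2u_h\) between \(x_h\) and \(y_h\) can be far smaller than \(|D^2u_h(x_h)|\).
\end{itemize}

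The missing ingredient is an interpolation inequality driven by the slope bound \(\sup_{B_1}|Du_h|\le L\) rather than by any bound on \(D^2u_h\). Fix \(x\in B_{3/4}\), an atom \(S_0\) of \(D^2u_h(x)\), a unit vector \(e\), and \(0<t\le1/8\). Lemma~\ref{lem:second-jet-calculus}\textup{(ii)} and the chain-of-balls argument give \(|S_0(e,\cdot)|\le C\bigl(t^{-1}\sup_{B_1}|Du_h|+t^\alpha[D^2u_h]_{\alpha;B_1}\bigr)\). Set \(M_h:=\sup_{B_{3/4}}|D^2u_h|\). Optimizing in \(t\) yields \(M_h^{1+\alpha}\le C\bigl(L^\alpha[D^2u_h]_{\alpha;B_1}+L^\alpha\sup_{B_1}|Du_h|\bigr)\). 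Since \([D^2u_h]_{\alpha;B_1}\le\delta^{-1}H_h\) and \(\sup|Du_h|/H_h\to0\), this gives \(M_h^{1+\alpha}/H_h\le C\). Combined with \(\rho_h^\alpha H_h\le4M_h\), it gives
\[
\rho_hM_h\le4\rho_h^{1-\alpha}\frac{M_h^2}{H_h}=4(\rho_hM_h)^{1-\alpha}\frac{M_h^{1+\alpha}}{H_h}\le C(\rho_hM_h)^{1-\alpha}.
\]
Hence \(\rho_hM_h\le C\), and then \(\rho_h^{1-\alpha}M_h^2/H_h\le C\). Since \(x_h\in B_{1/2}\), we have \(|S_h|\le M_h\), which is the required condition. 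With this in place, the rest of your argument goes through as in the paper.
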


\begin{proof}

\smallskip
\noindent
\emph{\underline{Step 1: reduction to a simpler estimate.}}
We first observe that hypothesis \((\star\star)\) is preserved under graphical rescalings. Indeed, if \(0<r\leq1\), \((\mathscr A,\mathscr C)\) satisfies hypothesis \((\star\star)\) on \(B_r(x_0)\), and \(u\) solves the equation there with right-hand side \(f\), then
\[
u_{x_0,r}(z):=\frac{u(x_0+rz)}{r}
\]
solves an equation of the same form in \(B_1\), with $\mathscr A_r^{ij}(z,a,P):=\mathscr A^{ij}(x_0+rz,ra,P)$, $\mathscr C_r(z,a,P):=r\,\mathscr C(x_0+rz,ra,P)$, $f_r(z):=r f(x_0+rz)$, and \((\mathscr A_r,\mathscr C_r)\) satisfies hypothesis \((\star\star)\) on \(B_1\) with the same parameters.
Moreover, $\|f_r\|_{C^{0,\alpha}(B_1)} \leq r\|f\|_{C^{0,\alpha}(B_r(x_0))}$.

By the \(Q\)-valued versions of Simon's interpolation inequality \cite[(1.5)]{Sim97} and the absorbing lemma in \cite[Section~4]{Sim97}, applied successively to \(u\) and \(Du\), with the required \(C^2\)-compactness provided by Lemma~\ref{lem:second-jet-calculus}\textup{(iii)}, it is enough to show that, for every \(\delta>0\), there exist \(\varepsilon_\delta>0\) and \(C_\delta<\infty\) such that
\begin{equation}
\label{eq:C2-reduction-estimate}
[D^2u]_{\alpha;B_{1/2}}
\leq
\delta[D^2u]_{\alpha;B_1}
+
C_\delta\left(
\|u\|_{C^2(B_1)}
+
\|f\|_{C^{0,\alpha}(B_1)}
\right),
\end{equation}
whenever \((\mathscr A,\mathscr C)\) satisfies hypothesis \((\star\star)\) on \(B_1\) with parameters \(\beta,L,\Lambda,\varepsilon_\delta\), \(f\in C^{0,\alpha}(B_1,\mathbb R^k)\), and \(u\in C^{2,\alpha}\bigl(B_1,\mathcal A_Q(\mathbb R^k)\bigr)\) is a \(Q\)-valued strong solution of the corresponding equation with right-hand side \(f\), with \(\sup_{B_1}|Du|\leq L\).

\smallskip
\noindent
\emph{\underline{Step 2: contradiction sequence and rescaling.}}
Suppose that \eqref{eq:C2-reduction-estimate} fails for some fixed \(\delta>0\). Then there are pairs \((\mathscr A_h,\mathscr C_h)\) satisfying hypothesis \((\star\star)\) on \(B_1\) with parameters \(\beta,L,\Lambda,h^{-1}\), functions \(f_h\in C^{0,\alpha}(B_1,\mathbb R^k)\), and strong solutions $u_h\in C^{2,\alpha}\bigl(B_1,\mathcal A_Q(\mathbb R^k)\bigr)$ of the corresponding equations with right-hand sides \(f_h\) with \(\sup_{B_1}|Du_h|\leq L\), such that
\[
[D^2u_h]_{\alpha;B_{1/2}}>\delta[D^2u_h]_{\alpha;B_1}+h\bigl(\|u_h\|_{C^2(B_1)}+\|f_h\|_{C^{0,\alpha}(B_1)}\bigr).
\]
Set $H_h:=[D^2u_h]_{\alpha;B_{1/2}}$. Then
\begin{equation}
\label{eq:C2-contradiction-ratios}
\frac{[D^2u_h]_{\alpha;B_1}}{H_h}\leq\delta^{-1},
\qquad
\frac{
\|u_h\|_{C^2(B_1)}
+
\|f_h\|_{C^{0,\alpha}(B_1)}
}{H_h}
\longrightarrow0.
\end{equation}

Choose \(x_h,y_h\in B_{1/2}\) such that, writing $\rho_h:=|x_h-y_h|$, $\xi_h:=\frac{y_h-x_h}{\rho_h}$, one has
\begin{equation}
\label{eq:C2-lower-bound-Holder}
\mathcal G\bigl(D^2u_h(x_h),D^2u_h(y_h)\bigr)\geq\frac12H_h\rho_h^\alpha.
\end{equation}
Since
\[
\rho_h^\alpha\leq\frac{4\sup_{B_1}|D^2u_h|}{H_h}\longrightarrow0,
\]
we have \(\rho_h\to0\). After passing to a subsequence, we may assume that $\xi_h\longrightarrow\xi\in\mathbb S^{n-1}$.

Set $\Omega_h:=\{z\in\mathbb R^n:x_h+\rho_hz\in B_1\}$ and define
\[
\widetilde w_h(z):=\frac{u_h(x_h+\rho_hz)}{\rho_h^{2+\alpha}H_h}.
\]
The domains \(\Omega_h\) exhaust \(\mathbb R^n\). Moreover,
\begin{equation}
\label{eq:C2-rescaled-Holder-bound}
[D^2\widetilde w_h]_{\alpha;\Omega_h}=\frac{[D^2u_h]_{\alpha;B_1}}{H_h}\leq\delta^{-1},
\end{equation}
while \eqref{eq:C2-lower-bound-Holder} becomes
\begin{equation}
\label{eq:C2-rescaled-Hessian-separation}
\mathcal G\bigl(D^2\widetilde w_h(0),D^2\widetilde w_h(\xi_h)\bigr)\geq\frac12.
\end{equation}

\smallskip
\noindent
\emph{\underline{Step 3: second-jet clustering.}}
Write $j_2\widetilde w_h(0)=\sum_{i=1}^Q\llbracket(a_{h,i},A_{h,i},S_{h,i})\rrbracket$.
Since there are only finitely many pairs of indices, after passing to a subsequence we may assume that $|(a_{h,i},A_{h,i},S_{h,i})-(a_{h,j},A_{h,j},S_{h,j})|$ has a limit in \([0,\infty]\) for every \(i,j\). Declare
\[
i\sim j\qquad\Longleftrightarrow\qquad\lim_h\left|(a_{h,i},A_{h,i},S_{h,i})-(a_{h,j},A_{h,j},S_{h,j})\right|<\infty.
\]
Let
\[
\{1,\ldots,Q\}=I_1\cup\cdots\cup I_N,\qquad q_m:=|I_m|,
\]
be the corresponding equivalence classes. Thus the second jets in the same class have uniformly bounded mutual distance, whereas
\[
d_h^{m, m'}:=\min_{\substack{i\in I_m\\j\in I_{m'}}}\left|(a_{h,i},A_{h,i},S_{h,i})-(a_{h,j},A_{h,j},S_{h,j})\right|\longrightarrow\infty
\]
whenever \(m\neq m'\).

If \(N=1\), we simply set \(\widetilde w_h^{(1)}:=\widetilde w_h\) and choose any \(R_h\to\infty\) such that \(B_{R_h}\subset\Omega_h\).

If \(N\geq2\), set $d_h:=\min_{m\neq m'}d_h^{m, m'}$.
For every fixed \(R<\infty\), the bound \eqref{eq:C2-rescaled-Holder-bound}, together with \(d_h\to\infty\), allows us to apply Lemma~\ref{lem:second-jet-calculus}\textup{(v)} for all sufficiently large \(h\). We obtain a unique decomposition $\widetilde w_h=\sum_{m=1}^N\widetilde w_h^{(m)}$ on \(B_R\), where $\widetilde w_h^{(m)}\in C^{2,\alpha}\bigl(B_R,\mathcal A_{q_m}(\mathbb R^k)\bigr)$ and $[D^2\widetilde w_h^{(m)}]_{\alpha;B_R}\leq C_Q[D^2\widetilde w_h]_{\alpha;B_R}\leq C_Q\delta^{-1}$.
By a diagonal argument and uniqueness of the decomposition, we may choose \(R_h\to\infty\), with \(B_{R_h}\subset\Omega_h\), so that these decompositions are defined on \(B_{R_h}\).

Define the corresponding components of \(u_h\) by
\[
u_h^{(m)}(x_h+\rho_hz):=\rho_h^{2+\alpha}H_h\widetilde w_h^{(m)}(z),\qquad z\in B_{R_h}.
\]
Then $u_h=\sum_{m=1}^N u_h^{(m)}$ on \(B_{\rho_hR_h}(x_h)\). Since the strong equation is imposed sheetwise on the second jet, every \(u_h^{(m)}\) is a \(q_m\)-valued strong solution of the same system with right-hand side \(f_h\).

\smallskip
\noindent
\emph{\underline{Step 4: extraction of the limiting quadratic blow-ups.}}
For each \(m\), choose one index \(i_m\in I_m\), and let $(\tilde a_h^{(m)},\tilde P_h^{(m)},\tilde S_h^{(m)}):=(a_{h,i_m},A_{h,i_m},S_{h,i_m})$.
Define
\[
\widehat w_h^{(m)}(z):=\widetilde w_h^{(m)}(z)\ominus\left(\tilde a_h^{(m)}+\tilde P_h^{(m)}z+\frac12\tilde S_h^{(m)}(z,z)\right).
\]
Since the second jets belonging to \(I_m\) have uniformly bounded mutual distance at the origin, $|\widehat w_h^{(m)}(0)|+|D\widehat w_h^{(m)}(0)|+|D^2\widehat w_h^{(m)}(0)|\leq C_m$. Moreover, $[D^2\widehat w_h^{(m)}]_{\alpha;B_{R_h}}=[D^2\widetilde w_h^{(m)}]_{\alpha;B_{R_h}}\leq C_Q\delta^{-1}$.
Using Lemma~\ref{lem:second-jet-calculus}\textup{(ii)} along segments, together with the usual chain-of-balls argument and integration once and twice, we obtain, for every fixed \(R<\infty\),
\[
\|\widehat w_h^{(m)}\|_{C^{2,\alpha}(B_R)}\leq C_R
\]
for all sufficiently large \(h\).

Lemma~\ref{lem:second-jet-calculus}\textup{(iii)} and a diagonal argument therefore give, after passing to a subsequence,
\[
\widehat w_h^{(m)}\longrightarrow w^{(m)}\qquad\text{locally in }C^2(\mathbb R^n)
\]
for every \(m\), where $[D^2w^{(m)}]_{\alpha;\mathbb R^n}\leq C_Q\delta^{-1}$.

We now want to apply Lemma~\ref{lem:perturbative-quadratic-blow-ups}. Fix \(R<\infty\). For all sufficiently large \(h\), \(R<R_h\) and \(x_h+B_{\rho_hR}\subset B_1\). Define $u_h^{(m),0}(x):=u_h^{(m)}(x_h+x)$, $(\mathscr A_h^0)^{ij}(x,a,P):=\mathscr A_h^{ij}(x_h+x,a,P)$, $\mathscr C_h^0(x,a,P):=\mathscr C_h(x_h+x,a,P)$, and $f_h^0(x):=f_h(x_h+x)$.
Then \(u_h^{(m),0}\) is a \(q_m\)-valued strong solution on \(B_{\rho_hR}\) with right-hand side \(f_h^0\), and \((\mathscr A_h^0,\mathscr C_h^0)\) satisfies hypothesis \((\star\star)\) there with parameters \(\beta,L,\Lambda,h^{-1}\). Moreover, $\sup_{B_{\rho_hR}}|Du_h^{(m),0}|\leq L$.

We apply Lemma~\ref{lem:perturbative-quadratic-blow-ups} with $q=q_m$, $r_h:=\rho_h$, and $a_h^{(m)}:=\rho_h^{2+\alpha}H_h\tilde a_h^{(m)}$, $P_h^{(m)}:=\rho_h^{1+\alpha}H_h\tilde P_h^{(m)}$, $S_h^{(m)}:=\rho_h^\alpha H_h\tilde S_h^{(m)}$. Define also $q_h^{(m)}(x):=a_h^{(m)}+P_h^{(m)}x+\frac12S_h^{(m)}(x,x)$.
Then $(a_h^{(m)},P_h^{(m)},S_h^{(m)})\in\operatorname{spt}j_2u_h^{(m),0}(0)$ and, by definition,
\[
q_h^{(m)}(\rho_hz)=\rho_h^{2+\alpha}H_h\left(\tilde a_h^{(m)}+\tilde P_h^{(m)}z+\frac12\tilde S_h^{(m)}(z,z)\right).
\]
Hence
\[
\frac{u_h^{(m),0}(\rho_hz)\ominus q_h^{(m)}(\rho_hz)}{\rho_h^{2+\alpha}H_h}=\widehat w_h^{(m)}(z)\longrightarrow w^{(m)}\qquad\text{locally in }C^2(B_R).
\]
Moreover,
\[
\frac{|a_h^{(m)}|+|P_h^{(m)}|+|S_h^{(m)}|}{H_h}\leq\frac{\|u_h\|_{C^2(B_1)}}{H_h}\longrightarrow0
\]
by \eqref{eq:C2-contradiction-ratios}. 
Moreover,
\[
\frac{[f_h^0]_{\alpha;B_{\rho_hR}}}{H_h}
\leq
\frac{[f_h]_{\alpha;B_1}}{H_h}
\longrightarrow0
\]
by \eqref{eq:C2-contradiction-ratios}.
Hence, in order to apply Lemma~\ref{lem:perturbative-quadratic-blow-ups}, it only remains to verify that
\begin{equation}
\label{eq:C2-quadratic-jet-scale-bound}
\rho_h|S_h^{(m)}|+\rho_h^{1-\alpha}\frac{|S_h^{(m)}|^2}{H_h}\leq C_0
\end{equation}
for some $C_0<\infty$.

By the interpolation inequality applied to \(Du_h\),
\[
\sup_{B_{3/4}}|D^2u_h|\leq C\left([D^2u_h]_{\alpha;B_1}^{\frac1{1+\alpha}}\left(\sup_{B_1}|Du_h|\right)^{\frac{\alpha}{1+\alpha}}+\sup_{B_1}|Du_h|\right).
\]
Indeed, fix \(x\in B_{3/4}\), an atom \(S_0\in\operatorname{spt}D^2u_h(x)\), a unit vector \(e\), and \(0<t\leq1/8\). By Lemma~\ref{lem:second-jet-calculus}\textup{(ii)} and the usual chain-of-balls argument, there are corresponding atoms \(P_0\in\operatorname{spt}Du_h(x)\) and \(P_t\in\operatorname{spt}Du_h(x+te)\) such that $|P_t-P_0-tS_0(e,\cdot)|\leq C[D^2u_h]_{\alpha;B_1}t^{1+\alpha}$.
Hence
\[
|S_0(e,\cdot)|\leq\frac{|P_t-P_0|}{t}+C[D^2u_h]_{\alpha;B_1}t^\alpha\leq C\left(\frac{\sup_{B_1}|Du_h|}{t}+[D^2u_h]_{\alpha;B_1}t^\alpha\right).
\]
Taking the supremum and optimizing in \(t\) gives the asserted estimate.
Since \eqref{eq:C2-contradiction-ratios} gives $\frac{[D^2u_h]_{\alpha;B_1}}{H_h}\leq\delta^{-1}$, $\frac{\sup_{B_1}|Du_h|}{H_h}\longrightarrow0$, and \(\sup_{B_1}|Du_h|\leq L\), it follows that
$$
\frac{\left(\sup_{B_{3/4}}|D^2u_h|\right)^{1+\alpha}}{H_h}\leq C.
$$
On the other hand, the choice of \(x_h,y_h\) gives $H_h\rho_h^\alpha\leq4\sup_{B_{3/4}}|D^2u_h|$. Hence
\[
\rho_h\sup_{B_{3/4}}|D^2u_h|
\leq
4\rho_h^{1-\alpha}\frac{\left(\sup_{B_{3/4}}|D^2u_h|\right)^2}{H_h}
\leq
C (\rho_h\sup_{B_{3/4}}|D^2u_h|)^{1-\alpha}.
\]
Consequently,
\[
\rho_h\sup_{B_{3/4}}|D^2u_h|+\rho_h^{1-\alpha}\frac{\left(\sup_{B_{3/4}}|D^2u_h|\right)^2}{H_h}\leq C.
\]
Since $|S_h^{(m)}|\leq\sup_{B_{3/4}}|D^2u_h|$, the required bound \eqref{eq:C2-quadratic-jet-scale-bound} follows.

Lemma~\ref{lem:perturbative-quadratic-blow-ups} therefore gives
\[
\Delta w^{(m)}=0\qquad\text{in }B_R,
\]
and \(Dw^{(m)}\) is Dirichlet-stationary in \(B_R\). Since \(R<\infty\) was arbitrary, \(\Delta w^{(m)}=0\) in \(\mathbb R^n\) and \(Dw^{(m)}\) is Dirichlet-stationary in \(\mathbb R^n\). Moreover,
\[
[D(Dw^{(m)})]_{\alpha;\mathbb R^n}=[D^2w^{(m)}]_{\alpha;\mathbb R^n}\leq C_Q\delta^{-1}.
\]

\smallskip
\noindent
\emph{\underline{Step 5: conclusion via the Liouville theorem.}}
The map $Dw^{(m)}:\mathbb R^n\longrightarrow\mathcal A_{q_m}\bigl(\operatorname{Hom}(\mathbb R^n,\mathbb R^k)\bigr)$ takes values in a Euclidean space of dimension \(kn\).

If \(n=2\), then $\alpha<\frac1Q\leq\frac1{q_m}$.
If \(n\geq3\), then $\alpha<\delta(n,kn,Q)\leq\delta(n,kn,q_m)$.
Theorem~\ref{thm:liouville}, applied to \(Dw^{(m)}\), therefore shows that \(Dw^{(m)}\) is affine. Hence $D^2w^{(m)}$ is constant.

Concatenating optimal matchings within the individual second-jet clusters gives
\[
\mathcal G\bigl(D^2\widetilde w_h(0),D^2\widetilde w_h(\xi_h)\bigr)^2\leq\sum_{m=1}^N\mathcal G\bigl(D^2\widetilde w_h^{(m)}(0),D^2\widetilde w_h^{(m)}(\xi_h)\bigr)^2.
\]
Since the same quadratic polynomial is subtracted at both points,
\[
\mathcal G\bigl(D^2\widetilde w_h^{(m)}(0),D^2\widetilde w_h^{(m)}(\xi_h)\bigr)=\mathcal G\bigl(D^2\widehat w_h^{(m)}(0),D^2\widehat w_h^{(m)}(\xi_h)\bigr).
\]
The local \(C^2\)-convergence, the fact that \(\xi_h\to\xi\), and the constancy of \(D^2w^{(m)}\) therefore give
\[
\mathcal G\bigl(D^2\widetilde w_h(0),D^2\widetilde w_h(\xi_h)\bigr)\longrightarrow0.
\]
This contradicts \eqref{eq:C2-rescaled-Hessian-separation}. Hence \eqref{eq:C2-reduction-estimate} holds. The interpolation and absorption argument from Step~1 then gives the asserted Schauder estimate.
\end{proof}

\section{Applications to area-stationary maps}
\subsection{Small-slope Schauder estimates and Bernstein theorem}

\begin{corollary}[Schauder estimates for stationary \(Q\)-valued graphs]
\label{cor:stationary-graph-Schauder}
Let \(0<\alpha<1/Q\) if \(n=2\), and \(0<\alpha<\delta(n,k,Q)\) if \(n\geq3\). There exist constants $\varepsilon_0=\varepsilon_0(n,k,Q,\alpha)>0$, $C=C(n,k,Q,\alpha)<\infty$, such that, if $u\in C^{1,\alpha}\bigl(B^n_1,\mathcal A_Q(\mathbb R^k)\bigr)$, $\sup_{B^n_1}|Du|\leq\varepsilon_0$, and the graph varifold associated with \(u\) is stationary, then
\[
\|u\|_{C^{1,\alpha}(B_{1/2}^n)}
\leq
C\|u\|_{L^2(B_1^n)}.
\]
\end{corollary}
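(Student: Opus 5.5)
We will deduce the statement from Theorem~\ref{thm:quasilinear-Schauder} applied with $\mathscr E:=\mathscr E_{\mathrm{area}}$, $\mathscr B:=0$, $f:=0$. By Remark~\ref{rem:Q-valued-weak-solutions-examples}\textup{(iii)}, stationarity of the graph varifold $\mathbf v(u)$ means that $u$ is a $Q$-valued weak solution of $\Delta u+\operatorname{div}_x\mathscr E_{\mathrm{area}}(Du)=0$. The first step is to take the inner (domain) variation of $\mathbf v(u)$ only in vertical directions. Concretely, we test with ambient vector fields $(0,\Psi(x,y))$, $\Psi\in C^1_c(B_1\times\mathbb R^k,\mathbb R^k)$. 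Then the first variation, computed sheet by sheet along a measurable jet selection, is exactly $\int\sum_\ell DF_{\mathrm{area}}(Du_\ell):D[\Psi(x,u_\ell)]$. This identity requires the graph varifold of a $C^1$ $Q$-valued map to be computable sheetwise, which holds a.e. by Lemma~\ref{lem:elementary-first-jet-calculus}\textup{(iii)}.

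The second step is to fix the parameters in $(\star)$. Fix $L:=1$, $\beta:=1$, and $\Lambda:=1$. Since $\mathscr E_{\mathrm{area}}$ does not depend on $(x,a)$, the $x$- and $a$-Lipschitz condition holds trivially, and $\mathscr E_{\mathrm{area}}(0)=DF_{\mathrm{area}}(0)-0=0$ because $F_{\mathrm{area}}$ is even in $P$. All conditions on $\mathscr B=0$ are trivial. The remaining condition is the $P$-Lipschitz bound $|\mathscr E_{\mathrm{area}}(P)-\mathscr E_{\mathrm{area}}(P')|\le\varepsilon|P-P'|$ for $|P|,|P'|\le L'$. Since $D_P\mathscr E_{\mathrm{area}}(0)=0$ and $\mathscr E_{\mathrm{area}}$ is smooth, we have $\sup_{|P|\le L'}|D_P\mathscr E_{\mathrm{area}}(P)|\to0$ as $L'\downarrow0$.

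Let $\varepsilon_0^{\mathrm S}=\varepsilon_0(n,k,Q,\alpha,1,1,1)$ and $C^{\mathrm S}$ be the constants of Theorem~\ref{thm:quasilinear-Schauder}. Choose $\varepsilon_0\le1$ such that $\sup_{|P|\le\varepsilon_0}|D_P\mathscr E_{\mathrm{area}}|\le\varepsilon_0^{\mathrm S}$. There is a subtlety: $(\star)$ is required for all $|P|,|P'|\le L$, not just on the range of $Du$. To handle this, we apply the theorem with $L:=\varepsilon_0$ itself, so that $(\star)$ is only needed on $\{|P|\le\varepsilon_0\}$. The constants then depend on $\varepsilon_0$, which depends only on $(n,k,Q,\alpha)$. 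If one prefers $L$ fixed first, an alternative is to replace $\mathscr E_{\mathrm{area}}$ by $\mathscr E_{\mathrm{area}}(\pi(P))$, where $\pi$ is the radial retraction onto $\{|P|\le\varepsilon_0\}$; this is unchanged on the range of $Du$ and is globally $\varepsilon_0^{\mathrm S}$-Lipschitz. The circularity in choosing $\varepsilon_0$ via the theorem's own smallness threshold is the only point needing care, and the truncation trick resolves it cleanly.

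With these choices, Theorem~\ref{thm:quasilinear-Schauder} yields $\|u\|_{C^{1,\alpha}(B_{1/2})}\le C(\|u\|_{L^2(B_1)}+0)$ with $C=C(n,k,Q,\alpha)$, which is the assertion. The admissible range of $\alpha$ is precisely that of Theorem~\ref{thm:quasilinear-Schauder}, and the requirement $\alpha\le\beta=1$ holds. We expect the main (mild) obstacle to be the justification in the first step: the Euler–Lagrange identity must be derived for genuinely $Q$-valued sheets, including at collision points, where the jet selection is only measurable. This is handled by symmetry of the integrand in the sheets, as in Remark~\ref{rem:Q-valued-weak-solutions-examples}\textup{(i)}.
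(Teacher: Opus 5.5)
Your proof is correct and has the same skeleton as the paper's. Remark~\ref{rem:Q-valued-weak-solutions-examples}\textup{(iii)} converts stationarity into the weak system \(\Delta u+\operatorname{div}\mathscr E_{\mathrm{area}}(Du)=0\), and Theorem~\ref{thm:quasilinear-Schauder} is then applied with \(\beta=L=\Lambda=1\), \(\mathscr B=0\), \(f=0\).

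The only difference is how the \(P\)-Lipschitz condition of \((\star)\) is made to hold on all of \(\{|P|\le1\}\).
You leave \(u\) alone and truncate the coefficient, replacing \(\mathscr E_{\mathrm{area}}\) by \(\mathscr E_{\mathrm{area}}\circ\pi\). This map is globally \(\varepsilon_0^{\mathrm S}\)-Lipschitz, because \(\pi\) is \(1\)-Lipschitz and the ball is convex. It also coincides with \(\mathscr E_{\mathrm{area}}\) on the range of \(Du\).
The paper instead dilates the target, setting \(\widetilde u:=\rho^{-1}u\) and \(\mathscr E_\rho(P):=\rho^{-1}\mathscr E_{\mathrm{area}}(\rho P)\). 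Then \(|D\widetilde u|\le1\), and \(D_P\mathscr E_\rho(P)=D_P\mathscr E_{\mathrm{area}}(\rho P)\) is small on \(\{|P|\le1\}\); the paper finishes by multiplying the resulting estimate by \(\rho\).
The two devices are equally short. Yours avoids rescaling the solution, while the paper's keeps the coefficient smooth.

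One caveat concerns your first option, applying the theorem with \(L:=\varepsilon_0\). As stated it is circular, because the threshold \(\varepsilon_0(n,k,Q,\alpha,\beta,L,\Lambda)\) depends on \(L\). It is rescued only by noting, via the same truncation, that this threshold can be taken nonincreasing in \(L\). So the truncation argument is the one you should present.

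Two small points about your first step. Vertical fields \((0,\Psi)\) generate outer, not inner, variations. Also, the identity \(\sqrt{\det(I+A^TA)}\,\operatorname{div}_{P_A}(0,\Psi)=DF_{\mathrm{area}}(A):(D_x\Psi+D_y\Psi\,A)\) holds pointwise for every sheet, so no almost-everywhere argument is needed there.
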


\begin{proof}
As observed in Remark~\ref{rem:Q-valued-weak-solutions-examples}\textup{(iii)}, stationarity of the graph implies that \(u\) is a \(Q\)-valued weak solution of \(\Delta u+\operatorname{div}\mathscr E_{\mathrm{area}}(Du)=0\), where \(\mathscr E_{\mathrm{area}}(P):=D_P\sqrt{\det(I+P^TP)}-P\).

Let \(\varepsilon_*\) be the perturbation threshold in Theorem~\ref{thm:quasilinear-Schauder}, with \(\beta=L=\Lambda=1\). Since \(D_P\mathscr E_{\mathrm{area}}(0)=0\), we may choose \(0<\rho\leq1\) such that \(\sup_{|P|\leq1}|D_P\mathscr E_{\mathrm{area}}(\rho P)|\leq\varepsilon_*\).

Suppose that \(\sup_{B_1}|Du|\leq\rho\), and set \(\widetilde u:=\rho^{-1}u\) and \(\mathscr E_\rho(P):=\rho^{-1}\mathscr E_{\mathrm{area}}(\rho P)\). Then \(\sup_{B_1}|D\widetilde u|\leq1\), and \(\widetilde u\) is a \(Q\)-valued weak solution of \(\Delta\widetilde u+\operatorname{div}\mathscr E_\rho(D\widetilde u)=0\). Moreover, \(D_P\mathscr E_\rho(P)=D_P\mathscr E_{\mathrm{area}}(\rho P)\), so the mean value theorem gives \(|\mathscr E_\rho(P)-\mathscr E_\rho(P')|\leq\varepsilon_*|P-P'|\) whenever \(|P|,|P'|\leq1\). Since \(\mathscr E_\rho\) is independent of \(x\) and \(y\), \(\mathscr E_\rho(0)=0\), and the lower-order term vanishes, the pair \((\mathscr E_\rho,0)\) satisfies hypothesis \((\star)\) with parameters \(\beta=L=\Lambda=1\) and perturbation parameter \(\varepsilon_*\).

The conclusion, with \(\varepsilon_0:=\rho\), follows from Theorem~\ref{thm:quasilinear-Schauder} applied to \(\widetilde u\), after multiplying the resulting estimate by \(\rho\).
\end{proof}

\begin{corollary}[Small-slope Bernstein theorem]
\label{cor:Bernstein}
Let \(n\geq2\), \(k\geq1\), and \(Q\geq1\). Suppose that \(0<\alpha<1/Q\) if \(n=2\), and that \(0<\alpha<\delta(n,k,Q)\) if \(n\geq3\). There exists \(\varepsilon_0=\varepsilon_0(n,k,Q,\alpha)>0\) with the following property. If $u\in C^{1,\alpha}_{\mathrm{loc}}\bigl(\mathbb R^n,\mathcal A_Q(\mathbb R^k)\bigr)$, $\sup_{\mathbb R^n}|Du|\leq\varepsilon_0$, and the graph varifold associated with \(u\) is stationary in \(\mathbb R^{n+k}\), then \(u\) is affine.
\end{corollary}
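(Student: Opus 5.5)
The plan is to take $\varepsilon_0$ to be the constant of Corollary~\ref{cor:stationary-graph-Schauder} and apply that estimate to rescalings on larger and larger balls. The argument has to avoid the $L^2$ norm of $u$ itself, which may grow, so first subtract an affine single-valued map.

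Fix $x_0\in\mathbb R^n$ and $R>0$. Set $a:=\eta\circ u(x_0)$ and define
\[
u_R(z):=\frac{u(x_0+Rz)\ominus a}{R},\qquad z\in B_1.
\]
Translating the graph in $\mathbb R^{n+k}$ and homothetically rescaling it preserves stationarity of the graph varifold. Hence $u_R$ has stationary graph varifold, $u_R\in C^{1,\alpha}(B_1)$, and $\sup_{B_1}|Du_R|=\sup|Du|\leq\varepsilon_0$. The $C^{1,\alpha}$ regularity on the closed ball follows from $C^{1,\alpha}_{\mathrm{loc}}$ on a slightly larger ball; if needed, apply the corollary on $B_{1-}$, or simply use $B_2$ scaled. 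Because $Du$ is bounded, $u$ is Lipschitz with constant $C(Q)\varepsilon_0$, by Lemma~\ref{lem:elementary-first-jet-calculus}(iii) and the chain-of-balls argument. Since all sheets of $u(x_0)\ominus a$ lie within $C|u(x_0)|$-type distances of each other, this gives $|u_R(z)|\leq C(Q)(|u(x_0)\ominus a|/R+\varepsilon_0)$. In particular $\|u_R\|_{L^2(B_1)}\leq C$ uniformly for $R\geq1$.

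Corollary~\ref{cor:stationary-graph-Schauder} then gives $[Du_R]_{\alpha;B_{1/2}}\leq C$. Since $[Du_R]_{\alpha;B_{1/2}}=R^{\alpha}[Du]_{\alpha;B_{R/2}(x_0)}$, this yields
\[
[Du]_{\alpha;B_{R/2}(x_0)}\leq CR^{-\alpha}.
\]
Letting $R\to\infty$ gives $\mathcal G(Du(x),Du(y))=0$ for all $x,y$, so $Du$ is a constant $Q$-point $\sum_\ell\llbracket A_\ell\rrbracket$.

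The conclusion then follows exactly as at the end of the proof of Theorem~\ref{thm:liouville}. Fix a labelling of $j_1u(0)$. Along each segment from $0$, Lemma~\ref{lem:one-dimensional-jet-selection} provides continuous differential labels, which must stay equal to the fixed $A_\ell$ because $Du$ is constant and the labels are continuous. Integrating gives $u(x)=\sum_\ell\llbracket a_\ell+A_\ell x\rrbracket$, so $u$ is affine.

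The only point requiring any care is the uniform $L^2$ bound for $u_R$. Subtracting the average value $a$ and using the Lipschitz bound $\varepsilon_0$, together with the chain-of-balls control on the spread of sheets at $x_0$, handles the value spread. That spread is a fixed quantity divided by $R$, so it tends to zero as $R\to\infty$.
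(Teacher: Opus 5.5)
Your proof is correct and follows essentially the same route as the paper: rescale to the unit ball, apply Corollary~\ref{cor:stationary-graph-Schauder}, control the $L^2$ norm by the global Lipschitz bound $\mathcal G(u(x),u(y))\leq\varepsilon_0|x-y|$ from Lemma~\ref{lem:one-dimensional-jet-selection}, obtain $[Du]_{\alpha;B_{R/2}}\leq CR^{-\alpha}\to0$, and conclude with segment selections. Subtracting $a=\eta\circ u(x_0)$ and centring at an arbitrary $x_0$ are harmless but unnecessary: the paper centres at $0$ and uses $|u(x)|\leq|u(0)|+\varepsilon_0|x|$ directly, since the resulting term $|u(0)|R^{-1-\alpha}$ already vanishes as $R\to\infty$.
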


\begin{proof}
For \(R\geq1\), set \(u_R(x):=R^{-1}u(Rx)\). The graph of \(u_R\) is stationary and $\sup_{B_1}|Du_R| = \sup_{B_R}|Du| \leq\varepsilon_0$.
Applying Corollary~\ref{cor:stationary-graph-Schauder} to \(u_R\) and then scaling back, we obtain $[Du]_{\alpha;B_{R/2}} \leq C R^{-n/2-1-\alpha}\|u\|_{L^2(B_R)}$.
The one-dimensional first-jet selection along segments gives $|u(x)|\leq |u(0)|+\varepsilon_0|x|$, and therefore $\|u\|_{L^2(B_R)} \leq C\bigl(|u(0)|R^{n/2}+\varepsilon_0R^{n/2+1}\bigr)$.
Consequently,
\[
[Du]_{\alpha;B_{R/2}}
\leq
C\bigl(|u(0)|R^{-1-\alpha}+\varepsilon_0R^{-\alpha}\bigr)
\longrightarrow0
\]
as \(R\to\infty\). It follows that \(Du\) is constant. Lemma~\ref{lem:one-dimensional-jet-selection}, applied along segments, then shows that \(u\) is affine.
\end{proof}

\subsection{The dimension bound for the branch set of area-stationary maps}
\label{sec:branch-dimension}

\begin{lemma}[Stationarity of graphical differential blocks]
\label{lem:area-stationary-differential-blocks}
Let \(U\subset\mathbb R^n\) be a neighbourhood of \(x_0\), and let $u=\sum_{\gamma=1}^N u^\gamma$, $u^\gamma\in C^1\bigl(U,\mathcal A_{Q_\gamma}(\mathbb R^k)\bigr)$, $\sum_{\gamma=1}^NQ_\gamma=Q$, be a \(C^1\) decomposition. Suppose that, for some \(a\in\mathbb R^k\) and pairwise distinct $A^1,\ldots,A^N \in\operatorname{Hom}(\mathbb R^n,\mathbb R^k)$, one has $j_1u^\gamma(x_0) = Q_\gamma\llbracket(a,A^\gamma)\rrbracket$ for every \(\gamma=1,\ldots,N\).

If the graph varifold \(V:=\mathbf v(u)\) is stationary, then, after
possibly shrinking \(U\) around \(x_0\), every graph varifold $V^{\gamma}:=\mathbf v(u^\gamma)$ is stationary.
\end{lemma}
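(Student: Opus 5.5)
The plan is to follow the structure of Lemma~\ref{lem:weak-solution-differential-blocks}: induct on \(Q\), and handle both variations of the graph varifold. Stationarity of \(\mathbf v(u)\) means \(\delta V(X)=0\) for every \(X\in C_c^1(U\times\mathbb R^k,\mathbb R^{n+k})\). The integrand of \(\delta V(X)\), namely \(\operatorname{div}_{P_{A}}X\,\sqrt{\det(I+A^TA)}\) evaluated at \((x,u_\ell(x))\), is a symmetric sum over first-jet sheets of a continuous function of \((x,u_\ell,Du_\ell)\). The argument of Lemma~\ref{lem:weak-solution-differential-blocks} only used two ingredients: the locality of the equation in the target, and the continuity and symmetry of the flux across a hypersurface. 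The main work is therefore to reproduce these two ingredients for the varifold first variation, with general ambient test fields \(X\) rather than only target variations.

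First I would shrink \(U\) so that \(\operatorname{spt}Du^\gamma(x)\subset B_\varrho(A^\gamma)\) for disjoint balls. Then I would choose \(e,\nu\) with \(g:=e\cdot(\eta\circ u^\alpha-\eta\circ u^\beta)\) having \(Dg\neq0\) on \(U\), and set \(H:=\{g=0\}\). This \(H\) is a \(C^1\) hypersurface containing the complete value-collision set of \(u\).

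On \(U\setminus H\), the values of \(u\) split locally into separated clusters of multiplicity less than \(Q\). Localising \(X\) to a neighbourhood of the graph of one cluster that avoids the graphs of the others shows that each cluster's graph varifold is stationary. Inside a value cluster, each prescribed \(u^\gamma\) is locally a sum of differential sub-blocks, to which the inductive hypothesis applies; I would use Proposition~\ref{prop:local-splitting} at points where the value cluster is a full collision. Since \(\delta\mathbf v(u^\gamma)\) is additive over sheets, \(V^\gamma\) is stationary over \(U\setminus H\).

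To extend across \(H\), I would test \(\delta V^\gamma\) with \(\eta^\pm_\varepsilon(x)X(x,y)\), where \(\eta^\pm_\varepsilon\) are the same cutoffs \(\chi(\pm\rho/\varepsilon)\) as before. The error term is
\[
\int\sum_\ell \sqrt{\det(I+A_\ell^TA_\ell)}\;\bigl(P_{T_\ell}D\eta^\pm_\varepsilon\bigr)\cdot X(x,u_\ell),
\]
where \(T_\ell\) is the tangent plane of sheet \(\ell\), and \(D\eta\) is extended with zero \(y\)-component. The next step is to write this as \(\int\mathcal J_\gamma\cdot D\eta^\pm_\varepsilon\) for the single-valued field
\[
\mathcal J_\gamma(x):=\sum_\ell \sqrt{\det(I+A_\ell^TA_\ell)}\;\pi_{\mathbb R^n}\bigl(P_{T_\ell}X(x,u_\ell)\bigr).
\]
This field is symmetric in the sheets and hence continuous, because \(j_1u^\gamma\) and \(X\) are continuous. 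The coarea argument then makes the two limits cancel, since \(\nu^-=-\nu^+\), and stationarity holds across \(H\).

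The main obstacle is justifying that the first-variation integrand is symmetric and continuous, so that the flux is well defined, and checking that localisation in the target is legitimate for ambient fields. For the latter, I would cut \(X\) off in \(y\) near the graph of a cluster, and note that \(\operatorname{div}_T\) of a cutoff only picks up derivatives along the graph, where the cutoff equals one.
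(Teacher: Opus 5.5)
Your proposal is correct and follows essentially the same route as the paper. The paper also inducts on \(Q\), takes the \(C^1\) hypersurface \(H=\{e\cdot(\eta\circ u^\alpha-\eta\circ u^\beta)=0\}\) containing the value-collision set, localises to value clusters and then to differential sub-blocks on \(U\setminus H\), and cancels the boundary fluxes across \(H\) using the cutoffs \(\chi(\pm\rho/\varepsilon)\) and the coarea formula. Your flux field, \(\pi_{\mathbb R^n}(P_{T_\ell}X)\) weighted by the area factor, is exactly the paper's \(\sum_{i,j}\sqrt{\det g_\ell}\,g_\ell^{ij}\,X\cdot\tau_{\ell,j}\) written invariantly.
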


\begin{proof}
We argue by induction on \(Q\).
The conclusion is immediate if \(Q=1\), or if \(N=1\).
Suppose \(N\geq2\) and that the result holds for every multiplicity strictly smaller than \(Q\).

For each \(\alpha\), set \(m^\alpha:=\eta\circ u^\alpha\). Choose \(\varrho>0\) so that the closed balls \(\overline B_\varrho(A^\alpha)\subset\operatorname{Hom}(\mathbb R^n,\mathbb R^k)\) are pairwise disjoint. Since \(Du^\alpha(x_0)=Q_\alpha\llbracket A^\alpha\rrbracket\) and \(Du^\alpha\) is continuous, after possibly shrinking \(U\) we may assume that \(\operatorname{spt}Du^\alpha(x)\subset B_\varrho(A^\alpha)\) for every \(x\in U\) and every \(\alpha\). In particular, \(Dm^\alpha(x)\in B_\varrho(A^\alpha)\) for every \(x\in U\), and hence \(Dm^\alpha(x)\neq Dm^\beta(x)\) whenever \(\alpha\neq\beta\).

Choose two distinct differential blocks \(u^\alpha\) and \(u^\beta\). Since all the values of \(u\) coincide at \(x_0\), one has $m^\alpha(x_0)=m^\beta(x_0)$, whereas $Dm^\alpha(x_0)=A^\alpha\neq A^\beta=Dm^\beta(x_0)$.
Hence there exist \(e\in\mathbb R^k\) and \(\nu\in\mathbb R^n\) such that $e\cdot(A^\alpha-A^\beta)\nu\neq0$.
Set $f(x):=e\cdot\bigl(m^\alpha(x)-m^\beta(x)\bigr)$.
Then $f(x_0)=0$, $D_\nu f(x_0)=e\cdot(A^\alpha-A^\beta)\nu\neq0$.
After shrinking \(U\) if necessary, we may therefore assume that
\(Df\neq0\) throughout \(U\), and hence $H:=\{x\in U:f(x)=0\}$ is a \(C^1\) hypersurface.

Let $C_u:=
\left\{
x\in U:
u(x)=Q\llbracket a\rrbracket
\text{ for some }a\in\mathbb R^k
\right\}$.
Then \(C_u\subset H\), since at every point of \(C_u\) all the value
blocks, and therefore their averages, coincide.

Consequently, at every point of \(U\setminus H\), the values of \(u\)
split into at least two disjoint clusters. The clusters remain uniformly
separated in a sufficiently small neighbourhood, and, by choosing
variations supported near the graph of only one cluster, stationarity
localizes to each cluster separately. Since every value cluster has multiplicity strictly smaller than \(Q\), the inductive hypothesis applies to its differential sub-blocks. By the preceding separation of the differential sheets, every such sub-block belongs to a unique prescribed component \(u^\gamma\); more precisely, \(u^\gamma\) is locally the sum of those differential sub-blocks whose differential sheets lie in \(B_\varrho(A^\gamma)\). Since first variation is additive over the corresponding graph varifolds, summing these stationary sub-blocks shows that every \(V^\gamma\) is stationary in \(U\setminus H\).

It is then enough to show that stationarity extends across \(H\). Fix \(\gamma\in\{1,\ldots,N\}\). Let \(W^\pm\) be the two sides of \(H\) in a sufficiently small neighbourhood \(W\Subset U\), and fix \(Y\in C_c^1(W\times\mathbb R^k,\mathbb R^{n+k})\). Since \(W^\pm\subset U\setminus H\), the preceding argument shows that \(V^\gamma\) is stationary over both \(W^+\) and \(W^-\).

Let \(\rho\in C^1(W)\) be a defining function for \(H\), with \(W^\pm=\{\pm\rho>0\}\), and set $\eta_\varepsilon^\pm(x):=\chi\left(\frac{\pm\rho(x)}{\varepsilon}\right)$, where \(\chi\in C^\infty(\mathbb R)\) is nondecreasing, \(\chi=0\) on \((-\infty,\frac12]\), and \(\chi=1\) on \([1,\infty)\).
Since \(\eta_\varepsilon^\pm\) vanishes near \(H\), \((\eta_\varepsilon^\pm\circ\pi)Y\) is an admissible variation over \(W^\pm\), where \(\pi(x,y)=x\). Thus
\begin{equation}
\label{eq:stationarity-identity}
\begin{aligned}
0&=\int_{\pi^{-1}(W^\pm)} \operatorname{div}_S\big((\eta_\varepsilon^\pm\circ\pi)Y\big)\,dV^\gamma\\
&= \int_{\pi^{-1}(W^\pm)}(\eta_\varepsilon^\pm\circ\pi)\operatorname{div}_S Y\,dV^\gamma+\int_{\pi^{-1}(W^\pm)}Y\cdot\nabla_S(\eta_\varepsilon^\pm\circ\pi)\,dV^\gamma.
\end{aligned}
\end{equation}
Indeed, write pointwise $j_1u^\gamma(x) = \sum_{\ell=1}^{Q_\gamma} \llbracket(u_\ell(x),Du_\ell(x))\rrbracket$ and set \(S_\ell:=(\operatorname{Id},Du_\ell(x))[\mathbb R^n]\), \(\tau_{\ell,i}:=(e_i,D_i u_\ell(x))\), and \(g_{\ell,ij}:=\tau_{\ell,i}\cdot\tau_{\ell,j}\), denoting by \(g_\ell^{ij}\) the entries of the inverse matrix.
Since $\nabla_{S_\ell}(\eta_\varepsilon^\pm\circ\pi)(x,u_\ell(x))=\sum_{i,j=1}^n g_\ell^{ij}D_i\eta_\varepsilon^\pm(x)\,\tau_{\ell,j}$, the graphical representation of \(V^\gamma\) gives
\[
\int_{\pi^{-1}(W^\pm)}Y\cdot\nabla_S(\eta_\varepsilon^\pm\circ\pi)\,dV^\gamma=\int_{W^\pm}\sum_{\ell=1}^{Q_\gamma}\sum_{i,j=1}^n \sqrt{\det(g_{\ell,ij})}\,g_\ell^{ij}D_i\eta_\varepsilon^\pm\,Y(x,u_\ell(x))\cdot\tau_{\ell,j}\,dx.
\]
Moreover, by the coarea formula,
\[
\int_W\varphi\cdot D\eta_\varepsilon^\pm\,dx=\int_{1/2}^1\chi'(t)\int_{\{\pm\rho=\varepsilon t\}}\varphi\cdot\frac{D(\pm\rho)}{|D\rho|}\,d\mathcal H^{n-1}\,dt\longrightarrow-\int_H\varphi\cdot\nu^\pm\,d\mathcal H^{n-1}
\]
for every \(\varphi\in C_c(W,\mathbb R^n)\), where \(\nu^\pm\) is the outward-pointing normal to \(W^\pm\). Applying this to the preceding continuous symmetric expression in \(j_1u^\gamma\), we obtain
\[
\int_{\pi^{-1}(W^\pm)}Y\cdot\nabla_S(\eta_\varepsilon^\pm\circ\pi)\,dV^\gamma\longrightarrow-\int_H\sum_{\ell=1}^{Q_\gamma}\sum_{i,j=1}^n \sqrt{\det(g_{\ell,ij})}\,g_\ell^{ij}(\nu^\pm)_iY(x,u_\ell(x))\cdot\tau_{\ell,j}\,d\mathcal H^{n-1}.
\]
Since \(\eta_\varepsilon^\pm\to1\) pointwise in \(W^\pm\), the first term on the right-hand side in the stationarity identity \eqref{eq:stationarity-identity} converges by dominated convergence. Hence, since \(\nu^-=-\nu^+\),
\[
\delta V^\gamma(Y)=\int_{\pi^{-1}(W^+)}\operatorname{div}_{S}Y\,dV^\gamma+\int_{\pi^{-1}(W^-)}\operatorname{div}_{S}Y\,dV^\gamma=0.
\]
Since \(W\) can be chosen around any point of \(H\), and stationarity is a local property, this shows that \(V^\gamma\) is stationary across \(H\). Together with its stationarity in \(U\setminus H\), it follows that \(V^\gamma\) is stationary throughout \(U\). Since \(\gamma\) was arbitrary, this proves the lemma.

\end{proof}

\begin{lemma}[Pointwise transformation of first jets under graphical reparametrization]
\label{lem:pointwise-first-jet-reparametrization}
Let \(u\in C^1\bigl(B_2^n,\mathcal A_Q(\mathbb R^k)\bigr)\), let \(R=\begin{pmatrix}R_{11}&R_{12}\\R_{21}&R_{22}\end{pmatrix}\in O(n+k)\), and let \(\widetilde u\in\operatorname{Lip}\bigl(B_1^n,\mathcal A_Q(\mathbb R^k)\bigr)\) satisfy
\begin{equation}
\label{eq:pointwise-reparametrization-graphical-identity}
\mathbf v(\widetilde u)
=
\bigl(R_\#\mathbf v(u)\bigr)
\mres
\bigl(B_1^n\times\mathbb R^k\bigr).
\end{equation}
Suppose that there exist \(0<c\leq C<\infty\) such that, for every \(z\in B_2^n\), every differential atom \(B\) of \(Du(z)\), and every \(h\in\mathbb R^n\),
\begin{equation}
\label{eq:pointwise-reparametrization-transversality}
c|h|
\leq
|(R_{11}+R_{12}B)h|
\leq
C|h|.
\end{equation}

Let \(\widetilde x_0\in B_1^n\), and write \(\widetilde u(\widetilde x_0)=\sum_{\beta=1}^M q_\beta\llbracket\widetilde a_0^\beta\rrbracket\), where the \(\widetilde a_0^\beta\) are pairwise distinct. For each \(\beta\), set \(\begin{pmatrix}x_0^\beta\\a^\beta\end{pmatrix}:=R^{-1}\begin{pmatrix}\widetilde x_0\\\widetilde a_0^\beta\end{pmatrix}\). Assume that \(x_0^\beta\in B_2^n\) for every \(\beta\)\footnote{Note that the varifold identity \eqref{eq:pointwise-reparametrization-graphical-identity} alone only implies that \(x_0^\beta\in\overline B_2^n\).}. For each \(\beta\), write the part of \(j_1u(x_0^\beta)\) whose value component is \(a^\beta\) as \(\sum_{\gamma=1}^{N_\beta}q_{\beta,\gamma}\llbracket(a^\beta,A^{\beta,\gamma})\rrbracket\), where the \(A^{\beta,\gamma}\) are pairwise distinct.
Then, for every \(\beta\), \(\sum_{\gamma=1}^{N_\beta}q_{\beta,\gamma}=q_\beta\), \(\widetilde u\) is differentiable at \(\widetilde x_0\), and
\begin{equation}
\label{eq:reparametrized-first-jet-transformation}
j_1\widetilde u(\widetilde x_0)
=
\sum_{\beta=1}^M
\sum_{\gamma=1}^{N_\beta}
q_{\beta,\gamma}
\llbracket
\left(
\widetilde a_0^\beta,
(R_{21}+R_{22}A^{\beta,\gamma})
(R_{11}+R_{12}A^{\beta,\gamma})^{-1}
\right)
\rrbracket.
\end{equation}
\end{lemma}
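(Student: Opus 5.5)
Near each preimage point \(x_0^\beta\), I plan to decompose \(u\) into its differential blocks and treat each block as a graph over the rotated plane.

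Fix \(\beta\). Write the full first jet of \(u\) at \(x_0^\beta\) as the atoms with value \(a^\beta\) plus the remaining atoms. Proposition~\ref{prop:local-splitting} applies to these (distinct) first-jet atoms. On a small ball \(U_\beta\ni x_0^\beta\) it yields \(C^1\) maps \(u^{\beta,\gamma}\) with \(j_1u^{\beta,\gamma}(x_0^\beta)=q_{\beta,\gamma}\llbracket(a^\beta,A^{\beta,\gamma})\rrbracket\), together with a remainder \(u^{\beta,\mathrm{rest}}\) whose values stay a definite distance \(\kappa>0\) away from \(a^\beta\) on \(U_\beta\).

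Next, for each block I consider the map \(F_{\beta,\gamma}(x)\) defined as the first component of \(R(x,y)\), evaluated on the sheets of \(u^{\beta,\gamma}\). This is multivalued, but along any selection its differential is \(R_{11}+R_{12}B\), with \(B\) close to \(A^{\beta,\gamma}\). By \eqref{eq:pointwise-reparametrization-transversality} this is invertible with uniform bounds, and the same holds at every atom.

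The key step is a multivalued inverse-function argument. Because all sheets of \(u^{\beta,\gamma}\) share the same first jet at \(x_0^\beta\), the following estimate holds for \(x\in U_\beta\), by Lemma~\ref{lem:one-dimensional-jet-selection} along segments together with continuity of \(Du\):
\[
\mathcal G\bigl(u^{\beta,\gamma}(x),q_{\beta,\gamma}\llbracket a^\beta+A^{\beta,\gamma}(x-x_0^\beta)\rrbracket\bigr)=o(|x-x_0^\beta|).
\]
Hence the rotated image of the graph of \(u^{\beta,\gamma}\) near \((\widetilde x_0,\widetilde a_0^\beta)\) lies in an \(o(r)\)-neighbourhood, at scale \(r\), of the rotated plane \(R\bigl(\operatorname{graph}(x\mapsto a^\beta+A^{\beta,\gamma}(x-x_0^\beta))\bigr)\). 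This is the graph over \(\mathbb R^n\) of the affine map with slope \(L_{\beta,\gamma}:=(R_{21}+R_{22}A^{\beta,\gamma})(R_{11}+R_{12}A^{\beta,\gamma})^{-1}\) through \((\widetilde x_0,\widetilde a_0^\beta)\). The inverse of \(R_{11}+R_{12}A^{\beta,\gamma}\) exists by \eqref{eq:pointwise-reparametrization-transversality}, and the same transversality converts vertical closeness into closeness of graphs over the new base.

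Multiplicity counting comes from the varifold identity \eqref{eq:pointwise-reparametrization-graphical-identity}. Near \((\widetilde x_0,\widetilde a_0^\beta)\), the measure \(\|\mathbf v(\widetilde u)\|\) equals the pushforward of the pieces of \(\mathbf v(u)\) lying over small neighbourhoods of the \(x_0^{\beta}\). I argue this as follows. By continuity, the sheets of \(\widetilde u\) near \(\widetilde x_0\) stay near \(\widetilde a_0^{\beta'}\). Any point of \(R(\operatorname{graph}u)\) near \((\widetilde x_0,\widetilde a_0^\beta)\) comes from a point near \((x_0^\beta,a^\beta)\), because \(R\) is an isometry. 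Points of \(\operatorname{graph}u^{\beta,\mathrm{rest}}\) are excluded since they are \(\kappa\)-far away.

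Comparing densities then gives the counts. At \((\widetilde x_0,\widetilde a_0^\beta)\), \(\mathbf v(\widetilde u)\) has density \(q_\beta\), using the Lipschitz property and the area formula. The rotated \(u\)-side has density \(\sum_\gamma q_{\beta,\gamma}\), since blow-ups at that point give planes with these multiplicities. Hence \(\sum_\gamma q_{\beta,\gamma}=q_\beta\).

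Finally I deduce differentiability of \(\widetilde u\). For small \(h\), the values of \(\widetilde u(\widetilde x_0+h)\) near \(\widetilde a_0^\beta\) are the vertical intersections of the slice \(\{\widetilde x_0+h\}\times\mathbb R^k\) with the rotated images of the \(u^{\beta,\gamma}\). Each rotated image is a \(q_{\beta,\gamma}\)-sheeted graph lying within \(o(|h|)\) of the affine graph with slope \(L_{\beta,\gamma}\).

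To make this rigorous I intend to use the varifold identity once more, slicing by \(\pi\) on a small ball. For a.e. \(h\), the \(Q\)-point \(\widetilde u(\widetilde x_0+h)\) equals the pushforward slice, and this gives \(\mathcal G\bigl(\widetilde u(\widetilde x_0+h),\sum_{\beta,\gamma}q_{\beta,\gamma}\llbracket\widetilde a_0^\beta+L_{\beta,\gamma}h\rrbracket\bigr)=o(|h|)\). Continuity of \(\widetilde u\), which is Lipschitz, then extends this to every \(h\). This is exactly differentiability with the jet \eqref{eq:reparametrized-first-jet-transformation}.

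The main obstacle is this last step: counting, for every \(h\), exactly \(q_{\beta,\gamma}\) preimages with the correct multiplicities. I would handle it by a degree argument. For \(y\) near \(x_0^\beta\), each selected sheet gives a base map \(x\mapsto R_{11}x+R_{12}u_\ell(x)\). This map is a local homeomorphism, being \(C^1\)-close to an invertible affine map whenever the sheets are locally \(C^1\), and an approximate one otherwise, so degree theory applies to the symmetric sum. Its total degree over \(\widetilde x_0+h\) is \(q_{\beta,\gamma}\). Combined with the equality of measures, this pins down the multiplicities.
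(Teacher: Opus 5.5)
Your overall plan matches the paper's. You split \(u\) near each \(x_0^\beta\) into the differential blocks \(u^{\beta,\gamma}\), use the transversality hypothesis to view their rotated graphs over the new base, count multiplicities, and read off the jet from the differentiability of \(u^{\beta,\gamma}\) at \(x_0^\beta\). Your final jet computation is correct once one knows that, over every base point \(\widetilde x_0+h\) near \(\widetilde x_0\), the rotated graph of \(u^{\beta,\gamma}\) contributes exactly \(q_{\beta,\gamma}\) values counted with multiplicity. That counting step is the gap: you flag it yourself as the main obstacle, and it is never established.

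The two arguments you offer for the count do not work.
\begin{itemize}
\item The density comparison rests on a false claim. A Lipschitz graph need not have density equal to its sheet multiplicity: the graph of \(x\mapsto|x|\) over \(\mathbb R^2\) is a cone of density \(1/\sqrt2\) at the origin. So ``\(\mathbf v(\widetilde u)\) has density \(q_\beta\) by the Lipschitz property and the area formula'' is not available. Here that density equals \(q_\beta\) only if you already know how the \(q_\beta\) sheets of \(\widetilde u\) are distributed among the planes \(R(P_{A^{\beta,\gamma}})\), which is exactly the count at stake.
\item The degree sketch does not close the gap either. The lemma is applied at arbitrary points, including branch and collision points of \(u^{\beta,\gamma}\) (for example at collapsed jets), where there are no local \(C^1\) sheets. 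There the ``base map'' \(x\mapsto R_{11}x+R_{12}u_\ell(x)\) is not a map between manifolds, and ``an approximate local homeomorphism'' does not define a degree.
\item Even with a degree in hand, a signed count equals the number of preimages only if all local orientations agree, i.e.\ \(\operatorname{sign}\det(R_{11}+R_{12}B)\) is constant over the relevant atoms \(B\). You do not address this.
\end{itemize}

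The paper supplies exactly this ingredient with currents. The boundary of \(R_\#\mathbf G_{u^{\beta,\gamma}}\) is controlled by the De Lellis--Spadaro boundary formula. The estimate \(|\pi(R(x_0^\beta+h,b))-\widetilde x_0|\geq\frac c2|h|\) shows that this boundary vanishes over \(B^n_{cr/2}(\widetilde x_0)\). The constancy theorem then gives \(\pi_\#R_\#\mathbf G_{u^{\beta,\gamma}}=m\llbracket B^n_{cr/2}(\widetilde x_0)\rrbracket\), and the tangent current gives \(m=\sigma_{\beta,\gamma}q_{\beta,\gamma}\), with \(\sigma_{\beta,\gamma}\) constant near \(x_0^\beta\). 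Part~I of the proof of \cite[Theorem~5.1]{DLS15} then produces Lipschitz maps \(\widetilde u^{\beta,\gamma}\) with the right multiplicity, and uniqueness identifies their sum with the corresponding piece of \(\widetilde u\).

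If you want to avoid currents, you can repair your own argument by working at the blow-up level rather than at a fixed scale:
\begin{itemize}
\item The \(C^1\) blow-ups of the \(u^{\beta,\gamma}\), together with the varifold identity, show that the rescalings of \(\mathbf v(\widetilde u)\) at \((\widetilde x_0,\widetilde a_0^\beta)\) converge as varifolds to \(\sum_\gamma q_{\beta,\gamma}|R(P_{A^{\beta,\gamma}})|\).
\item The rescalings of \(\widetilde u^\beta\) subconverge locally uniformly by Lipschitz compactness.
\item The area-formula identity \(\int\sum_i\varphi(y,\widetilde u_i(y))\,dy=\int\varphi(z)J_\pi(T)\,d\mathbf v(\widetilde u)(z,T)\), where \(J_\pi(T)\) is the Jacobian of \(\pi|_T\), has an integrand continuous on the Grassmannian. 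It therefore passes to the varifold limit.
\end{itemize}
This identifies every blow-up limit of \(\widetilde u^\beta\) as \(\sum_\gamma q_{\beta,\gamma}\llbracket(R_{21}+R_{22}A^{\beta,\gamma})(R_{11}+R_{12}A^{\beta,\gamma})^{-1}y\rrbracket\). That yields both \(\sum_\gamma q_{\beta,\gamma}=q_\beta\) and differentiability of \(\widetilde u\) at \(\widetilde x_0\) with the asserted jet.
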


\begin{proof}
Let \(\pi:\mathbb R^{n+k}\to\mathbb R^n\) denote the horizontal projection. Following \cite[Definition~1.10]{DLS15}, we denote by \(\mathbf G_w\) the graph current associated with a multiple-valued map \(w\), with its canonical graphical orientation.

Since \(\widetilde u\) is Lipschitz, the usual value-splitting argument for \(Q\)-valued Lipschitz maps \cite[Proposition~1.6]{DLS11} gives, after shrinking to a neighbourhood of \(\widetilde x_0\), a local decomposition \(\widetilde u=\sum_{\beta=1}^M\widetilde u^\beta\), with \(\widetilde u^\beta(\widetilde x_0)=q_\beta\llbracket\widetilde a_0^\beta\rrbracket\). By assumption, \(x_0^\beta\in B_2^n\). The assumed varifold identity \eqref{eq:pointwise-reparametrization-graphical-identity} and the continuity of \(u\) then give \(a^\beta\in\spt u(x_0^\beta)\).
By Proposition~\ref{prop:local-splitting}, for each \(\beta\) there exists \(r>0\) such that, in \(B_r^n(x_0^\beta)\), the part of \(u\) corresponding to \(a^\beta\) decomposes as \(\sum_{\gamma=1}^{N_\beta}u^{\beta,\gamma}\), where each \(u^{\beta,\gamma}:B_r^n(x_0^\beta)\to\mathcal A_{q_{\beta,\gamma}}(\mathbb R^k)\) satisfies
\(j_1u^{\beta,\gamma}(x_0^\beta) =q_{\beta,\gamma} \llbracket (a^\beta,A^{\beta,\gamma}) \rrbracket\).

By \eqref{eq:pointwise-reparametrization-transversality}, the matrices \(R_{11}+R_{12}B\) are uniformly invertible, and hence the rotated planes \(R(P_B)\) are graphs over the horizontal plane, with locally uniformly bounded slopes. Set \(L_{\beta,\gamma}:=R_{11}+R_{12}A^{\beta,\gamma}\) and \(\sigma_{\beta,\gamma}:=\operatorname{sign}\det L_{\beta,\gamma}\).

After decreasing \(r\) if necessary, differentiability at \(x_0^\beta\)  gives \(\lvert b-a^\beta-A^{\beta,\gamma}h\rvert\leq\frac c2|h|\) whenever \(h\in B_r^n\) and \(b\in\spt u^{\beta,\gamma}(x_0^\beta+h)\). Set \(T_r^{\beta,\gamma}:=\mathbf G_{u^{\beta,\gamma}}\) and \(\widetilde T_r^{\beta,\gamma}:=R_\#T_r^{\beta,\gamma}\). By the boundary formula for multiple-valued graph currents \cite[Theorem~2.1]{DLS15}, \(\spt\partial T_r^{\beta,\gamma}\subset\{(x_0^\beta+h,b):|h|=r,\ b\in\spt u^{\beta,\gamma}(x_0^\beta+h)\}\).
Since
\(\pi(R(x_0^\beta+h,b))-\widetilde x_0=L_{\beta,\gamma}h+R_{12}(b-a^\beta-A^{\beta,\gamma}h)\),
\eqref{eq:pointwise-reparametrization-transversality} gives
\(\bigl|\pi(R(x_0^\beta+h,b))-\widetilde x_0\bigr|\geq\frac c2|h|\).
Consequently,
\(\spt\partial\widetilde T_r^{\beta,\gamma}\cap\bigl(B_{cr/2}^n(\widetilde x_0)\times\mathbb R^k\bigr)=\varnothing\),
and hence \(\partial\widetilde T_r^{\beta,\gamma}=0\) in \(B_{cr/2}^n(\widetilde x_0)\times\mathbb R^k\). The same estimate also gives
\(\spt\widetilde T_r^{\beta,\gamma}\cap\pi^{-1}(\widetilde x_0)=\{(\widetilde x_0,\widetilde a_0^\beta)\}\).

Together with orthogonality of \(R\), the preceding estimates give
\(\bigl|R(x_0^\beta+h,b)-(\widetilde x_0,\widetilde a_0^\beta)\bigr|\leq C_0\bigl|\pi(R(x_0^\beta+h,b))-\widetilde x_0\bigr|\)
on \(\spt\widetilde T_r^{\beta,\gamma}\), for some \(C_0<\infty\). The constancy theorem therefore gives
\((\pi_\#\widetilde T_r^{\beta,\gamma})\mres B_{cr/2}^n(\widetilde x_0)=m_{\beta,\gamma}\llbracket B_{cr/2}^n(\widetilde x_0)\rrbracket\)
for some \(m_{\beta,\gamma}\in\mathbb Z\). The tangent current to \(\widetilde T_r^{\beta,\gamma}\) at \((\widetilde x_0,\widetilde a_0^\beta)\) is \(q_{\beta,\gamma}R_\#\llbracket P_{A^{\beta,\gamma}}\rrbracket\), whose horizontal push-forward is \(\sigma_{\beta,\gamma}q_{\beta,\gamma}\llbracket\mathbb R^n\rrbracket\). Comparison with the preceding identity gives \(m_{\beta,\gamma}=\sigma_{\beta,\gamma}q_{\beta,\gamma}\).

By continuity of the first jet, the radius \(r\) may also be chosen so that
\(\operatorname{sign}\det(R_{11}+R_{12}B)=\sigma_{\beta,\gamma}\)
for every \(z\in B_r^n(x_0^\beta)\) and every differential atom \(B\) of \(Du^{\beta,\gamma}(z)\). Therefore, after replacing \(\widetilde T_r^{\beta,\gamma}\) by \(\sigma_{\beta,\gamma}\widetilde T_r^{\beta,\gamma}\), the horizontal projection preserves orientation on all its approximate tangent planes and its horizontal push-forward has multiplicity \(q_{\beta,\gamma}\) (and obviously this replacement does not change the associated varifold). Moreover, the uniform slope bound and the area formula give the mass bounds required to apply \cite[Lemma~5.5]{DLS15} to the one-dimensional slices. Together with the preceding boundary and multiplicity conclusions, these facts allow us to apply the existence argument in Part~I of the proof of \cite[Theorem~5.1]{DLS15}, obtaining \(\rho>0\) and Lipschitz maps \(\widetilde u^{\beta,\gamma}:B_\rho^n(\widetilde x_0)\to\mathcal A_{q_{\beta,\gamma}}(\mathbb R^k)\), with \(\widetilde u^{\beta,\gamma}(\widetilde x_0)=q_{\beta,\gamma}\llbracket\widetilde a_0^\beta\rrbracket\), whose graph currents satisfy
\(\mathbf G_{\widetilde u^{\beta,\gamma}}=\sigma_{\beta,\gamma}\widetilde T_r^{\beta,\gamma}\)
in \(B_\rho^n(\widetilde x_0)\times\mathbb R^k\). In particular,
\begin{equation}
\label{eq:local-reparametrization-graphical-identity}
\mathbf v(\widetilde u^{\beta,\gamma})
=
\bigl(R_\#\mathbf v(u^{\beta,\gamma})\bigr)
\mres
\bigl(B_\rho^n(\widetilde x_0)\times\mathbb R^k\bigr).
\end{equation}

Since there are only finitely many pairs \((\beta,\gamma)\), after restricting the maps if necessary, the same radius \(\rho>0\) may be used for all of them. After decreasing \(\rho\) further if necessary, \eqref{eq:pointwise-reparametrization-graphical-identity} and \eqref{eq:local-reparametrization-graphical-identity}, together with uniqueness of the fibrewise graphical representation, show that \(\widetilde u^\beta=\sum_{\gamma=1}^{N_\beta}\widetilde u^{\beta,\gamma}\) almost everywhere in \(B_\rho^n(\widetilde x_0)\), and hence throughout \(B_\rho^n(\widetilde x_0)\) by continuity. Consequently, \(\widetilde u=\sum_{\beta=1}^M\sum_{\gamma=1}^{N_\beta}\widetilde u^{\beta,\gamma}\) throughout \(B_\rho^n(\widetilde x_0)\). In particular, \(\sum_{\gamma=1}^{N_\beta}q_{\beta,\gamma}=q_\beta\).

We now compute the first jet of each \(\widetilde u^{\beta,\gamma}\). Let \(\widetilde h\to0\), and let \(\widetilde b\in\spt\widetilde u^{\beta,\gamma}(\widetilde x_0+\widetilde h)\). By \eqref{eq:local-reparametrization-graphical-identity}, there exist \(h\in\mathbb R^n\) and \(b\in\spt u^{\beta,\gamma}(x_0^\beta+h)\) such that
\(\begin{pmatrix}\widetilde x_0+\widetilde h\\\widetilde b\end{pmatrix}=R\begin{pmatrix}x_0^\beta+h\\b\end{pmatrix}\).
Subtracting
\(\begin{pmatrix}\widetilde x_0\\\widetilde a_0^\beta\end{pmatrix}=R\begin{pmatrix}x_0^\beta\\a^\beta\end{pmatrix}\)
gives \(\widetilde h=R_{11}h+R_{12}(b-a^\beta)\) and \(\widetilde b-\widetilde a_0^\beta=R_{21}h+R_{22}(b-a^\beta)\).

Differentiability of \(u^{\beta,\gamma}\) at \(x_0^\beta\) gives, uniformly over \(b\in\spt u^{\beta,\gamma}(x_0^\beta+h)\), \(b-a^\beta=A^{\beta,\gamma}h+o(|h|)\). Consequently, \(\widetilde h=(R_{11}+R_{12}A^{\beta,\gamma})h+o(|h|)\) and \(\widetilde b-\widetilde a_0^\beta=(R_{21}+R_{22}A^{\beta,\gamma})h+o(|h|)\). By \eqref{eq:pointwise-reparametrization-transversality}, after decreasing \(\rho\) if necessary,
\begin{equation}
\label{eq:reparametrization-bilipschitz}
\frac c2|h|
\leq
|\widetilde h|
\leq
2C|h|.
\end{equation}
It follows that \(h=(R_{11}+R_{12}A^{\beta,\gamma})^{-1}\widetilde h+o(|\widetilde h|)\), and therefore
\(\widetilde b-\widetilde a_0^\beta=(R_{21}+R_{22}A^{\beta,\gamma})(R_{11}+R_{12}A^{\beta,\gamma})^{-1}\widetilde h+o(|\widetilde h|)\).
Since this holds for all \(q_{\beta,\gamma}\) values of \(\widetilde u^{\beta,\gamma}(\widetilde x_0+\widetilde h)\), counted with multiplicity,
\(\mathcal G\bigl(\widetilde u^{\beta,\gamma}(\widetilde x_0+\widetilde h),q_{\beta,\gamma}\llbracket\widetilde a_0^\beta+(R_{21}+R_{22}A^{\beta,\gamma})(R_{11}+R_{12}A^{\beta,\gamma})^{-1}\widetilde h\rrbracket\bigr)=o(|\widetilde h|)\).
Thus \(\widetilde u^{\beta,\gamma}\) is differentiable at \(\widetilde x_0\), with
\(j_1\widetilde u^{\beta,\gamma}(\widetilde x_0)=q_{\beta,\gamma}\llbracket\bigl(\widetilde a_0^\beta,(R_{21}+R_{22}A^{\beta,\gamma})(R_{11}+R_{12}A^{\beta,\gamma})^{-1}\bigr)\rrbracket\).

Summing over \(\beta,\gamma\), we conclude that \(\widetilde u\) is differentiable at \(\widetilde x_0\) and that \eqref{eq:reparametrized-first-jet-transformation} holds.
\end{proof}

\begin{lemma}[\(C^{1,\alpha}\) reparametrization over nearby planes]
\label{lem:C1alpha-reparametrization}
There exist \(\varepsilon=\varepsilon(n,k,Q)>0\) and \(C=C(n,k,Q)<\infty\) with the following property.

Let \(u\in C^1\bigl(B_2^n,\mathcal A_Q(\mathbb R^k)\bigr)\) and \(R\in O(n+k)\) satisfy \(\|u\|_{C^0(B^n_2)}+\|Du\|_{C^0(B^n_2)}\leq\varepsilon\) and \(\|R-I\|\leq\varepsilon\). Then there exists a map \(\widetilde u\in C^1\bigl(B_1^n,\mathcal A_Q(\mathbb R^k)\bigr)\) such that \(\mathbf v(\widetilde u)=\bigl(R_\#\mathbf v(u)\bigr)\mres\bigl(B_1^n\times\mathbb R^k\bigr)\) and \(\|D\widetilde u\|_{C^0(B^n_1)}\leq C\bigl(\|R-I\|+\|Du\|_{C^0(B^n_2)}\bigr)\).

Moreover, if \(u\in C^{1,\alpha}\bigl(B_2^n,\mathcal A_Q(\mathbb R^k)\bigr)\) for some \(0<\alpha<1\), then \(\widetilde u\in C^{1,\alpha}\bigl(B_1^n,\mathcal A_Q(\mathbb R^k)\bigr)\) and \([D\widetilde u]_{\alpha;B^n_1}\leq C[Du]_{\alpha;B^n_2}\).
\end{lemma}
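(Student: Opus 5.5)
The plan is to build \(\widetilde u\) fibrewise, check that it is \(C^1\) using Lemma~\ref{lem:pointwise-first-jet-reparametrization}, and obtain the Hölder bound through the smooth jet-transformation map. Write \(R=\begin{pmatrix}R_{11}&R_{12}\\R_{21}&R_{22}\end{pmatrix}\). Then \(\|R_{11}-I\|,\|R_{12}\|,\|R_{21}\|,\|R_{22}-I\|\le\varepsilon\).

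\emph{Step 1: the transversality hypothesis.} For every differential atom \(B\) with \(|B|\le\varepsilon\) we have \(\|R_{11}+R_{12}B-I\|\le 2\varepsilon\). Hence \eqref{eq:pointwise-reparametrization-transversality} holds with \(c=1/2\) and \(C=2\) once \(\varepsilon\) is small.

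\emph{Step 2: definition of \(\widetilde u\).} For \(\widetilde x\in B_1^n\), set \(\Phi(x,b):=\pi(R(x,b))=R_{11}x+R_{12}b\). Then consider the set of pairs \((x,b)\), with \(b\in\operatorname{spt}u(x)\) counted with multiplicity, such that \(\Phi(x,b)=\widetilde x\). Along segments, Lemma~\ref{lem:one-dimensional-jet-selection} gives continuous sheets \(a_\ell\) with \(|\dot a_\ell|\le\varepsilon|\dot\gamma|\). It follows that each sheet map \(x\mapsto R_{11}x+R_{12}a_\ell(x)\) is a small perturbation of the identity, so every preimage satisfies \(|x-\widetilde x|\le C\varepsilon\). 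In particular all preimages lie in \(B_{3/2}^n\subset B_2^n\).

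To count preimages, I would use the graph current \(\mathbf G_u\) on \(B_2\). The projection \(\pi_\#R_\#\mathbf G_u\) restricted to \(B_1\) has no boundary there, because \(\partial\) is supported over \(\partial B_2\), which maps outside \(B_{3/2}\). By the constancy theorem this projection is \(m\llbracket B_1\rrbracket\). Orientation is preserved, since \(\det(R_{11}+R_{12}B)>0\), and comparing with \(R=I\) by continuity in \(R\) gives \(m=Q\). Positivity of the Jacobian then shows that the fibre over \(\widetilde x\) consists of exactly \(Q\) points counted with multiplicity.

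Set \(\widetilde u(\widetilde x):=\sum\llbracket R_{21}x+R_{22}b\rrbracket\), summed over this fibre. The area formula gives the varifold identity \(\mathbf v(\widetilde u)=(R_\#\mathbf v(u))\mres(B_1\times\mathbb R^k)\). Along the way I would also check that \(\widetilde u\) is Lipschitz, using the bi-Lipschitz bound \(\frac c2|h|\le|\widetilde h|\) as in \eqref{eq:reparametrization-bilipschitz}.

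\emph{Step 3: \(C^1\) regularity and the \(C^0\) bound.} Lemma~\ref{lem:pointwise-first-jet-reparametrization} applies at every \(\widetilde x_0\). It shows that \(\widetilde u\) is differentiable, with \(j_1\widetilde u(\widetilde x_0)\) equal to the push-forward of the atoms of \(j_1u\) over the fibre under the smooth map
\[
F(B):=(R_{21}+R_{22}B)(R_{11}+R_{12}B)^{-1}.
\]
We have \(|F(B)|\le C(\|R-I\|+|B|)\), which gives the stated \(C^0\) bound on \(D\widetilde u\). For continuity of \(j_1\widetilde u\), the fibre points depend continuously on \(\widetilde x\) in the \(\mathcal A_Q\) sense, because the local sheet decomposition from the proof of that lemma is continuous. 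Composing with the continuous map \(F\) then gives continuity of \(j_1\widetilde u\).

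\emph{Step 4: the Hölder estimate (main obstacle).} Fix \(\widetilde x,\widetilde y\in B_1^n\) and join them by a segment \(\widetilde\gamma\). The difficulty is to match the preimage sheets coherently along \(\widetilde\gamma\). My first attempt would be to lift \(\widetilde\gamma\) using the local decomposition \(\widetilde u=\sum\widetilde u^{\beta,\gamma}\) to obtain continuous selected sheets \((x_\ell(t),b_\ell(t))\). Each lifted curve \(x_\ell(t)\) is Lipschitz with \(|x_\ell(s)-x_\ell(t)|\le 2|\widetilde x-\widetilde y||s-t|\), by the bi-Lipschitz bound. The atoms \(B_\ell(t)\) form a continuous selection of \(Du\) along these curves, and a chain-of-balls argument along the lifts gives \(|B_\ell(s)-B_\ell(t)|\le C_Q[Du]_{\alpha}|x_\ell(s)-x_\ell(t)|^\alpha\). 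Since \(F\) is uniformly Lipschitz on \(\{|B|\le\varepsilon\}\), this yields
\[
\mathcal G\bigl(D\widetilde u(\widetilde x),D\widetilde u(\widetilde y)\bigr)\le C[Du]_{\alpha;B_2}|\widetilde x-\widetilde y|^\alpha.
\]

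The delicate point is that the lifts must follow the sheets of \(Du\) continuously even where sheets collide. To handle this, I would apply the chain-of-balls argument directly to the continuous map \(t\mapsto D\widetilde u(\widetilde\gamma(t))\), which makes the individual lifts unnecessary. That map satisfies a pointwise local Hölder bound, because nearby fibres are matched by the local decomposition from Lemma~\ref{lem:pointwise-first-jet-reparametrization}.
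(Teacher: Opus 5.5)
Your Steps 1--3 are a legitimate alternative to the paper's construction. Where the paper rescales and invokes \cite[Theorem~5.1]{DLS15} to get the Lipschitz map $\widetilde u$, you build it by hand from the constancy theorem and the area formula. One caveat: the area formula only gives a fibre of exactly $Q$ points, counted with multiplicity, for almost every $\widetilde x$. The everywhere statement needs the local-degree computation from the proof of Lemma~\ref{lem:pointwise-first-jet-reparametrization} at each fibre point.

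The genuine gap is in Step~4, in the fix you finally settle on. A pointwise local Hölder bound $\mathcal G\bigl(D\widetilde u(\widetilde x),D\widetilde u(\widetilde x')\bigr)\le M|\widetilde x-\widetilde x'|^\alpha$ for $|\widetilde x'-\widetilde x|<\delta(\widetilde x)$ carries no global information when $\alpha<1$. The map $t\mapsto Nt$ satisfies it with any constant $M$ (take $\delta=(M/N)^{1/(1-\alpha)}$), and chaining such bounds along a partition only yields $\sum_j|\Delta t_j|^\alpha$, which blows up as the mesh is refined. The chain-of-balls argument cannot rescue this. It takes as input the global bound $\mathcal G(D(t),D(0))\le r$ for every intermediate $t$ and merely converts it into sheetwise bounds. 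Applied to $D\widetilde u$, it therefore presupposes the estimate you are trying to prove.

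Your first attempt was the right one, and the collision problem disappears once you lift the full first jet of $\widetilde u$ rather than its values. Step~3 gives $\widetilde u\in C^1$, so Lemma~\ref{lem:one-dimensional-jet-selection} yields continuous selections $j_1\widetilde u(\sigma(t))=\sum_i\llbracket(\widetilde a_i(t),\widetilde A_i(t))\rrbracket$ along $\sigma(t)=(1-t)\widetilde x+t\widetilde y$, with $\widetilde a_i\in C^1$ and $\dot{\widetilde a}_i=\widetilde A_i(\widetilde y-\widetilde x)$. Set $(x_i(t),a_i(t)):=R^{-1}(\sigma(t),\widetilde a_i(t))$ and $A_i(t):=(R_{12}^T+R_{22}^T\widetilde A_i(t))(R_{11}^T+R_{21}^T\widetilde A_i(t))^{-1}$. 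Then:
\begin{itemize}
\item By the transformation formula \eqref{eq:reparametrized-first-jet-transformation}, $(a_i(t),A_i(t))$ is an atom of $j_1u(x_i(t))$.
\item $A_i$ is continuous simply because $\widetilde A_i$ is.
\item $x_i'=(R_{11}^T+R_{21}^T\widetilde A_i)(\widetilde y-\widetilde x)$, so $x_i$ is Lipschitz with constant $C|\widetilde y-\widetilde x|$.
\end{itemize}
The chain-of-balls argument for $Du$ along the single curve $x_i$ is now fed by the hypothesis $[Du]_{\alpha;B_2}$, which is global in $B_2$. It gives $|A_i(1)-A_i(0)|\le C[Du]_{\alpha;B_2}|\widetilde y-\widetilde x|^\alpha$. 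The Lipschitz bound on $F$ transfers this to $\widetilde A_i$, and the selections at $t=0,1$ supply the matching. This is precisely the paper's argument.
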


\begin{proof}
Lemma~\ref{lem:one-dimensional-jet-selection} gives \(\operatorname{Lip}(u)\leq\|Du\|_{C^0(B_2)}\). Set \(u_2(z):=\frac12u(2z)\), \(z\in B_1^n\). Since the ambient dilation commutes with \(R\), it is enough to apply \cite[Theorem~5.1]{DLS15} to \(u_2\), with fixed radii \(s:=\frac12\) and \(r:=\frac34\), and with \(\varphi:B_s^n\to\mathbb R^k\) linear such that \(\Sigma:=R^{-1}\bigl(\mathbb R^n\times\{0\}\bigr)\cap(B_s^n\times\mathbb R^k)=\operatorname{graph}\varphi\). For \(\varepsilon\) sufficiently small depending only on \(n,k,Q\), the assumed smallness of \(u\), \(Du\), and \(R-I\) gives the hypotheses of that theorem. Scaling the resulting reparametrization back, applying \(R\), and restricting to \(B_1^n\), we obtain a Lipschitz \(Q\)-valued map \(\widetilde u:B_1^n\to\mathcal A_Q(\mathbb R^k)\) such that
\begin{equation}
\label{eq:reparametrization-global-graphical-identity}
\mathbf v(\widetilde u)
=
\bigl(R_\#\mathbf v(u)\bigr)
\mres
\bigl(B_1^n\times\mathbb R^k\bigr).
\end{equation}

Since \(u\) is Lipschitz, it extends continuously to \(\overline B_2^n\). For \(x\in\partial B_2^n\) and \(a\in\spt u(x)\), the smallness assumptions give \(|R_{11}x+R_{12}a|\geq|x|-\|R_{11}-I\||x|-\|R_{12}\||a|\geq2-\varepsilon(2+\varepsilon)>1\), after decreasing \(\varepsilon\) if necessary. Consequently, \eqref{eq:reparametrization-global-graphical-identity} implies that every preimage under \(R\) of a point of the graph of \(\widetilde u\) lies in \(B_2^n\times\mathbb R^k\).
If \(B\) is a differential atom of \(Du(z)\), then \(|B|\leq\varepsilon\) and \(\bigl|(R_{11}+R_{12}B)h-h\bigr|\leq\bigl(\|R_{11}-I\|+\|R_{12}\||B|\bigr)|h|\leq2\varepsilon|h|\). Thus, after decreasing \(\varepsilon\) if necessary, \(\frac12|h|\leq|(R_{11}+R_{12}B)h|\leq2|h|\) for every \(h\in\mathbb R^n\). Lemma~\ref{lem:pointwise-first-jet-reparametrization} therefore shows that \(\widetilde u\) is differentiable at every point of \(B_1^n\) and that the first-jet transformation formula \eqref{eq:reparametrized-first-jet-transformation} holds.

We next prove that \(j_1\widetilde u\) is continuous. Fix \(\widetilde x_0\in B_1^n\), and retain the local submaps \(u^{\beta,\gamma}\) and \(\widetilde u^{\beta,\gamma}\) constructed in the proof of Lemma~\ref{lem:pointwise-first-jet-reparametrization}. Thus, for some \(\rho>0\), \(\widetilde u=\sum_{\beta,\gamma}\widetilde u^{\beta,\gamma}\) in \(B_\rho^n(\widetilde x_0)\), and each \(\widetilde u^{\beta,\gamma}\) has a collapsed first jet at \(\widetilde x_0\).

Let \(\widetilde x\to\widetilde x_0\), and let \((\widetilde a,\widetilde A)\) be an atom of \(j_1\widetilde u^{\beta,\gamma}(\widetilde x)\). By the local graphical identity \eqref{eq:local-reparametrization-graphical-identity}, there exist \(x\in B_2^n\) and \(a\in\spt u^{\beta,\gamma}(x)\) such that \(\begin{pmatrix}\widetilde x\\\widetilde a\end{pmatrix}=R\begin{pmatrix}x\\a\end{pmatrix}\). Applying \eqref{eq:reparametrized-first-jet-transformation} to the corresponding local pair, there is an atom \(A\) of \(Du^{\beta,\gamma}(x)\) such that \(\widetilde A=(R_{21}+R_{22}A)(R_{11}+R_{12}A)^{-1}\). Moreover, \eqref{eq:reparametrization-bilipschitz} gives \(|x-x_0^\beta|\leq C|\widetilde x-\widetilde x_0|\).

Since \(j_1u^{\beta,\gamma}\) is continuous and \(j_1u^{\beta,\gamma}(x_0^\beta)=q_{\beta,\gamma}\llbracket(a^\beta,A^{\beta,\gamma})\rrbracket\), every such atom \((a,A)\) converges to \((a^\beta,A^{\beta,\gamma})\) as \(\widetilde x\to\widetilde x_0\). The map \(A\mapsto(R_{21}+R_{22}A)(R_{11}+R_{12}A)^{-1}\) is smooth in the range under consideration. Hence every atom of \(j_1\widetilde u^{\beta,\gamma}(\widetilde x)\) converges to \(\bigl(\widetilde a_0^\beta,(R_{21}+R_{22}A^{\beta,\gamma})(R_{11}+R_{12}A^{\beta,\gamma})^{-1}\bigr)\). Thus \(j_1\widetilde u^{\beta,\gamma}\) is continuous at \(\widetilde x_0\). Summing the local components and using that \(\widetilde x_0\) was arbitrary, we conclude that \(\widetilde u\in C^1(B_1^n,\mathcal A_Q(\mathbb R^k))\).

For every differential atom \(A\) of \(Du\), the smallness assumptions imply \(\bigl|(R_{21}+R_{22}A)(R_{11}+R_{12}A)^{-1}\bigr|\leq C\bigl(\|R-I\|+|A|\bigr)\). Together with \eqref{eq:reparametrized-first-jet-transformation}, this gives
\begin{equation}
\label{eq:reparametrization-gradient-bound}
\|D\widetilde u\|_{C^0(B_1)}
\leq
C\bigl(\|R-I\|+\|Du\|_{C^0(B_2)}\bigr).
\end{equation}

Assume now that \(u\in C^{1,\alpha}(B_2^n,\mathcal A_Q(\mathbb R^k))\). Let \(\widetilde x,\widetilde y\in B_1^n\), and let \(\sigma(t):=(1-t)\widetilde x+t\widetilde y\). By Lemma~\ref{lem:one-dimensional-jet-selection}, the restriction of \(j_1\widetilde u\) to \(\sigma\) admits continuous selections \(j_1\widetilde u(\sigma(t))=\sum_{i=1}^Q\llbracket(\widetilde a_i(t),\widetilde A_i(t))\rrbracket\), where each \(\widetilde a_i\) is \(C^1\) and \(\dot{\widetilde a}_i(t)=\widetilde A_i(t)(\widetilde y-\widetilde x)\).

Define \(\begin{pmatrix}x_i(t)\\a_i(t)\end{pmatrix}:=R^{-1}\begin{pmatrix}\sigma(t)\\\widetilde a_i(t)\end{pmatrix}\). By \eqref{eq:reparametrization-global-graphical-identity}, \((x_i(t),a_i(t))\) belongs to the graph of \(u\), and \(x_i(t)\in B_2^n\). Set \(A_i(t):=(R_{12}^{T}+R_{22}^{T}\widetilde A_i(t))(R_{11}^{T}+R_{21}^{T}\widetilde A_i(t))^{-1}\). For \(\varepsilon\) sufficiently small, the inverse is well defined, and \eqref{eq:reparametrized-first-jet-transformation} shows that \((a_i(t),A_i(t))\) is an atom of \(j_1u(x_i(t))\). Moreover, \(x_i'(t)=(R_{11}^{T}+R_{21}^{T}\widetilde A_i(t))(\widetilde y-\widetilde x)\). By \eqref{eq:reparametrization-gradient-bound}, the paths \(x_i\) therefore satisfy \(|x_i(t)-x_i(s)|\leq C|t-s|\,|\widetilde y-\widetilde x|\).

The Hölder continuity of \(Du\) gives \(\mathcal G(Du(x_i(t)),Du(x_i(s)))\leq C[Du]_{\alpha;B_2}|t-s|^\alpha|\widetilde y-\widetilde x|^\alpha\). Since \(A_i(t)\) is a continuous selection of the differential atoms of \(Du(x_i(t))\), the usual chain-of-balls argument yields \(|A_i(1)-A_i(0)|\leq C[Du]_{\alpha;B_2}|\widetilde y-\widetilde x|^\alpha\). Using once more the smooth dependence of \(A\mapsto(R_{21}+R_{22}A)(R_{11}+R_{12}A)^{-1}\), we obtain \(|\widetilde A_i(1)-\widetilde A_i(0)|\leq C[Du]_{\alpha;B_2}|\widetilde y-\widetilde x|^\alpha\). The selections at \(t=0\) and \(t=1\) provide a matching between the atoms of \(D\widetilde u(\widetilde x)\) and \(D\widetilde u(\widetilde y)\); hence \([D\widetilde u]_{\alpha;B_1}\leq C[Du]_{\alpha;B_2}\).
\end{proof}

\begin{lemma}[Reparametrization at a collapsed first jet]
\label{lem:collapsed-jet-reparametrization}
Let \(0<\alpha<1\), let \(u\in C^{1,\alpha}_{\mathrm{loc}}\bigl(B_2^n, \mathcal A_Q(\mathbb R^k)\bigr)\), and suppose that \(j_1u(0)=Q\llbracket(0,A)\rrbracket\). Let \(R\in O(n+k)\) satisfy \(R(P_A)=\mathbb R^n\times\{0\}\), where \(P_A:=\{(x,Ax):x\in\mathbb R^n\}\). Then there exist \(\rho>0\) and \(\widetilde u\in C^{1,\alpha}\bigl(B_{2\rho}^n,\mathcal A_Q(\mathbb R^k)\bigr)\) such that \(\mathbf v(\widetilde u)\mres B_\rho^{n+k}=\bigl(R_\#\mathbf v(u)\bigr)\mres B_\rho^{n+k}\), and \(\widetilde u(0)=Q\llbracket0\rrbracket\), \(D\widetilde u(0)=Q\llbracket0\rrbracket\).
\end{lemma}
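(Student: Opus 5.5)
The plan is to reduce to Lemma~\ref{lem:C1alpha-reparametrization} by first straightening the tangent plane \(P_A\) through a preliminary shear and rescaling, and then correcting with a rotation close to the identity. The difficulty is that \(R\) need not be close to \(I\), and \(A\) need not be small. However, \(R\) maps \(P_A\) to the horizontal plane, and \(u\) is graphical over \(P_A\) near the origin with small slope relative to \(P_A\). Concretely, let \(R_A\in O(n+k)\) be any rotation with \(R_A(P_A)=\mathbb R^n\times\{0\}\). Then \(R\) and \(R_A\) differ by an orthogonal map \(R R_A^{-1}\) preserving \(\mathbb R^n\times\{0\}\), hence of block form \(\operatorname{diag}(O_1,O_2)\). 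Such a map sends graphs of \(Q\)-valued functions to graphs via \(w\mapsto O_2\,w(O_1^{-1}\cdot)\), which preserves \(C^{1,\alpha}\) and the collapsed jet at \(0\). It therefore suffices to treat one convenient \(R\).

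The key step is to produce a \(C^{1,\alpha}\) map over \(P_A\) with small slope. Subtract the linear map: set \(v(x):=u(x)\ominus Ax\), so \(j_1v(0)=Q\llbracket(0,0)\rrbracket\) and \(v\in C^{1,\alpha}_{\mathrm{loc}}\). Rescale by \(v_r(z):=r^{-1}v(rz)\). Then \(\|v_r\|_{C^0(B_2)}+\|Dv_r\|_{C^0(B_2)}\leq Cr^\alpha[Dv]_{\alpha}\to0\), using Lemma~\ref{lem:one-dimensional-jet-selection} and the chain-of-balls argument, and \([Dv_r]_{\alpha;B_2}=r^\alpha[Du]_{\alpha;B_{2r}}\). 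The graph of \(u\) is not a rigid motion of the graph of \(v\), since it is the image under the shear \((x,y)\mapsto(x,y+Ax)\). So I would instead work with the rotated graph directly. Choose \(R\) as above and note that, near the origin, \(R_\#\mathbf v(u)\) coincides with \(R_\#\) of a graph whose tangent plane at \(0\) is \(P_A\). Equivalently, after rotating by \(R\), the blow-ups \(r^{-1}(R_\#\mathbf v(u))\) converge to \(Q\) copies of the horizontal plane.

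To make this rigorous with earlier lemmas, I would apply Lemma~\ref{lem:pointwise-first-jet-reparametrization} and the existence part of \cite[Theorem~5.1]{DLS15} to the rescaled map \(u_r(z):=r^{-1}u(rz)\) and the rotation \(R\). The transversality hypothesis \eqref{eq:pointwise-reparametrization-transversality} holds for \(r\) small. This is because \(R_{11}+R_{12}A\) is invertible, precisely because \(R(P_A)\) is horizontal, and the atoms of \(Du_r\) on \(B_2\) are within \(Cr^\alpha\) of \(A\). This gives a Lipschitz, in fact \(C^1\), map \(\widetilde u_r\) on \(B_1\) representing \(R_\#\mathbf v(u_r)\). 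The jet formula \eqref{eq:reparametrized-first-jet-transformation} at \(0\) gives \(\widetilde u_r(0)=Q\llbracket0\rrbracket\) and \(D\widetilde u_r(0)=Q\llbracket(R_{21}+R_{22}A)(R_{11}+R_{12}A)^{-1}\rrbracket=Q\llbracket0\rrbracket\), the latter since \(R(x,Ax)\) is horizontal. The \(C^{1,\alpha}\) bound follows exactly as in the last paragraph of the proof of Lemma~\ref{lem:C1alpha-reparametrization}. There, the smallness of \(\|R-I\|\) was only used to guarantee uniform invertibility of \(R_{11}+R_{12}B\), smoothness of \(A\mapsto(R_{21}+R_{22}A)(R_{11}+R_{12}A)^{-1}\), and the inverse formula for \(A_i(t)\). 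All three hold here near \(B=A\), for \(r\) small. Scaling back with \(\widetilde u(\widetilde x):=r\widetilde u_r(\widetilde x/r)\) gives the map on \(B_{2\rho}\) for suitable \(\rho\).

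The main obstacle is verifying the hypotheses of \cite[Theorem~5.1]{DLS15} when \(R\) is not close to the identity. That theorem is stated for small tilts. I would circumvent this by composing with the shear-free decomposition: write \(R=R_0R_1\), where \(R_1\) is a fixed rotation taking \(P_A\) to the horizontal plane through an orientation-compatible path, and reapply in finitely many steps of small rotations. At each step the current representing map has small slope, by the jet formula and the smallness of \(Du_r-A\), so Lemma~\ref{lem:C1alpha-reparametrization} applies, and the Hölder constants multiply by a bounded factor. This also controls the localisation \(\mres B_\rho^{n+k}\), since the boundary-escape estimate from that lemma survives each step.
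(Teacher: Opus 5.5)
There is a genuine gap in the existence step, and the iteration you propose to close it does not work. Lemma~\ref{lem:C1alpha-reparametrization}, like \cite[Theorem~5.1]{DLS15} as stated, needs the map being reparametrized to have small \emph{absolute} slope over its base plane, \(\|u\|_{C^0(B_2)}+\|Du\|_{C^0(B_2)}\leq\varepsilon(n,k,Q)\). Small oscillation of \(Du\) is not enough. Rescaling does not help, because \(Du_r(z)=Du(rz)\) still has atoms close to \(A\).

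If you split the rotation into small steps \(R^{(j)}\), the map at step \(j\) is a graph over \(\mathbb R^n\times\{0\}\) whose differential atoms are close to the slope \(A^{(j)}\) of the plane \(R^{(j)}(P_A)\). Since \(A^{(0)}=A\) is arbitrary (take \(n=k=1\), \(A=10\)), the lemma already fails at the first step. It stays unavailable until the intermediate plane is nearly horizontal. Your claim that the slope is small at each step confuses smallness relative to the tangent plane \(P_A\) with smallness relative to the base plane. Converting the first into the second is exactly what the statement asserts, so the scheme is circular.

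The missing idea is that the local existence construction needs no small tilt, only transversality. The paper runs the current-theoretic construction from the proof of Lemma~\ref{lem:pointwise-first-jet-reparametrization} directly for \(u\) at the origin, with \(q=Q\), \(a=0\), \(B=A\), and no rescaling. Set \(L:=R_{11}+R_{12}A\). Then \(R(h,Ah)=(Lh,0)\), so \(R_{21}+R_{22}A=0\), and orthogonality gives \(|Lh|=|(h,Ah)|\geq|h|\). Since \(\|R_{12}\|\leq1\), shrinking to \(B_s\) so that every differential atom satisfies \(|B-A|\leq\frac12\) gives \(|(R_{11}+R_{12}B)h|\geq\frac12|h|\).

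From this transversality alone the paper concludes as follows. The constancy theorem applies to the horizontal projection of the rotated graph current, and the tangent current fixes the multiplicity. Part~I of the proof of \cite[Theorem~5.1]{DLS15} then yields a Lipschitz \(\widetilde u\) on \(B_{2\rho}\) with \(\widetilde u(0)=Q\llbracket0\rrbracket\). None of this depends on \(\|R-I\|\) or on \(|A|\).

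Next, the bound \(|b-Ax|\leq\frac12|x|\) gives \(|R_{11}x+R_{12}b|\geq s/2>2\rho\) for \(|x|=s\). So every preimage lies over \(B_s\), and Lemma~\ref{lem:pointwise-first-jet-reparametrization} applies at each point of \(B_{2\rho}\). From there your third paragraph is correct. The jet formula together with \(R_{21}+R_{22}A=0\) gives the collapsed jet. The continuity and H\"older arguments of Lemma~\ref{lem:C1alpha-reparametrization} carry over using only transversality. With this, the block-diagonal reduction, the rescaling and the iteration are all unnecessary.
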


\begin{proof}
We follow the current-theoretic reparametrization scheme of De Lellis--Spadaro \cite{DLS15}, with some simplifications afforded by the \(C^1\) assumption.

Let \(\pi:\mathbb R^{n+k}\to\mathbb R^n\) denote the horizontal projection, write \(R=\begin{pmatrix}R_{11}&R_{12}\\R_{21}&R_{22}\end{pmatrix}\), and set \(L:=R_{11}+R_{12}A:\mathbb R^n\to\mathbb R^n\). Since \(R(x,Ax)=(Lx,0)\), the matrix \(L\) is invertible and \(R_{21}+R_{22}A=0\). Moreover, since \(R\) is orthogonal,
\(|Lh|=|(h,Ah)|\geq|h|\) for every \(h\in\mathbb R^n\).

If \(B\) is a differential atom of \(Du(z)\), then, under the graphical parametrization \(h\mapsto R(h,Bh)\) of \(R(P_B)\), the horizontal projection is \(h\mapsto(R_{11}+R_{12}B)h\). By continuity of the first jet, we may choose \(s\in(0,1)\) sufficiently small and \(C<\infty\) such that, for every differential atom \(B\) of \(Du(z)\), \(z\in B_s^n\), and every \(h\in\mathbb R^n\),
\begin{equation}
\label{eq:collapsed-jet-horizontal-transversality}
C^{-1}|h|
\leq
|(R_{11}+R_{12}B)h|
\leq
C|h|.
\end{equation}
Since \(R(P_B)=\{\bigl((R_{11}+R_{12}B)x,(R_{21}+R_{22}B)x\bigr):x\in\mathbb R^n\}\), it follows, after increasing \(C\) if necessary, that
\begin{equation}
\label{eq:collapsed-jet-slope-representation}
R(P_B)
=
\{\bigl(y,(R_{21}+R_{22}B)(R_{11}+R_{12}B)^{-1}y\bigr):y\in\mathbb R^n\},
\qquad
\bigl|(R_{21}+R_{22}B)(R_{11}+R_{12}B)^{-1}\bigr|
\leq
C.
\end{equation}

After decreasing \(s\) if necessary, we may apply the current-theoretic construction in the proof of Lemma~\ref{lem:pointwise-first-jet-reparametrization} to \(u\) at the origin, with \(q=Q\), \(a=0\), and \(B=A\). Its transversality hypothesis \eqref{eq:pointwise-reparametrization-transversality} follows from \eqref{eq:collapsed-jet-horizontal-transversality}. More precisely, since \(|Lh|\geq|h|\) and \(\|R_{12}\|\leq1\), continuity of the first jet allows us to assume that \(|B-A|\leq\frac12\), and hence
\(\lvert(R_{11}+R_{12}B)h\rvert\geq|Lh|-\lvert R_{12}(B-A)h\rvert\geq\frac12|h|\).
Thus the constant \(c\) in \eqref{eq:pointwise-reparametrization-transversality} may be taken to be \(\frac12\). The construction therefore yields \(\rho>0\), with \(B_{2\rho}^n\Subset B_{s/2}^n\), and a Lipschitz map \(\widetilde u:B_{2\rho}^n\to\mathcal A_Q(\mathbb R^k)\), with \(\widetilde u(0)=Q\llbracket0\rrbracket\), such that
\begin{equation}
\label{eq:collapsed-jet-graphical-identity}
\mathbf v(\widetilde u)
=
\bigl(R_\#\mathbf v(u|_{B_s^n})\bigr)
\mres
\bigl(B_{2\rho}^n\times\mathbb R^k\bigr).
\end{equation}

The bound \(|B-A|\leq\frac12\) and Lemma~\ref{lem:one-dimensional-jet-selection} imply that \(|b-Ax|\leq\frac12|x|\) for every \(x\in\overline B_s^n\) and \(b\in\spt u(x)\). Hence, for \(|x|=s\), we have \(|R_{11}x+R_{12}b|=|Lx+R_{12}(b-Ax)|\geq|Lx|-\|R_{12}\||b-Ax|\geq s/2>2\rho\). Together with \eqref{eq:collapsed-jet-graphical-identity}, this shows that every preimage under \(R\) of a point of the graph of \(\widetilde u\) lies in \(B_s^n\times\mathbb R^k\).
After localization and rescaling, Lemma~\ref{lem:pointwise-first-jet-reparametrization} applies at every point of \(B_{2\rho}^n\) by \eqref{eq:collapsed-jet-horizontal-transversality} and \eqref{eq:collapsed-jet-graphical-identity}. It follows that \(\widetilde u\) is differentiable there and that the first-jet transformation formula \eqref{eq:reparametrized-first-jet-transformation} holds.

The continuity and Hölder arguments in the proof of Lemma~\ref{lem:C1alpha-reparametrization} now carry over without any smallness assumption on \(R-I\). Indeed, the required local bilipschitz estimates follow from \eqref{eq:collapsed-jet-horizontal-transversality}, while the map \(B\mapsto(R_{21}+R_{22}B)(R_{11}+R_{12}B)^{-1}\) is smooth, with uniformly bounded derivative, in the range under consideration. Hence, after decreasing \(\rho\) if necessary, \(\widetilde u\in C^{1,\alpha}\bigl(B_{2\rho}^n,\mathcal A_Q(\mathbb R^k)\bigr)\). Since every differential atom of \(Du(0)\) equals \(A\) and \(R_{21}+R_{22}A=0\), the transformation formula gives \(D\widetilde u(0)=Q\llbracket0\rrbracket\). Finally, \(B_{2\rho}^n\Subset B_{s/2}^n\) implies \(\rho<s\), so \eqref{eq:collapsed-jet-graphical-identity} yields
\(\mathbf v(\widetilde u)\mres B_\rho^{n+k}
=\bigl(R_\#\mathbf v(u)\bigr)\mres B_\rho^{n+k}\), as required.
\end{proof}

\begin{theorem}[Dimension bound for the branch set]
\label{thm:branch-dimension}
Let \(n\geq2\), \(k\geq1\), \(Q\geq2\), and let \(\Omega\subset\mathbb R^n\) be open. Suppose that $u\in C^{1,\alpha}_{\mathrm{loc}} \bigl(\Omega,\mathcal A_Q(\mathbb R^k)\bigr)$ for some \(\alpha>0\), and that the graph varifold \(\mathbf v(u)\) is stationary in \(\Omega\times\mathbb R^k\). Then
\[
\dim_{\mathcal H}\mathcal B_u\leq n-2.
\]
\end{theorem}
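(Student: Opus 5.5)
The plan is to apply the abstract branch-set stratification theorem of Krummel--Minter--Wickramasekera \cite{KMW26} to the class $\mathcal S$ of stationary integral $n$-varifolds $V$ that, after some rotation $R\in O(n+k)$, coincide in a ball with $\mathbf v(w)$ for a $C^{1,\alpha}$ $Q'$-valued map $w$ ($Q'\le Q$) with small $C^1$ norm. As I understand it, that theorem asks for three things: closure of the class under rescaling, translation and rotation; a compactness property; and an $\varepsilon$-regularity property. The $\varepsilon$-regularity should say the following. If $V\in\mathcal S$ is close in a ball to a plane of multiplicity $q$, or to a union of planes, and has no density gap forcing a genuine branch point, then $V$ is locally a sum of $C^{1,\alpha}$ single-valued graphs, which places the point in $\operatorname{reg}$. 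The output of the theorem is that the set of branch points carrying a multiplicity-$q$ planar tangent has Hausdorff dimension at most $n-2$. The first task is therefore to match the branch set $\mathcal B_u$ with that abstract set. I expect to split $\mathcal B_u$ by value clusters and first-jet clusters, then reduce to collapsed jets.

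Step 1 (reduction). Fix $x_0\in\mathcal B_u$. If $j_1u(x_0)$ is not collapsed, I use Proposition~\ref{prop:local-splitting} together with Lemma~\ref{lem:area-stationary-differential-blocks} (for value-distinct clusters, stationarity localizes directly). This splits $u$ near $x_0$ into stationary blocks of smaller multiplicity, and $x_0$ lies in the branch set of one of them. By induction on $Q$, and because a countable union of sets of dimension $\le n-2$ still has dimension $\le n-2$ (covering $\Omega$ by countably many balls on which a given splitting holds), it suffices to treat the collapsed-jet set $K_u\cap\mathcal B_u$. At such a point, Lemma~\ref{lem:collapsed-jet-reparametrization} rotates $T_{x_0}$ to $\mathbb R^n\times\{0\}$ and produces $\widetilde u\in C^{1,\alpha}$ with $j_1\widetilde u(0)=Q\llbracket(0,0)\rrbracket$. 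Rotations preserve stationarity and, since the reparametrization is pointwise-local and regular sheets map to regular sheets by Lemma~\ref{lem:pointwise-first-jet-reparametrization}, they preserve the branch structure bi-Lipschitz, hence also Hausdorff dimension.

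Step 2 ($\varepsilon$-regularity and compactness). Take a sequence of stationary graphs $\mathbf v(u_j)$ with slopes $\to0$, converging to a multiplicity-$Q$ plane or to a sum of planes. I would first apply Lemma~\ref{lem:C1alpha-reparametrization} to re-graph each $u_j$ over the limiting plane, with controlled $C^{1,\alpha}$ norm. I would then apply Corollary~\ref{cor:stationary-graph-Schauder}, for some $\alpha'<\min(\alpha,1/Q)$ or $\alpha'<\delta(n,k,Q)$, which is possible since $C^{1,\alpha}\subset C^{1,\alpha'}$ locally. This gives uniform $C^{1,\alpha'}$ bounds in terms of the $L^2$ height excess, that is,
\[
\|u_j\|_{C^{1,\alpha'}(B_{1/2})}\le C\|u_j\|_{L^2(B_1)}.
\]
Lemma~\ref{lem:C1-compactness} then yields $C^1$ convergence of the graphs. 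Together with Lemma~\ref{lem:perturbative-linearised-blow-ups}, applied to the normalized excess blow-ups, the limits are Dirichlet-stationary $C^{1,\alpha'}$ maps, and Theorem~\ref{thm:stationarity-of-differential} makes them fully Dirichlet-stationary. This supplies the coarse/fine blow-up class required in \cite{KMW26}. I also need the Schauder estimate to give a uniform decay rate, which I expect to read off from the same estimate.

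Step 3 (dimension of the singular strata). Since homogeneous Dirichlet-stationary blow-ups exist by Theorem~\ref{prop:frequency-monotonicity-final}, the spine/stratification argument of \cite{KMW26} shows that points whose blow-ups are translation-invariant along $n-1$ directions are regular. Such blow-ups are one-dimensional Dirichlet-stationary maps, which are sums of affine functions, so the $\varepsilon$-regularity from Step 2 gives a single-valued decomposition. Hence $\mathcal B_u\cap K_u$ lies in the $(n-2)$-stratum, and $\dim_{\mathcal H}\le n-2$.

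The main obstacle is Step 2: checking precisely the $\varepsilon$-regularity hypothesis of \cite{KMW26} in its exact form. In particular, the Schauder constants must be uniform along sequences whose rotations are not small. I would try first to handle this by working only at collapsed jets after Lemma~\ref{lem:collapsed-jet-reparametrization}, so that every rotation needed is close to the identity at small scales, and Lemma~\ref{lem:C1alpha-reparametrization} then applies with uniform constants.
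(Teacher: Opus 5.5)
There is a genuine gap. Your outer architecture agrees with the paper's: induction on $Q$ at points of $\mathcal B_u\setminus K_u$ via first-jet splitting and Lemma~\ref{lem:area-stationary-differential-blocks}, Lemma~\ref{lem:collapsed-jet-reparametrization} at points of $K_u$, and \cite[Theorem~A]{KMW26} applied to rotated small-slope stationary graphs. The problem is what you feed into that theorem and what you take out of it.

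\emph{Step~3 does not work.} It tries to obtain the bound by dimension reduction on homogeneous Dirichlet-stationary blow-ups. But Theorem~\ref{prop:frequency-monotonicity-final} applies only to Dirichlet-stationary maps, and $u$ is merely area-stationary. Nothing available gives frequency monotonicity for $u$ at points of $K_u$. Hence you have no homogeneous tangent objects of $u$ itself, no upper semicontinuity along sequences of branch points, and no mechanism forcing branch points of $u$ to accumulate on the spine of a blow-up. The limits from Lemma~\ref{lem:perturbative-linearised-blow-ups} are Dirichlet-stationary but need not be homogeneous, so the reduction never starts. Supplying such a blow-up class is not a hypothesis of Theorem~A as used here.

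\emph{Your $\varepsilon$-regularity is the wrong statement.} You take it to say that closeness to a plane yields a sum of single-valued graphs. This is false for stationary branched graphs, for example small-scale rescalings of the calibrated curve $w^Q=z^{Q+1}$ from the introduction. These are arbitrarily close to a multiplicity-$Q$ plane, with no density gap, yet are never sums of single-valued graphs near the branch point. Single-valued regularity is the output of the stratification, not its input.

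In the paper no blow-up analysis is done at this stage. Theorem~A directly gives $\dim_{\mathcal H}\bigl(\mathcal B_{\mathbf v(u)}\cap\{\Theta_{\mathbf v(u)}=Q\}\bigr)\le n-2$ in the normalized neighbourhoods. One then checks $\mathcal B_u\cap K_u\subset\pi\bigl(\mathcal B_{\mathbf v(u)}\cap\{\Theta_{\mathbf v(u)}=Q\}\bigr)$: a regular point of density $Q$ with graphical tangent plane $P_A$ would force $u=Q\llbracket u_0\rrbracket$ locally.

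\emph{What must be verified instead, for a class $\mathcal V_L$:}
\begin{enumerate}
\item[(i)] Closure under rotations and homotheties.
\item[(ii)] Weak compactness. For this the class must use an exponent $\beta<\alpha$ lying in the Schauder range for every $q\le Q$. Your $C^{1,\alpha}$ class is not closed under varifold limits, since Corollary~\ref{cor:stationary-graph-Schauder} only yields $C^{1,\beta}$ bounds. Components whose values escape to infinity must also be discarded.
\item[(iii)] The local disjoint-decomposition property, which your list omits.
\item[(iv)] $\varepsilon$-regularity in the sense of \cite[Definition~1.2]{KMW26}. Suppose $(\omega_n2^n)^{-1}\|V\|(B_2^{n+k})<Q+\frac12$, $\widehat E_{V,P}<\varepsilon$, and the lower mass bound holds. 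Then in a smaller cylinder $V$ must be the graph over $P$ of a $C^{1,\beta}$ $q^*$-valued map with $C^{1,\beta}$ norm at most $C\widehat E_{V,P}$, together with conditions (B) and (C).
\end{enumerate}

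Your stated main obstacle, non-small rotations, is not removed by working only at collapsed-jet base points, because (iv) is quantified over every $V$ in the class and every plane $P$. The paper removes it by a compactness and constancy-theorem argument: small slope plus small excess force $P$ to be close to the plane over which $V$ is graphical. A similar contradiction argument makes the contributing sheets uniformly low. Then Lemma~\ref{lem:C1alpha-reparametrization} re-graphs over $P$, and Corollary~\ref{cor:stationary-graph-Schauder}, with $\int_{B_{7/8}}|\widetilde w|^2\le\widehat E_{V,P}^2$, gives condition (A).
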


\begin{proof}
The assertion is local, and we argue by induction on \(Q\).
The case \(Q=1\) is immediate. Fix \(Q\geq2\), assume that the conclusion holds at every strictly smaller multiplicity, and choose \(0<\beta<\alpha\) so that the Schauder estimate for the area system applies simultaneously for every \(q\leq Q\). Thus, we require $\beta<\frac1Q$ if \(n=2\), and $\beta<
\min_{2\leq q\leq Q}\delta(n,k,q)$ if \(n\geq3\).

For \(L>0\), let
\[
\mathcal V_L
:=
\left\{
(\Gamma_\#\mathbf v(w))\mres B_2^{n+k}
\,\middle|\,
\begin{array}{l}
1\leq q\leq Q,\quad \Gamma\in O(n+k),\\
w\in
C^{1,\beta}_{\mathrm{loc}}
\bigl(B_2^n,\mathcal A_q(\mathbb R^k)\bigr),\\
\|Dw\|_{C^0(B_2^n)}\leq L,\\
\mathbf v(w)
\text{ is stationary in }
B_2^n\times\mathbb R^k
\end{array}
\right\}
\cup\{0\}.
\]

We verify that there exists \(L=L(n,k,Q,\beta)>0\) such that the class \(\mathcal V_L\) satisfies the hypotheses needed to apply \cite[Theorem~A]{KMW26} up to multiplicity \(Q\).

The closure under rotations and homotheties required in \cite[Section~1.1]{KMW26} is immediate from the definition of \(\mathcal V_L\).

We now verify the required weak compactness. Let \(V_h\in\mathcal V_L\) satisfy \(\limsup_{h\to\infty}\|V_h\|(B_2^{n+k})<\infty\). Since the \(V_h\) are stationary integral varifolds, after passing to a subsequence they converge as varifolds to some \(V\) in \(B_2^{n+k}\). It remains to show that \(V\in\mathcal V_L\). If \(V_h=0\) for infinitely many \(h\), then \(V=0\in\mathcal V_L\), so we may assume that \(V_h\neq0\) for every \(h\). Write \(V_h=(\Gamma_{h\#}\mathbf v(w_h))\mres B_2^{n+k}\), where \(w_h\) has multiplicity \(q_h\leq Q\). After passing to a subsequence, we may assume that \(q_h=q\) is constant and, by the compactness of \(O(n+k)\), that \(\Gamma_h\to\Gamma\in O(n+k)\). Since \(B_2^{n+k}\) is invariant under rotations and push-forward is continuous under convergence of the rotations, the varifolds \(\widetilde V_h:=(\Gamma_h^{-1})_\#V_h=\mathbf v(w_h)\mres B_2^{n+k}\) converge to \(\widetilde V:=(\Gamma^{-1})_\#V\). It is therefore enough to prove that \(\widetilde V=\mathbf v(w)\mres B_2^{n+k}\) for some admissible \(w\); rotating back then gives \(V=(\Gamma_\#\mathbf v(w))\mres B_2^{n+k}\in\mathcal V_L\). Replacing \(V_h\) and \(V\) by \(\widetilde V_h\) and \(\widetilde V\), respectively, and relabelling, we may then assume that \(\Gamma_h=\operatorname{Id}\) for every \(h\).
Choose a labelling \(w_h(0)=\sum_{\ell=1}^q\llbracket a_{h,\ell}\rrbracket\). After passing to a further subsequence and relabelling, there is \(0\leq q'\leq q\) such that \(a_{h,\ell}\) remains bounded for \(1\leq\ell\leq q'\), while \(|a_{h,\ell}|\to\infty\) for \(q'<\ell\leq q\). The Lipschitz decomposition of \cite[Proposition~1.6]{DLS11} separates these two groups into submaps \(w_h'\) and \(w_h''\). The slope bound implies that the values of \(w_h''\) diverge uniformly on \(B_2^n\), and hence \(\mathbf v(w_h'')\mres B_2^{n+k}=0\) for all sufficiently large \(h\). Moreover, \(\mathbf v(w_h')\) is stationary by localization of the first variation.
If \(q'=0\), then \(V=0\). Otherwise, \(w_h'(0)\) is uniformly bounded and \(\sup_{B_2^n}|Dw_h'|\leq L\), so \(w_h'\) is uniformly bounded on compact subsets of \(B_2^n\).
Taking \(L\leq\min_{1\leq p\leq Q}\varepsilon_0(n,k,p,\beta)\), Corollary~\ref{cor:stationary-graph-Schauder} gives uniform \(C^{1,\beta}\) bounds on every compact subset of \(B_2^n\). Lemma~\ref{lem:C1-compactness} and a diagonal argument yield a map \(w\in C^{1,\beta}_{\mathrm{loc}}\bigl(B_2^n,\mathcal A_{q'}(\mathbb R^k)\bigr)\) such that \(w_h'\to w\) locally in \(C^1\). The slope bound and stationarity pass to the limit, and the varifold convergence gives \(V=\mathbf v(w)\mres B_2^{n+k}\). Undoing the original rotation, the original limit belongs to \(\mathcal V_L\).

Now we verify that the local disjoint-decomposition property is also satisfied. Suppose without loss of generality that \(V=\mathbf v(w)\mres B_2^{n+k}\in\mathcal V_L\), with \(w\in C^{1,\beta}_{\mathrm{loc}}\bigl(B_2^n,\mathcal A_q(\mathbb R^k)\bigr)\), \(\|Dw\|_{C^0(B_2^n)}\leq L\), and \(\mathbf v(w)\) stationary in \(B_2^n\times\mathbb R^k\), and that \(V=V_1+V_2\) in \(B_2^{n+k}\), where \(V_1\) and \(V_2\) have disjoint supports. We want to show that \((\eta_{0,1/2})_\#(V_i\mres B_1^{n+k})\in\mathcal V_L\) for \(i=1,2\). If \(V_i\mres B_1^{n+k}=0\) for some \(i\), the claim follows immediately, so we may assume that both restrictions are nonzero.
Iterating the Lipschitz decomposition of \cite[Proposition~1.6]{DLS11}, we may write \(w=\sum_{\lambda=1}^N w^\lambda\), where the \(w^\lambda\) have fixed multiplicities, \(\|Dw^\lambda\|_{C^0(B^n_2)}\leq L\), and \(\operatorname{diam}\bigl(\bigcup_{x\in B_2^n}\operatorname{spt}w^\lambda(x)\bigr)\leq C(n,k,Q)L\). Let \(\Lambda_*:=\{\lambda:\operatorname{spt}\mathbf v(w^\lambda)\cap B_1^{n+k}\neq\varnothing\}\) and set \(w^*:=\sum_{\lambda\in\Lambda_*}w^\lambda\). After decreasing \(L\) if necessary, for every \(\lambda\in\Lambda_*\), $\spt \mathbf{v}(w^\lambda|_{B_1^n})$ is contained in \(B_2^{n+k}\), and hence \(V\mres B_1^{n+k}=\mathbf v(w^*)\mres B_1^{n+k}\).
For \(i=1,2\) and \(x\in B_1^n\), choose any labelling \(w^*(x)=\sum_{\ell=1}^{q_*}\llbracket w^{*,\ell}(x)\rrbracket\) and set $w^i(x):=\sum_{\substack{1\leq\ell\leq q_*, (x,w^{*,\ell}(x))\in\operatorname{spt}V_i}}\llbracket w^{*,\ell}(x)\rrbracket$.
This definition is independent of the labelling. Since \(\operatorname{spt}V_1\) and \(\operatorname{spt}V_2\) are relatively closed in \(B_2^{n+k}\) and disjoint, and every point of \(\operatorname{spt}\mathbf v(w^*|_{B_1^n})\) lies in their union, the local value-splitting property of \(w^*\) shows that the multiplicity of each \(w^i\) is locally constant. Hence, since \(B_1^n\) is connected and \(V_i\mres B_1^{n+k}\neq0\), \(w^i\in C^{1,\beta}\bigl(B_1^n,\mathcal A_{q_i}(\mathbb R^k)\bigr)\) for some \(1\leq q_i\leq q\), with \(\|Dw^i\|_{C^0(B^n_1)}\leq L\), and \(V_i\mres B_1^{n+k}=\mathbf v(w^i)\mres B_1^{n+k}\).
By localization of the first variation, \(\mathbf v(w^i)\) is stationary in \(B_1^n\times\mathbb R^k\). Setting \(\widetilde w^i(z):=2w^i(z/2)\), we therefore have \(\mathbf v(\widetilde w^i)\mres B_2^{n+k}=(\eta_{0,1/2})_\#(V_i\mres B_1^{n+k})\in\mathcal V_L\), as required.

It remains to verify the \(\varepsilon\)-regularity property. After decreasing \(L=L(n,k,Q,\beta)>0\) if necessary, we shall show that there exists \(\varepsilon=\varepsilon(n,k,Q,\beta)>0\) such that the conclusions of \cite[Definition~1.2]{KMW26} hold whenever \(V=(\Gamma_\#\mathbf v(w))\mres B_2^{n+k}\in\mathcal V_L\setminus\{0\}\) and an \(n\)-plane \(P\) satisfy \((\omega_n2^n)^{-1}\|V\|(B_2^{n+k})<Q+\frac12\), \(\widehat E_{V,P}<\varepsilon\), where \(\widehat E_{V,P}^2:=\int_{\mathcal C_1(P)}\operatorname{dist}^2(X,P)\,d\|V\|(X)\) and \(\mathcal C_r(P):=\{X\in\mathbb R^{n+k}:|\pi_P(X)|<r\}\), together with the additional lower mass bound \(\omega_n^{-1}\|V\|(\mathcal C_1(P))\geq\frac12\).
Since the hypotheses and conclusions of the \(\varepsilon\)-regularity property are invariant under rotations, after applying \(\Gamma^{-1}\) and replacing \(P\) by \(\Gamma^{-1}P\), we may assume that \(\Gamma=\operatorname{Id}\). Thus \(V=\mathbf v(w)\mres B_2^{n+k}\) is represented as an \(L\)-Lipschitz graph over \(\mathbb R^n\times\{0\}\), whereas the excess is measured relative to the possibly different plane \(P\).

We first select the components of the graph which contribute to \(V\mres B_{15/8}^{n+k}\). As in the verification of the local disjoint-decomposition property, iterating \cite[Proposition~1.6]{DLS11} gives \(w=\sum_{\lambda=1}^N w^\lambda\), where the \(w^\lambda\) are locally value-separated \(C^{1,\beta}\) submaps, each satisfying \(\|Dw^\lambda\|_{C^0(B^n_2)}\leq L\), and \(\operatorname{diam}\left(\bigcup_{x\in B_2^n}\operatorname{spt}w^\lambda(x)\right)\leq C(n,k,Q)L\).
By localization of the first variation, each \(\mathbf{v}(w^\lambda)\) is stationary in \(B_2^n\times\mathbb R^k\).
Let \(\Lambda_*:=\left\{\lambda\in \{1,...,N\}:\spt \mathbf{v}(w^\lambda) \cap B_{15/8}^{n+k}\neq\varnothing\right\}\), \(w^*:=\sum_{\lambda\in\Lambda_*}w^\lambda\).
The lower mass bound and the smallness of \(\widehat E_{V,P}\) imply that \(\Lambda_*\neq\varnothing\) (indeed, otherwise \(V\mres B_{15/8}^{n+k}=0\), and hence \(\widehat E_{V,P}^2\geq \left(\left(\frac{15}{8}\right)^2-1\right)\|V\|(\mathcal C_1(P))\), contrary to the lower mass bound when \(\widehat E_{V,P}^2\) is sufficiently small). Thus \(w^*\in C^{1,\beta}_{\mathrm{loc}}\bigl(B_2^n,\mathcal A_{q^*}(\mathbb R^k)\bigr)\) for some \(1\leq q^*\leq q\), with \(\|Dw^*\|_{C^0(B^n_2)}\leq L\), and \(\mathbf v(w^*)\) is stationary in \(B_2^n\times\mathbb R^k\).

We next claim that \(P\) can be made arbitrarily close to \(\mathbb R^n\times\{0\}\) by choosing \(L\) and \(\varepsilon\) sufficiently small.
Indeed, otherwise there would exist \(L_h\downarrow0\), varifolds \(V_h=\mathbf v(w_h)\mres B_2^{n+k}\), and planes \(P_h\) such that \(\|Dw_h\|_{C^0(B^n_2)}\leq L_h\), \(\widehat E_{V_h,P_h}\to0\), and \(\|\pi_{P_h}-\pi_{\mathbb R^n\times\{0\}}\|\geq\eta\) for some fixed \(\eta>0\). After passing to a subsequence, \(P_h\to P\) and \(V_h\) converges locally as varifolds to a stationary integral varifold \(V_\infty\). The lower mass bound and \(\widehat E_{V_h,P_h}\to0\) ensure that \(V_\infty\neq0\). Since the slopes of the graphs tend to zero, the tangent plane to \(V_\infty\) is \(\mathbb R^n\times\{0\}\) at \(\|V_\infty\|\)-almost every point. On the other hand, \(\widehat E_{V_h,P_h}\to0\) implies that \(V_\infty\) is supported in \(P\) in the corresponding cylinder. The constancy theorem therefore shows that \(V_\infty\) is a nonzero integer multiple of \(|P|\). Hence \(P=\mathbb R^n\times\{0\}\), contradicting \(\|\pi_P-\pi_{\mathbb R^n\times\{0\}}\|\geq\eta\) and proving the claim.

Hence, after decreasing \(L\) and \(\varepsilon\) if necessary, we may assume that \(P\) is sufficiently close to \(\mathbb R^n\times\{0\}\) that there exists \(R\in O(n+k)\), as close to the identity as required in Lemma~\ref{lem:C1alpha-reparametrization} and in the argument below, such that \(R(P)=\mathbb R^n\times\{0\}\).
Set \(\mathcal C_r:=B_r^n\times\mathbb R^k\) and \(\overline V:=R_\#V\).

With this choice of \(R\), we claim that, after decreasing \(L\) and \(\varepsilon\) if necessary, \(\|w^*\|_{C^0(B^n_2)}\) is as small as required in Lemma~\ref{lem:C1alpha-reparametrization}. Indeed, otherwise there would exist sequences \(L_h\to0\), \(V_h=\mathbf v(w_h)\mres B_2^{n+k}\), \(P_h\), \(R_h\to I\) with \(R_h(P_h)=\mathbb R^n\times\{0\}\), and corresponding decompositions \(w_h=\sum_{\lambda=1}^{N_h}w_h^\lambda\), such that \(\widehat E_{V_h,P_h}\to0\) but \(\|w_h^*\|_{C^0(B^n_2)}\) stays bounded away from zero. By the component-diameter estimate, after passing to a subsequence there are \(\lambda_h\in\Lambda_{*,h}\) and \(a_h\in\operatorname{spt}w_h^{\lambda_h}(0)\) such that \(|a_h|\) stays bounded away from zero. Since \(\lambda_h\in\Lambda_{*,h}\), we may choose \((x_h,b_h)\in\operatorname{spt}\mathbf v(w_h^{\lambda_h})\cap B_{15/8}^{n+k}\), and the component-diameter estimate gives \(|a_h-b_h|\leq CL_h\). Hence a fixed portion of the graph of \(w_h^{\lambda_h}\) near \(0\) remains a fixed positive distance from \(\mathbb R^n\times\{0\}\), while, for \(h\) sufficiently large, its image under \(R_h\) lies in \(B_2^{n+k}\cap\mathcal C_1\). This gives a fixed positive lower bound for \(\widehat E_{V_h,P_h}\), a contradiction. This contradiction proves the claim.

Consequently, after restricting to \(B_{7/4}^n\) and making a fixed graphical rescaling, Lemma~\ref{lem:C1alpha-reparametrization}, with \(\alpha=\beta\), applies to \(w^*\) and \(R\). We obtain a map $\widetilde w\in C^{1,\beta}\bigl(B_{7/8}^n,\mathcal A_{q^*}(\mathbb R^k)\bigr)$ whose graph agrees with the corresponding portion of \(R\big(\spt \mathbf{v}(w^*)\big)\), and whose slope lies in the range of applicability of the Schauder estimate for the area system. Then $\mathbf{v}(\widetilde w)$ is stationary in \(\mathcal C_{7/8}\). By decreasing \(L\) and \(\varepsilon\) once more if necessary, \(\spt \mathbf{v}(\widetilde w|_{B^n_{7/8}})\) is contained in \(B_{15/8}^{n+k}\). Therefore, by the definition of \(\Lambda_*\), $\bigl(\overline V\mres B_{15/8}^{n+k}\bigr)\mres\mathcal C_{3/4} = \mathbf v\bigl(\widetilde w|_{B^n_{3/4}}\bigr)$.
Moreover, the definition of the graph varifold gives $\int_{B^n_{7/8}}|\widetilde w|^2 \leq \int_{\mathcal C_1}\operatorname{dist}^2(Z,\R^n\times \{0\})\,d\|\overline V\|(Z) = \widehat E^2_{V,P}$.
The interior Schauder estimate for the area system consequently yields
\[
\|\widetilde w\|_{C^{1,\beta}(B^n_{3/4})}
\leq
C \widehat E_{V,P}.
\]

The preceding varifold identity and estimate give condition \emph{(A)} of \cite[Definition~1.2]{KMW26}, with multiplicity \(1\leq q^*\leq q\); the \(C^{1,\beta}\) graphical structure gives condition \emph{(B)}; and the Hölder continuity of the tangent planes, together with the corresponding Taylor estimate, gives the uniqueness of the tangent cones and the two quantitative estimates in condition \emph{(C)}.

We have thus verified that \(\mathcal V_L\) satisfies all the hypotheses of \cite[Theorem~A]{KMW26}.
We now apply \cite[Theorem~A]{KMW26} near the points of \(K_u\). Let \(x\in K_u\), write $j_1u(x) = Q\llbracket(a,A)\rrbracket$, and set $X:=(x,a)$.
By Lemma~\ref{lem:collapsed-jet-reparametrization}, after translating \(X\) to the origin and applying an ambient rotation sending \(P_A\) onto \(\mathbb R^n\times\{0\}\), the graph varifold of \(u\) is locally represented by a map $w\in C^{1,\alpha} \bigl(B^n_\rho,\mathcal A_Q(\mathbb R^k)\bigr)$ for some \(\rho>0\), satisfying $w(0)=Q\llbracket0\rrbracket$, $Dw(0)=Q\llbracket0\rrbracket$.
After a fixed rescaling we may assume that \(w\) is defined on \(B^n_2\).
For $w_r(z):=r^{-1}w(rz)$,  the \(C^{1,\alpha}\) continuity of the first jet and the identity \(Dw(0)=Q\llbracket0\rrbracket\) give $\|Dw_r\|_{C^0(B^n_2)} \leq Cr^\alpha \longrightarrow0$ as $r\downarrow0$.
Thus, for \(r\) sufficiently small, the corresponding translated,
rotated, and rescaled graph belongs to \(\mathcal V_L\).
Now let $x\in\mathcal B_u\cap K_u$.
At \(X=(x,a)\) one has $\Theta_{\mathbf v(u)}(X)=Q$, and the tangent cone is $Q|P_A|$.
Denote by \(\mathcal B_{\mathbf v(u)}\) the branch set of the varifold as in \cite{KMW26}, and let $\pi:\mathbb R^{n+k}\longrightarrow\mathbb R^n$ be the orthogonal projection.
We claim that \(X\in\mathcal B_{\mathbf v(u)}\). Indeed, \(Q|P_A|\) is a tangent cone to \(\mathbf v(u)\) at \(X\), and hence, by the definition of the branch set for varifolds, it is enough to show that \(X\) is singular. If \(X\) were regular, then, since \(\Theta_{\mathbf v(u)}(X)=Q\), the varifold would agree near \(X\) with a smooth embedded submanifold of multiplicity \(Q\). Its tangent plane \(P_A\) is graphical over \(\mathbb R^n\), so the inverse function theorem would represent this submanifold locally as the graph of a smooth single-valued map \(u_0\). The graphical identity would then give \(u=Q\llbracket u_0\rrbracket\) locally, contrary to \(x\in\mathcal B_u\).
Consequently, $\mathcal B_u\cap K_u \subset \pi\left( \mathcal B_{\mathbf v(u)} \cap \{\Theta_{\mathbf v(u)}=Q\} \right)$.
Theorem~A of \cite{KMW26}, applied in the normalized neighbourhoods constructed above, together with the fact that \(\pi\) is Lipschitz, gives $\dim_{\mathcal H} (\mathcal B_u\cap K_u) \leq n-2$.
Finally, let $x\in\mathcal B_u\setminus K_u$.
Since \(x\notin K_u\), the first jet of \(u\) at \(x\) has at least two distinct atoms. By the local first-jet splitting, after shrinking to a neighbourhood \(W_x\) of \(x\), $u = \sum_{\gamma=1}^N u^\gamma$, $u^\gamma \in C^{1,\alpha} \bigl(W_x,\mathcal A_{Q_\gamma}(\mathbb R^k)\bigr)$, $Q_\gamma<Q$.
To see that the graph of every \(u^\gamma\) is stationary, fix \(y\in W_x\). Near \(y\), first split \(u\) into its distinct value clusters, to which stationarity localizes. Within each value cluster, use the local first-jet splitting to separate the differential blocks and apply Lemma~\ref{lem:area-stationary-differential-blocks}. Summing the blocks belonging to each \(u^\gamma\), we conclude that \(\mathbf v(u^\gamma)\) is stationary near \(y\). Since \(y\in W_x\) was arbitrary, \(\mathbf v(u^\gamma)\) is stationary in \(W_x\times\mathbb R^k\). Moreover, $\mathcal B_u\cap W_x \subset \bigcup_{\gamma=1}^N \mathcal B_{u^\gamma}$.
The inductive hypothesis therefore gives $\dim_{\mathcal H} (\mathcal B_u\cap W_x) \leq n-2$.
Taking countable subcovers of \(\mathcal B_u\cap K_u\) and \(\mathcal B_u\setminus K_u\) completes the proof.
\end{proof}

\bigskip

\noindent
Department of Pure Mathematics and Mathematical Statistics, University of Cambridge\\
\textit{Email}: \texttt{ml2003@cam.ac.uk}

\end{document}